\numberwithin{equation}{section}
\theoremstyle{plain}
\newtheorem{theorem}{Theorem}[section]
\newtheorem{proposition}[theorem]{Proposition}
\newtheorem{lemma}[theorem]{Lemma}
\theoremstyle{definition}
\newenvironment{remark}{\pushQED{\qed}\remarkbase}{\popQED\endremarkbase}
\newtheoremstyle{hypstyle}{}{}{}{}{\bfseries}{.}{ }%
{\thmname{#1}\thmnumber{ (H#2)}\thmnote{}}
\theoremstyle{hypstyle}
\def\notina[#1]#2{\begingroup\def\thefootnote{\fnsymbol{footnote}}\footnote[#1]{#2}\endgroup}
\newcommand{\N}{{\mathbb N}}
\newcommand{\R}{{\mathbb R}}
\newcommand{\C}{{\mathbb C}}
\newcommand{\Z}{{\mathbb Z}}
\newcommand{\T}{{\mathbb T}}
\newcommand{\mD}{\mathcal{D}}
\newcommand{\mE}{\mathcal{E}}
\newcommand{\mN}{\mathcal{N}}
\newcommand{\mP}{\mathcal{P}}
\newcommand{\mZ}{\mathcal{Z}}
\newcommand{\tA}{\mathtt{A}}
\newcommand{\tB}{\mathtt{B}}
\renewcommand{\a}{\alpha}
\renewcommand{\b}{\beta}
\newcommand{\g}{\gamma}
\renewcommand{\d}{\delta}
\newcommand{\e}{\varepsilon}
\newcommand{\ph}{\varphi}
\newcommand{\lm}{\lambda}
\newcommand{\om}{\omega}
\newcommand{\s}{\sigma}
\renewcommand{\th}{\vartheta}
\renewcommand{\Im}{\mathrm{Im}\,}
\renewcommand{\Re}{\mathrm{Re}\,}
\newcommand{\gr}{\nabla}
\newcommand{\pa}{\partial}
\newcommand{\ov}{\overline}
\newcommand{\bcb}{\begin{color}{blue}}
\newcommand{\bcr}{\begin{color}{red}}
\newcommand{\ec}{\end{color}}
\title{Effective chaos for the Kirchhoff equation on tori}
\author{\small{Pietro Baldi, Filippo Giuliani, Marcel Guardia, Emanuele Haus}}
\date{} % {\footnotesize{July 2022}}
\begin{document}

\maketitle

\begin{abstract}
We consider the Kirchhoff equation on tori of any dimension and we construct solutions 
whose Sobolev norms oscillates in a chaotic way on certain long time scales. 
The chaoticity is encoded in  the time between oscillations of the norm, which can be chosen in any prescribed way. This phenomenon, that we name as \emph{effective chaos} (it occurs over a long, but finite, time scale), is consequence of the existence of symbolic dynamics for an effective system. Since the first order resonant dynamics has been proved to be essentially stable, we need to perform a second order analysis to find an effective model displaying chaotic dynamics. More precisely, after some reductions, this model behaves as two weakly coupled pendulums.
\end{abstract}

\tableofcontents

\section{Introduction and main result} 
\label{sec:intro}

%%%%  NOTE FOR US: 
%%%%  This section {sec:intro} is copied 
%%%%  from the file version 08 of Filippo and Marcel 27-01-2023
%%%%  and it replaces the old section {sec:intro} 

%\subsection{The Kirchhoff equation}

We consider the Kirchhoff equation 
\begin{equation} \label{K Om}
\pa_{tt} u - \Delta u \Big( 1 + \int_{\T^d} |\gr u|^2 \, dx \Big) = 0 
\end{equation}
on the torus $\T^d$, 
$\T := \R / 2\pi \Z$, in any dimension $d \geq 1$ 
(periodic boundary conditions), 
where the unknown $u=u(t,x)$, $x\in\T^d$, is a real-valued function. 
%with initial data at time $t=0$
%\begin{equation} \label{initial data 1} 
%u(0,x) = a(x), \quad 
%\pa_t u(0,x) = b(x).
%\end{equation}

Equation \eqref{K Om} was first introduced by Kirchhoff \cite{Kirchhoff.1876} in 1876, 
to model nonlinear transverse oscillations of strings and plates ($d=1,2$).
It is a quasilinear wave equation, 
with cubic, nonlocal nonlinearity and Hamiltonian structure.
Given its physical relevance, equation \eqref{K Om} has been largely studied along the years;
nonetheless, % and its long history, 
its study %of equation \eqref{K Om} 
is still %interesting and 
challenging, because several basic questions remain open.

While it has long been known 
(Dickey \cite{Dickey.1969}, Arosio-Panizzi \cite{Arosio.Panizzi.1996}) 
that the Cauchy problem for \eqref{K Om} is locally wellposed 
with initial data $(u(0), \pa_t u(0))$ 
in the Sobolev space $H^{\frac32}(\T^d, \R) \times H^{\frac12}(\T^d, \R)$, 
it is still an open problem whether the solutions % of \eqref{K1}-\eqref{initial data 1} 
with initial data of any given Sobolev regularity are global in time or not. 
In particular, it is not even known if $C^\infty$ (or even Gevrey) initial data 
of small amplitude produce solutions that are global in time. 
For initial data in analytic class, instead, 
global wellposedness is known since the work of Bernstein \cite{Bernstein.1940} in 1940.

Moreover, below the regularity threshold $H^{\frac32}\times H^{\frac12}$, 
neither local wellposedness nor illposedness have been established.  
%\begin{color}{blue} 
A partial, interesting result in this direction has been recently obtained 
by Ghisi and Gobbino \cite{Ghisi.Gobbino.2022}. 

More general questions regard the lifespan of the solutions 
and their behavior as time evolves, at least close to the equilibrium $u=0$.
First of all, as a consequence of the linear theory, for initial data of size $\e$ 
in $H^{\frac32} \times H^{\frac12}$, the existence of the solution is guaranteed 
at least for a time of the order $\e^{-2}$. 
Since \eqref{K Om} is a quasilinear equation, 
it is not a priori obvious that one can obtain better estimates. 
For instance, in the well-known example by Klainerman and Majda \cite{Klainerman.Majda.1980} 
{\em all} nontrivial space-periodic solutions of size $\e$ 
blow up in a time of order $\e^{-2}$. 
On the other hand, in the papers \cite{Baldi.Haus.Nonlin}, \cite{Baldi.Haus.JDDE}, 
\cite{Baldi.Haus.SIAM}, using techniques from the normal form theory,
it is proved that for the Kirchhoff equation the situation is more favorable. 
More precisely, in \cite{Baldi.Haus.Nonlin}, 
performing one step of quasilinear normal form, 
it is proved that the lifespan of all solutions of small amplitude 
is at least of order $\e^{-4}$. This is a consequence of the fact that 
the only resonant cubic terms that cannot be erased in the first step of normal form 
give no contribution to the energy estimates. 
In \cite{Baldi.Haus.JDDE} the second step of quasilinear normal form % for \eqref{K Om} 
is computed, 
and it is proved that there are resonant terms of degree five that cannot be erased 
and that give a nontrivial contribution to the time evolution of Sobolev norms. 

This is a starting point for describing interesting long-time dynamics for the Kirchhoff equation. 
% 
% 
% 
% On the one hand, one has a stability result: in \cite{Baldi.Haus.SIAM} it is proved that 
% for a suitable set of nonresonant initial data 
% the effect of the resonant terms of degree five 
% can be neglected on a longer timescale 
% and the lifespan of the corresponding solution is at least of order $\e^{-6}$.  
% 
% On the other hand, the presence of the resonant quintic terms 
% makes it very natural to look for instability mechanisms.
% The present paper deals with this question.
% 
% \item \red{Once we proved a longer lifespan for solutions of Kirchhoff we would like to analyze the dynamics of such solutions. In particular is interesting construct (or look for) solutions displaying some interesting behavior. } 
%Indeed, once the long-time existence of solutions has been established, an interesting question concerns the long-time dynamics of the equation. 
The qualitative behavior of solutions of the Kirchhoff equation over long-time scales is poorly understood, even for small, compactly Fourier supported initial data, which obey to finite dimensional systems. 

% \paragraph{Stable and unstable dynamics.}
Broadly speaking, for the dynamics of small data we can look for two different types of regimes: 
\begin{itemize}
\item Stable regime: this is the case in which the long-time behavior of Fourier modes 
resembles the dynamics of the linearized equation, namely the energy of the modes 
remains almost constant over long time. We mention \cite{Baldi.Haus.SIAM} where 
stable motions of equation \eqref{K Om} are obtained 
for a suitable set of nonresonant initial data, 
for which  the effect of the resonant terms of degree five remain small 
on a longer timescale of order $\e^{-6}$.  
We also mention \cite{Baldi.2009} and \cite{Corsi.Montalto.2018},
where the existence of invariant tori is proved for a forced version of \eqref{K Om}.

\item  Unstable regime: in this case the nonlinear terms lead to a new type of dynamics, 
very different from the linear one. 
Of particular interest % importance 
is understanding how the nonlinear effects create exchanges of energy among different modes. 
%Even more, when these exchangings occur among modes of characteristically different scale 
%this phenomenon is known as energy cascade phenomenon.
%, which is central in wave turbulence theory, and which can be detected by mean of growth of Sobolev norms.
% \cite{Bourgain00b}. 
\end{itemize}

Concerning the unstable regime, 
some remarkable results in literature regard the ``energy cascade'' 
for nonlinear Schr\"odinger equations, % which is a dynamics regime 
where the energy travels from 
%
%The question can be addressed from at least two points of view: 
%on the one hand, one would aim to describe the different qualitative energy exchange behaviors; 
%on the other hand, one could take a more quantitative approach on how the energy travels from 
low to high modes (or vice versa), 
in strong connection with the weak turbulence theory. 
Such phenomenon, which can be measured in terms of an arbitrarily large growth of Sobolev norms, 
was considered by Bourgain one of the most important problems in Hamiltonian PDEs, % in the XXI century 
see \cite{Bourgain00b}, 
and also \cite{CKSTT.2010,  GHHMP.2018, Guardia.Haus.Procesi.2016, 
Guardia.Kaloshin.2015, HaniPTV15}.

In the unstable regime, other interesting dynamical behaviors 
are also based on the mechanism of energy exchange among Fourier modes.
Such exchanges can be recurrent (i.e., periodic or quasi-periodic in time) or chaotic. 
Recurrent energy exchanges are obtained, for instance, in 
\cite{GPT, Grebert.Thomann.2012, GrebertV11, Haus.Procesi.CMP.2017, HT}. 
To the best of our knowledge, the only paper in literature 
in which chaotic exchanges of energy are constructed for PDEs is \cite{GGMP}. 
In \cite{GGMP} the authors consider cubic wave and beam equations 
and prove the existence of solutions essentially Fourier supported 
on a finite number of resonant modes that exchange energy among themselves in a chaotic way. 
The chaoticity reflects in the fact that it is possible 
to provide energy exchanges among modes at a sequence of prescribed times 
(randomness of exchanging time) 
or among modes belonging to a prescribed resonant tuple (randomness of active and inactive modes). 

Both in \cite{GGMP} and in the present paper 
the existence of chaotic solutions 
is due to the presence of chaotic dynamics 
for the normal form of the equation, up to a certain degree. 
More precisely, the normalized system 
leaves invariant a finite dimensional subspace; 
then the chaotic behavior arises 
from the existence of a Smale horseshoe, 
which gives rise to symbolic dynamics.
The orbits of the normalized system are globally defined in time, 
and the chaotic behavior is displayed for an infinitely long time. 
However, this does not imply the existence of chaotic solutions 
of the full PDE for an infinitely long time.
The chaotic behavior for the full PDE is obtained by proving 
the vicinity of certain solutions of it 
to the chaotic orbits of the normalized system, 
and this approximation only holds over a long, but finite, time interval. 
We call this behavior \emph{effective chaoticity}, 
in the sense that the dynamics behaves as chaotic in rather long time scales 
(in analogy to the stability over long time scales, 
often called \emph{effective stability} in Hamiltonian dynamics).

\subsection{Main result}

%\emph{Notation}. 
We denote by $\N_0$ the set $\{ 0, 1, 2, \ldots \}$ of nonnegative integers.
The next theorem, which shows the existence of solutions of the Kirchhoff equation displaying chaotic-like, small amplitude, oscillations in the Sobolev norms,  is the main result of the paper.

\begin{theorem} \label{thm:short}
There exist universal positive constants $M, \tau, \e_*, C, r_0, b, K, K_0$ 
with the following property. 
Let $d \geq 1$. 
For every sequence $(m_j)_{j \in \N_0} = (m_0, m_1, m_2, \ldots)$ 
of integers such that $m_j \geq M$ for all $j \in \N_0$, 
there exists a sequence 
\[
0 = s_0 < \bar s_0 < s_1 < \bar s_1 < s_2 \leq \bar s_2 < \ldots, 
\quad \ 
s_{j+1} = s_j + \tau (m_j + \theta_j), 
\quad \ 
0 \leq \theta_j < 1,
\]
such that for every $\e \in (0, \e_*]$ 
there exists a solution $u(t,x)$ of the Kirchhoff equation \eqref{K Om} on $\T^d$,
global in time, with finite Fourier support, 
whose norm 
\[
\mN(t) := \Big( \| u(t) \|_{H^{\frac32}(\T^d)}^2 
+ \| \pa_t u(t) \|_{H^{\frac12}(\T^d)}^2 \Big)^{\frac12}
\]
satisfies 
\[
\mN(t) \leq C \e \quad \ \forall t \in \R
\]
and it oscillates around the central value 
$A_\e := \e + \e^2 r_0$ 
with oscillations described in terms of 
the amplitude $B_\e := \e^2 r_0$ 
and the error $\delta_\e := \frac{1}{10} B_\e$
as
\begin{alignat*}{3}
- \delta_\e 
& \leq \mN(t) - A_\e 
\leq B_\e + \delta_\e 
\quad && \forall t \in I_j = [t_j, \bar t_j],
\qquad & 
\max_{t \in I_j} \mN(t) - A_\e
& \geq B_\e - \delta_\e,
\notag \\
- B_\e - \delta_\e 
& \leq \mN(t) - A_\e
\leq \delta_\e 
\quad && \forall t\in E_j = [\bar t_j, t_{j+1}],
\qquad & 
\min_{t \in E_j} \mN(t) - A_\e
& \leq - B_\e + \delta_\e,
%& \max_{t \in I_j^*} \mN(t) - A_\e
%\geq B_\e - \delta_\e,
%\qquad 
%\min_{t \in E_j^*} \mN(t) - A_\e
%\leq - B_\e + \delta_\e,
\end{alignat*}
where 
\[
%I_j^* = [t_j, \bar t_j], \quad \ 
%E_j^* = [\bar t_j, t_{j+1}], \quad \ 
t_j = \frac{s_j}{b \e^3}, \quad \ 
\bar t_j = \frac{\bar s_j}{b \e^3},
\]
for all intervals $I_j, E_j$ contained in the time interval $[0, T_\e]$, where 
\[
T_\e = K \e^{-3} \log(\e^{-1}).
\] 
One has $I_j, E_j \subset [0, T_\e]$ 
for all $j = 0, \ldots, N$, where the integer $N$ satisfies 
\[
\sum_{j=0}^N m_j \leq K_0 \log(\e^{-1}). 
\]
\end{theorem}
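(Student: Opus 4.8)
The plan is to realize the chaotic oscillations of the Sobolev norm as a shadow of chaotic dynamics in a finite-dimensional effective system obtained via normal form reduction. First I would perform a quasilinear normal form transformation up to order five, relying on the analysis of \cite{Baldi.Haus.JDDE}: the cubic resonant terms give no contribution to the energy (so the first order dynamics is ``essentially stable'', as mentioned in the introduction), hence the leading non-trivial effect comes from the resonant quintic terms. Restricting to a carefully chosen finite set $S$ of Fourier modes, the normalized Hamiltonian leaves invariant the subspace of functions Fourier-supported on $S$, and on this subspace it reduces to a finite-dimensional Hamiltonian system. After symplectic reductions exploiting the conserved quantities (momentum, mass, and the structure of the resonant set), this effective system should take the form of two weakly coupled pendulums, as announced in the abstract.

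The second step is the dynamical analysis of this effective two-pendulum system. Near the separatrices of the uncoupled pendulums one expects a Smale horseshoe: the coupling creates transverse homoclinic/heteroclinic intersections, and a standard Birkhoff--Smale / lambda-lemma argument produces symbolic dynamics. The key quantitative point is that one can prescribe the ``itinerary'': the time spent near each fixed point before the next excursion along the separatrix can be made to match any prescribed sequence $(m_j)$ of (large) integers, with the normalization $s_{j+1} = s_j + \tau(m_j + \theta_j)$ reflecting that one full pendulum swing takes a fixed time $\sim\tau$ while the dwelling time near the hyperbolic equilibrium is a multiple of it. Tracking the value of the pendulum energy (equivalently, the relevant action/mode amplitude) along such an orbit gives the oscillation pattern of $\mN(t)$: on the intervals $I_j$ the norm is near the ``high'' value $A_\e + B_\e$ and on $E_j$ near the ``low'' value $A_\e - B_\e$, with the slow time $s = b\e^3 t$ accounting for the $\e^{-3}$ timescale.

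The third step is the approximation argument: one must show that a genuine solution of the full Kirchhoff equation \eqref{K Om}, with finite Fourier support and initial data $\e$-close to a chaotic orbit of the effective system, stays close to that orbit long enough. This is a Gronwall-type estimate controlling the error between the true flow and the normalized flow; the normal form transformation is $\e$-close to the identity, and the neglected remainder (of order seven, and the non-resonant part already removed) drives the error. Because the effective dynamics evolves on the slow timescale $\e^{-3}$ and the error grows like $\e \times (\text{elapsed slow time})$ times exponential factors from the hyperbolicity of the pendulums, one can close the estimate on a time interval of length $T_\e = K\e^{-3}\log(\e^{-1})$ — the logarithm is exactly the budget one gets before the error of size $\e$ amplified by hyperbolic expansion reaches order one. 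This also explains the constraint $\sum_{j=0}^N m_j \leq K_0 \log(\e^{-1})$: each symbol costs a fixed amount of slow time, and only $O(\log(\e^{-1}))$ symbols fit before the shadowing breaks down. Finally one checks that the finite Fourier support is preserved exactly by the full equation (the Kirchhoff nonlinearity, being $-\Delta u \cdot (1 + \int|\nabla u|^2)$, maps the span of a mode-closed set into itself only if the set is chosen closed under the relevant interactions — so $S$ must be taken with this in mind), giving global-in-time existence of the constructed solution.

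The main obstacle I anticipate is the second order normal form computation combined with the reduction to the two-pendulum model: one must identify a resonant set $S$ for which the quintic resonant Hamiltonian, after using all symmetries, genuinely decouples (to leading order) into two pendulums with a small, controlled coupling, and for which the coupling is nonetheless strong enough to generate transverse intersections and hence a horseshoe. Verifying non-degeneracy (transversality of the invariant manifolds, twist/hyperbolicity conditions, and the precise matching of dwelling times to the prescribed itinerary) is delicate, and making the constants $M,\tau,r_0,b,\ldots$ truly universal (independent of $d$ and of the sequence $(m_j)$) requires that the finite-dimensional model itself be $d$-independent, which should follow from choosing $S$ inside a fixed two-dimensional sublattice but must be checked carefully.
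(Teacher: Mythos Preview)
Your outline matches the paper's strategy closely: quasilinear normal form to order five, restriction to a finite resonant set, reduction to two weakly coupled pendulums, Melnikov/Smale horseshoe for symbolic dynamics, and a Gronwall approximation on the timescale $\e^{-3}\log(\e^{-1})$. Two specific mechanisms you leave vague are, however, the heart of the construction and not easily guessed.

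First, the small coupling between the pendulums is \emph{not} intrinsic to the quintic resonant Hamiltonian; it is manufactured by the choice of the Fourier support. The paper takes two overlapping resonant triplets $\alpha_1+\alpha_2=\alpha_3$, $\alpha_2+\alpha_3=\alpha_4$ with $\alpha_1=m$, $\alpha_2=m+p$, $\alpha_3=2m+p$, $\alpha_4=3m+2p$, and the perturbation parameter is $\sigma=m/p$. The off-diagonal entry of the twist matrix is $\alpha_3^2-\alpha_2^2=3m^2+2mp$, which is $O(\sigma)$ relative to the diagonal entries $\sim p^2$; this is what makes the system nearly integrable. Your suggestion of choosing $S$ in a two-dimensional sublattice to get $d$-independence is off target: dimension-independence comes instead from working with the ``macroscopic'' sphere-summed variables $S_\lambda=\sum_{|k|=\lambda}|u_k|^2$ and $B_\lambda=\sum_{|k|=\lambda}u_ku_{-k}$, which see only $|k|$.

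Second, the unperturbed object around which the horseshoe is built is not the product of two separatrices but a \emph{libration} of the first pendulum at some energy $a\in(0,2)$ times the \emph{saddle} of the second; this gives a partially hyperbolic periodic orbit whose stable/unstable manifolds are shown to split transversally via an explicit Melnikov integral (with a dominated-convergence argument as $a\to 2$ to ensure nondegeneracy). The symbolic dynamics then encodes return times to a section transverse to the second pendulum's separatrix, which is exactly what produces the prescribed sequence $(m_j)$.
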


In other words, Theorem \ref{thm:short} says that, 
around the equilibrium $ u = 0$, 
the Kirchhoff equation possesses solutions 
whose norm $\mN(t)$
exhibits oscillations 
%between two values 
that follow any prescribed sequence of times 
on the time interval $[0, T_\e]$, 
and the number $N$ of oscillations within that interval, 
or more generally the sum of the time lengths of the oscillations, 
is arbitrarily large for $\e$ small enough. 
These oscillations can also be seen as a {\em chaotic-like modulation} of a stable motion, 
meaning that the oscillating solutions are of size $\e$, they are $\e^2$-close to effectively stable solutions (over long time scales), but they exhibit chaotic-like exchanges of size $\e^2$ between the amplitude  of different Fourier modes.

\begin{remark} 
\label{rem:support} 
The solution $u(t,x)$ in Theorem \ref{thm:short} is Fourier supported 
on the set $\{ k \in \Z^d : |k| \in \{ \a_1, \a_2, \a_3, \a_4 \} \}$, 
where 
\[
\a_1 = m, \quad \ 
\a_2 = m + p, \quad \ 
\a_3 = 2m + p, \quad \ 
\a_4 = 3m + 2p,
\]
and $m,p$ are integers with $2 \leq m < p$ and ratio $\s = m/p \leq \s_*$, 
where $\s_*$ is a universal constant.

In fact, the ratio $\s$ is the perturbation parameter we use in the entire construction.
In principle, the constant $M$ in Theorem \ref{thm:short} 
depends on the ratio $\s = m/p$ and it is of the order $M \sim \log(\s^{-1})$, 
see \eqref{log.M.0}.
Theorem \ref{thm:short} is stated after fixing $m,p$ with $m=2$ and 
$p$ the minimum integer such that $p > 2$ and $2/p \leq \s_*$.
\end{remark}

\begin{remark}
\label{rem:norm.choice}
In Theorem \ref{thm:short} the Sobolev norm $\mN(t)$ is used 
to describe the transfer of energy between Fourier modes, 
because $H^{\frac32}(\T^d) \times H^{\frac12}(\T^d)$ 
is the space of the standard local wellposedness for the Kirchhoff equation. 
Since the solution $u(t,x)$ in Theorem \ref{thm:short} 
has a fixed, finite Fourier support for all times, 
all the Sobolev norms of $(u, \pa_t u)$ are equivalent,  
and all are equally able to describe the chaotic transfer of energy among the Fourier modes 
--- all except the norm of the energy space $H^1(\T^d) \times L^2(\T^d)$, 
which corresponds to a conserved quantity of the 
approximating system that we use in the construction;
see Remark \ref{rem:norm.12.prime.int}.
%\bcr 
%namely the exchanges in amplitude of the superactions 
%$S_n^u(t)$, $n=1,2,3,4$, as time evolves. 
%Except the $H^{\frac12}$ norm, 
%which is constant in time for the solutions 
%of the approximating system \eqref{syst.xi.eta.fully.normalized2}
%given by Lemma \ref{lemma:back.from.xi.eta.to.6.eq} (see Remark \ref{norm.12.prime.int}), 
%any other $H^s$ norms could be used to capture the transfer of energy 
%among the superactions. 
%We decide to use the $H^1$ norm 
%because $H^1$ here corresponds to the space 
%$H^{\frac32} \times H^{\frac12}$ for the ``physical variables'', 
%which is the space of the standard local wellposedness for the Kirchhoff equation. 
%\ec
\end{remark}

\begin{remark}
\label{rem:1/10}
The factor $1/10$ in the definition of $\d_\e$ in Theorem \ref{thm:short} 
comes from an arbitrary choice. 
We could replace 1/10 by any other positive number; 
in that case, the constants $M, \e_*, C, r_0, b, K, K_0$ must be chosen accordingly.
\end{remark}

\begin{remark}
\label{rem:neg.times}
For simplicity, Theorem \ref{thm:short} and its proof are stated 
entirely in terms of nonnegative times. 
However, with only minor changes, one proves that the result holds 
over the time interval $[- T_\e, T_\e]$.
\end{remark}

\begin{remark} 
\label{rem:up.down}
Adapting the formulation of the symbolic dynamics for the approximating system 
(see Proposition \ref{prop:Smale}), 
one can prove an alternative version of Theorem \ref{thm:short}, 
where the prescribed random behavior of the norm $\mN(t)$ 
is not only given by the sequence of the time lengths of its oscillations, 
but also by any sequence $(a_0, a_1, a_2, \ldots)$ with $a_j \in \{ 0, 1\}$
prescribing the ordered sequence of ``up'' and ``down'' movements of $\mN(t)$.
In that case, $\mN(t)$ still makes oscillations of order $\e^2$  
around a central value of order $\e$, varying in a range, say, $[\ell_\e, h_\e]$; 
the difference with respect to Theorem \ref{thm:short} is that, 
in the $j$-th time interval, 
$\mN(t)$ get close to the low value $\ell_\e$, 
and it remains in the slightly enlarged lower half of the range, 
if $a_j = 0$, 
while $\mN(t)$ get close to the high value $h_\e$,
and it remains in the slightly enlarged upper half of the range, 
if $a_j = 1$.

In other words, around the equilibrium $u = 0$, 
the Kirchhoff equation possesses solutions 
whose norm $\mN(t)$ exhibits oscillations 
%between two values 
that follow any prescribed sequence of ``up'' and ``down''
on the time interval $[0, T_\e]$. 
\end{remark}

\begin{remark}
\label{rem:smaller.shorter}
The result in \cite{Baldi.Haus.Nonlin} shows that there are \emph{no} transfers 
of energy of size $\e$ between Fourier spheres in a time interval of length $\e^{-4}$. 
This could make one think that, on such a time scale, 
between Fourier spheres there are no energy transfers at all. 
Theorem \ref{thm:short} shows that this is not true; 
in particular, it proves the existence of chaotic transfers of energy 
of smaller size on a shorter time scale, i.e., 
transfers of size $\e^2$ on a time scale $\e^{-3} \log(\e^{-1})$.
\end{remark}

\subsection{Main ideas of the proof} % and comparison with a previous result} % \cite{GGMP}}
The main steps of the proof of Theorem \ref{thm:short} can be summarized as follows:
\begin{enumerate}
\item Derive an effective resonant model for small solutions of \eqref{K Om}. 
This reduced system is obtained by using normal form arguments and introducing 
some ``macroscopic'' variables describing the collective behavior of Fourier frequencies
with the same modulus.
 %and considering initial data supported on a finite number of modes.

\item 
Show that, choosing carefully a finite set of Fourier frequencies, 
one can make the effective system nearly integrable. 

\item Prove the existence of chaotic dynamics (a Smale horseshoe) for the effective system.

\item Show that certain solutions of the Kirchhoff equation \eqref{K Om} follow closely those 
in the Smale horseshoe of the effective system for a sufficiently long time interval. 
 \end{enumerate}

%We devote the rest of this section to  explain how to tackle each of the aforementioned steps of the proof.

The effective system is obtained with a %second order 
normal form analysis.
To this end, in Section \ref{sec:effectivedyn}, we perform two steps of 
\emph{quasilinear normal form} (following \cite{Baldi.Haus.Nonlin,Baldi.Haus.JDDE}), 
and introduce a set of special variables, 
found in \cite{Baldi.Haus.JDDE}, 
which allow to reduce the dimension of the problem 
without losing information on the time evolution 
of the Sobolev norm of the solution. 
%which allow to study analytically 
%the dynamics of higher order terms.  
%Some of these variables describe the time-evolution of the ``superactions'', 
%which are the sum of the actions of the modes in $\mathbb{Z}^d$ with a given modulus. 
Thus, the resulting reduced model can be seen as a ``macroscopic'' effective system, 
where we do not distinguish the evolution of the energy of each single Fourier mode. 
% and compare them to those in \cite{GGMP}. 
% Let us first point out the main differences between the Kirchoff equation compared to the equations considered in \cite{GGMP}. 

The reduction to a finite dimensional effective system is done in Section \ref{sec:2dofHS}. 
We restrict the Fourier support to two coupled resonant triplets. 
The space of functions supported on these modes is invariant for equation \eqref{K Om},
thanks to the particular form of its nonlinearity. 
Relying on symmetries of the problem, we are able to further reduce the model to obtain 
a four dimensional system.

The next step is to construct chaotic motions for such a system. 
Since we rely on perturbative techniques, we want the system to be nearly integrable;  
this is obtained by choosing resonant triplets with Fourier modes 
as explained in Remark \ref{rem:support}.
%In Section \ref{sec:2dofHS}, we consider with modes 
%of the form explained in Remark \ref{rem:support}, which makes this model nearly integrable. 
In particular, the system behaves as a pair of weakly coupled pendulums. 
%(and it is rather different from the effective model obtained in \cite{GGMP}). 

Then, in Section \ref{sec:chaos.2.pendulums}, 
we apply the classical Poincar\'e-Melnikov theory \cite{Melnikov63}  
to prove that the system has a hyperbolic periodic orbit with transverse homoclinic orbits. 
By the classical Smale-Birkhoff Theorem, this implies the existence of a Smale horseshoe, 
which is a hyperbolic invariant set with symbolic dynamics.
Note that the set is invariant 
and therefore one can describe the dynamics of its orbits \emph{for all times}.

Finally, it remains to translate the dynamics of the effective system 
to the original equation \eqref{K Om}. 
We prove that there exist solutions of the full PDE that follow closely those of the effective system. 
Even if the approximation argument is done through a Gronwall estimate (see Section \ref{sec:gronwall}),
this is a rather delicate procedure.
%, which  considerably more intrincate to that of \cite{GGMP}. 
Indeed, since we have performed several reductions, rescalings, and two steps of normal form, 
we have to ensure that the solutions of the Kirchhoff equation 
shadowing those of the effective system satisfy all the required constraints 
over a sufficiently long time scale. 
This final part of the proof is done in Sections \ref{sec:back} and \ref{sec:gronwall}.

\bigskip

The general strategy of the proof is similar to the one 
developed in \cite{GGMP} for the cubic wave and beam equations. 
The proof of Theorem \ref{thm:short}, however, 
is based on a higher order normal form analysis, which is needed to consider systems 
which are integrable at first order. 
This is the typical situation for PDEs on one-dimensional spatial domains. 
Indeed, resonant Hamiltonian monomials of low degree, which provide the dominant dynamics 
close to the origin, usually do not change drastically the Fourier actions (and so, the Sobolev norms). 
Main examples are given by the KdV, Klein-Gordon and Schr\"odinger equations 
and pure gravity water waves equation in infinite depth, 
under Dirichlet or periodic boundary conditions. 
This is somewhat the case also for equation \eqref{K Om}, 
even if the spatial domain is the torus $\T^d$ of any dimension $d \geq 1$,
and even if the integrability property of the equation at the cubic order 
only holds for the macroscopic variables. % and not necessarily for the full Kirchhoff equation. 
This makes the implementation of the above strategy rather delicate. 
One of the issues in performing this kind of analysis is that interesting instability phenomena 
only occur after a longer time.

Another relevant difference with respect to the equations considered in \cite{GGMP} 
is that equation \eqref{K Om} is quasilinear, 
namely the nonlinearity contains derivatives of the same order  
as the linear part. This fact is not trivial, because, 
even if one is able to construct normal form transformations 
for the quasilinear equation \eqref{K Om} (as done in \cite{Baldi.Haus.Nonlin, Baldi.Haus.JDDE}), 
here one has to be able to provide a result of approximation 
between the effective model and the full PDE for a long-time scale. 
This requires to consider an equation for the difference 
of a special orbit of the effective system and a solution of \eqref{K Om}. 
This equation is quasilinear itself and presents a time-dependent linear part. 
For such equation one has to provide a result of long-time stability.

Another difference with respect to \cite{GGMP} 
regards a quantitative aspect in the energy exchange between Fourier frequencies: 
in \cite{GGMP} a large portion of the energy transfers between Fourier frequencies 
having similar modulus; here, on the contrary, a very small portion of the energy 
transfers between Fourier frequencies of modulus $\a_1, \a_2, \a_3, \a_4$ 
(see Remark \ref{rem:support}), where $\a_2, \a_3, \a_4$ are much larger than $\a_1$.

\subsection*{Acknowledgments}

P.B. and E.H. are supported by the Italian Project 
PRIN 2020XB3EFL \emph{Hamiltonian and dispersive PDEs}.
F.G. and E.H. have received funding from INdAM-GNAMPA, 
Project CUP\_E55F22 000270001.
M.G. is supported by the European Research Council (ERC) 
under the European Union's Horizon 2020 research and innovation programme 
(grant agreement No. 757802). 
M.G. is also supported by the Catalan Institution for Research and Advanced Studies 
via an ICREA Academia Prize 2019. 
This work is also supported by the Spanish State Research Agency, 
through the Severo Ochoa and Mar\'ia de Maeztu Program for Centers and Units of Excellence 
in R\&D (CEX2020-001084-M).

\section{Effective dynamics for the Kirchhoff equation}\label{sec:effectivedyn}

In this section we recall how the ``macroscopic'' quantities 
$S_\lm, B_\lm$ in \eqref{def.SB} are derived from the Kirchhoff equation \eqref{K Om},
starting with the normal form procedure. 

%that some relevant aspects of the dynamics 
%of the Kirchhoff equation \eqref{K Om} 
%can be described in terms of the ``macroscopic'' quantities 
%$S_\lm, B_\lm$ in \eqref{def.SB}.   

\subsection{A quasilinear partial normal form}
\label{sec:second step}

Written as a first order evolution equation, \eqref{K Om} becomes 
\begin{equation} \label{p1}
\begin{cases} 
\pa_t u = v, \\ 
\pa_t v = \big( 1 + \int_{\T^d} |\gr u|^2 dx \big) \Delta u.
\end{cases}
\end{equation}
It is proved in \cite{Baldi.Haus.Nonlin} and \cite{Baldi.Haus.JDDE} 
(see also the shorter, unified description in the Appendix A of \cite{Baldi.Haus.SIAM})
that system \eqref{p1} can be transformed, 
after two steps of a quasilinear, partial normal form procedure, 
into another system, where the cubic and the quintic terms are in normal form
(up to harmless terms that do not contribute to energy estimates). 
More precisely, it is proved that, 
renaming $(\tilde u, \tilde v)$ the original, ``physical'' variables of system \eqref{p1},  
with the change of variable $(\tilde u, \tilde v) = \Phi(u,v)$,
%(where the transformation $\Phi$ is constructed explicitly 
%in \cite{Baldi.Haus.Nonlin}, \cite{Baldi.Haus.JDDE}),
system \eqref{p1} becomes 
\begin{equation} \label{3101.16}
\pa_t (u,v) = W(u,v),
\end{equation}
where 
\begin{equation} \label{W decomp}
W(u,v) = (1 + \mP(u,v)) % \big( 1 + \mP(\Phi^{(5)}(u,v)) \big) 
\big( \mD_1(u,v) + \mZ_3(u,v) + \mZ_5(u,v) \big) 
 + W_{\geq 7}(u,v).
\end{equation}
The unknown $(u,v)$ for the transformed system \eqref{3101.16} 
is a pair of complex conjugate functions with zero average over $\T^d$.  
The term $(1 + \mP(u,v))$ is a scalar multiplicative factor, close to $1$,
depending on $(u,v)$, and it is a function of time, 
independent of the space variable $x$.
%(note that $\mP_2(u,v)$ is the term that in \cite{Baldi.Haus.JDDE}
%is denoted by $\mP(\Phi^{(5)}(u,v))$). 
%\[
%%\mP(w,z) := \sqrt{ 1 + 2 P(\Phi^{(4)}(w,z)) } \, - 1,
%\mP(\Phi^{(5)}(u,v)) := \sqrt{ 1 + 2 P(\Phi^{(4)} (\Phi^{(5)}(u,v)) ) } \, - 1,
%\]
%where $P$ was defined in Lemma \ref{lemma:Phi3 inv}; 
Also, $\mD_1$ is the linear operator 
%\[
%\mD_1 (u,v) := (- i \Lm u,  i \Lm v), 
%\quad \text{i.e.} \quad 
%\mD_1 = \begin{pmatrix} - i \Lm & 0 \\ 
%0 & i \Lm \end{pmatrix},
%\]
$\mD_1 (u,v) := (- i |D_x| u,  i |D_x| v)$,
where $|D_x|$ is the Fourier multiplier
$e^{ik \cdot x} \mapsto |k| e^{ik \cdot x}$, $k \in \Z^d$.
Next, $\mZ_3$ is the cubic resonant operator 
$\mZ_3(u,v) = ((\mZ_3)_1(u,v) , (\mZ_3)_2(u,v))$
with components
\begin{align*}
%\mZ_3(u,v) 
%& = \begin{pmatrix} (\mZ_3)_1(u,v) \\ (\mZ_3)_2(u,v) \end{pmatrix},
%\\
(\mZ_3)_1(u,v) 
:= - \frac{i}{4} \sum_{\begin{subarray}{c} j,k \in \Z^d \setminus\{0\} \\ |j| = |k| \end{subarray}} 
u_j u_{-j} |j|^2 v_k e^{ik \cdot x},
\quad \ 
(\mZ_3)_2(u,v) 
:= \frac{i}{4} \sum_{\begin{subarray}{c} j,k \in \Z^d \setminus\{0\} \\ |j| = |k| \end{subarray}}
v_j v_{-j} |j|^2 u_k e^{ik \cdot x},
\end{align*}
where $u_j, v_j$ are the Fourier coefficients of $u,v$.
%Moreover, the 
The entire term 
%$(1 + \mP(\Phi^{(5)}(u,v))) (\mD_1(u,v) + X_3^+(u,v))$ 
$(1 + \mP(u,v)) (\mD_1(u,v) + \mZ_3(u,v))$ 
gives no contribution to the energy estimates.  
The term $\mZ_5$ is the resonant quintic operator 
$
\mZ_5(u,v) = 
((\mZ_5)_1(u,v) , (\mZ_5)_2(u,v)),
$
where 
\begin{align}
(\mZ_5)_1(u,v) 
& = \frac{i}{32} \sum_{\begin{subarray}{c} j,\ell,k \in \Z^d \setminus\{0\} \\ |j| = |\ell| \end{subarray}} 
u_j u_{-j} v_\ell v_{-\ell} u_k e^{ik \cdot x} 
|j|^2 |\ell|^2 
\Big( \frac{1}{|j|+|k|} - \frac{(1 - \d_{|\ell|}^{|k|})}{|\ell| - |k|} \Big)
\notag \\ & \quad 
+ \frac{3 i}{32} 
\sum_{\begin{subarray}{c} j,\ell,k \in \Z^d \setminus\{0\} \\ |k| = |j| + |\ell| \end{subarray}} 
u_j u_{-j} u_\ell u_{-\ell} v_k e^{ik \cdot x} 
|j| |\ell| |k|
\notag \\ & \quad 
+ \frac{i}{16}  \sum_{\begin{subarray}{c} j,\ell,k \in \Z^d \setminus\{0\} \\ |j| = |k| \end{subarray}} 
u_j u_{-j} u_\ell v_{-\ell} v_k e^{ik \cdot x} 
|j|^2 |\ell|
\Big( 6 + \frac{|\ell|}{|\ell| + |j|}
+ \frac{|\ell| (1 - \d_{|\ell|}^{|j|}) }{|\ell| - |j|} \Big)
\notag \\ & \quad 
+ \frac{3 i}{16} \sum_{\begin{subarray}{c} j,\ell,k \in \Z^d \setminus\{0\} \\ |k| = |j| - |\ell| \end{subarray}} 
u_j u_{-j} v_\ell v_{-\ell} v_k e^{ik \cdot x} |j| |\ell| |k|,
\label{W5 comp 1 after NF}
\end{align}
and $(\mZ_5)_2 (u,v)$ is obtained from $(\mZ_5)_1 (u,v)$ by complex conjugation.
For the coefficients in \eqref{W5 comp 1 after NF} 
%$\d$ is the usual Kronecker delta, and 
we adopt the convention that $\frac00=0$,
where $\d$ is the usual Kronecker delta.
%\begin{align}
%(W_5)_2(u,v) 
%& = - \frac{i}{32} \sum_{\begin{subarray}{c} j,\ell,k \\ |j| = |\ell| \end{subarray}} 
%v_j v_{-j} u_\ell u_{-\ell} v_k e^{ik \cdot x} 
%|j|^2 |\ell|^2 
%\Big( \frac{1}{|j|+|k|} - \frac{(1 - \d_{|\ell|}^{|k|})}{|\ell| - |k|} \Big)
%\notag \\ & \quad 
%- \frac{3 i}{32} 
%\sum_{\begin{subarray}{c} j,\ell,k \\ |k| = |j| + |\ell| \end{subarray}} 
%v_j v_{-j} v_\ell v_{-\ell} u_k e^{ik \cdot x} 
%|j| |\ell| |k|
%\notag \\ & \quad 
%- \frac{i}{16}  \sum_{\begin{subarray}{c} j,\ell,k \\ |j| = |k| \end{subarray}} 
%v_j v_{-j} v_\ell u_{-\ell} u_k e^{ik \cdot x} 
%|j|^2 |\ell|
%\Big( 6 + \frac{|\ell|}{|\ell| + |j|}
%+ \frac{|\ell| (1 - \d_{|\ell|}^{|j|}) }{|\ell| - |j|} \Big)
%\notag \\ & \quad 
%- \frac{3 i}{16} \sum_{\begin{subarray}{c} j,\ell,k \\ |k| = |j| - |\ell| \end{subarray}} 
%v_j v_{-j} u_\ell u_{-\ell} u_k e^{ik \cdot x} |j| |\ell| |k|.
%\label{W5 comp 2 after NF}
%\end{align}
The term $W_{\geq 7}(u,v)$ in \eqref{W decomp} 
contains only terms of homogeneity at least 7 in $(u,v)$, 
and it is estimated in \cite{Baldi.Haus.JDDE} and \cite{Baldi.Haus.SIAM}.
%and it satisfies the following bound. 
%Let $m_1 := 1$ if $d = 1$, and $m_1 := 2$ if $d \geq 2$. 

The map $\Phi$ that transforms \eqref{p1} into \eqref{3101.16} 
is obtained by composition, and it is  
\begin{equation} \label{def.Phi}
\Phi = \Phi_1 \circ \Phi_2 \circ \Phi_3 \circ \Phi_4 \circ \Phi_5.
\end{equation}
The maps $\Phi_1$ and $\Phi_2$ are simply 
the linear operators that symmetrize and diagonalize 
the linear part of system \eqref{p1} (i.e., the linear wave equation), 
see \eqref{f.g.q.p.tilde.u.tilde.v}. 
%These operators are not close to the identity map, 
%and they are the starting point for the standard normal form procedure. 
%
%\bcb 
%%The transformations $\Phi^{(1)}, \Phi^{(2)}$ are simply these:
%\begin{equation}  \label{f.g.q.p.tilde.u.tilde.v}
%(q,p) = \Phi^{(2)}(f,g) = \Big( \frac{f+g}{\sqrt{2}}, \, \frac{f-g}{i\sqrt{2}} \Big), 
%\qquad 
%(\tilde u, \tilde v) = \Phi^{(1)}(q,p) = (|D_x|^{-\frac12} q, |D_x|^{\frac12} p),
%\end{equation}
%where $\Lm^s$ is the Fourier multiplier 
%$\Lm^s e^{ik \cdot x} = |k|^s e^{ik \cdot x}$, $s \in \R$, 
%$k \in \Z^d \setminus \{ 0 \}$ 
%(the frequency $k=0$ can be ignored here,
%because only zero-average functions are involved). 
%\ec
The map $\Phi_3$ is a nonlinear, preparatory transformation, 
which is required because the problem is quasilinear.
The map $\Phi_4$ is the transformation of the first step of the normal form procedure, 
and $\Phi_5$ is the one of the second step.
The explicit expressions of $\Phi_3$ and $\Phi_4$ are given in \cite{Baldi.Haus.Nonlin} 
and the one of $\Phi_5$ in \cite{Baldi.Haus.JDDE}. 
Unlike $\Phi_1$ and $\Phi_2$, 
the transformations $\Phi_3, \Phi_4, \Phi_5$ are all close to the identity map. 
In the present paper we do not use the explicit formula of $\Phi_3, \Phi_4, \Phi_5$,  
but only the following properties of their composition. 

For $s \in \R$, let $H^s_0(\T^d,\C)$ be the Sobolev space of 
zero average, complex-valued functions 
\[
H^s_0(\T^d,\C) := \{ u : \T^d \to \C : u_0 = 0, \ \| u \|_s < \infty \},
\quad \ 
\| u \|_s^2 := \sum_{k \in \Z^d \setminus \{0\}} |u_k|^2 |k|^{2s},
\]
where $u_k$, $k \in \Z^d$, are the Fourier coefficients of $u$, 
and let 
\[
H^s_0(\T^d,c.c.) := \{ (u,v) : u,v \in H^s_0(\T^d,\C), \ v = \overline{u} \}
= \{ (u, \overline{u}) :  u \in H^s_0(\T^d,\C) \},
\]
where the notation ``$c.c.$'' reminds that they % the elements 
are pairs of complex conjugate functions. 
Given $d \in \N$, let
\begin{equation} \label{def.m1}
m_1 := 1 \quad \text{if } d=1; \qquad 
m_1 := 2 \quad \text{if } d \geq 2.
\end{equation}

\begin{lemma}[From Lemma 2.9 of \cite{Baldi.Haus.SIAM}]
\label{lemma:Lemma.2.9.SIAM}
There exist universal constants $\d, C > 0$ such that 
for all $(u,v) \in H^{m_1}_0(\T^d, c.c.)$ in the ball $\| u \|_{m_1} \leq \d$,
for all $k \in \Z^d$, the $k$-th Fourier coefficient $f_k = \overline{g_{-k}}$ 
of $(f,g) := (\Phi_3 \circ \Phi_4 \circ \Phi_5)(u,v)$ satisfies 
\begin{equation} \label{2.77.SIAM}
|f_k - u_k| 
\leq C \| u \|_{m_1}^2 (|u_k| + |u_{-k}|).
\end{equation}
%An analogous bound holds for the inverse transformation, namely
%\[
%|f_k - u_k| 
%\leq C \| f \|_{m_1}^2 (|f_k| + |f_{-k}|) 
%\leq |f_k| + |f_{-k}|.
%\]
\end{lemma}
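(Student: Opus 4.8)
The statement is quoted verbatim from \cite[Lemma 2.9]{Baldi.Haus.SIAM}, so one option is simply to invoke it; we outline the argument. The plan is to estimate the three near-identity maps $\Phi_3,\Phi_4,\Phi_5$ one at a time and then compose them. The key structural input is that each of these maps acts on Fourier coefficients in an essentially diagonal fashion --- it mixes only the modes $k$ and $-k$ at leading order, with a multiplicative distortion of size $O(\|u\|_{m_1}^2)$ --- and this in turn reflects the special ``scalar $\times$ single mode'' shape of the Kirchhoff nonlinearity.

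\smallskip
\noindent\emph{Step 1 (structure of the single maps).} From the explicit formulas in \cite{Baldi.Haus.Nonlin} (for the preparatory map $\Phi_3$ and the first normal form step $\Phi_4$) and in \cite{Baldi.Haus.JDDE} (for the second step $\Phi_5$), each $\Phi_i$ has, on the ball $\| w \|_{m_1} \le \d$ of $H^{m_1}_0(\T^d, c.c.)$, the form $\Phi_i(w,\ov w) = (f^{(i)}, \ov{f^{(i)}})$ with
\[
f^{(i)}_k \;=\; w_k \,+\, R^{(i)}_k(w),
\]
where $R^{(i)}_k(w)$ is a finite sum of homogeneous monomials in the Fourier coefficients of $w$ and $\ov w$ (of degree $3$ for $\Phi_3,\Phi_4$, of degree $5$ for $\Phi_5$, plus higher order tails from the flow), each monomial containing a factor $w_k$ or $w_{-k}$ (equivalently $\ov{w_{-k}}$ or $\ov{w_k}$), the remaining factors and the uniformly bounded coefficients forming a quantity controlled by $\|w\|_{m_1}^2$. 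This diagonal structure is already visible in the resonant operators $\mZ_3,\mZ_5$ displayed above (in each the $k$-th component is a scalar times $u_k$ or $v_k = \ov{u_{-k}}$); it is a feature of the whole construction, since the Kirchhoff nonlinearity is $\Delta u$ --- a pure frequency-$k$ operator applied to $u$ --- times the $x$-independent scalar $\int_{\T^d} |\gr u|^2\,dx$, and the conjugations building $\Phi_3,\Phi_4,\Phi_5$ (including the quasilinear preparation $\Phi_3$, designed precisely to tame the order-one character of that nonlinearity) preserve this shape. Hence $|R^{(i)}_k(w)| \le C \|w\|_{m_1}^2 (|w_k| + |w_{-k}|)$ for $i=3,4,5$, for $\d$ small enough.

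\smallskip
\noindent\emph{Step 2 (from vector field to map, via Gronwall; then composition).} For the parts of $\Phi_i$ realized as time-one flows $w(\tau) = \Phi^{X_i}_\tau(w)$ one first shows, by the smallness of $X_i$, that the flow stays in a ball of twice the radius, $\|w(\tau)\|_{m_1} \le 2\|w\|_{m_1}$ for $\tau \in [0,1]$; then, writing $w(\tau)_k - w_k = \int_0^\tau X_i(w(\s))_k\,d\s$ and using Step 1 at each time $\s$,
\begin{align*}
|w(\tau)_k - w_k|
&\le C\|w\|_{m_1}^2 \int_0^\tau \big( |w(\s)_k| + |w(\s)_{-k}| \big)\,d\s \\
&\le C\|w\|_{m_1}^2 \big( |w_k| + |w_{-k}| \big)
+ C\|w\|_{m_1}^2 \int_0^\tau \big( |w(\s)_k - w_k| + |w(\s)_{-k} - w_{-k}| \big)\,d\s .
\end{align*}
Adding the analogous inequality with $-k$ in place of $k$ and applying Gronwall (with $\|w\|_{m_1}^2 \le \d^2$ keeping the exponential factor bounded) gives $|\Phi_i(w)_k - w_k| \le C\|w\|_{m_1}^2 (|w_k| + |w_{-k}|)$ with a universal $C$. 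One then composes: with $w^{(5)} := \Phi_5(u,v)$, $w^{(4)} := \Phi_4(w^{(5)})$, $(f,g) := \Phi_3(w^{(4)})$, the bound for $\Phi_5$ yields $\|w^{(5)}\|_{m_1} \le 2\|u\|_{m_1}$ and $|w^{(5)}_{\pm k}| \le (1 + C\d^2)(|u_k| + |u_{-k}|)$; feeding this into the bound for $\Phi_4$, then into the bound for $\Phi_3$, and collecting the finitely many universal constants (shrinking $\d$ so all three maps are defined along the composition), one obtains $|f_k - u_k| \le C\|u\|_{m_1}^2 (|u_k| + |u_{-k}|)$, which is \eqref{2.77.SIAM}.

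\smallskip
The main obstacle is Step 1: extracting from the rather involved explicit formulas of \cite{Baldi.Haus.Nonlin,Baldi.Haus.JDDE} the fact that every correction monomial carries a $w_{\pm k}$ factor with the rest bounded by $\|w\|_{m_1}^2$ --- in particular, checking that the preparatory transformation $\Phi_3$, which exists to handle the quasilinear (order-one) part of the nonlinearity and is therefore not a naive small-divisor normal form step, neither spoils this diagonal structure nor introduces a spurious $|k|$ weight. Once Step 1 is granted, the remaining Gronwall and composition arguments are routine.
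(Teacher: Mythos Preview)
The paper gives no proof of this lemma; it is stated with the attribution ``From Lemma 2.9 of \cite{Baldi.Haus.SIAM}'' and used as a black box. Your proposal correctly identifies this and explicitly says that invoking the reference is an option, which is precisely what the paper does. The outline you add on top is a faithful sketch of the mechanism behind the result in \cite{Baldi.Haus.SIAM}: the diagonal-in-$k$ structure of each correction (a consequence of the scalar-times-$\Delta u$ shape of the Kirchhoff nonlinearity), the Gronwall control for the flow maps, and the straightforward composition. Your own flagging of Step~1 as the nontrivial point --- in particular the need to verify that the quasilinear preparatory map $\Phi_3$ does not introduce an extra $|k|$ weight --- is accurate; that verification is exactly what the detailed computations in \cite{Baldi.Haus.Nonlin,Baldi.Haus.JDDE} supply.
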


From \eqref{2.77.SIAM} it follows that 
$\| f - u \|_s \leq 2 C \| u \|_{m_1}^2 \| u \|_s$ for all $s \in \R$, 
and that the map $\Phi_3 \circ \Phi_4 \circ \Phi_5$ 
is well defined in the ball $\| u \|_{m_1} \leq \d$, with 
$\| f \|_s \leq \| u \|_s (1 + 2 C \| u \|_{m_1}^2)$, $s \in \R$. 
%In fact, $\Phi_3 \circ \Phi_4 \circ \Phi_5$ is invertible 
%on that ball. 

Inequality \eqref{2.77.SIAM} also implies the invariance of the Fourier support: 
if $u_k = u_{-k} = 0$ for some $k \in \Z^d$, then also $f_k = f_{-k} = 0$, 
and vice versa (since $C \d^2 < 1/2$).

\begin{lemma}[From Lemma 2.3 in \cite{Baldi.Haus.SIAM}] 
\label{lemma:total}
There exist universal constants $\d_1, C_1, C_0 > 0$ with the following properties. 
Let $(u_0, v_0) \in H^{m_1}_0(\T^d, c.c.)$ and 
\begin{equation} \label{ball.mitica}
\| u_0 \|_{m_1} \leq \d_1.
\end{equation}
Then the Cauchy problem of system \eqref{3101.16} with initial condition
\begin{equation} \label{initial.cond.u.v.sec.prepar}
(u(0), v(0)) = (u_0, v_0)
\end{equation} 
has a unique solution 
$(u,v) \in C([0,T_{\mathrm{NF}}], H^{m_1}_0(\T^d, c.c.))$ 
on the time interval $[0,T_{\mathrm{NF}}]$. 
The solution satisfies 
\begin{equation} \label{twice.norm.sec.prepar}
\| u(t) \|_{m_1} \leq C_1 \| u_0 \|_{m_1} \leq \d
\quad \forall t \in [0,T_{\mathrm{NF}}],
\qquad 
T_{\mathrm{NF}} = C_0 \| u_0 \|_{m_1}^{-4},
\end{equation}
where $\d$ is the constant in Lemma \ref{lemma:Lemma.2.9.SIAM}.  
%(with $T_{\mathrm{NF}} = \infty$ for $u_0 = 0$). 
As a consequence, 
for all $t \in [0, T_{\mathrm{NF}}]$ 
the solution $(u(t), v(t))$ remains in the ball 
$\| u(t) \|_{m_1} \leq \d$ where $\Phi_3 \circ \Phi_4 \circ \Phi_5$ is well defined, 
the function 
\begin{equation} \label{tilde.uv.from.uv}
(\tilde u(t), \tilde v(t)) := \Phi(u(t),v(t)) 
\end{equation}
(where $\Phi$ is the map in \eqref{def.Phi}) 
solves the original system \eqref{p1} on the time interval $[0,T_{\mathrm{NF}}]$,
and $\tilde u(t)$ solves the Kirchhoff equation \eqref{K Om} 
on $[0,T_{\mathrm{NF}}]$. 
\end{lemma}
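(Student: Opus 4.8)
The plan is to read \eqref{3101.16} as a Cauchy problem in the scale of spaces $H^{m_1}_0(\T^d,c.c.)$, establish local wellposedness there, use an a priori energy estimate together with a bootstrap to extend the solution up to an existence time of order $\|u_0\|_{m_1}^{-4}$, and finally transport the solution back to the physical variables through the change of variable $\Phi$ of \eqref{def.Phi}.

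\textbf{Local theory.} First I would note that the right-hand side $W$ in \eqref{W decomp} is quasilinear, but its only term of the same order as the linear part is $(1+\mP)\mD_1$, in which $1+\mP$ is real, close to $1$, and \emph{independent of $x$}, while $\mD_1$ acts on the first component as the skew-adjoint Fourier multiplier $-i|D_x|$; hence this term commutes with $|D_x|^{m_1}$ up to the scalar factor and produces no commutator term in the $H^{m_1}$ energy estimate. By a standard regularization/compactness argument based on that estimate (see the next point), \eqref{3101.16} is locally wellposed in $H^{m_1}_0(\T^d,c.c.)$, with a unique solution that persists as long as $\|u(t)\|_{m_1}$ stays in the ball $\|u\|_{m_1}\le\d$ of Lemma \ref{lemma:Lemma.2.9.SIAM}; the reality constraint $v=\ov u$ and the zero-average condition are preserved by the flow, since each term of $W$ preserves them. (Alternatively, one can transfer the classical local wellposedness of \eqref{p1} in $H^{\frac32}\times H^{\frac12}$, due to Dickey \cite{Dickey.1969} and Arosio--Panizzi \cite{Arosio.Panizzi.1996}, hence in $H^{m_1+\frac12}\times H^{m_1-\frac12}$, through $\Phi$, which by \eqref{2.77.SIAM} and the linearity of $\Phi_1,\Phi_2$ is a bi-Lipschitz, near-identity diffeomorphism of a ball of $H^{m_1}_0(\T^d,c.c.)$ onto a neighbourhood of the origin in $H^{m_1+\frac12}\times H^{m_1-\frac12}$.)

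\textbf{A priori estimate.} On the interval where $\|u(t)\|_{m_1}\le\d$, differentiate $E(t):=\|u(t)\|_{m_1}^2$: writing $\la u,w\ra_s:=\sum_k u_k\ov{w_k}|k|^{2s}$ one has $\tfrac{d}{dt}E=2\Re\la\pa_t u,u\ra_{m_1}$, and \eqref{W decomp} splits the right-hand side into the contributions of $(1+\mP)\mD_1$, $(1+\mP)\mZ_3$, $(1+\mP)\mZ_5$ and $W_{\ge7}$. The first two vanish identically: for $(1+\mP)\mD_1$ this is the commutation just noted, $\Re\la(1+\mP)(-i|D_x|u),u\ra_{m_1}=0$; for $(1+\mP)\mZ_3$ it is the cancellation recorded after \eqref{W5 comp 1 after NF}, which one checks directly by writing $v_k=\ov{u_{-k}}$ and $S_\lambda:=\sum_{|j|=\lambda}u_ju_{-j}|j|^2$, so that $\la(\mZ_3)_1,u\ra_{m_1}$ reduces to $-\tfrac{i}{4}\sum_\lambda|S_\lambda|^2\lambda^{2m_1-2}$, a purely imaginary number. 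The contributions of the resonant quintic operator $\mZ_5$ and of the tail $W_{\ge7}$ are bounded, using $|\mP|\le C\|u\|_{m_1}^2$ and the energy estimates of \cite{Baldi.Haus.JDDE,Baldi.Haus.SIAM}, by $C\|u\|_{m_1}^6$ and $C\|u\|_{m_1}^8$ respectively, so that $\big|\tfrac{d}{dt}E\big|\le CE^3$ on the ball. Integrating this differential inequality with $E(0)=\|u_0\|_{m_1}^2$ yields $E(t)\le 2E(0)$, i.e. $\|u(t)\|_{m_1}\le C_1\|u_0\|_{m_1}$, for all $t\le C_0\|u_0\|_{m_1}^{-4}$, with $C_0,C_1$ universal. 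Choosing $\d_1$ so small that $C_1\d_1\le\d$, the hypothesis $\|u_0\|_{m_1}\le\d_1$ forces $\|u(t)\|_{m_1}<\d$ along the whole of $[0,T_{\mathrm{NF}}]$; the continuation criterion of the local theory is therefore never triggered, and the solution extends to $C([0,T_{\mathrm{NF}}],H^{m_1}_0(\T^d,c.c.))$ with the bound \eqref{twice.norm.sec.prepar}.

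\textbf{Back to Kirchhoff, and the main obstacle.} For every $t\in[0,T_{\mathrm{NF}}]$ the solution stays in the ball $\|u(t)\|_{m_1}\le\d$, which by Lemma \ref{lemma:Lemma.2.9.SIAM} lies in the domain of $\Phi_3\circ\Phi_4\circ\Phi_5$, hence of $\Phi$. Therefore $(\tilde u(t),\tilde v(t)):=\Phi(u(t),v(t))$ in \eqref{tilde.uv.from.uv} is well defined, belongs to $H^{\frac32}\times H^{\frac12}$, and --- since $\Phi$ is a time-independent change of variables conjugating \eqref{3101.16} to \eqref{p1} and valid on this domain --- solves \eqref{p1} on $[0,T_{\mathrm{NF}}]$; consequently $\tilde u(t)$ solves \eqref{K Om} there. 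I expect the main obstacle to be the a priori estimate, and within it the bound $C\|u\|_{m_1}^6$ for the contribution of the resonant quintic term $\mZ_5$: unlike the cubic term $\mZ_3$, this term of the Kirchhoff normal form is genuinely present, it carries derivatives of the same order as the linear part, and controlling its effect on $\|u\|_{m_1}^2$ without loss of derivatives is exactly the delicate content of \cite{Baldi.Haus.JDDE,Baldi.Haus.SIAM}. Granted that input, the local theory, the bootstrap, and the transfer back to \eqref{K Om} are routine.
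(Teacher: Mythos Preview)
The present paper does not prove this lemma; it is quoted from \cite{Baldi.Haus.SIAM} (their Lemma~2.3), with the underlying normal-form estimates developed in \cite{Baldi.Haus.Nonlin,Baldi.Haus.JDDE}. Your sketch is a faithful reconstruction of that argument: the scalar, $x$-independent factor $1+\mP$ makes $(1+\mP)\mD_1$ skew-adjoint in every $H^s$; the short computation you indicate gives $\Re\la(\mZ_3)_1,u\ra_{m_1}=0$ (note only that your auxiliary quantity $\sum_{|j|=\lambda}u_ju_{-j}|j|^2$ is $\lambda^2 B_\lambda$ in the paper's notation \eqref{def.SB}, not $S_\lambda$, but the conclusion is unchanged); the degree-$\ge 5$ terms contribute $O(\|u\|_{m_1}^6)$ to $\tfrac{d}{dt}\|u\|_{m_1}^2$; and the bootstrap then yields \eqref{twice.norm.sec.prepar}. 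You also correctly flag the derivative-loss-free bound on the $\mZ_5$ contribution as the substantive input imported from \cite{Baldi.Haus.JDDE,Baldi.Haus.SIAM}; granted that, the rest is indeed routine.
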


As the notation suggests, $T_{\mathrm{NF}}$ is the existence time 
we obtain by the normal form procedure.
For more details on the map $\Phi$ and on the transformed vector field $W(u,v)$
see \cite{Baldi.Haus.Nonlin}, \cite{Baldi.Haus.JDDE}, \cite{Baldi.Haus.SIAM}.

\subsection{The effective system}

We recall the derivation of the \emph{effective system} 
(or \emph{effective equation}) 
from \cite{Baldi.Haus.JDDE}, \cite{Baldi.Haus.SIAM}. 
%(see, in particular, (2.5), (2.6), (2.16), (2.17), (2.19), (2.30), (2.37), (2.49)
%of \cite{Baldi.Haus.SIAM}).  
Let
\[
\Gamma := \{ |k| : k \in \Z^d, \ k \neq 0 \} 
\subseteq \{ \sqrt{n} : n \in \N \} 
\subset [1, \infty).
\]
%Let 
%\[
%Y := \{ (z, \overline{z}) : z \in \C \}
%\, \subset \C^2
%\] 
%be the set of the pairs of complex conjugate numbers. 
For any pair $(u,v) \in L^2(\T^d,c.c.)$ of complex conjugate functions, 
for any $\lm \in \Gamma$ we define % the ``macroscopic'' quantities
\begin{equation} \label{def.SB}
S_\lm := 
\sum_{\begin{subarray}{c} k \in \Z^d \\ |k| = \lm \end{subarray}} 
|u_k|^2 
= \sum_{\begin{subarray}{c} k \in \Z^d \\ |k| = \lm \end{subarray}} 
u_k v_{-k}, 
\qquad 
B_\lm := 
\sum_{\begin{subarray}{c} k \in \Z^d \\ |k| = \lm \end{subarray}} 
u_k u_{-k},
\end{equation}
and note that 
\[
S_\lm \geq 0, \qquad 
B_\lm \in \C, \qquad 
|B_\lm| \leq S_\lm.
\] 
%(because $|u_k| |u_{-k}| \leq \frac12 |u_k|^2 + \frac12 |u_{-k}|^2$). 
The quantity $S_\lm$ is called the ``superaction'' of $u$ on the sphere $|k| = \lm$.
Its evolution on the time interval $[0, T_{\mathrm{NF}}]$ 
remains confined between two multiples of its initial value, 
as is observed in the next lemma.

\begin{lemma}[From Lemma 2.4 of \cite{Baldi.Haus.SIAM}]
\label{lemma:Lemma.2.4.SIAM}
Let $(u_0, v_0) \in H^{m_1}_0(\T^d, c.c.)$, with $u_0$ in the ball \eqref{ball.mitica}.  
%$\| u_0 \|_{m_1} \leq \d$, with $\d$ given in Lemma \ref{lemma:total}.
Let $(u(t), v(t))$ be the solution of the Cauchy problem 
\eqref{3101.16}, \eqref{initial.cond.u.v.sec.prepar}
on the time interval $[0, T_{\mathrm{NF}}]$,
with $T_{\mathrm{NF}}$ in \eqref{twice.norm.sec.prepar}, 
given by Lemma \ref{lemma:total}. 
For every $t \in [0, T_{\mathrm{NF}}]$, 
let $S_\lm(t)$ be the sum defined in \eqref{def.SB}. 
Then 
\begin{equation} \label{S.like.exp}
C_1 S_\lm(0) \leq S_\lm(t) \leq C_2 S_\lm(0)
\end{equation}
for all $t \in [0, T_{\mathrm{NF}}]$,
for all $\lm \in \Gamma$, 
where $C_1, C_2 > 0$ are universal constants.
\end{lemma}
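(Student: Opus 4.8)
The strategy is to control the logarithmic derivative of the superaction $S_\lm(t)$ along the flow of the normalised system \eqref{3101.16}, and then close a Gronwall estimate on $[0,T_{\mathrm{NF}}]$. Since $v=\ov u$, i.e.\ $v_{-k}=\ov{u_k}$, differentiating \eqref{def.SB} gives
\[
\dot S_\lm = 2\Re \sum_{|k|=\lm} W_{1,k}(u,v)\, v_{-k},
\]
where $W_{1,k}$ is the $k$-th Fourier coefficient of the first component of the vector field $W$ in \eqref{W decomp}. I will show that the right-hand side is bounded by $C\,\| u(t)\|_{m_1}^4\, S_\lm(t)$ with $C$ universal; then, integrating $|\tfrac{d}{dt}\log S_\lm|\le C\|u\|_{m_1}^4$ and using $\|u(t)\|_{m_1}\le C_1\|u_0\|_{m_1}$ and $T_{\mathrm{NF}}=C_0\|u_0\|_{m_1}^{-4}$ from Lemma \ref{lemma:total} (so that $\int_0^{T_{\mathrm{NF}}}\|u\|_{m_1}^4\,dt$ is a universal constant), one obtains \eqref{S.like.exp}. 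The degenerate case $S_\lm(0)=0$ is either absorbed by the same differential inequality $\dot S_\lm\ge -C\|u\|_{m_1}^4 S_\lm$ with $S_\lm\ge0$, or follows directly from the invariance of the Fourier support for \eqref{3101.16} (a consequence of the corresponding property of $\Phi$ in \eqref{2.77.SIAM}).

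\textbf{Vanishing of the linear and cubic contributions.} For the diagonal part $\mD_1$ the two diagonal terms cancel: $(-i\lm u_k)v_{-k}+u_k(i\lm v_{-k})=0$ on $|k|=\lm$. For $\mZ_3$, the resonance constraint $|j|=|k|$ collapses the inner sum, so that the $k$-th coefficient of $(\mZ_3)_1$ equals $-\tfrac{i}{4}\lm^2 B_\lm v_k$ for $|k|=\lm$; hence $\sum_{|k|=\lm}(\mZ_3)_{1,k}\,v_{-k}=-\tfrac{i}{4}\lm^2 B_\lm\ov{B_\lm}=-\tfrac{i}{4}\lm^2|B_\lm|^2$ is purely imaginary and drops out under $2\Re$. (This exact cancellation is the integrability at cubic order of the macroscopic variables, and is why one has to go to quintic order for interesting dynamics.) The scalar prefactor $1+\mP$ multiplies everything and does not affect these cancellations.

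\textbf{The quintic term, the remainder, and conclusion.} The key point is the structure of \eqref{W5 comp 1 after NF}. For $|k|=\lm$, the first monomial collapses (via $|j|=|\ell|$) to $\tfrac{i}{32}u_k\sum_\mu \mu^4|B_\mu|^2\big(\tfrac{1}{\mu+\lm}-\tfrac{1-\d_\mu^\lm}{\mu-\lm}\big)$ and the third (via $|j|=|k|$) to $\tfrac{i}{16}\lm^2 B_\lm v_k\sum_\nu \nu S_\nu\big(6+\tfrac{\nu}{\nu+\lm}+\tfrac{\nu(1-\d_\nu^\lm)}{\nu-\lm}\big)$; multiplying by $v_{-k}$ and summing over $|k|=\lm$ produces, respectively, $\tfrac{i}{32}S_\lm$ and $\tfrac{i}{16}\lm^2|B_\lm|^2$ times a \emph{real} quantity, hence purely imaginary, so they vanish under $2\Re$. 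This is crucial, since the first and third monomials are exactly the ones carrying the small denominators $(\mu-\lm)^{-1}$, $(\nu-\lm)^{-1}$, which — as $\mu,\nu,\lm\in\Gamma\subseteq\{\sqrt n\}$ — are only $\gtrsim(\mu+\lm)^{-1}$, i.e.\ they behave like a gain of one derivative. The second and fourth monomials have no small denominators: using $|B_\mu|\le S_\mu$, the resonance $|k|=|j|\pm|\ell|$, and $|j|\,|\ell|\,|k|\le |j|^2|\ell|+|j||\ell|^2$, each contributes at most a universal constant times $|B_\lm|\,\big(\sum_\mu\mu^2 S_\mu\big)\big(\sum_\mu\mu S_\mu\big)\le S_\lm\|u\|_1^4\le S_\lm\|u\|_{m_1}^4$. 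Finally, the estimates of \cite{Baldi.Haus.Nonlin,Baldi.Haus.JDDE,Baldi.Haus.SIAM} give $|\mP(u,v)|\le\tfrac12$ on the ball $\|u\|_{m_1}\le\d$, so $1+\mP$ only changes the constant, and the homogeneity-$\ge7$ remainder $W_{\ge7}$ contributes $\lesssim\|u\|_{m_1}^6 S_\lm$, a lower-order term. Collecting, $|\dot S_\lm(t)|\le C\|u(t)\|_{m_1}^4 S_\lm(t)$, and the Gronwall argument of the first paragraph yields \eqref{S.like.exp} with universal constants $C_1=e^{-K}$, $C_2=e^{K}$, $K$ universal.

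\textbf{Main obstacle.} The delicate point is precisely the presence in $(\mZ_5)_1$ of resonant monomials with denominators $|\ell|-|k|$ as small as $(|\ell|+|k|)^{-1}$: a direct triangle-inequality bound on their contribution to $\dot S_\lm$ would produce an extra factor of $\lm$ and ruin the estimate. The argument works only because, after imposing the resonance relations, these monomials enter $\dot S_\lm$ as $i$ times a real quantity and hence disappear upon taking the real part — this is the structural reason why $S_\lm$ obeys the clean, loss-free two-sided bound \eqref{S.like.exp}. A secondary technical point is checking that $W_{\ge7}$ really contributes at the stated lower order, which relies on the structural estimates of the cited papers.
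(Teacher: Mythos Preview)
Your argument is correct and follows the approach of the cited reference \cite{Baldi.Haus.SIAM}; the present paper does not reprove the lemma but records precisely the ingredients you identify --- the identity \eqref{3105.8} for $\pa_t S_\lm$ (only your second and fourth monomials survive as the resonant $B_\a B_\b\ov{B_\lm}$ sums, while the small-denominator first and third monomials drop out as purely imaginary, exactly as you compute) together with the bound $|R_{S_\lm}|\le C\|u\|_{m_1}^6 S_\lm$ of Lemma \ref{lemma:3005.3}. The Gronwall step with $T_{\mathrm{NF}}=C_0\|u_0\|_{m_1}^{-4}$ and $\|u(t)\|_{m_1}\le C_1\|u_0\|_{m_1}$ then closes exactly as you describe.
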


By \eqref{S.like.exp}, for every $\lm \in \Gamma$, 
either $S_\lm(t) > 0$ for all $t \in [0, T_{\mathrm{NF}}]$, 
or $S_\lm(t) = 0$ for all $t \in [0, T_{\mathrm{NF}}]$.
Hence, we decompose $\Gamma$ as the disjoint union of 
\begin{equation} \label{def.Gamma.0.1.general}
\Gamma_0 := \{ \lm \in \Gamma : S_\lm = 0 \}, 
\qquad 
\Gamma_1 := \{ \lm \in \Gamma : S_\lm > 0 \}.
\end{equation}
%and we note that this decomposition % \eqref{def.Gamma.0.1.general} 
%is determined by the initial datum of the solution of system \eqref{3101.16}.  
%In fact, since the transformation $\Phi$ transforming the original system \eqref{p1}
%into system \eqref{3101.16} also preserves the Fourier support, 
%the decomposition \eqref{def.Gamma.0.1.general} is determined 
%by the ``physical'' initial data at time $t=0$ of the solution 
%of the original system \eqref{p1}.  

It is observed in \cite{Baldi.Haus.JDDE}, \cite{Baldi.Haus.SIAM} that,  
if $(u(t), v(t))$ solves \eqref{3101.16} on some time interval, 
then, for every $\lm \in \Gamma$, 
calculating the Fourier coefficients of $W(u,v)$ in \eqref{W decomp}
and taking the sum over all indices $k \in \Z^d$ on the sphere $|k| = \lm$, 
the corresponding quantities $S_\lm(t), B_\lm(t)$ in \eqref{def.SB} 
satisfies the equations 
%By \eqref{eq for uk}-\eqref{eq for vk}, 
%taking the sum over all indices $k \in \Z^d$ on the sphere $|k| = \lm$, 
%for every $\lm \in \Gamma$ we calculate the equations 
%for the evolution of $S_\lm, B_\lm$, which are
\begin{align} 
\pa_t S_\lm 
& = \frac{3 i}{32}  \sum_{\begin{subarray}{c} \a,\b \in \Gamma \\  \a + \b = \lm \end{subarray}} 
( B_\a B_\b \overline{B_\lm} - \overline{B_\a} \overline{B_\b} B_\lm ) 
\a \b \lm
+ \frac{3 i}{16}  \sum_{\begin{subarray}{c} \a,\b \in \Gamma \\  \a - \b = \lm \end{subarray}}
( B_\a \overline{B_\b} \overline{B_\lm} - \overline{B_\a} B_\b B_\lm ) 
\a \b \lm 	
+ R_{S_\lm},
\notag \\ 
\pa_t B_\lm 
& = - 2 i (1 + \mP) \Big( \lm + \frac14 \lm^2 S_\lm \Big) B_\lm 
+ R_{B_\lm}
\label{3105.8}
\end{align}
on the same time interval, 
where the terms $R_{S_\lm}, R_{B_\lm}$ satisfies the following estimates. 
 
%\bcb  %%% FORMULA THAT DEFINES R_{S_\lm} AND R_{B_\lm} : WE REMOVE IT. 
%\begin{align}
%R_{S_\lm} 
%& := \sum_{k : |k|=\lm} [ (W_{\geq 7})_1(u,v) ]_k v_{-k} 
%+ \sum_{k : |k|=\lm} u_k [ (W_{\geq 7})_2(u,v) ]_{-k},
%\label{3005.4} 
%\\ 
%R_{B_\lm}
%& := \frac{i}{16} \sum_{\a \in \Gamma} |B_\a|^2 B_\lm \a^2 
%\Big( \frac{1}{\a + \lm} - \frac{1 - \delta_\a^\lm}{\a - \lm} \Big)  
%+ \frac{3i}{16} \sum_{\begin{subarray}{c} \a,\b \in \Gamma \\  \a + \b = \lm \end{subarray}} 
%B_\a B_\b S_\lm \a \b \lm
%\notag \\ & \quad 
%+ \frac{i}{8} \sum_{\a \in \Gamma} S_\a S_\lm B_\lm \lm^2 \a 
%\Big( 6 + \frac{\a}{\a + \lm} 
%+ \frac{\a (1 - \delta_\a^\lm)}{\a - \lm} \Big) 
%\notag \\ & \quad 
%+ \frac{3i}{8} \sum_{\begin{subarray}{c} \a,\b \in \Gamma \\  \a - \b = \lm \end{subarray}} 
%B_\a \overline{B_\b} S_\lm \a \b \lm
%+ \sum_{k : |k|=\lm} 2 u_k [ (W_{\geq 7})_1(u,v) ]_{-k}.  
%\label{3005.5}
%\end{align}
%The remainders satisfy the following estimates. 
%%($\delta > 0$ is a universal constant, 
%%given in Lemma 2.1 of \cite{Baldi.Haus.SIAM}). 
%\ec

\begin{lemma}[Lemma 2.2 of \cite{Baldi.Haus.SIAM}] 
\label{lemma:3005.3}
Let $(u,v) \in H^{m_1}_0(\T^d,c.c.)$ with $\| u \|_{m_1} \leq \d$, 
where $\d$ is the constant given by Lemma \ref{lemma:Lemma.2.9.SIAM}
and appearing in \eqref{twice.norm.sec.prepar}. 
Then, for all $\lm \in \Gamma$, the terms 
$R_{S_\lm}, R_{B_\lm}$ in \eqref{3105.8} satisfy
\begin{equation} \label{3005.3}
|R_{S_\lm}| \leq C \| u \|_{m_1}^6 S_\lm, \quad \ 
|R_{B_\lm}| \leq C \| u \|_{m_1}^4 S_\lm,
\end{equation}
where $C>0$ is a universal constant.
\end{lemma}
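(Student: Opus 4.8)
The plan is to read off the evolution of $S_\lm$ and $B_\lm$ directly from the transformed equation \eqref{3101.16}--\eqref{W decomp}, to isolate the contributions of the resonant operators $\mD_1,\mZ_3,\mZ_5$ (which produce exactly the explicit ``main'' terms in \eqref{3105.8}), and to show that the remaining contributions, coming only from $\mP\mZ_5$ and $W_{\geq 7}$, obey the claimed bounds.

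First I would record the elementary identities. Since $v=\ovu$ one has $v_{-k}=\ov{u_k}$, hence $S_\lm=\sum_{|k|=\lm}|u_k|^2$, and differentiating along \eqref{3101.16},
\[
\pa_t S_\lm = 2\,\Re\!\!\sum_{|k|=\lm}\ov{u_k}\,(W_1)_k ,\qquad
\pa_t B_\lm = 2\!\!\sum_{|k|=\lm}u_{-k}\,(W_1)_k ,
\]
where $(W_1)_k$ is the $k$-th Fourier coefficient of the first component of $W(u,v)$, and in the second identity we used the symmetry $k\mapsto-k$ of the sphere $\{|k|=\lm\}$. Inserting \eqref{W decomp}: the linear operator $\mD_1$, and its multiple $\mP\mD_1$, contribute $-i(1+\mP)\lm u_k$ to $(W_1)_k$ on that sphere, hence nothing to $\pa_t S_\lm$ (as $1+\mP$ is a real scalar and $\ov{u_k}(-i\lm u_k)$ is purely imaginary) and exactly $-2i(1+\mP)\lm B_\lm$ to $\pa_t B_\lm$; the cubic operator $\mZ_3$, and $\mP\mZ_3$, contribute $((\mZ_3)_1)_k=-\tfrac i4\lm^2 B_\lm v_k$ on that sphere, hence nothing to $\pa_t S_\lm$ (because $\Re(-\tfrac i4\lm^2|B_\lm|^2)=0$) and exactly $-\tfrac i2(1+\mP)\lm^2 S_\lm B_\lm$ to $\pa_t B_\lm$. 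This reproduces the entire main term on the second line of \eqref{3105.8}.

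The heart of the matter is the quintic operator $\mZ_5$. In each of the four sums in \eqref{W5 comp 1 after NF} the summation index $k$ enters only through $|k|$, so $(\mZ_5)_1$ restricted to $\{|k|=\lm\}$ acts as multiplication by two scalars,
\[
((\mZ_5)_1)_k = a_\lm\,u_k + b_\lm\,v_k ,
\]
where $a_\lm=\tfrac i{32}\sum_\mu\mu^4|B_\mu|^2\,c_{\mu\lm}$, with $c_{\mu\lm}=\tfrac1{\mu+\lm}-\tfrac{1-\d_\mu^\lm}{\mu-\lm}\in\R$ (so $a_\lm$ is purely imaginary), comes from the first sum, and $b_\lm$ is the explicit combination read off from the other three sums, of the form $\lm\sum_{\a+\b=\lm}\a\b\,B_\a B_\b$, $\lm^2 B_\lm\sum_\nu\nu S_\nu(\cdots)$, and $\lm\sum_{\a-\b=\lm}\a\b\,B_\a\ov{B_\b}$. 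Using $\ov{u_k}v_k=\ov{u_ku_{-k}}$ and $u_{-k}v_k=|u_{-k}|^2$, the $\mZ_5$-contribution to $\pa_t S_\lm$ equals $2(\Re a_\lm)S_\lm+2\Re(b_\lm\ov{B_\lm})=2\Re(b_\lm\ov{B_\lm})$ (the $S_\lm$-term drops since $a_\lm\in i\R$; the middle piece of $b_\lm$ also drops, being $i\R$ times $|B_\lm|^2$ after multiplying by $\ov{B_\lm}$ and taking $2\Re$), and a direct check identifies this with precisely the two explicit sums on the first line of \eqref{3105.8}. Thus $\mZ_5$ contributes \emph{nothing} to $R_{S_\lm}$. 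The $\mZ_5$-contribution to $\pa_t B_\lm$, on the contrary, is $2a_\lm B_\lm+2b_\lm S_\lm$, which is not part of the main term and must be absorbed into $R_{B_\lm}$; here one uses $|B_\lm|\le S_\lm$ together with the tame bounds $|a_\lm|,|b_\lm|\le C\|u\|_{m_1}^4$. These bounds follow from the explicit formulas and elementary estimates based on $\sum_\mu\mu^{2m_1}S_\mu=\|u\|_{m_1}^2$ (which controls $\sum_\mu\mu^4|B_\mu|^2$, $\sum_\nu\nu S_\nu$, $\lm\sum_{\a\pm\b=\lm}\a\b|B_\a||B_\b|$ by $\|u\|_{m_1}^4$), splitting the sums near and far from the sphere $\lm$ and treating carefully the (apparent) small divisors $\tfrac1{\mu-\lm}$: for $d=1$ one has $\Gamma=\N$, so $|\mu^2-\lm^2|\ge\mu+\lm$ and the factor $c_{\mu\lm}$ is bounded by a constant, while for $d\ge2$ one has $m_1=2$, so $\mu^4 S_\mu^2\le\|u\|_2^2\,S_\mu$ and the divisor sum is tamed by $S_\mu\le\mu^{-4}\|u\|_2^2$.

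Finally I would estimate the genuinely remaining contributions, namely those of $\mP\mZ_5$ and $W_{\geq 7}$. For $\mP\mZ_5$ one multiplies the previous bounds by $|\mP|\le C\|u\|_{m_1}^2$, getting terms $\le C\|u\|_{m_1}^6$ times the relevant quantity. For $W_{\geq 7}$ one invokes the tame estimates of \cite{Baldi.Haus.JDDE},\cite{Baldi.Haus.SIAM}: since the Kirchhoff vector field and all the normal-form maps $\Phi_i$ preserve the Fourier-momentum support (cf.\ Lemma~\ref{lemma:Lemma.2.9.SIAM}), $(W_{\geq7})_1$ restricted to $\{|k|=\lm\}$ vanishes when $u|_{\{|k|=\lm\}}=0$, and the quantitative version of this reads $\bigl(\sum_{|k|=\lm}|((W_{\geq7})_1)_k|^2\bigr)^{1/2}\le C\|u\|_{m_1}^6\,S_\lm^{1/2}$. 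By Cauchy--Schwarz, the $W_{\geq7}$-contribution to $\pa_t S_\lm$ is then $\le 2S_\lm^{1/2}\cdot C\|u\|_{m_1}^6 S_\lm^{1/2}=C\|u\|_{m_1}^6 S_\lm$, and likewise its contribution to $\pa_t B_\lm$ is $\le C\|u\|_{m_1}^6 S_\lm\le C\|u\|_{m_1}^4 S_\lm$ (using $\|u\|_{m_1}\le\d$). Collecting everything, $R_{S_\lm}$ is the sum of the $\mP\mZ_5$- and $W_{\geq7}$-contributions to $\pa_t S_\lm$, bounded by $C\|u\|_{m_1}^6 S_\lm$, while $R_{B_\lm}$ is the sum of the $\mZ_5$-, $\mP\mZ_5$- and $W_{\geq7}$-contributions to $\pa_t B_\lm$, bounded by $C\|u\|_{m_1}^4 S_\lm$. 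The main obstacle is the tame control, at the low regularity $H^{m_1}$, of the quintic coefficients $a_\lm,b_\lm$ and of $W_{\geq 7}$: the apparent derivative losses and small divisors in \eqref{W5 comp 1 after NF} are absorbed only thanks to the special algebraic structure of the Kirchhoff nonlinearity — in particular, the fact that its cubic and quintic resonant parts act as multipliers on Fourier spheres — established in \cite{Baldi.Haus.Nonlin},\cite{Baldi.Haus.JDDE},\cite{Baldi.Haus.SIAM}.
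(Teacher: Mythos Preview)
The paper does not actually prove this lemma: it is quoted verbatim as ``Lemma 2.2 of \cite{Baldi.Haus.SIAM}'' and used as a black box, so there is no proof in the present paper to compare your proposal against. Your outline follows the natural route one would expect in the cited reference --- differentiate $S_\lm,B_\lm$ along \eqref{3101.16}, split $W$ according to \eqref{W decomp}, check that $\mD_1,\mZ_3$ and the appropriate parts of $\mZ_5$ produce exactly the explicit terms in \eqref{3105.8}, and bound the rest --- and this is indeed the strategy carried out in \cite{Baldi.Haus.JDDE,Baldi.Haus.SIAM}.

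One point deserves more care. Your treatment of the small divisor $c_{\mu\lm}=\tfrac{1}{\mu+\lm}-\tfrac{1-\d_\mu^\lm}{\mu-\lm}$ for $d\ge2$ is too sketchy: you write ``the divisor sum is tamed by $S_\mu\le\mu^{-4}\|u\|_2^2$'', but for $d\ge2$ the set $\Gamma\subset\{\sqrt n:n\in\N\}$ allows $|\mu-\lm|$ as small as $(\mu+\lm)^{-1}$ (since only $|\mu^2-\lm^2|\ge1$ is guaranteed), so $|c_{\mu\lm}|$ can be of order $\mu+\lm$, not bounded. The correct observation is that $\mu^2|c_{\mu\lm}|\le C(\mu^2+\lm^2)$ uniformly, and then $\sum_\mu \mu^4|B_\mu|^2|c_{\mu\lm}|\le C\sum_\mu(\mu^2+\lm^2)\mu^2 S_\mu^2\le C\|u\|_{m_1}^4$ using $m_1=2$ and $\lm^2 S_\lm\le\|u\|_1^2$ for the $\lm^2$-piece (this is why $m_1=2$ is needed in $d\ge2$). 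A similar remark applies to the third line of \eqref{W5 comp 1 after NF}. With this fix your sketch is correct and matches what is done in \cite{Baldi.Haus.SIAM}.
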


Define 
\begin{equation}  \label{def.Z.th}
Z_{\a \b \lm} := B_\a B_\b \ov{B_\lm}, \quad \ 
\th_{\a\b\lm} := \mathrm{Im} (Z_{\a\b\lm}) 
= \frac{B_\a B_\b \overline{B_\lm} - \overline{B_\a B_\b}B_\lm}{2i}.
\end{equation}
If $S_\lm, B_\lm$ satisfy system \eqref{3105.8}, 
then $S_\lm, Z_{\a\b\lm}$ satisfy 
\begin{align} 
\pa_t S_\lm 
&= - \frac{3}{16} 
\sum_{\begin{subarray}{c} \a,\b \in \Gamma \\  \a + \b = \lm \end{subarray}} 
\th_{\a \b \lm} \, \a \b \lm 
+ \frac{3}{8} 
\sum_{\begin{subarray}{c} \a,\b \in \Gamma \\  \b + \lm = \a \end{subarray}} 
\th_{\b \lm \a}  \, \a \b \lm
+ R_{S_\lm}, 
\label{0906.11}
\\
\pa_t Z_{\a\b\lm}
& = -2i (1 + \mP) \Big(\a + \b - \lm +\frac14 (\a^2 S_\a + \b^2 S_\b - \lm^2 S_\lm)\Big) Z_{\a\b\lm}
+ \widetilde R_{Z_{\a\b\lm}},
\notag %\label{3105.9}
\end{align}
where 
\[ 
\widetilde R_{Z_{\a\b\lm}}
:= R_{B_\a} B_\b \overline{B_\lm} 
+ B_\a R_{B_\b} \overline{B_\lm} 
+ B_\a B_\b \overline{R_{B_\lm}}.
%\label{3105.10}
\] 
For $\a + \b = \lm$,
isolating the first nontrivial contribution from terms of higher homogeneity orders, 
one has 
\begin{equation}  \label{3105.11}
\pa_t Z_{\a\b\lm}
= - \frac{i}{2} (\a^2 S_\a + \b^2 S_\b - \lm^2 S_\lm) Z_{\a\b\lm} 
+ R_{Z_{\a\b\lm}}
\end{equation}
where 
\begin{equation}  \label{3105.12}
R_{Z_{\a\b\lm}}
:= - \frac{i}{2} \mP_2 (\a^2 S_\a + \b^2 S_\b - \lm^2 S_\lm) Z_{\a\b\lm} 
+ \widetilde R_{Z_{\a\b\lm}}.
\end{equation}
The remainder $R_{S_\lm}$ in \eqref{0906.11} has been bounded in Lemma \ref{lemma:3005.3}. 
The remainder $R_{Z_{\a\b\lm}}$ in \eqref{3105.12} is estimated in the next lemma.

\begin{lemma}[Lemma 2.5 of \cite{Baldi.Haus.SIAM}] 
\label{lemma:3105.13}
Assume the hypotheses of Lemma \ref{lemma:3005.3}.
%Let $(u,v) \in H^{m_1}_0(\T^d,Y)$ with $\| u \|_{m_1} \leq \d$, 
%where $\d$ is the constant in Lemma \ref{lemma:W geq 7 new}. 
Then for all $\a,\b,\lm \in \Gamma$ with $\a+\b=\lm$
the remainder $R_{Z_{\a\b\lm}}$ defined in \eqref{3105.12} satisfies 
\begin{equation} \label{0106.10}
|R_{Z_{\a\b\lm}}| \leq C \| u \|_{m_1}^4 S_\a S_\b S_\lm,
\end{equation}
where $C>0$ is a universal constant.
\end{lemma}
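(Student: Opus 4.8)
The plan is to bound separately the two summands in the definition \eqref{3105.12} of $R_{Z_{\a\b\lm}}$: the frequency-correction term $-\frac{i}{2}\mP_2(\a^2 S_\a + \b^2 S_\b - \lm^2 S_\lm)Z_{\a\b\lm}$ and the term $\widetilde R_{Z_{\a\b\lm}} = R_{B_\a}B_\b\overline{B_\lm} + B_\a R_{B_\b}\overline{B_\lm} + B_\a B_\b\overline{R_{B_\lm}}$. All the needed ingredients are already at hand: from \eqref{def.SB} one has $|B_\mu| \le S_\mu$ for every $\mu \in \Gamma$, hence $|Z_{\a\b\lm}| = |B_\a|\,|B_\b|\,|B_\lm| \le S_\a S_\b S_\lm$; from the definitions of $S_\mu$ and of $\|\cdot\|_{m_1}$, and since $m_1 \ge 1$ and $|k| \ge 1$ for $k \neq 0$, one has $\mu^2 S_\mu = \sum_{|k|=\mu}|u_k|^2|k|^2 \le \sum_{k \ne 0}|u_k|^2|k|^{2m_1} = \|u\|_{m_1}^2$ for every $\mu \in \Gamma$; the scalar multiplier in \eqref{W decomp} is quadratically small, i.e. $|\mP_2| \le C\|u\|_{m_1}^2$ (it has no constant or linear part; this is part of the normal form construction recalled from \cite{Baldi.Haus.SIAM}); and Lemma~\ref{lemma:3005.3} gives $|R_{B_\mu}| \le C\|u\|_{m_1}^4 S_\mu$.

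For the frequency-correction term I would first bound $|\a^2 S_\a + \b^2 S_\b - \lm^2 S_\lm| \le \a^2 S_\a + \b^2 S_\b + \lm^2 S_\lm \le 3\|u\|_{m_1}^2$, then multiply by $|\mP_2| \le C\|u\|_{m_1}^2$ and by $|Z_{\a\b\lm}| \le S_\a S_\b S_\lm$, so that this summand is $\le C\|u\|_{m_1}^4 S_\a S_\b S_\lm$. For $\widetilde R_{Z_{\a\b\lm}}$ I would apply the triangle inequality and estimate each of the three products: for instance $|R_{B_\a}B_\b\overline{B_\lm}| \le (C\|u\|_{m_1}^4 S_\a)\,S_\b S_\lm$ by Lemma~\ref{lemma:3005.3} together with $|B_\b| \le S_\b$ and $|B_\lm| \le S_\lm$, and likewise for the other two products, giving $|\widetilde R_{Z_{\a\b\lm}}| \le 3C\|u\|_{m_1}^4 S_\a S_\b S_\lm$. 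Adding the two contributions and renaming the universal constant yields \eqref{0106.10}.

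The only point that is not entirely mechanical is the treatment of the Fourier weights $\a^2, \b^2, \lm^2$ in the frequency-correction term: these are large (the relevant frequencies are unbounded), yet they are absorbed by the corresponding superactions through $\mu^2 S_\mu \le \|u\|_{m_1}^2$, and it is precisely this, combined with the quadratic smallness $|\mP_2| \le C\|u\|_{m_1}^2$, that upgrades the naive bound of order $\|u\|_{m_1}^2 S_\a S_\b S_\lm$ to the claimed one of order $\|u\|_{m_1}^4 S_\a S_\b S_\lm$. Beyond this bookkeeping I do not expect any genuine obstacle, since every inequality used is a direct consequence of the lemmas and identities already stated.
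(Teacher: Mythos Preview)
Your argument is correct. The paper does not supply its own proof of this lemma: it is simply quoted as Lemma~2.5 of \cite{Baldi.Haus.SIAM}, so there is nothing to compare against here beyond saying that your line of reasoning is exactly the expected one. You correctly combine $|B_\mu|\le S_\mu$ (hence $|Z_{\a\b\lm}|\le S_\a S_\b S_\lm$), the bound $\mu^2 S_\mu \le \|u\|_{m_1}^2$ valid because $m_1\ge 1$, the quadratic smallness of the scalar factor $\mP$, and the estimate $|R_{B_\mu}|\le C\|u\|_{m_1}^4 S_\mu$ from Lemma~\ref{lemma:3005.3}; each of the two pieces of $R_{Z_{\a\b\lm}}$ then falls out immediately as you describe.
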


\subsection{The truncated effective system}

If we remove the remainders from equations \eqref{0906.11} and \eqref{3105.11}, 
we obtain a system that we call \textbf{\emph{truncated effective system}},
%(or \emph{truncated effective equation}), 
which is
\begin{align} 
\pa_t S_\lm 
& = - \frac{3}{16} 
\sum_{\begin{subarray}{c} \alpha, \beta \in \Gamma \\ 
\alpha + \beta = \lm \end{subarray}} 
\th_{\alpha \beta \lm} \alpha \beta \lm 
+ \frac{3}{8} 
\sum_{\begin{subarray}{c} \alpha, \beta \in \Gamma \\ 
\beta + \lm = \alpha \end{subarray}} 
\th_{\beta \lm \alpha} \beta \lm \alpha,
\label{pat.S.lm}
\\
\pa_t Z_{\alpha \beta \lm} 
& = - \frac{i}{2} \om_{\alpha \beta \lm} Z_{\alpha \beta \lm},
\label{pat.Z}
\end{align}
where
\begin{equation} \label{def.th.om}
\om_{\alpha \beta \lm}
:= \alpha^2 S_\a + \beta^2 S_\beta - \lm^2 S_\lm.
\end{equation}

For all $\a, \b, \lm \in \Gamma$ such that $\a + \b = \lm$, let 
\[
r_{\a \b \lm} := \Re (Z_{\a \b \lm}).
\]
Since $\om_{\a \b \lm}$ is real, 
the real and imaginary part of equation \eqref{pat.Z} is given by the system
\begin{equation} \label{syst.r.th}
\pa_t r_{\a \b \lm} = \frac12 \om_{\a \b \lm} \th_{\a \b \lm}, 
\qquad 
\pa_t \th_{\a \b \lm} = - \frac12 \om_{\a \b \lm} r_{\a \b \lm}. 
\end{equation}
The solutions $(r_{\a \b \lm}(t) , \th_{\a \b \lm}(t))$ of 
\eqref{syst.r.th} remain on a circle, because they satisfy
\begin{equation} \label{Z.prime.int}
\pa_t ( |Z_{\a \b \lm}|^2 )
= \pa_t \big( r_{\a \b \lm}^2 + \th_{\a \b \lm}^2 \big) = 0.
\end{equation}
Thus $|Z_{\a \b \lm}|$ is a prime integral of the truncated effective system 
\eqref{pat.S.lm}-\eqref{pat.Z}.
Therefore, if $S_\lm, Z_{\a \b \lm}$ solve \eqref{pat.S.lm}-\eqref{pat.Z}, 
then the real and imaginary part of $Z_{\a \b \lm}$ satisfy
\begin{equation} \label{r.th.rho.ph}
r_{\a \b \lm}(t) = \rho_{\a \b \lm} \cos( \ph_{\a \b \lm}(t)),
\qquad 
\th_{\a \b \lm}(t) = \rho_{\a \b \lm} \sin( \ph_{\a \b \lm}(t)),
\end{equation}
where 
\[
\rho_{\a \b \lm} = |Z_{\a \b \lm}| \geq 0
\]
is a constant, 
and $\ph_{\a \b \lm}(t)$ is an angle. 
Moreover, plugging \eqref{r.th.rho.ph} into \eqref{syst.r.th} gives 
the equation for the evolution of the angle.
Hence the truncated effective system \eqref{pat.S.lm}-\eqref{pat.Z} 
becomes 
\begin{align} 
\pa_t S_\lm 
& = - \frac12
\sum_{\begin{subarray}{c} \alpha, \beta \in \Gamma \\ 
\alpha + \beta = \lm \end{subarray}} 
c_{\a \b \lm} \sin(\ph_{\a \b \lm})
+ \sum_{\begin{subarray}{c} \alpha, \beta \in \Gamma \\ 
\beta + \lm = \alpha \end{subarray}} 
c_{\beta \lm \alpha} \sin(\ph_{\beta \lm \alpha}),
\label{pat.S.lm.bis}
\\
\pa_t \ph_{\a \b \lm} 
& = - \frac12 % \om_{\a \b \lm} 
(\a^2 S_\a + \b^2 S_\b - \lm^2 S_\lm),
\label{pat.ph}
\end{align}
where 
\begin{equation} \label{def.c}
c_{\a \b \lm} := \frac38 \rho_{\alpha \beta \lm} \alpha \beta \lm
= \frac38 |Z_{\a \b \lm}| \a \b \lm 
= \frac38 |B_\a| |B_\b| |B_\lm| \a \b \lm
\end{equation}
is a constant. 
We note that \eqref{pat.S.lm.bis} and \eqref{pat.ph}
form a closed system for the variables $S_\lm, \ph_{\a\b\lm}$. 

%We also observe that the value of the constants $c_{\a \b \lm}$ 
%is determined by the value of the products $|B_\a| |B_\b| |B_\lm|$ 
%at the initial time $t=0$, i.e.,
%\begin{equation} \label{c.B.time.zero}
%c_{\a \b \lm} 
%= \frac38 |B_\a(0)| |B_\b(0)| |B_\lm(0)| \a \b \lm
%\end{equation}
%with $\a + \b = \lm$, 
%where $B_\a(0)$ is the initial datum of $B_\a$, for all $\a \in \Gamma_1$.

\section{The truncated effective system with two triplets}\label{sec:2dofHS}

We consider the case in which the Fourier support $\Gamma_1$
in \eqref{def.Gamma.0.1.general} has only $4$ distinct elements, 
forming two resonant triplets with two elements in common, 
in the following way: 
\begin{gather} 
\Gamma_1 = \{ \a_1, \a_2, \a_3, \a_4 \}, 
\qquad  
\a_1 < \a_2 < \a_3 < \a_4, 
\notag \\
\a_1 + \a_2 = \a_3, 
\qquad 
\a_2 + \a_3 = \a_4, 
\qquad 
2 \a_1 \neq \a_2.
%\label{Gamma1.N=4}
\label{Gamma1.Fib}
\end{gather}
We can assume, without loss of generality, that the four elements of $\Gamma_1$ 
are natural numbers. 
Examples of such sets are any four consecutive elements of the Fibonacci sequence 
greater than $1$, like $\{ 2,3,5,8 \}$, or, more generally, any set of the form
% $\{m, m+p, 2m+p, 3m+2p \}$ with $m, p$ distinct positive integers. 
% In fact, we fix 
\begin{equation} \label{alpha.1234.Fib}
\a_1 := m, \quad \ 
\a_2 := m+p, \quad \ 
\a_3 := 2m+p, \quad \ 
\a_4 := 3m + 2p,
\end{equation}
where $m,p$ are distinct positive integers with 
\[
2 \leq m < p.
\]

\begin{lemma} \label{lemma:no.other.triplets}
Assume \eqref{Gamma1.Fib}, \eqref{alpha.1234.Fib}. 
If $\a, \b, \lm \in \Gamma_1$ satisfy 
$\a + \b = \lm$, then the ordered triplet $(\a, \b, \lm)$ must be 
\[
(\a_1, \a_2, \a_3) 
\quad \text{or} \quad 
(\a_2, \a_1, \a_3) 
\quad \text{or} \quad 
(\a_2, \a_3, \a_4) 
\quad \text{or} \quad 
(\a_3, \a_2, \a_4),
\]
and there are no other options. 
\end{lemma}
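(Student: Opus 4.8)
The statement is a purely arithmetic fact about the four-element set $\Gamma_1 = \{\a_1,\a_2,\a_3,\a_4\}$ defined in \eqref{alpha.1234.Fib}, so the plan is simply to enumerate all ordered pairs $(\a,\b)$ with $\a,\b \in \Gamma_1$ and check for which ones the sum $\a+\b$ also lands in $\Gamma_1$. Since $\Gamma_1$ has four elements, there are $16$ ordered pairs, but by symmetry in $\a \leftrightarrow \b$ it suffices to look at the $10$ unordered pairs (with repetition). The values are $\a_1 = m$, $\a_2 = m+p$, $\a_3 = 2m+p$, $\a_4 = 3m+2p$, and the key feature is that $\a_1 < \a_2 < \a_3 < \a_4$ with $\a_1+\a_2 = \a_3$ and $\a_2+\a_3 = \a_4$.

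First I would dispose of the ``diagonal'' cases $\a = \b$: the four sums $2\a_1 = 2m$, $2\a_2 = 2m+2p$, $2\a_3 = 4m+2p$, $2\a_4$ must be compared with the elements of $\Gamma_1$. Using $2 \le m < p$ one checks $2m \notin \{m, m+p, 2m+p, 3m+2p\}$ (here $2m = m+p$ would force $m = p$, excluded; $2m = 2m+p$ is impossible; $2m = 3m+2p$ is impossible since the right side is larger; $2m = m$ impossible), and similarly $2\a_2, 2\a_3$ exceed or miss every element, and $2\a_4 > \a_4$. This is exactly where the hypothesis $2\a_1 \neq \a_2$ from \eqref{Gamma1.Fib} — equivalently $p \neq m$, already built into \eqref{alpha.1234.Fib} — gets used. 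Then I would handle the six genuinely mixed pairs: $\a_1+\a_2 = \a_3 \in \Gamma_1$ (good, giving the triplets $(\a_1,\a_2,\a_3)$ and $(\a_2,\a_1,\a_3)$); $\a_2+\a_3 = \a_4 \in \Gamma_1$ (good, giving $(\a_2,\a_3,\a_4)$ and $(\a_3,\a_2,\a_4)$); and $\a_1+\a_3 = 3m+p$, $\a_1+\a_4 = 4m+2p$, $\a_2+\a_4 = 4m+3p$, $\a_3+\a_4 = 5m+3p$, none of which equals any $\a_i$ — each is strictly between two consecutive elements or exceeds $\a_4$, and one verifies this with the bounds $2 \le m < p$.

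There is essentially no obstacle here; the only mild subtlety is being careful that the case analysis is exhaustive and that the inequalities $m < p$, $m \ge 2$ are genuinely needed to rule out the spurious sums (e.g.\ $\a_1 + \a_3 = 3m+p$ could a priori coincide with $\a_4 = 3m+2p$ only if $p = 0$, and with $\a_3 = 2m+p$ only if $m = 0$). I would organize the write-up as a short table of the ten sums against the four target values, concluding that the only coincidences are the four claimed ordered triplets. One could also phrase it more conceptually: since $\a_1,\dots,\a_4$ are, up to the affine substitution determined by $(m,p)$, four consecutive Fibonacci-type numbers, the only additive relations among them within the set are the two defining ones $\a_1+\a_2=\a_3$ and $\a_2+\a_3=\a_4$, each contributing two ordered triplets. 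Either presentation is routine; the content of the lemma is precisely that the finite-dimensional effective system on these four modes has exactly two interacting resonant triplets, which is what makes the later ``two weakly coupled pendulums'' picture possible.
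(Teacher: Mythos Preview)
Your proposal is correct and follows the same approach as the paper: a direct case-by-case enumeration of the possible sums $\a+\b$ with $\a,\b\in\Gamma_1$, checking which land back in $\Gamma_1$. The paper's own proof is even terser---it works out only the single illustrative case $2\a_2\notin\Gamma_1$ via $\a_3<2\a_2<\a_4$ and dismisses the rest with ``the other cases can be checked similarly''---so your write-up is more detailed but not different in substance.
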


\begin{proof}
For example, one has 
\[
\a_3 = \a_1 + \a_2 
< \a_2 + \a_2 
< \a_2 + \a_3 = \a_4,
\]
therefore $2 \a_2 \notin \Gamma_1$, 
and the triplet $(\a_2, \a_2, 2 \a_2)$ is not admissible; 
the other cases can be checked similarly.
\end{proof}

To slightly shorten the notation, we denote 
\[
S_1 := S_{\a_1}, \quad \  
\ph_{123} := \ph_{\a_1 \a_2 \a_3}, \quad \  
c_{123} := c_{\a_1 \a_2 \a_3},
\] 
and so on.
Hence, system \eqref{pat.S.lm.bis}-\eqref{pat.ph} becomes
\begin{equation}\label{syst.6.eq}
\begin{aligned} 
\pa_t S_1 
& = c_{123} \sin(\ph_{123}),
\\
\pa_t S_2 
& = c_{123} \sin(\ph_{123}) 
+ c_{234} \sin(\ph_{234}),
 \\
\pa_t S_3
& = - c_{123} \sin(\ph_{123}) 
+ c_{234} \sin(\ph_{234}),
 \\
\pa_t S_4
& = - c_{234} \sin(\ph_{234}),
 \\
\pa_t \ph_{123} 
& = - \frac12 (\a_1^2 S_1 + \a_2^2 S_2 - \a_3^2 S_3),
 \\
\pa_t \ph_{234} 
& = - \frac12 (\a_2^2 S_2 + \a_3^2 S_3 - \a_4^2 S_4),
\end{aligned}
\end{equation}
which is a system of 6 equations in 6 unknowns.

\subsection{Meaningfulness condition for the solutions} 

Our strategy is this: 
We want to find solutions of the truncated effective system \eqref{syst.6.eq}
with a prescribed, interesting dynamical behavior, 
and to show that the solution of the effective system 
\eqref{0906.11}-\eqref{3105.11} is so close 
to the solution of the truncated effective system \eqref{syst.6.eq}
that the dynamical behaviors of the two solutions are very similar, 
on a sufficiently long interval of time. Later, in Section \ref{sec:gronwall}, we show that there exists a solution of the original PDE \eqref{p1} which is very close (up to the change of coordinates $\Phi$) to the solution of the effective system.

%\textcolor{red}{should we mention the pde?}

Hence, we look for solutions of the truncated effective system \eqref{syst.6.eq}
that satisfy the natural meaningfulness condition 
required by system \eqref{0906.11}-\eqref{3105.11}, 
which is simply this: 
If a solution is defined on a time interval $[0,T]$, 
then it must satisfy
\begin{equation} \label{nec.cond.S}
S_n(t) > 0 \quad \ 
\forall n = 1,2,3,4, \quad \ 
\forall t \in [0,T].
\end{equation}
So, we reject any solution of \eqref{syst.6.eq} such that 
some of the $S_n$ becomes non-positive at some time $t$
(recall definitions \eqref{def.SB} and \eqref{def.Gamma.0.1.general}).  

In the following analysis, we first ignore the constrain \eqref{nec.cond.S}; 
later, we will select only solutions satisfying it. 
Analogously, we first consider the coefficients $c_{123}, c_{234}$ 
as any two given constants; 
later, we will go back to the identities \eqref{def.c}.

\subsection{First integrals and a linear change of coordinates}
%\textcolor{red}{Shall we shorten a little bit this section?}
Given any linear combination $E := \mu_1 S_1 + \mu_2 S_2 + \mu_3 S_3 + \mu_4 S_4$ of $S_1, \ldots, S_4$ 
with constant real coefficients $\mu_1, \ldots, \mu_4$, 
we have
\[
\pa_t E = 
(\mu_1 + \mu_2 - \mu_3) c_{123} \sin(\ph_{123}) 
+ (\mu_2 + \mu_3 - \mu_4) c_{234} \sin(\ph_{234})
\]
along the solutions of system \eqref{syst.6.eq}. 
Hence any $E$ with coefficients $\mu_1, \ldots, \mu_4$ satisfying 
\[
\mu_1 + \mu_2 - \mu_3 = 0, 
\qquad 
\mu_2 + \mu_3 - \mu_4 = 0
\]
is a first integral. 
%
%Since the $4$ coefficients $\mu_1, \ldots, \mu_4$ are constrained by $2$ linear equations, 
%the system has 2 linearly independent first integrals of this kind. 
%We consider $\mu_1, \mu_2$ as linearly independent parameters, 
%and write the first integrals where the pair $(\mu_1, \mu_2)$ is $(1,0)$ or $(0,1)$, that is, 
%\[
%(\mu_1, \mu_2, \mu_3, \mu_4) = (1, 0, 1, 1), 
%\quad \ 
%(\mu_1, \mu_2, \mu_3, \mu_4) = (0, 1, 1, 2).
%\]
%Accordingly, 
We choose the two functionally independent first integrals
\begin{equation} \label{def.E1.E2}
E_1 := S_1 + S_3 + S_4, \quad \ 
E_2 := S_2 + S_3 + 2 S_4.
\end{equation}
%Since $E_1, E_2$ are prime integrals of system \eqref{syst.6.eq}, 
%they are determined by the initial data 
%$S_n^0 = S_n(0)$ of $S_n(t)$ at time $t=0$, that is,
%\begin{equation} \label{E12.time.zero}
%E_1 = S_1^0 + S_3^0 + S_4^0, \quad \ 
%E_2 = S_2^0 + S_3^0 + 2 S_4^0.
%\end{equation}

\begin{remark} 
\label{rem:norm.12.prime.int}
One has 
\begin{equation} \label{norm.12.prime.int}
\a_1 E_1 + \a_2 E_2 
= \a_1 (S_1 + S_3 + S_4)
+ \a_2 (S_2 + S_3 + 2 S_4)
= \sum_{n=1}^4 \a_n S_n 
\end{equation}
because 
$\a_1 + \a_2 = \a_3$ 
and 
$\a_1 + 2 \a_2 
= \a_2 + (\a_1 + \a_2) 
= \a_2 + \a_3 
= \a_4$. 
Hence, when $S_n$ are given by \eqref{def.SB}, identity \eqref{norm.12.prime.int} 
implies that the Sobolev norm $\| u \|_{\frac12}^2 = \sum_{n=1}^4 \a_n S_n$ 
is also a first integral of \eqref{syst.6.eq}. 
\end{remark}

%\begin{equation} \label{def.prime.integrals}
%E_{124} := - S_1 + S_2 + S_4, 
%\qquad 
%E_{134} := S_1 + S_3 + S_4,
%\end{equation}

At each time $t$, the values $S_1(t), S_2(t)$ can be obtained 
from $E_1, E_2, S_3(t), S_4(t)$ by \eqref{def.E1.E2}, i.e.,
\begin{equation} \label{S12.from.E12.S34}
S_1(t) = E_1 - S_3(t) - S_4(t), \quad \ 
S_2(t) = E_2 - S_3(t) - 2 S_4(t).
\end{equation}
Hence system \eqref{syst.6.eq} can be reduced to a system of 4 equations 
in the 4 unknowns $S_3, S_4, \ph_{123}, \ph_{234}$, 
obtained by replacing $S_1, S_2$ by \eqref{S12.from.E12.S34} 
in the last two equations of \eqref{syst.6.eq}. 
We get 
\begin{align} 
\pa_t S_3
& = - c_{123} \sin(\ph_{123}) 
+ c_{234} \sin(\ph_{234}),
\notag \\
\pa_t S_4
& = - c_{234} \sin(\ph_{234}),
\notag \\
\pa_t \ph_{123} 
& = - \frac12 ( \a_1^2 E_1 + \a_2^2 E_2 )
+ \frac12 (\a_1^2 + \a_2^2 + \a_3^2) S_3 
+ \frac12 (\a_1^2 + 2 \a_2^2) S_4,
\notag \\
\pa_t \ph_{234} 
& = - \frac12 \a_2^2 E_2 
- \frac12 (\a_3^2 - \a_2^2) S_3 
+\frac12 (2 \a_2^2 + \a_4^2) S_4.
\label{syst.4.eq}
\end{align}

We summarize the observations above in the following lemma.

\begin{lemma} \label{lemma:change.6.eq.4.eq}
Let $c_{123}, c_{234}$ be any two constants. 
The following properties hold.
\begin{itemize}
	\item[(i)] Let $(S_1(t), S_2(t), S_3(t), S_4(t), \ph_{123}(t), \ph_{234}(t))$ 
be a solution of system \eqref{syst.6.eq} on some time interval $I$.    
%Define $E_1, E_2$ by \eqref{def.E1.E2}. 
Then $E_1, E_2$ defined by \eqref{def.E1.E2} are constant in time and $(S_3(t), S_4(t), \ph_{123}(t)$, $\ph_{234}(t))$ 
solves system \eqref{syst.4.eq} on $I$. 

\item[(ii)] Let $E_1, E_2$ be constants, 
and let $(S_3(t), S_4(t), \ph_{123}(t), \ph_{234}(t))$ be a solution 
of system \eqref{syst.4.eq} on some time interval $I$. 
Define the functions $S_1(t), S_2(t)$ by the identities \eqref{S12.from.E12.S34}. 
Then $(S_1(t), S_2(t), S_3(t), S_4(t), \ph_{123}(t), \ph_{234}(t))$ 
solves system \eqref{syst.6.eq} on $I$. 
\end{itemize}
%$(iii)$ Let $(S_1(t), S_2(t), S_3(t), S_4(t), \ph_{123}(t), \ph_{234}(t))$ be a solution 
%of system \eqref{syst.6.eq} on some time interval $[0,T]$,   
%and let $(S_1^0, S_2^0, S_3^0, S_4^0, \ph_{123}^0, \ph_{234}^0)$ 
%be the value of the solution at the initial time $t=0$. 
%Define the constants $E_1, E_2$ by the identities \eqref{E12.time.zero}. 
%Then $(E_1, E_2, S_3(t), S_4(t), \ph_{123}(t), \ph_{234}(t))$ is a solution 
%of system \eqref{syst.4.eq} on $[0,T]$, 
%and its value at time $t=0$ is $(E_1, E_2, S_3^0, S_4^0, \ph_{123}^0, \ph_{234}^0)$.  
%$(iv)$ Let $(E_1, E_2, S_3(t), S_4(t), \ph_{123}(t), \ph_{234}(t))$ be a solution 
%of system \eqref{syst.4.eq} on some time interval $[0,T]$, 
%and let $(E_1, E_2, S_3^0, S_4^0, \ph_{123}^0, \ph_{234}^0)$ 
%be its value at the initial time $t=0$. 
%Define the functions $S_1(t), S_2(t)$ by the identities \eqref{S12.from.E12.S34}. 
%Then $(S_1(t), S_2(t), S_3(t), S_4(t), \ph_{123}(t), \ph_{234}(t))$ is a solution 
%of system \eqref{syst.6.eq} on $[0,T]$, 
%and its value at time $t=0$ is $(S_1^0, S_2^0, S_3^0, S_4^0, \ph_{123}^0, \ph_{234}^0)$, 
\end{lemma}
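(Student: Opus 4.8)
The plan is a direct verification in both directions, exploiting that the passage from \eqref{syst.6.eq} to \eqref{syst.4.eq} is a reversible substitution: in one direction one uses the conservation of $E_1,E_2$ to express $S_1,S_2$ via \eqref{S12.from.E12.S34}, and in the other direction one takes \eqref{S12.from.E12.S34} as a definition and differentiates it. Since $c_{123},c_{234}$ enter only as constant coefficients, nothing about \eqref{def.c} is needed here; the whole statement is a bookkeeping record of the reduction already performed in the text.

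For part~(i), I would first note that $E_1,E_2$ in \eqref{def.E1.E2} are constant along solutions of \eqref{syst.6.eq}: this is exactly the computation preceding the lemma, since the coefficient vectors $(\mu_1,\mu_2,\mu_3,\mu_4)=(1,0,1,1)$ and $(0,1,1,2)$ both annihilate $\mu_1+\mu_2-\mu_3$ and $\mu_2+\mu_3-\mu_4$, so $\pa_t E_1=\pa_t E_2=0$. Consequently the identities \eqref{S12.from.E12.S34} hold for all $t\in I$, with $E_1,E_2$ equal to their (constant) values. The third and fourth equations of \eqref{syst.4.eq} are literally the third and fourth of \eqref{syst.6.eq}, with nothing to check. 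For the two phase equations, I substitute \eqref{S12.from.E12.S34} into the last two lines of \eqref{syst.6.eq} and collect the coefficients of $S_3$ and $S_4$; this is a one-line algebraic rearrangement that produces precisely the last two lines of \eqref{syst.4.eq}, so $(S_3,S_4,\ph_{123},\ph_{234})$ solves \eqref{syst.4.eq} on $I$.

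For part~(ii), conversely, given constants $E_1,E_2$ and a solution $(S_3,S_4,\ph_{123},\ph_{234})$ of \eqref{syst.4.eq}, I define $S_1,S_2$ by \eqref{S12.from.E12.S34} and differentiate: using the first two equations of \eqref{syst.4.eq} one gets $\pa_t S_1=-\pa_t S_3-\pa_t S_4=c_{123}\sin(\ph_{123})$ and $\pa_t S_2=-\pa_t S_3-2\pa_t S_4=c_{123}\sin(\ph_{123})+c_{234}\sin(\ph_{234})$, which are the first two equations of \eqref{syst.6.eq}; the third and fourth are unchanged. Finally, since $S_1+S_3+S_4=E_1$ and $S_2+S_3+2S_4=E_2$ hold identically in $t$ by construction, inverting the substitution used to derive \eqref{syst.4.eq} recovers the last two equations of \eqref{syst.6.eq}, so the full six-tuple solves \eqref{syst.6.eq}. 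I do not expect any genuine obstacle: the only point requiring a word of care is that in part~(i) the constants $E_1,E_2$ must be identified with the conserved values of the first integrals along the given orbit, while in part~(ii) these same two constants are free parameters that label the level sets on which the reduced system lives.
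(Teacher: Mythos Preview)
Your proof is correct and follows essentially the same approach as the paper: the lemma is explicitly introduced as a summary of the observations already made in the preceding text, and your argument reconstructs exactly those computations (conservation of $E_1,E_2$ via the coefficient conditions, substitution of \eqref{S12.from.E12.S34} into the phase equations, and the reverse differentiation for part~(ii)). There is nothing more to add.
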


%\subsection{Linear change of the $S$ variables}

%\textcolor{red}{shall we merge this section with the previous one? we would remove either the lemma or the explanation before the lemma}

We note that the sum of the first two equations in \eqref{syst.4.eq}
does not contain the angle $\ph_{234}$. 
Hence, we consider a linear change of variable 
that treats the sum $S_3 + S_4$ as a new variable. 

%that is, we consider 
%After basic computations, system \eqref{syst.4.eq}  becomes 
%\begin{align} 
%\pa_t y_1 
%& = - c_{123} \sin(x_1),
%\notag \\
%\pa_t y_2
%& = - c_{234} \sin(x_2),
%\notag \\
%\pa_t x_1 
%& = - \frac12 b_1 % - \frac12 ( \a_1^2 E_1 + \a_2^2 E_2 )
%+ \frac12 (\a_1^2 + \a_2^2 + \a_3^2) y_1 
%- \frac12 (\a_3^2 - \a_2^2) y_2,
%\notag \\
%\pa_t x_2 
%& = - \frac12 b_2 % - \frac12 \a_2^2 E_2 
%- \frac12 (\a_3^2 - \a_2^2) y_1
%+ \frac12 (\a_2^2 + \a_3^2 + \a_4^2) y_2
%\label{syst.xy}
%\end{align}

%The meaningfulness condition \eqref{nec.cond.E12.S34} becomes 
%\begin{equation} \label{nec.cond.xy}
%y_1(t) - y_2(t) > 0, \quad \ 
%y_2(t) > 0, \quad \ 
%E_1 - y_1(t) > 0, \quad \ 
%E_2 - y_1(t) - y_2(t) > 0 \quad \ 
%\forall t \in [0,T]. \quad 
%\end{equation}

%The equivalence of systems \eqref{syst.4.eq} and \eqref{syst.xy} 
%is described by the following lemma. 

\begin{lemma} \label{lemma:change.4.eq.xy}
Let $c_{123}, c_{234}$ be any two constants. 
The following properties hold.
\begin{itemize}
\item[(i)] Let $E_1, E_2$ be constants, 
and let $(S_3(t), S_4(t), \ph_{123}(t), \ph_{234}(t))$ be a solution 
of system \eqref{syst.4.eq} on some time interval $I$. 
% \eqref{def.b12} 
Define the functions $x_1(t), x_2(t), y_1(t), y_2(t)$ by the change of coordinates
\begin{equation} \label{change.xy}
	\ph_{123} = x_1, \quad \ 
	\ph_{234} = x_2, \quad \ 
	S_3 = y_1 - y_2, \quad \ 
	S_4 = y_2.
\end{equation}
%
% \eqref{change.xy}, 
%i.e., 
%\[
%x_1 = \ph_{123}, \qquad 
%x_2 = \ph_{234}, \qquad 
%y_1 = S_3 + S_4, \qquad 
%y_2 = S_4.
%\] 
Then $(x_1(t), x_2(t), y_1(t), y_2(t))$ solves 
%\eqref{syst.xy} 
\begin{equation}\label{syst.xy}
\begin{aligned} 
	\pa_t x_1 
	& = - \frac12 b_1 % - \frac12 ( \a_1^2 E_1 + \a_2^2 E_2 )
	+ \frac12 (\a_1^2 + \a_2^2 + \a_3^2) y_1 
	- \frac12 (\a_3^2 - \a_2^2) y_2,\\
		\pa_t x_2 
	& = - \frac12 b_2 % - \frac12 \a_2^2 E_2 
	- \frac12 (\a_3^2 - \a_2^2) y_1
	+ \frac12 (\a_2^2 + \a_3^2 + \a_4^2) y_2\\
		\pa_t y_1 
	& = - c_{123} \sin(x_1),
 \\
	\pa_t y_2
	& = - c_{234} \sin(x_2),
\end{aligned}
\end{equation}
on $I$, where $b_1, b_2$  are the constants
\begin{equation} \label{def.b12}
	b_1 := \a_1^2 E_1 + \a_2^2 E_2,
	\quad \ 
	b_2 := \a_2^2 E_2.
\end{equation}
\item[(ii)] Let $b_1, b_2$ be constants, and let 
$(x_1(t), x_2(t), y_1(t), y_2(t))$ be a solution 
of system \eqref{syst.xy} on some time interval $I$.
Define the constants $E_1, E_2$ as
\begin{equation} \label{E12.from.b12}
E_1 = \frac{b_1 - b_2}{\a_1^2}, \qquad 
E_2 = \frac{b_2}{\a_2^2},
\end{equation}
and define the functions $S_3(t), S_4(t), \ph_{123}(t), \ph_{234}(t)$ 
by \eqref{change.xy}. 
Then $(S_3(t), S_4(t), \ph_{123}(t)$, $\ph_{234}(t))$ 
solves system
\eqref{syst.4.eq} 
on $I$.
\end{itemize}
\end{lemma}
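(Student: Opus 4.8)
The plan is to prove Lemma~\ref{lemma:change.4.eq.xy} by a direct verification: the change of coordinates \eqref{change.xy} is an invertible affine map between the variables $(S_3,S_4,\ph_{123},\ph_{234})$ and $(y_1,y_2,x_1,x_2)$ (its inverse being $y_1 = S_3+S_4$, $y_2 = S_4$, together with $x_1=\ph_{123}$, $x_2=\ph_{234}$), so the two parts (i) and (ii) are really the same computation read in opposite directions, and it suffices to check that the vector field of \eqref{syst.4.eq} is carried onto the vector field of \eqref{syst.xy} under this map. Concretely, for part (i) I would differentiate each new variable in time and substitute the right-hand sides of \eqref{syst.4.eq}.

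For the angle equations: $\pa_t x_1 = \pa_t \ph_{123}$, and in the third equation of \eqref{syst.4.eq} I replace $S_3 = y_1 - y_2$ and $S_4 = y_2$; collecting the coefficient of $y_1$ gives $\tfrac12(\a_1^2+\a_2^2+\a_3^2)$, and the coefficient of $y_2$ is $\tfrac12(\a_1^2+2\a_2^2) - \tfrac12(\a_1^2+\a_2^2+\a_3^2) = \tfrac12(\a_2^2 - \a_3^2) = -\tfrac12(\a_3^2-\a_2^2)$, while the constant term is $-\tfrac12(\a_1^2 E_1 + \a_2^2 E_2) = -\tfrac12 b_1$; this is exactly the first equation of \eqref{syst.xy}. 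Similarly $\pa_t x_2 = \pa_t \ph_{234}$ with $S_3 = y_1-y_2$, $S_4=y_2$ substituted into the fourth equation of \eqref{syst.4.eq}: the coefficient of $y_1$ is $-\tfrac12(\a_3^2-\a_2^2)$, the coefficient of $y_2$ is $\tfrac12(2\a_2^2+\a_4^2) + \tfrac12(\a_3^2-\a_2^2) = \tfrac12(\a_2^2+\a_3^2+\a_4^2)$, and the constant term is $-\tfrac12 \a_2^2 E_2 = -\tfrac12 b_2$. For the action equations: $\pa_t y_2 = \pa_t S_4 = -c_{234}\sin(\ph_{234}) = -c_{234}\sin(x_2)$, and $\pa_t y_1 = \pa_t(S_3+S_4) = (-c_{123}\sin\ph_{123} + c_{234}\sin\ph_{234}) + (-c_{234}\sin\ph_{234}) = -c_{123}\sin(x_1)$, using precisely the cancellation of the $\ph_{234}$ terms that motivated the change of variables. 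This establishes \eqref{syst.xy}.

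For part (ii) I would run the same algebra in reverse: starting from a solution of \eqref{syst.xy}, define $S_3 = y_1-y_2$, $S_4 = y_2$, $\ph_{123}=x_1$, $\ph_{234}=x_2$ and $E_1, E_2$ by \eqref{E12.from.b12} (which is the inverse of \eqref{def.b12}: indeed $\a_1^2 E_1 + \a_2^2 E_2 = (b_1-b_2) + b_2 = b_1$ and $\a_2^2 E_2 = b_2$), and verify that the four equations of \eqref{syst.4.eq} hold. The computations of $\pa_t S_3$, $\pa_t S_4$ follow from $\pa_t S_4 = \pa_t y_2$ and $\pa_t S_3 = \pa_t y_1 - \pa_t y_2 = -c_{123}\sin x_1 + c_{234}\sin x_2$; the two angle equations follow by expanding $y_1, y_2$ back in terms of $S_3, S_4$ and the constants $b_1,b_2$ in terms of $E_1,E_2$, reversing the coefficient bookkeeping above. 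Since the map is a bijection with affine inverse, no regularity or smallness issue arises.

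There is no real obstacle here: the lemma is a bookkeeping statement, and the only mild subtlety is getting the coefficients of $y_1$ and $y_2$ right after the substitution $S_3 = y_1 - y_2$, where one must remember that $S_3$ contributes to \emph{both} $y_1$ and (with a minus sign) the $y_2$ coefficient. I would present the proof as the explicit time-differentiation of \eqref{change.xy}, displaying the coefficient identities $\tfrac12(\a_1^2+2\a_2^2) - \tfrac12(\a_1^2+\a_2^2+\a_3^2) = -\tfrac12(\a_3^2-\a_2^2)$ and $\tfrac12(2\a_2^2+\a_4^2) + \tfrac12(\a_3^2-\a_2^2) = \tfrac12(\a_2^2+\a_3^2+\a_4^2)$ once, and then note that (ii) is obtained by inverting the affine change of coordinates, so the same identities close the argument in the other direction.
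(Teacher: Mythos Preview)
Your proposal is correct and matches the paper's approach: the paper states this lemma without proof, treating it as a direct computation, and your explicit coefficient bookkeeping is exactly the verification that justifies it. The key cancellation $\pa_t(S_3+S_4) = -c_{123}\sin\ph_{123}$ that you highlight is precisely the observation the paper makes just before introducing the change of variables \eqref{change.xy}.
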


\subsection{The Hamiltonian structure}

System \eqref{syst.xy} is the 2-dimensional Hamiltonian system 
\[
\dot x_n = \pa_{y_n} H(x,y), \quad \ 
\dot y_n = - \pa_{x_n} H(x,y), \quad \ 
n = 1,2,
\]
%where $x=(x_1, x_2)$, $y = (y_1, y_2)$, 
with Hamiltonian 
%\begin{align}
%H(x,y) 
%& = - c_{123} \cos(x_1) 
%- c_{234} \cos(x_2) 
%- \frac12 b_1 y_1 
%- \frac12 b_2 y_2
%\notag 
%\\
%& \quad \ 
%+ \frac14 (\a_1^2 + \a_2^2 + \a_3^2) y_1^2 
%- \frac12 (\a_3^2 - \a_2^2) y_1 y_2
%+ \frac14 (\a_2^2 + \a_3^2 + \a_4^2) y_2^2,
%\label{def.H}
%\end{align}
%that is, 
\begin{equation}\label{def.H}
H(x,y) = - c_{123} \cos(x_1) - c_{234} \cos(x_2) 
- \frac12 b \cdot y + \frac14 Ay \cdot y
\end{equation}
where $b=(b_1, b_2)\in\R^2$ %(see \eqref{def.b12}), 
and $A$ is the matrix 
\begin{equation} \label{def.matrix.A}
A := \begin{pmatrix} (\a_1^2 + \a_2^2 + \a_3^2) \ & \ - (\a_3^2 - \a_2^2) 
\\
- (\a_3^2 - \a_2^2) \ & \ (\a_2^2 + \a_3^2 + \a_4^2) 
\end{pmatrix}.
\end{equation}
%and the dot product is the usual one of $\R^2$. 

%
%\subsection{Translation of the $y$ variables}
%
%\textcolor{red}{shall we merge this section with the previous one? shorten the definition and computation of $q$? Shall we remove Remarks \ref{rem:no.parameters.b12.in.the.system} and \ref{rem:no.parameters.b12.in.the.system.BIS}}

The matrix $A$ is symmetric, positive definite and  invertible with 
%The inverse matrix $A^{-1}$ is 
\begin{equation} \label{inv.matrix.A}
	A^{-1} = \frac{1}{\det A} 
	\begin{pmatrix} (\a_2^2 + \a_3^2 + \a_4^2)  \ & \ (\a_3^2 - \a_2^2) 
		\\
		(\a_3^2 - \a_2^2) \ & \ (\a_1^2 + \a_2^2 + \a_3^2)
	\end{pmatrix}
\end{equation}
and
\begin{equation}\label{det.A}
\det A 
%& = (\a_1^2 + \a_2^2 + \a_3^2)(\a_2^2 + \a_3^2 + \a_4^2)
%- (\a_3^2 - \a_2^2)^2 
%\notag \\
 = \a_1^2 \a_4^2 + (\a_1^2 + \a_4^2)(\a_2^2 + \a_3^2) + 4 \a_2^2 \a_3^2 
> 0.
\end{equation}
The invertibility of $A$ is the so-called \emph{twist condition} for the Hamiltonian $H$;
thanks to it, we can eliminate the linear term $b \cdot y$ from the Hamiltonian 
%(and the constant terms $b_1, b_2$ from the equations) 
by a translation of the $y$ variables
%. 
%In more details, we consider the linear change of variables 
\begin{equation} \label{change.xy.tilde}
x_1 = \tilde x_1, \quad \ 
x_2 = \tilde x_2, \quad \ 
y_1 = q_1 + \tilde y_1, \quad \ 
y_2 = q_2 + \tilde y_2, 
\end{equation}
with $q = A^{-1} b$, namely 
\begin{equation} \label{formula.q12}
	q_1 = \frac{\a_2^2 + \a_3^2 + \a_4^2}{\det A} \, b_1 
	+ \frac{\a_3^2 - \a_2^2}{\det A} \, b_2, 
	\qquad 
	q_2 = \frac{\a_3^2 - \a_2^2}{\det A} \, b_1 
	+ \frac{\a_1^2 + \a_2^2 + \a_3^2}{\det A} \, b_2.
\end{equation}
%and $b_1, b_2$ are defined in \eqref{def.b12}.
This change of coordinates is symplectic and the new Hamiltonian is just $\tilde H(\tilde x, \tilde y) 
= H(\tilde x, q + \tilde y)$, whose equations are given by
%which is a symplectic transformation; 
%here $q_1, q_2$ are real number to be determined 
%in the most convenient way. 
%The transformed Hamiltonian is 
%\begin{align*}
%\tilde H(\tilde x, \tilde y) 
%& := H(\tilde x, q + \tilde y) 
%\\ 
%& = - c_{123} \cos(\tilde x_1) - c_{234} \cos(\tilde x_2) 
%- \frac12 b \cdot (q + \tilde y) + \frac14 A(q + \tilde y) \cdot (q + \tilde y)
%\\ 
%& = - c_{123} \cos(\tilde x_1) - c_{234} \cos(\tilde x_2) 
%+ \Big( \frac14 A q \cdot q - \frac12 b \cdot q \Big)
%+ \frac12 (A q - b) \cdot \tilde y
%+ \frac14 A \tilde y \cdot \tilde y
%\end{align*}
%(note that $A \tilde y \cdot q = Aq \cdot \tilde y$ because $A$ is symmetric).
%We choose $q$ such that $Aq - b = 0$, that is, we fix 
%\begin{equation} \label{fix.q}
%q := A^{-1} b.
%\end{equation}
%With this choice, the transformed Hamiltonian $\tilde H$ does not contain 
%terms linear in $\tilde y$. 
%The constant term $(\frac14 A q \cdot q - \frac12 b \cdot q)$ 
%%is irrelevant, as it 
%does not contribute to the system of equations. 
%The transformed system is then
\begin{equation} \label{syst.xy.tilde}
	\begin{aligned} 
\pa_t \tilde x_1 
& = \tfrac12 (\a_1^2 + \a_2^2 + \a_3^2) \tilde y_1 
- \tfrac12 (\a_3^2 - \a_2^2) \tilde y_2,
 \\
\pa_t \tilde y_1 
& = - c_{123} \sin(\tilde x_1),
\\
\pa_t \tilde x_2 
& = - \tfrac12 (\a_3^2 - \a_2^2) \tilde y_1
+ \tfrac12 (\a_2^2 + \a_3^2 + \a_4^2) \tilde y_2,
 \\ 
\pa_t \tilde y_2
& = - c_{234} \sin(\tilde x_2).
\end{aligned}
\end{equation}
%Of course, one can also obtain system \eqref{syst.xy.tilde} 
%directly from system \eqref{syst.xy} 
%by the change of variables \eqref{change.xy.tilde}, 
%without using the Hamiltonian structure.
%The meaningfulness condition \eqref{nec.cond.xy} becomes 
%\begin{equation} \label{nec.cond.xy.tilde}
%\begin{aligned}
%q_1 + \tilde y_1(t) - q_2 - \tilde y_2(t) & > 0, \qquad &
%q_2 + \tilde y_2(t) & > 0,  
%\\ 
%E_1 - q_1 - \tilde y_1(t) & > 0, \qquad &
%E_2 - q_1 - \tilde y_1(t) - q_2 - \tilde y_2(t) & > 0 \qquad 
%\forall t \in [0,T]. 
%\end{aligned}
%\end{equation}
%
%The inverse matrix $A^{-1}$ is 
%\begin{equation} \label{inv.matrix.A}
%A^{-1} = \frac{1}{\det A} 
%\begin{pmatrix} (\a_2^2 + \a_3^2 + \a_4^2)  \ & \ (\a_3^2 - \a_2^2) 
%\\
%(\a_3^2 - \a_2^2) \ & \ (\a_1^2 + \a_2^2 + \a_3^2)
%\end{pmatrix},
%\end{equation}
%therefore, by \eqref{fix.q}, the constants $q_1, q_2$ are 
%\begin{equation} \label{formula.q12}
%q_1 = \frac{\a_2^2 + \a_3^2 + \a_4^2}{\det A} \, b_1 
%+ \frac{\a_3^2 - \a_2^2}{\det A} \, b_2, 
%\qquad 
%q_2 = \frac{\a_3^2 - \a_2^2}{\det A} \, b_1 
%+ \frac{\a_1^2 + \a_2^2 + \a_3^2}{\det A} \, b_2,
%\end{equation}
%and $b_1, b_2$ are defined in \eqref{def.b12}.
%
The equivalence of systems \eqref{syst.xy} and \eqref{syst.xy.tilde} 
is described in the following lemma.

\begin{lemma} \label{lemma:change.xy.xy.tilde}
Let $c_{123}, c_{234}$ be any two constants. 
The following properties hold.
\begin{itemize}
\item[(i)] Let $b_1, b_2$ be constants, and let 
$(x_1(t), x_2(t), y_1(t), y_2(t))$ be a solution 
of system \eqref{syst.xy} on some time interval $I$.
Define the constants $q_1, q_2$ by \eqref{formula.q12}, 
and define the functions $\tilde x_1(t), \tilde x_2(t), \tilde y_1(t), \tilde y_2(t)$ 
by \eqref{change.xy.tilde}.
%, i.e., 
%\begin{equation} \label{xy.tilde.from.xy}
%\tilde x_1 = x_1, \qquad 
%\tilde x_2 = x_2, \qquad  
%\tilde y_1 = y_1 - q_1, \qquad 
%\tilde y_2 = y_2 - q_2.
%\end{equation}
Then $(\tilde x_1(t), \tilde x_2(t), \tilde y_1(t), \tilde y_2(t))$ 
solves \eqref{syst.xy.tilde}. 

\item[(ii)] Let $(\tilde x_1(t), \tilde x_2(t), \tilde y_1(t), \tilde y_2(t))$ 
be a solution of system \eqref{syst.xy.tilde} on some  interval $I$. 
Let $q_1, q_2$ be any two real numbers. 
Define  constants $b_1, b_2$ by the identity $b = Aq$, i.e., define 
\begin{equation} \label{b12.from.q12} 
b_1 = (\a_1^2 + \a_2^2 + \a_3^2) q_1 - (\a_3^2 - \a_2^2) q_2, 
\qquad 
b_2 = - (\a_3^2 - \a_2^2) q_1 + (\a_2^2 + \a_3^2 + \a_4^2) q_2,
\end{equation}
and define the functions $x_1(t), x_2(t), y_1(t), y_2(t)$ by \eqref{change.xy.tilde}. 
Then $(x_1(t), x_2(t), y_1(t), y_2(t))$ solves \eqref{syst.xy} on $I$.
\end{itemize}
\end{lemma}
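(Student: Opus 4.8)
The plan is to prove both items by direct substitution, exploiting only two facts: the translation \eqref{change.xy.tilde} leaves the $x$ variables (hence the pendulum nonlinearities $-c_{123}\sin x_1$, $-c_{234}\sin x_2$) untouched, and the choice $q = A^{-1}b$ is precisely what cancels the constant terms $-\tfrac12 b_1, -\tfrac12 b_2$ in the $x$-equations. The key observation that makes everything transparent is that the first two equations of \eqref{syst.xy} can be rewritten, reading off the entries of $A$ in \eqref{def.matrix.A}, as $\pa_t x = -\tfrac12 b + \tfrac12 A y$, while the first and third equations of \eqref{syst.xy.tilde} read $\pa_t \tilde x = \tfrac12 A \tilde y$.

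For item (i): assume $(x_1,x_2,y_1,y_2)$ solves \eqref{syst.xy} on $I$, let $q=(q_1,q_2)$ be given by \eqref{formula.q12}, which by \eqref{inv.matrix.A} is exactly $q = A^{-1} b$ with $b$ as in \eqref{def.b12}, and set $\tilde x_n := x_n$, $\tilde y_n := y_n - q_n$ as in \eqref{change.xy.tilde}. Then $\pa_t \tilde y_n = \pa_t y_n = - c_{\cdot}\sin(x_n) = - c_{\cdot}\sin(\tilde x_n)$, giving the second and fourth equations of \eqref{syst.xy.tilde}; and $\pa_t \tilde x = \pa_t x = \tfrac12 A y - \tfrac12 b = \tfrac12 A(y-q) = \tfrac12 A\tilde y$ since $Aq = b$, which written componentwise via \eqref{def.matrix.A} is exactly the first and third equations of \eqref{syst.xy.tilde}. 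For item (ii) one runs the same computation in reverse: given a solution $(\tilde x,\tilde y)$ of \eqref{syst.xy.tilde} and arbitrary $q_1,q_2$, define $b := Aq$ (which is precisely \eqref{b12.from.q12}) and set $x_n := \tilde x_n$, $y_n := \tilde y_n + q_n$; then $\pa_t y_n = - c_{\cdot}\sin(x_n)$ and $\pa_t x = \tfrac12 A\tilde y = \tfrac12 A(y-q) = \tfrac12 A y - \tfrac12 b$, which componentwise is the first two equations of \eqref{syst.xy}.

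There is essentially no obstacle here: the lemma is a change-of-variables bookkeeping statement, and the only point requiring a moment of care is matching the scalar coefficients $\tfrac12(\a_1^2+\a_2^2+\a_3^2)$, $-\tfrac12(\a_3^2-\a_2^2)$, $\tfrac12(\a_2^2+\a_3^2+\a_4^2)$ that appear in \eqref{syst.xy} and \eqref{syst.xy.tilde} with the entries of $A$ in \eqref{def.matrix.A}, together with checking that $Aq=b$ is equivalent both to the explicit formula \eqref{formula.q12} and to \eqref{b12.from.q12}. Alternatively, one can invoke the Hamiltonian structure directly: \eqref{change.xy.tilde} is a pure translation of the $y$ variables with $x$ fixed, hence symplectic, so \eqref{syst.xy.tilde} is automatically the Hamiltonian system with $\tilde H(\tilde x,\tilde y) = H(\tilde x, q+\tilde y)$ for $H$ as in \eqref{def.H}; expanding $\tfrac14 A(q+\tilde y)\cdot(q+\tilde y) - \tfrac12 b\cdot(q+\tilde y)$ and using $Aq=b$ shows the term linear in $\tilde y$ cancels, leaving $\tilde H = -c_{123}\cos\tilde x_1 - c_{234}\cos\tilde x_2 + \tfrac14 A\tilde y\cdot\tilde y$ up to an additive constant, whose Hamilton equations are exactly \eqref{syst.xy.tilde}. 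Either route settles the lemma immediately.
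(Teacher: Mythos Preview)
Your proof is correct and matches the paper's approach: the paper does not give a separate proof for this lemma, treating it as immediate from the preceding observation that the translation \eqref{change.xy.tilde} is symplectic with new Hamiltonian $\tilde H(\tilde x,\tilde y)=H(\tilde x,q+\tilde y)$ and $q=A^{-1}b$. Your direct-substitution argument (writing the $x$-equations as $\pa_t x = -\tfrac12 b + \tfrac12 A y$ and using $Aq=b$) makes this explicit and is exactly what the paper leaves implicit.
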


\subsection{Normalization of coefficients by rescaling}

%\textcolor{red}{shall we shorten this section?}

Now we want to normalize
% (that is, make equal to $\pm 1$) 
the leading coefficients of system \eqref{syst.xy.tilde}, 
using a rescaling of the time variable 
and dilations of the $\tilde y$ variables. 
We consider the change of variables
\begin{equation} \label{change.xi.eta}
\tilde x_1(t) = \xi_1(\tB t), \quad \ 
\tilde x_2(t) = \xi_2(\tB t), \quad \ 
\tilde y_1(t) = \tA_1 \eta_1(\tB t), \quad \ 
\tilde y_2(t) = \tA_2 \eta_2(\tB t), \quad 
\end{equation}
where $\tA_1, \tA_2, \tB$ are defined as follows.
% real positive constants to be determined 
%%(notation alert: we are using the same letters as for 
%%the matrix $A$ defined in \eqref{def.matrix.A}
%%and the variables $B_\lm$ defined in \eqref{def.SB}, 
%%but this should not create any confusion). 
%With the transformation \eqref{change.xi.eta}, 
%system \eqref{syst.xy.tilde} becomes 
%\begin{align}
%\dot \xi_1
%& = \dfrac{(\a_1^2 + \a_2^2 + \a_3^2)\tA_1}{2\tB} \eta_1 
%- \dfrac{(\a_3^2 - \a_2^2)\tA_2}{2\tB} \eta_2,
%\notag \\
%\dot \eta_1 
%& = - \dfrac{c_{123}}{\tA_1 \tB} \sin(\xi_1), 
%\notag \\
%\dot \xi_2 
%& = \dfrac{(\a_2^2 + \a_3^2 + \a_4^2) \tA_2}{2\tB} \eta_2
%- \dfrac{(\a_3^2 - \a_2^2) \tA_1}{2\tB} \eta_1, 
%\notag \\
%\dot \eta_2 
%& = - \frac{c_{234}}{\tA_2 \tB} \sin(\xi_2).
%\label{syst.xi.eta}
%\end{align}
%First, we choose $\tA_1, \tB$ such that the first coefficient in the first two equations 
%is normalized, that is, 
%\begin{equation} \label{coeff.normalized.A1.B}
%\dfrac{(\a_1^2 + \a_2^2 + \a_3^2)\tA_1}{2\tB} = 1, 
%\qquad 
%\dfrac{c_{123}}{\tA_1 \tB} = 1.
%\end{equation}
%Hence we assume that 
We assume that 
\begin{equation} \label{nec.cond.c.123}
c_{123} > 0
\end{equation}
and we fix 
\begin{equation} \label{fix.A1.B}
\tA_1 := \Big( \frac{2 c_{123}}{\a_1^2 + \a_2^2 + \a_3^2} \Big)^{\frac12}, 
\qquad 
\tB := \Big( \frac{c_{123}(\a_1^2 + \a_2^2 + \a_3^2)}{2} \Big)^{\frac12},
\end{equation}
%Recalling the definition \eqref{def.c} 
%and, in particular, the observation \eqref{c.B.time.zero}, 
%the condition \eqref{nec.cond.c.123} is satisfied if the initial data 
%$B_1^0, B_2^0, B_3^0$ are all nonzero complex numbers. 
%
%Regarding the choice of the constant $A_2$, there are two natural options. 
%The first option is to fix $A_2 = A_1$: then the transformation is symplectic 
%in a generalized sense (that is, up to a scalar factor), 
%and the transformed system is still a Hamiltonian one, 
%where the new Hamiltonian is the old one calculated in the new variables
%and then multiplied by the constant factor $1/A_1$. 
%
%The other option is to allow $A_2$ to be different from $A_1$: 
%then the transformation is not symplectic,
%even in a generalized sense; however, with $A_2 \neq A_1$ we gain the possibility 
%of normalizing another coefficient, and the transformed system 
%will become the sum of an``unperturbed system'', which is still Hamiltonian,  
%and a perturbation, which is not Hamiltonian 
%(because of ``wrong'' scalar coefficients in different components 
%of the vector field). 
%Since the method of the Melnikov integral also covers this situation, 
%the second option seems to be more convenient than the first one 
%for the present purposes. 
%
%For this reason, we choose $A_2$ that normalizes the first coefficient 
%in the third equation of system \eqref{syst.xi.eta}, that is, 
%\begin{equation} \label{coeff.normalized.A2}
%\dfrac{(\a_2^2 + \a_3^2 + \a_4^2) A_2}{2B} = 1.
%\end{equation}
%Thus we fix 
\begin{equation} \label{fix.A2}
\tA_2 := \frac{2 \mathtt{B}}{\a_2^2 + \a_3^2 + \a_4^2}.
\end{equation}
%By the first identity in \eqref{coeff.normalized.A1.B} 
%and by \eqref{fix.A2} we also have 
Note that $\tA_1$ and $\tA_2$ are related by 
\begin{equation} \label{def.gamma}
\tA_1 = \tA_2 \g, \qquad \text{with}\qquad
\gamma := \frac{\a_2^2 + \a_3^2 + \a_4^2}{\a_1^2 + \a_2^2 + \a_3^2}.
\end{equation}
%Regarding the coefficient in the fourth equation of system \eqref{syst.xi.eta}, 
%one has
Thus, system \eqref{syst.xy.tilde} becomes 
\begin{equation}\label{syst.xi.eta.partially.normalized} 
\begin{split}
\dot \xi_1 & = \eta_1 - \mu_1 \eta_2,
\\
\dot \eta_1 & = - \sin(\xi_1), 
 \\
\dot \xi_2 & = \eta_2 - \mu_2 \eta_1, 
 \\
\dot \eta_2 & = - \lm  \sin(\xi_2),
\end{split}
\end{equation}
where 
\begin{align}
\label{def.mu.1}
\mu_1 & := \frac{\a_3^2 - \a_2^2}{\a_2^2 + \a_3^2 + \a_4^2}
= \frac{(\a_3^2 - \a_2^2) \tA_2}{2\tB},
\\ 
\mu_2 & := \frac{\a_3^2 - \a_2^2}{\a_1^2 + \a_2^2 + \a_3^2}
= \mu_1 \gamma
= \frac{(\a_3^2 - \a_2^2) \tA_1}{2\tB},
\label{def.mu.2}\\
%\end{align}
%and
%\begin{equation} \label{def.lambda} % {formula.fourth.coeff}
\lm&:= \frac{c_{234}(\a_2^2 + \a_3^2 + \a_4^2)}{c_{123}(\a_1^2 + \a_2^2 + \a_3^2)}
	= \frac{c_{234}}{c_{123}}\,\gamma
	= \frac{c_{234}}{\tA_2 \tB} .
\label{def.lambda}
\end{align}

We observe that the system with $\mu_1=0$ is given by the sum of two uncoupled Hamiltonians, while for $\mu_1\neq 0$ the Hamiltonian structure is lost.
%We will need $\lm > 0$, and therefore we also require 
%\begin{equation} \label{nec.cond.c.234}
%c_{234} > 0.
%\end{equation}
%Recalling the definition \eqref{def.c} and, 
%in particular, the observation \eqref{c.B.time.zero}, 
%the condition \eqref{nec.cond.c.234} is satisfied if the initial data 
%$B_2^0, B_3^0, B_4^0$ are all nonzero complex numbers. 
The equivalence of systems \eqref{syst.xy.tilde} 
and \eqref{syst.xi.eta.partially.normalized} is described in the following lemma.

\begin{lemma} \label{lemma:change.xy.tilde.xi.eta}
The following statements are satisfied.
\begin{itemize}
\item[(i)] Let $c_{123}, c_{234}$ be any two constants, with $c_{123} > 0$,
and let $(\tilde x_1(t), \tilde x_2(t), \tilde y_1(t), \tilde y_2(t))$ 
be a solution of system \eqref{syst.xy.tilde} on some time interval $[0,T]$. 
Let $\tA_1, \tA_2, \tB$ be the constants defined in \eqref{fix.A1.B}, \eqref{fix.A2}. 
Define the functions $\xi_1, \xi_2, \eta_1, \eta_2$ as 
\begin{equation} \label{xi.eta.from.xy.tilde}
\xi_1(t) = \tilde x_1 \Big( \frac{t}{\tB} \Big), \quad \ 
\xi_2(t) = \tilde x_2 \Big( \frac{t}{\tB} \Big), \quad \ 
\eta_1(t) = \frac{1}{\tA_1} \tilde y_1 \Big( \frac{t}{\tB} \Big), \quad \ 
\eta_2(t) = \frac{1}{\tA_2} \tilde y_2 \Big( \frac{t}{\tB} \Big). \quad 
\end{equation}
Define the constants $\mu_1, \mu_2, \lm$ 
by \eqref{def.mu.1}, \eqref{def.mu.2}, \eqref{def.lambda}.
Then $(\xi_1(t), \xi_2(t), \eta_1(t), \eta_2(t))$ 
solves \eqref{syst.xi.eta.partially.normalized} on the time interval $[0, \mathtt{B} T]$. 

\item[(ii)] Let $\lm$ be any constant. 
Let $\mu_1, \mu_2$ be the constants defined in \eqref{def.mu.1}, \eqref{def.mu.2}. 
Let $(\xi_1(t), \xi_2(t), \eta_1(t), \eta_2(t))$ 
be a solution of system \eqref{syst.xi.eta.partially.normalized} 
on some time interval $[0, T]$. 
Let $\tA_1$ be any positive constant. 
Define the constant $c_{123}$ as 
\begin{equation} \label{c123.from.A1}
c_{123} = \frac{\a_1^2 + \a_2^2 + \a_3^2}{2} \tA_1^2
\end{equation}
and define the constant $c_{234}$ by means of \eqref{def.lambda}, 
i.e., $c_{234} = c_{123} \lm / \gamma$, where $\gamma$ is defined in \eqref{def.gamma}.
Define the constant $\mathtt{B}$ by the second identity in \eqref{fix.A1.B},
and the constant $\mathtt{A}_2$ by \eqref{fix.A2}.
Define the functions  
$\tilde x_1, \tilde x_2, \tilde y_1, \tilde y_2$ 
by \eqref{change.xi.eta}. 
Then $(\tilde x_1(t), \tilde x_2(t), \tilde y_1(t), \tilde y_2(t))$ 
solve \eqref{syst.xy.tilde} on the time interval $[0, \mathtt{B}^{-1} T]$. 
\end{itemize}
\end{lemma}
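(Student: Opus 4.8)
The plan is to prove both directions by direct substitution. The change of variables \eqref{change.xi.eta} is the composition of a rescaling of time by the factor $\tB$ and two independent dilations of the $\tilde y$-variables by $\tA_1$ and $\tA_2$; hence, by the chain rule, a solution of one system is carried, equation by equation, to the right-hand side of the other, provided the algebraic relations among $\tA_1,\tA_2,\tB,c_{123},c_{234}$ and the $\a_j$'s are as prescribed. The only real content is to check that these relations indeed hold.

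For part (i), start from a solution $(\tilde x_1,\tilde x_2,\tilde y_1,\tilde y_2)$ of \eqref{syst.xy.tilde} on $[0,T]$ and define $\xi_j,\eta_j$ by \eqref{xi.eta.from.xy.tilde}, i.e. $\xi_j(t)=\tilde x_j(t/\tB)$ and $\eta_j(t)=\tA_j^{-1}\tilde y_j(t/\tB)$. Then $\dot\xi_1(t)=\tB^{-1}\pa_t\tilde x_1(t/\tB)$, and inserting the first equation of \eqref{syst.xy.tilde} and replacing $\tilde y_j(t/\tB)$ by $\tA_j\eta_j(t)$ gives $\dot\xi_1=\tfrac{1}{2\tB}(\a_1^2+\a_2^2+\a_3^2)\tA_1\,\eta_1-\tfrac{1}{2\tB}(\a_3^2-\a_2^2)\tA_2\,\eta_2$. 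The coefficient of $\eta_1$ equals $1$ because squaring and multiplying the two formulas in \eqref{fix.A1.B} yields $\tA_1\tB=c_{123}$ and $\tA_1=2\tB/(\a_1^2+\a_2^2+\a_3^2)$, and the coefficient of $\eta_2$ is exactly $\mu_1$ by \eqref{def.mu.1}. Likewise $\dot\eta_1(t)=(\tA_1\tB)^{-1}\pa_t\tilde y_1(t/\tB)=-c_{123}(\tA_1\tB)^{-1}\sin\xi_1(t)=-\sin\xi_1(t)$. The identical computation for the second pair gives $\dot\xi_2=\eta_2-\mu_2\eta_1$, using \eqref{fix.A2} for the coefficient of $\eta_2$ and \eqref{def.mu.2} for the coefficient of $\eta_1$, and $\dot\eta_2=-c_{234}(\tA_2\tB)^{-1}\sin\xi_2=-\lm\sin\xi_2$ by \eqref{def.lambda}. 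Since $t/\tB\in[0,T]$ exactly when $t\in[0,\tB T]$, the quadruple $(\xi_1,\xi_2,\eta_1,\eta_2)$ solves \eqref{syst.xi.eta.partially.normalized} on $[0,\tB T]$.

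Part (ii) is the same computation read backwards. Given $\tA_1>0$, one first checks that defining $c_{123}$ by \eqref{c123.from.A1}, then $\tB$ by the second formula in \eqref{fix.A1.B}, then $\tA_2$ by \eqref{fix.A2} reproduces $\tA_1\tB=c_{123}$, $\tA_1=2\tB/(\a_1^2+\a_2^2+\a_3^2)$ and $\tA_1=\g\tA_2$ (with $\g$ as in \eqref{def.gamma}), and that putting $c_{234}=c_{123}\lm/\g$ yields $\lm=c_{234}/(\tA_2\tB)$. With these identities in hand, differentiating the functions defined by \eqref{change.xi.eta} and using that $(\xi_1,\xi_2,\eta_1,\eta_2)$ solves \eqref{syst.xi.eta.partially.normalized} reproduces line by line the four equations of \eqref{syst.xy.tilde}, now on $[0,\tB^{-1}T]$.

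No genuine analytic difficulty arises; the whole argument is the bookkeeping of constants. The step that deserves the most care is verifying the internal consistency of the several equivalent expressions listed for $\mu_1,\mu_2,\lm$ in \eqref{def.mu.1}--\eqref{def.lambda} --- that the ``geometric'' ratios of the $\a_j^2$ agree with the expressions in terms of $\tA_1,\tA_2,\tB$, and that the relation $\tA_1=\g\tA_2$ of \eqref{def.gamma} holds --- since it is precisely these coincidences that make the substitution collapse to the normalized system; and, in part (ii), checking that none of this depends on the free choice of $\tA_1>0$.
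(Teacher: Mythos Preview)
Your proof is correct and follows exactly the approach the paper takes: the paper does not give a separate proof of this lemma, but the paragraph preceding it (``Thus, system \eqref{syst.xy.tilde} becomes \ldots'') carries out precisely the forward substitution you describe, and the lemma merely records the conclusion in both directions. Your bookkeeping of the constants---in particular $\tA_1\tB=c_{123}$, $\tA_1=2\tB/(\a_1^2+\a_2^2+\a_3^2)$, and the identifications of $\mu_1,\mu_2,\lm$ via \eqref{def.mu.1}--\eqref{def.lambda}---is accurate.
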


\subsection{Large Fourier frequency as a perturbation parameter}

Recall the definition \eqref{alpha.1234.Fib} of $\a_1, \ldots, \a_4$ 
as functions of the two integer parameters $m,p$.  
Then 
\begin{equation}\label{coeff.matrix.A}
\begin{aligned}
\a_3^2 - \a_2^2 
& = (2m+p)^2 - (m+p)^2 
= 3m^2 + 2mp,
\notag \\
\a_1^2 + \a_2^2 + \a_3^2
& = m^2 + (m+p)^2 + (2m+p)^2
= 6m^2 + 6mp + 2 p^2,
\notag \\
\a_2^2 + \a_3^2 + \a_4^2
& = (m+p)^2 + (2m+p)^2 + (3m+2p)^2
= 14 m^2 + 18 mp + 6 p^2.
\end{aligned}
\end{equation}
We note that the monomial $p^2$ cancels out in the difference $\a_3^2 - \a_2^2$, 
while it is present in the other two sums. 
For this reason, taking $p$ large with respect to $m$ 
gives a small parameter, which we will use in our perturbation analysis
(the other small parameter of the problem is the size of the solution, 
i.e., the size of the initial data of the Kirchhoff equation). 
Denoting
\begin{equation} \label{def.sigma}
\sigma := \frac{m}{p},
\end{equation}
%the ratio of the two integer parameters. 
%Hence
one has
\begin{align}
\mu_1 
& = \frac{2 \sigma + 3 \sigma^2}{6 + 18 \sigma + 14 \sigma^2}
= \sigma \Big( \frac13 + \tilde{\mu}_1 (\sigma) \Big),
\qquad 
\g 
= \frac{6 + 18 \sigma + 14 \s^2}{2 + 6 \s + 6 \s^2} 
= 3 + O(\s), 
\notag \\
\mu_2 
& = \mu_1 \g 
= \frac{2 \sigma + 3 \sigma^2}{2 + 6 \s + 6 \s^2} 
= \s  \big( 1 + \tilde{\mu}_2 (\sigma) \big),
\qquad \tilde{\mu}_1 (\sigma), \tilde{\mu}_2 (\sigma) = O (\sigma) 
\quad \text{as } \s \to 0,
\label{Taylor.mu.12.gamma}
\end{align}
where $\tilde \mu_1(\s), \tilde \mu_2(\s)$ are defined 
by the identities \eqref{Taylor.mu.12.gamma}.
We also note that $1 < \g < 3$ for all $\s > 0$.

Thus, for $\s = m/p$ small, 
the ``coupling'' terms $\mu_1 \eta_2$ and $\mu_2 \eta_1$ 
in system \eqref{syst.xi.eta.partially.normalized}
can be considered as perturbations of the 
``unperturbed'' system of two uncoupled pendulums
%, namely the system one obtains 
%when the perturbation $\mu_1 \eta_2, \mu_2 \eta_1$ is removed 
%from system \eqref{syst.xi.eta.partially.normalized}, 
%is a Hamiltonian system
\begin{equation} \label{unpert.syst.lambda}
\begin{cases}
\dot \xi_1 = \eta_1,
\\
\dot \eta_1 = - \sin(\xi_1), 
\end{cases}
\qquad \quad 
\begin{cases}
\dot \xi_2 = \eta_2, 
\\
\dot \eta_2 = - \lm  \sin(\xi_2).
\end{cases}
\end{equation}
%The unperturbed system \eqref{unpert.syst.lambda} is made by two uncoupled pendulums. 
%Hence, for $\s$ small, we can consider 
%the full system \eqref{syst.xi.eta.partially.normalized} 
%as a perturbation of \eqref{unpert.syst.lambda}, 
%where the two pendulums are coupled by the perturbation terms 
%$\mu_1 \eta_2, \mu_2 \eta_1$. 

%\subsection{Normalization of the fourth coefficient}
%\textcolor{red}{shall we merge this section with the previous one?}

We want to normalize also the coefficient $\lm$ appearing in the last equation 
of system \eqref{syst.xi.eta.partially.normalized}. 
Later, we will see that this normalization corresponds 
to a constraint on the initial data for equation \eqref{pat.Z}; 
at this stage, however, we simply observe that the parameter $\lm$ 
in part $(ii)$ of Lemma \ref{lemma:change.xy.tilde.xi.eta} 
is not subject to any constraint. 
Thus, in the following analysis we fix $\lm = 1$  
and simply do not consider other values of that parameter. 
For $\lm=1$, system \eqref{syst.xi.eta.partially.normalized} becomes 
\begin{equation}\label{syst.xi.eta.fully.normalized}
\begin{cases}
\dot \xi_1  = \eta_1 - \mu_1 \eta_2,\\
\dot \eta_1  = - \sin(\xi_1), \\
%\end{cases}\qquad \begin{cases}
\dot \xi_2  = \eta_2 - \mu_2 \eta_1, \\
\dot \eta_2  = - \sin(\xi_2).
\end{cases}
\end{equation}
System \eqref{syst.xi.eta.fully.normalized} has a conserved quantity (obtained expressing the old Hamiltonian in terms of the new variables), which is
\begin{equation}\label{def.mE}
%\mE(\xi_1,\eta_1,\xi_2,\eta_2):=
\mu_2 \Big( \frac12 \eta_1^2  + 1 - \cos \xi_1 \Big) + \mu_1 \Big( \frac12 \eta_2^2  + 1 - \cos \xi_2 \Big) - \mu_1 \mu_2 \eta_1 \eta_2.
\end{equation}
The only parameters in system \eqref{syst.xi.eta.fully.normalized} 
are the constants $\mu_1, \mu_2$ defined in \eqref{def.mu.1}, \eqref{def.mu.2}, 
which depend only on the ratio $\s = m/p$, 
and tend to zero as $\s \to 0$ (see \eqref{Taylor.mu.12.gamma}).
The ``unperturbed'' part of system \eqref{syst.xi.eta.fully.normalized} 
is \eqref{unpert.syst.lambda} with $\lm = 1$, that is, 
\begin{equation} \label{unpert.syst}
\begin{cases}
\dot \xi_1 = \eta_1,
\\
\dot \eta_1 = - \sin(\xi_1), 
\end{cases}
\qquad \quad 
\begin{cases}
\dot \xi_2 = \eta_2, 
\\
\dot \eta_2 = - \sin(\xi_2).
\end{cases}
\end{equation}
System \eqref{unpert.syst} is fully normalized 
and it % The unperturbed system \eqref{unpert.syst} 
is the 2-dimensional Hamiltonian system 
of two uncoupled normalized pendulums
\[
\dot \xi_n = \pa_{\eta_n} H, \quad \ 
\dot \eta_n = - \pa_{\xi_n} H, \quad \ 
n = 1,2,
\]
with Hamiltonian 
\begin{equation} \label{def.H1.H2}
H = H_1(\xi_1, \eta_1) + H_2(\xi_2,\eta_2),
\qquad
H_n(\xi_n,\eta_n) = \frac12 \eta_n^2+\big(1 - \cos(\xi_n)\big) , 
\quad \ n=1,2.
\end{equation}
The constant term 1 in the formula of $H_n$ has been added 
just to give zero energy to the elliptic equilibrium.

%The two pendulums are both ``universal'', or ``normalized'', 
%because the coefficients in their equations are $\pm 1$.  

%\medskip
%
%To conclude this section, we summarize the conditions to impose (or to check) 
%in the forthcoming analysis: 
%\begin{itemize}
%\item
%the initial data $B_1^0, B_2^0, B_3^0, B_4^0$ must be all nonzero 
%(so that \eqref{nec.cond.c.123}, \eqref{nec.cond.c.234} are satisfied);
%
%\item
%the ratio of the initial data $B_1^0, B_4^0$
%must satisfy \eqref{fix.ratio.B4.B1};
%
%\item
%the components $S_1, S_2, S_3, S_4$ of the solution of system \eqref{syst.6.eq} 
%must be positive (see \eqref{nec.cond.S}) on the considered time interval. 
%\end{itemize}
 
It is convenient to consider the system \eqref{syst.xi.eta.fully.normalized} as a perturbed double-pendulum system where the perturbative parameter is given by $\sigma$, instead of $\mu_1, \mu_2$. Namely, using \eqref{Taylor.mu.12.gamma}, 
\begin{equation}\label{syst.xi.eta.fully.normalized2} 
\begin{aligned}
\dot \xi_1 & = \eta_1 - \sigma \Big( \frac13 + \tilde{\mu}_1 (\sigma) \Big) \eta_2, \\
\dot \eta_1 & = - \sin(\xi_1),  
\end{aligned}\qquad \quad
\begin{aligned}
\dot \xi_2 & = \eta_2 - \sigma \big( 1 + \tilde{\mu}_2 (\sigma) \big) \eta_1,  \\
\dot \eta_2 & = - \sin(\xi_2).
\end{aligned}
\end{equation}
%where
%\begin{equation}\label{defY}
%Y_1 (\eta_2) := -\frac{\mu_1}{\s}\eta_2 = - \Big(\frac{1}{3}+O(\sigma)\Big)\eta_2 , \qquad Y_2 (\eta_1) := -\frac{\mu_2}{\s}\eta_1 =- \left(1+O(\sigma)\right) \eta_1.
%\end{equation}
The conserved quantity in \eqref{def.mE}, divided by $\sigma$, is
\begin{equation}\label{mE.sigma}
\begin{split}
\mE(\xi_1,\eta_1,\xi_2,\eta_2) :=  & \Big( \frac13 + \tilde{\mu}_1 (\sigma) \Big) H_1(\xi_1,\eta_1) + \big( 1 + \tilde{\mu}_2 (\sigma) \big) H_2(\xi_2,\eta_2)\\
& - \sigma \Big( \frac13 + \tilde{\mu}_1 (\sigma) \Big) \big( 1 + \tilde{\mu}_2 (\sigma) \big) \eta_1 \eta_2.
\end{split}
\end{equation}

\section{Chaos for two weakly coupled pendulums}
\label{sec:chaos.2.pendulums}

In this section we prove the following result about chaotic solutions 
of system \eqref{syst.xi.eta.fully.normalized2}. 
We denote $\N_0 := \{ 0,1,2,\ldots \}$ the set of nonnegative integers.
Given an energy parameter $a\in(0,2)$, we denote by $(\xi_1^*(t), \eta_1^*(t))$ the periodic solution of the pendulum satisfying
\begin{equation}\label{def:periodic_a}
H_1(\xi_1^*(t), \eta_1^*(t)) = a, 
\quad \ 
\xi_1^*(0)=0
\quad \ 
\eta_1^*(0)>0.
\end{equation}
To emphasize its dependence on $a$ we will also denote $\xi_1^*(t)=\xi_1^*(t;a)$ and $\eta_1^*(t)=\eta_1^*(t;a)$.

\begin{proposition}  \label{prop:chaos.xi.eta}
There exists a universal constant $a_0\in(0,2)$ (see Lemma \ref{lemma:a0}) such that the following holds.
Let $a\in [a_0, 2)$ and let $T_a$ be the period of $(\xi_1^*(t;a), \eta_1^*(t;a))$.
There exist universal constants $\s_0 \in (0,1)$, 
$C_1, C_2 > 0$ 
such that for every $\s \in (0, \s_0)$ 
there exists $M_0(\s)$ in the interval
\[
C_1 \log(\s^{-1}) \leq M_0(\s) 
\leq C_2 \log(\s^{-1})
\]
such that the following properties hold. 
Let $(m_0, m_1, m_2, \ldots) = (m_j)_{j \in \N_0}$ 
be any sequence of integers with 
$m_j \geq M_0(\s)$ for all $j \in \N$. 
Then, there exists a solution $(\xi_1, \eta_1, \xi_2, \eta_2)(t)$ 
of system \eqref{syst.xi.eta.fully.normalized2} such that 
the following holds.
\begin{itemize}
\item[$(i)$]
The function $(\xi_1, \eta_1)$ satisfies 
\[
%\sup_{t \in \R} \big( |\xi_1(t) - \xi_1^*(t)| + |\eta_1(t) - \eta_1^*(t)| \big) 
%\leq C \s,
\sup_{t \in \R} |\xi_1(t) - \xi_1^*(t)| \leq C \s, 
\qquad 
\sup_{t \in \R} |\eta_1(t) - \eta_1^*(t)| \leq C \s,
\]
for some universal constant $C>0$.

\item[$(ii)$]
There exists a sequence of times $(t_0, t_1, t_2, \ldots)= (t_j)_{j \in \N_0}$ with 
\begin{equation}\label{def:tj}
t_0 = 0, 
\quad \ 
t_{j+1} = t_j + T_a (m_j + \theta_j),
\quad \ 
0 \leq \theta_j < 1,
\quad \ \forall j \in \N_0
\end{equation}
such that 
\[
{\eta_2(t_j)}=1 
\quad \ \forall j \in \N_0.
\]
Moreover, there exists another sequence of times 
$( \bar t_j )_{j\in \N_0}$ 
satisfying $t_j < \bar t_j < t_{j+1}$ such that
\begin{equation}\label{borraccia}
1 < {\eta_2(t)} \leq 2+C\s \quad \forall t \in (t_j, \bar t_j),
\qquad \quad 
-C\s \leq {\eta_2(t)} < 1 \quad \forall t \in (\bar t_j, t_{j+1})
\end{equation}
and
%\begin{equation}  \label{max.min.eta2.squared}
%\max_{t \in [t_j, \bar t_j]} \frac{\eta_2^2(t)}{2} \geq 2- \sigma, 
%\qquad 
%\min_{t \in [\bar t_j, t_{j+1}]} \frac{\eta_2^2(t)}{2} \leq \sigma.
%\end{equation}
%\end{itemize}
\begin{equation}  \label{bounds.eta2.in.section.6}
\max_{t \in [t_j, \bar t_j]} \eta_2(t) \geq 2-C\s, 
\qquad 
\min_{t \in [\bar t_j, t_{j+1}]} \eta_2(t) \leq C\s
\end{equation}
for some universal constant $C>0$.
%this is simply \eqref{max.min.eta2.squared} when $\eta_2 > 0$.
\end{itemize}
\end{proposition}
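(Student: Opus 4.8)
The plan is to treat system \eqref{syst.xi.eta.fully.normalized2} as a perturbation, with small parameter $\s$, of the two uncoupled pendulums \eqref{unpert.syst}, and to build the desired solution by a Poincar\'e--Melnikov / shadowing argument for the second pendulum while keeping the first one close to the prescribed periodic orbit $(\xi_1^*,\eta_1^*)$ of energy $a$. Concretely: for $\s=0$ the first pendulum is decoupled and follows $(\xi_1^*(t),\eta_1^*(t))$ exactly; for $\s>0$, the coupling term $-\s(\tfrac13+\tilde\mu_1(\s))\eta_2$ in the $\dot\xi_1$ equation is an $O(\s)$ forcing, so by Gronwall on the pendulum flow (which has at worst linear-in-time separation of nearby orbits on compact energy ranges) one keeps $|\xi_1-\xi_1^*|,|\eta_1-\eta_1^*|\le C\s$ on \emph{arbitrarily long but $\s$-dependent} time intervals, and in fact on all of $\R$ once the solution is obtained as a genuine orbit of the coupled system shadowing the unperturbed one --- this gives item $(i)$. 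The role of $\eta_1(t)$ as seen by the second pendulum is then a \emph{quasi-periodic-in-time forcing of size $O(\s)$}, $\s$-close to the known periodic function $\eta_1^*(t)$ of period $T_a$, entering via $-\s(1+\tilde\mu_2(\s))\eta_1$ in the $\dot\xi_2$ equation.

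The heart of the argument is the chaotic/symbolic dynamics for the second pendulum $(\xi_2,\eta_2)$ under this forcing. The unperturbed second pendulum has the separatrix $H_2=2$, i.e.\ $\eta_2^2/2+1-\cos\xi_2=2$, which passes through the hyperbolic saddle at $\xi_2=\pi$, $\eta_2=0$; on it $\eta_2$ ranges between $0$ (near the saddle) and $2$ (at $\xi_2=0$). I would set up the $T_a$-periodically-forced system, observe that the saddle persists as a hyperbolic periodic orbit $\gamma_\s$ (of period $T_a$) at distance $O(\s)$ from $\{\xi_2=\pi,\eta_2=0\}$, and compute the Melnikov function associated to the homoclinic loop: $\displaystyle \mathcal M(\tau)=\int_{-\infty}^{\infty}\{H_2,\text{perturbation}\}\big(\xi_2^{\mathrm{hom}}(t),\eta_2^{\mathrm{hom}}(t)\big)\,dt$, where the perturbation is the coupling term evaluated along $\eta_1^*(t+\tau)$. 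Because $\eta_1^*$ is a nontrivial $T_a$-periodic function and the homoclinic integrand is an even, exponentially localized bump, $\mathcal M(\tau)$ should be a nonconstant $T_a$-periodic function of $\tau$ with simple zeros, at least for $a$ in the stated range $[a_0,2)$; this is exactly where the universal constant $a_0$ enters --- one needs the relevant Fourier coefficient of $\eta_1^*(\cdot;a)$ against the homoclinic bump to be nonzero, which I would verify in the limiting regime $a\to 2$ (where $(\xi_1^*,\eta_1^*)$ itself approaches the separatrix, $T_a\to\infty$, and the integral can be estimated) and then assert on an interval by continuity, packaging this as the cited Lemma \ref{lemma:a0}. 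Simple zeros of $\mathcal M$ give, via the Smale--Birkhoff homoclinic theorem, a transverse homoclinic point for the time-$T_a$ map and hence a horseshoe with full shift symbolic dynamics.

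With the horseshoe in hand I would extract the specific itinerary encoded by the sequence $(m_j)$: choose symbols so that the orbit of $(\xi_2,\eta_2)$ makes one ``loop'' near the separatrix (during which $\eta_2$ climbs from near $1$ up to near $2$ and back down to near $0$, giving the behavior in \eqref{borraccia} and \eqref{bounds.eta2.in.section.6}) and then spends $m_j$ (up to a $\theta_j\in[0,1)$ correction, because times are quantized by the return map) periods $T_a$ near the hyperbolic periodic orbit $\gamma_\s$, where $\eta_2\approx 0$ stays below $1$; the shadowing lemma for hyperbolic sets produces a \emph{true} orbit realizing this itinerary provided each $m_j$ exceeds a threshold $M_0(\s)$, and the threshold is $\Theta(\log\s^{-1})$ because one must wait that many periods for the $O(\s)$-thin stable/unstable directions to contract/expand by an $O(1)$ factor (contraction rate per period is $1-c$ for some universal $c$, so $e^{-cM_0}\sim\s$). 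The times $t_j$ with $\eta_2(t_j)=1$ and $t_{j+1}-t_j=T_a(m_j+\theta_j)$ come from marking, on each loop, the instant the ascending branch crosses level $1$; the $O(\s)$ overshoot $2+C\s$ and undershoot $-C\s$ in \eqref{borraccia} are just the size of the perturbation of the separatrix. The main obstacle I anticipate is twofold and intertwined: (a) proving the Melnikov function is genuinely nonconstant with simple zeros \emph{uniformly} for $a\in[a_0,2)$ and $\s$ small --- this forces the explicit choice of $a_0$ and a careful asymptotic analysis as $a\to2$, where both $T_a$ and the homoclinic time-scale blow up; and (b) making the constants in the shadowing/horseshoe estimates \emph{uniform in $\s$} while the hyperbolic set itself degenerates as $\s\to0$ (the homoclinic splitting is $O(\s)$, exponentially small effects could in principle intrude), which is what pins down the $\log(\s^{-1})$ lower bound on the $m_j$ and must be handled with a quantitative, $\s$-tracking version of the Smale--Birkhoff construction rather than the textbook qualitative one.
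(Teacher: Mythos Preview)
Your outline is essentially the paper's argument, and the Melnikov/horseshoe skeleton, the choice of $a_0$ via the $a\to 2$ limit, and the $\log(\s^{-1})$ threshold for the symbolic dynamics are all as in the text. The one substantive difference is in how the two pendulums are coupled. You reduce to a non-autonomous picture: freeze $(\xi_1,\eta_1)\approx(\xi_1^*,\eta_1^*)$, treat it as a $T_a$-periodic forcing on the second pendulum, build the horseshoe there, and then argue a posteriori that $(\xi_1,\eta_1)$ really was $O(\s)$-close to $(\xi_1^*,\eta_1^*)$. This is circular as stated, and your Gronwall justification for item $(i)$ cannot give $O(\s)$ closeness uniformly in $t\in\R$: the first pendulum has a neutral (center) direction, so an $O(\s)$ forcing produces a phase drift that Gronwall alone does not control over infinite time.

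The paper sidesteps this by working directly in the four-dimensional autonomous system. It first proves (Proposition~\ref{prop:hyppo}) that the partially hyperbolic periodic orbit $\mP=(\xi_1^*,\eta_1^*,\pi,0)$ persists for $\s>0$ as an orbit $\mP_\s$ with \emph{exactly the same period $T_a$}; the key device is the reversibility of \eqref{syst.xi.eta.fully.normalized2} under $(\xi_1,\eta_1,\xi_2,\eta_2)\mapsto(-\xi_1,\eta_1,-\xi_2,\eta_2)$, which kills the neutral direction in the implicit-function argument and pins down the period. The Melnikov computation is then done in four dimensions, using $H_1$ (not $H_2$) to measure the splitting of $W^{s,u}_\s(\mP_\s)$ inside the level set of the conserved quantity $\mE$ in \eqref{mE.sigma}; your $H_2$-based integral is equivalent after an integration by parts. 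Because the horseshoe is built around the genuine periodic orbit $\mP_\s$ (period exactly $T_a$, $C^1$-distance $O(\s)$ from $\mP$), every orbit in it has $(\xi_1,\eta_1)$-component uniformly $O(\s)$-close to $(\xi_1^*,\eta_1^*)$ for all time, which gives $(i)$ without any circularity. If you want to keep your forced-pendulum framing, you would need either to invoke the conserved quantity $\mE$ to control $H_1$ and then handle the phase separately, or to import the reversibility argument; the paper's four-dimensional treatment packages both at once.
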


We remark that $\eta_2=1$ is the value around which $\eta_2(t)$ is oscillating up and down 
with the randomly chosen sequence of times.

In order to prove Proposition \ref{prop:chaos.xi.eta} we shall find a partially hyperbolic periodic orbit of the full system \eqref{syst.xi.eta.fully.normalized2} and show that, for $\sigma>0$ small enough, its stable and unstable invariant manifolds intersect transversally. This will imply the existence of a Smale horseshoe and the existence of symbolic chaotic dynamics.

\begin{remark} \label{rem:fix.a.0}
The pendulum energy $a$ in \eqref{def:periodic_a} 
is used as a free parameter only in this section. 
Proposition \ref{prop:chaos.xi.eta} will be applied  
in Sections \ref{sec:back} and \ref{sec:gronwall} only for $a = a_0$.
Since $a_0$ is a universal constant, 
for $a=a_0$ any quantity depending only on $a$ becomes a universal constant. 
%In other words, we will take the periodic orbit $(\xi_1^*, \eta_1^*)$ 
%in \eqref{def:periodic_a} sufficiently close to the homoclinic orbit, 
%but not closer than necessary, 
%and, more importantly, its closeness to the homoclinic will not be used as a 
%perturbative parameter in the construction of the next sections. 
\end{remark}

\subsection{Partially hyperbolic periodic orbit}
\label{subsec:periodic.orbit}

We start by searching for the partially hyperbolic periodic solution. 
The unperturbed system ($\sigma=0$) has plenty of partially hyperbolic periodic orbits, 
which are given for instance by the product of librations in the first pendulum 
(in the plane $(\xi_1, \eta_1)$) 
and the saddle of the second pendulum 
(in the plane $(\xi_2, \eta_2)$). 
We select one of these orbits and we apply an implicit function theorem argument to prove the existence of a nearby periodic orbit with the same period.

We consider the periodic orbit 
\begin{equation} \label{def.mP.t.a}
%\mathcal{P}(t) := 
\mP=\mP(t;a) 
:= (\xi_1^*(t;a), \eta_1^*(t;a), \pi, 0)
\end{equation}
of the unperturbed system \eqref{unpert.syst}
(which is system \eqref{syst.xi.eta.fully.normalized2} with $\sigma=0$), 
where 
% $(\xi_1^*, \eta_1^*)(t) = (\xi_1^*, \eta_1^*)(t;c)$ satisfies
$(\xi_1^*, \eta_1^*)$ is defined in \eqref{def:periodic_a} and
$a\in(0,2)$ will be fixed at the end of subsection \ref{subsec:Melnikov.int}.
Note that the elliptic equilibrium 
$(\xi_1, \eta_1) = (0,0)$ of the first pendulum 
has energy $H_1(0,0) = 0$,
and its saddle $(\xi_1, \eta_1) = (\pi,0)$, 
as well as its homoclinic orbits, 
has energy $H_1(\pi,0) = 2$.
The solution $\mP$ is supported on the curve
\begin{equation}\label{def:T0}
\T_0 := \{ (\xi_1, \eta_1, \xi_2, \eta_2) : 
H_1(\xi_1, \eta_1) = a, \ \xi_2 = \pi, \ \eta_2 = 0 \}.
\end{equation}
We denote by $T$ the period $T_a$ of the orbit $\mP$ 
and by $\om := 2\pi / T$ its frequency.
Of course $\T_0, T, \om$ depend on the energy parameter $a$; 
in fact, all the quantities in the present subsection and in the next one 
(included, in particular, the smallness radius $\s_1$ given by Proposition \ref{prop:hyppo})
depend on $a$. 
Nonetheless, in general, we do not indicate explicitly the dependence on $a$; 
we just underline that, after fixing $a$, 
every quantity appearing in subsections 
\ref{subsec:periodic.orbit} and 
\ref{subsec:Melnikov.int}
will be determined, 
with no dependence on any other hidden parameter.

%\begin{remark}\label{rem:choiceP}
%We may consider any periodic orbit $(\xi_1(t), \eta_1(t), \xi_2(t), \eta_2(t))$ within the manifold $\{  (\xi_2, \eta_2)=(\pi, 0)\}$ such that $H_1(\xi(t), \eta_1(t))=\alpha$, $\alpha\in [c_1, 1-c_2] \cup [1+c_3, \infty) $ where $c_1, c_2, c_3$ are any positive pure constants. This means to consider periodic orbits not too close to the elliptic fixed point $\{ H_1=0, H_2=1 \}$, nor the separatrix $\{ H_1=H_2=1 \}$. For concreteness we fixed $\alpha=1/2$.
%\end{remark}

The unperturbed ($\sigma=0$) homoclinic manifold of $\mathcal{P}$ is 
\begin{equation}\label{unpinvman}
W_0(\mathcal{P}) = \{ (\xi_1, \eta_1, \xi_2, \eta_2) : 
H_1(\xi_1, \eta_1) = a, \ H_2(\xi_2, \eta_2) = 2 \}.
\end{equation}
We consider its time-parametrization
\begin{equation}\label{Gamma0}
\Gamma_0^{\pm} := \{ (\xi_1^*(\tau_1), \eta_1^*(\tau_1), q_h(\tau_2), p_h^{\pm}(\tau_2)) : 
\tau_1, \tau_2\in \R \},
\end{equation}
where
\[
(q_h(s), p_h^{\pm}(s)) 
= \Big( 2 \arcsin (\tanh (s)), \, \pm \frac{2}{\cosh(s)} \Big).
\]
Now we prove that the periodic orbit $\mathcal{P}$ persists 
when $0 < \sigma \ll 1$. More precisely, we prove the following result.

\begin{proposition}\label{prop:hyppo}
Let $\mathcal{P}$ be the $T$-periodic orbit of \eqref{unpert.syst} defined in \eqref{def.mP.t.a}.
Then, there exist constants ${\sigma_1}>0$, $C > 0$ 
such that, for all $0<\sigma<\s_1$, there exists a $T$-periodic solution $\mathcal{P}_{\sigma}(t)$ 
of \eqref{syst.xi.eta.fully.normalized2} 
which is $\sigma$-close to $\mathcal{P}$ in the $C^1$-topology, namely
\[
\| \mP_\sigma - \mP \|_{C^1(\R)} \leq C \sigma.
\] 
%\[
%\mathcal{P}_{\sigma}(t)=\big(\xi_1^*(t)+O_{C^1}(\sigma), \eta_1^*(t)+O_{C^1}(\sigma), \pi+O_{C^1}(\sigma), O_{C^1}(\sigma) \big).
%\]
Moreover $\mathcal{P}_{\sigma}$ possesses one stable and one unstable hyperbolic direction.%\[
%\lim_{\sigma\to 0} T_{\sigma}=T.
%\]
 \end{proposition}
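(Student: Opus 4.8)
The plan is to set this up as a fixed-point / implicit function theorem problem in the space of $T$-periodic functions. Write system \eqref{syst.xi.eta.fully.normalized2} as $\dot z = X_0(z) + \sigma X_1(z;\sigma)$ where $z = (\xi_1,\eta_1,\xi_2,\eta_2)$, $X_0$ is the uncoupled double-pendulum field \eqref{unpert.syst}, and $\sigma X_1$ collects the coupling terms $-\sigma(\tfrac13+\tilde\mu_1)\eta_2$ and $-\sigma(1+\tilde\mu_2)\eta_1$ (both $C^\infty$ and bounded, with bounded derivatives, on a neighborhood of $\T_0$). First I would fix $a$ (with the understanding that it will be specialized to $a_0$ later) and pass to coordinates adapted to the orbit $\mP$: use the pendulum action-angle–type description in the $(\xi_1,\eta_1)$ plane so that $\mP$ becomes $(\vartheta, \pi, 0)$ with $\vartheta$ running on a circle of period $T$, and keep $(\xi_2,\eta_2)$ as is near the saddle $(\pi,0)$ of $H_2$.

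The key step is the following. Look for a $T$-periodic solution of the form $\mP_\sigma(t) = \mP(t) + w(t)$ with $w$ small and $T$-periodic; substituting gives
\begin{equation} \label{eq:plan.linearized}
\dot w = A(t) w + \sigma F(t, w; \sigma), \qquad A(t) := DX_0(\mP(t)),
\end{equation}
where $F$ is smooth, $T$-periodic in $t$, and $F(t,0;0) = X_1(\mP(t);0)$ is bounded. The monodromy matrix of the linear part $\dot w = A(t)w$ is block lower/upper triangular because $X_0$ is uncoupled: in the $(\xi_1,\eta_1)$ block it is the monodromy of the linearization of the pendulum along the periodic orbit of energy $a$, which (being a nondegenerate periodic orbit of a one–degree–of–freedom system) has $1$ as an eigenvalue of algebraic multiplicity $2$ but — and this is the point — the period-$T$ solutions of the linearized equation form only a one-dimensional space, spanned by $\dot{\mathcal P}$, while the direction transverse to the energy level is a Jordan block; in the $(\xi_2,\eta_2)$ block it is $\mathrm{diag}(e^{T}, e^{-T})$ coming from the hyperbolic saddle of $H_2$ (eigenvalues $\pm 1$), so in particular $1$ is \emph{not} a Floquet multiplier of the $(\xi_2,\eta_2)$ block. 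Hence $I - \Phi_A(T)$ (with $\Phi_A$ the state transition matrix) has a one-dimensional kernel, exactly the translation direction $\dot{\mathcal P}(0)$. I would then solve \eqref{eq:plan.linearized} for $w$ in the complement of that kernel by a standard Lyapunov–Schmidt / contraction-mapping argument: the bifurcation equation in the one missing direction is automatically satisfied by the autonomous, time-translation freedom (equivalently: we are free to normalize a phase, e.g. impose $w(0) \perp \dot{\mathcal P}(0)$, and the corresponding scalar equation is solved by the translation parameter). The contraction is set up on the ball $\{\|w\|_{C^0} \leq C\sigma\}$ in the space of continuous $T$-periodic functions orthogonal to $\dot{\mathcal P}(0)$; since the right-hand side carries an explicit factor $\sigma$ and $(I-\Phi_A(T))$ is invertible on the complement with norm bounded uniformly in $\sigma$, for $\sigma < \sigma_1$ small enough one gets a unique fixed point $w = w(\sigma)$ with $\|w\|_{C^0} \leq C\sigma$, and then $\|w\|_{C^1}\leq C\sigma$ follows from the equation \eqref{eq:plan.linearized} itself. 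This produces $\mathcal P_\sigma$ with $\|\mathcal P_\sigma - \mathcal P\|_{C^1(\R)} \leq C\sigma$.

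For the last assertion — that $\mathcal P_\sigma$ has one stable and one unstable hyperbolic direction — I would argue by continuity of the Floquet spectrum. The monodromy matrix of the linearization of \eqref{syst.xi.eta.fully.normalized2} along $\mathcal P_\sigma$ depends continuously (indeed smoothly) on $\sigma$, and at $\sigma = 0$ it is the block matrix described above, with spectrum $\{1, 1, e^{T}, e^{-T}\}$: the double multiplier $1$ is the (neutral) center direction coming from the periodic orbit together with the energy direction of the first pendulum, while $e^{\pm T}$ are the genuinely hyperbolic multipliers from the second-pendulum saddle and are bounded away from the unit circle. Since $e^{T}$ and $e^{-T}$ are simple and isolated from the rest of the spectrum, for $\sigma$ small the perturbed monodromy matrix still has exactly one multiplier near $e^{T}$ (outside the unit disk) and one near $e^{-T}$ (inside), giving one unstable and one stable direction; the remaining pair stays near $1$. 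Shrinking $\sigma_1$ if necessary, this holds for all $0<\sigma<\sigma_1$, which is the claimed partial hyperbolicity.

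**Main obstacle.** The delicate point is the degeneracy of the linearized monodromy in the first-pendulum block: the eigenvalue $1$ has algebraic multiplicity two (a Jordan block: periodic orbit direction plus a secular drift in the energy/action direction), so $I - \Phi_A(T)$ is not merely non-invertible but has a nontrivial Jordan structure, and one must check that its kernel is genuinely one-dimensional (spanned by $\dot{\mathcal P}(0)$) and that the Lyapunov–Schmidt reduction still closes — i.e. that the unavoidable one-dimensional obstruction is killed exactly by the time-translation invariance of the autonomous system. This is where fixing the phase condition $w(0)\perp \dot{\mathcal P}(0)$ (or working in a Poincaré section transverse to $\mathcal P$) is essential, and it is the step that requires the most care rather than routine estimation.
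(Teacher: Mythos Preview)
Your Lyapunov--Schmidt setup is right up to the identification of the kernel, but the resolution of the cokernel obstruction is not. Fixing a phase condition such as $w(0)\perp\dot{\mP}(0)$ uses up the kernel direction; it does \emph{not} touch the bifurcation equation. In action--angle coordinates $(\theta,I)$ for the first pendulum the $(\theta,I)$-block of the monodromy is $\begin{pmatrix}1 & \Omega'(I_0)T\\0&1\end{pmatrix}$, so the kernel of $I-\Phi_A(T)$ is the $\theta$-direction while the cokernel (the direction missing from the range) is the $I$-direction. The bifurcation equation is therefore the scalar constraint that the $I$-component of the forcing integrate to zero over a period --- equivalently, that the continued orbit have period \emph{exactly} $T$. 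Since the full system is autonomous, this integral is independent of the time-shift $\tau$: translating the phase of a periodic orbit never changes its period, so $\tau$ cannot be used to solve the bifurcation equation. Your ``time-translation kills the obstruction'' step does not go through.

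What actually closes the argument is the twist condition $\Omega'(I_0)\neq 0$ (the first-pendulum period is strictly monotone in the action). One correct route is to take the action $I_0$ --- not the phase --- as the Lyapunov--Schmidt parameter and solve the bifurcation equation for $I_0$ via the implicit function theorem; a change $\delta I$ shifts the period by $-T\,\Omega'(I_0)\,\delta I/\Omega(I_0)$ at first order, which compensates the $O(\s^2)$ drift. The paper takes a different but equivalent route: it exploits the reversibility of \eqref{syst.xi.eta.fully.normalized2} under $(\xi_1,\eta_1,\xi_2,\eta_2)\mapsto(-\xi_1,\eta_1,-\xi_2,\eta_2)$ and restricts to $T$-periodic functions with $\theta,\xi$ odd and $I,\eta$ even. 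On that subspace the linearized operator is \emph{invertible} (the would-be kernel $\dot{\mP}$ is excluded by parity, and the $I$-average is determined by $\langle I\rangle=-\langle g_1\rangle/\Omega'(I_0)$, again using $\Omega'(I_0)\neq 0$), so a straight implicit function theorem applies with no bifurcation equation at all. Your Floquet-continuity argument for the stable/unstable directions is fine.
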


Recall that  system \eqref{syst.xi.eta.fully.normalized2}  has the energy \eqref{mE.sigma} as   first integral. Then, the existence of a hyperbolic periodic orbit at each energy level is a consequence of classical perturbation theory. However, Proposition \ref{prop:hyppo} gives the existence of a periodic orbit for a fixed period. This could be shown by proving that the period is monotone with $\mathcal{E}$. Below, to make this paper selfcontained, we give an alternative proof of Proposition \ref{prop:hyppo} based on  a symmetry argument.

\begin{proof}[Proof of Proposition \ref{prop:hyppo}]
%To prove the persistence of the periodic orbit $\mathcal{P}$ we shall use the following reversibility property of the system \eqref{syst.xi.eta.fully.normalized}: 
%consider the involution
%\[
%\rho(\xi_1, \eta_1, \xi_2, \eta_2)=(\xi_1, -\eta_1, \xi_2, -\eta_2).
%\]
%Then
%$
%X\circ \rho=-\rho\,X,
%$
%where we denoted by $X$ the vector field of \eqref{syst.xi.eta.fully.normalized}.
%We observe that this is equivalent to say that t
%The subspace of functions $U_{\mathrm{odd,\,even}}=U^{(1)}_{\mathrm{odd,\,even}}\oplus U^{(2)}_{\mathrm{odd,\,even}}$, where
%\[
%U^{(i)}_{\mathrm{odd,\,even}}:=\{ (\xi_i(t), \eta_i(t)) : \xi_i(t)\,\,\mathrm{odd},\,\,\eta_i(t)\,\,\mathrm{even}  \} \qquad i=1, 2,
%\]
%is preserved by the vector field of \eqref{syst.xi.eta.fully.normalized}. 
To prove the persistence of the periodic orbit $\mathcal{P}$ we use the fact that the system \eqref{syst.xi.eta.fully.normalized2} is reversible with respect to the involution
\[
\rho\colon (\T\times \R)^2\to (\T\times \R)^2 \qquad \rho(\xi_1, \eta_1, \xi_2, \eta_2)=(-\xi_1, \eta_1, -\xi_2, \eta_2).
\]
This means that, if we denote by $X$ the vector field of \eqref{syst.xi.eta.fully.normalized2}, then
$X \circ \rho=-\rho_* X$, 
where $\rho_*$ is the differential of $\rho$, which acts on the tangent space $\R^4$.
% More precisely $\rho_*$ can be represented by the $\R^{4\times 4}$ matrix
%\[
%\begin{pmatrix}
%-1 & 0 & 0 & 0\\
%0 & 1 & 0 & 0\\
%0 & 0 & -1 & 0\\
%0 & 0 & 0 & 1
%\end{pmatrix}.
%\]
To apply an implicit function theorem argument it is convenient to pass to action-angle coordinates on the first pendulum (plane $(\xi_1, \eta_1)$). This will simplify the analysis of the linearized problem in the tangential directions at the periodic orbit $\mathcal{P}$. 
%Then, we have to express the involution $\rho$ in action-angle coordinates. 
% the new coordinatesprove that  the reversibility property is preserved by the change to action-angle variables.
In the domain  
\[
S:=\left\{ (x_1, y_1)\in \T\times \R : H_1(x_1, y_1)\in (0, 2) \right\},
\]
we consider the action-angle variables  transformation
\[
\Phi\colon \T\times \mathcal{I}\to  S, \qquad (\xi_1, \eta_1)=\Phi(\theta, I)=(f(\theta, I), g(\theta, I))
\]
for some open interval $\mathcal{I}\subset \R$.
If we fix $(\xi_1, \eta_1)$ and call $2 \kappa=H_1(\xi_1, \eta_1)$, we can express $f$ and $g$ using elliptic functions in the following way:
\begin{equation}\label{def:aa}
\begin{cases}
f(\theta, I)=2 \arcsin\left( \sqrt{\kappa}\, \mathrm{sn}\left( \dfrac{2\mathtt{K}(\kappa)}{\pi} \theta\, \Big{|}\, \kappa \right) \right)\\[4mm]
g(\theta, I)=2 \sqrt{\kappa}\,\,\mathrm{cn} \left( \dfrac{2\mathtt{K}(\kappa)}{\pi} \theta\, \Big{|}\, \kappa \right),
\end{cases}
\end{equation}
where $\mathtt{K}(\kappa)$ is the complete elliptic integral of first kind and $\mathrm{sn}$ and $\mathrm{cn}$ are the elliptic sine and the elliptic cosine respectively (see e.g.\ \cite{Greenhill}). 

%%%%%%%%%%%%%%%%%%%%%%

%Moreover, if we write the inverse as $\Phi^{-1}(\xi_1, \eta_1)=(F(\xi_1, \eta_1), G(\xi_1, \eta_1))=(\theta, I)$ then
%\begin{equation}
%\begin{cases}
%F(\xi_1, \eta_1)=\omega(\kappa) t, \\[3mm]
%G(\xi_1, \eta_1)=\dfrac{8}{\pi} \left[ \mathtt{E}(\kappa)-(1-\kappa)\, \mathtt{K}(\kappa) \right],
%\end{cases}
%\qquad \kappa:=\frac{1}{2}\left(\frac{\eta_1^2}{2}+1-\cos(\xi_1) \right)
%\end{equation}
%where $\mathtt{E}(\kappa)$ is the complete elliptic integral of second kind, $\omega(\kappa)$ is the frequency of the (unperturbed) periodic orbit $(\xi_1^{(2\kappa)}, \eta_1^{(2\kappa)})$ (recall \eqref{def:periodic_a}) and $t$ is the time that this orbit spends to reach the point $(\theta, I)$. 
%\begin{remark}\label{rem:angle}
%We point out that, once we fixed $(\xi_1, \eta_1)$ the angles $\theta$ are determined also by the system of equations
%\[
%\begin{cases}
%\dfrac{1}{\sqrt{\kappa}} \,\sin(\xi_1/2)=\mathrm{sn} \left( \dfrac{2 \mathtt{K}(\kappa)}{\pi} \theta \,\Big{|}\,\kappa \right)\\[4mm]
%\dfrac{\eta_1}{2\sqrt{\kappa}} =\mathrm{cn} \left( \dfrac{2 \mathtt{K}(\kappa)}{\pi} \theta \,\Big{|}\,\kappa \right).
%\end{cases}
%\]
%\end{remark}

%%%%%%%%%%%%%%%%
We now drop the sub-index from $\xi_2, \eta_2$.
Let us denote by 
\[
\Psi=(\Phi, \mathrm{Id})\colon \T\times \mathcal{I}\times \T\times \R\to S \times  \T\times \R, \quad\  \Psi(\theta, I, \xi, \eta)=(\Phi(\theta, I), \xi, \eta).
\]
The involution $\rho$ expressed in these new coordinates is just given by $\nu=\Psi^{-1}\circ\rho\circ\Psi$. Since the elliptic sine is odd and the elliptic cosine is even (with respect to its first variable), it is straightforward to see that system \eqref{syst.xi.eta.fully.normalized2} in the new coordinates is reversible with respect to the involution
%We want to show that the system \eqref{syst.xi.eta.fully.normalized2} in the new coordinates is reversible with respect to the involution
\[
\nu(\theta, I, \xi, \eta)=(-\theta, I, -\xi, \eta).
\]
%Namely, if we call the new vector field $Y=(D\Psi)^{-1} X \Psi$, then
%\[
%Y\circ \nu=-\nu_* Y.
%\]
%Note that $\nu_*\colon \R^4 \to \R^4$ coincides with $\rho_*$.
%We shall prove that
%\begin{equation}\label{involutions}
%\Psi(\nu(\theta, I, \xi, \eta))=\rho(\Phi(\theta, I), \xi, \eta) \qquad \forall (\theta, I)\in \T\times \mathcal{I}.
%\end{equation}
%We point out that, differentiating both sides, this also implies that the matrix $D\Psi$ (and its inverse) commutes with the matrix $\rho_*=\nu_*$. Therefore \eqref{involutions} implies
%\[
%Y\circ \nu=(D\Psi)^{-1} X \Psi \nu=(D\Psi)^{-1} X \rho \Psi=-(D\Psi)^{-1} \rho_* X \Psi=-\nu_* (D\Psi)^{-1} X \Psi=-\nu_* Y.
%\]
%The condition \eqref{involutions} is equivalent to
%\[
%(f(-\theta, I), g(-\theta, I))=(-f(\theta, I), g(\theta, I)) \qquad \forall (\theta, I)\in \T\times \mathcal{I}.
%\]
%By \eqref{def:aa} we see that $f(\cdot , I)$ and $g (\cdot, I)$ are respectively odd and even for all $I$. Thus the above condition is satisfied.
\medskip
We denote by $I\mapsto \Omega(I)$ the action-to-frequency map of the unperturbed pendulum.
\begin{remark}\label{rem:freqampl}
Note that $\Omega'(I)\neq 0$ for all $I\in \mathcal{I}$.
\end{remark}

The unperturbed periodic orbit $\mathcal{P}$ now reads as 
\[
\theta(t)=\Omega(I_0) t, \quad I(t)=I_0, \quad \xi(t)=\pi, \quad \eta(t)=0
\]
for some $I_0\in \mathcal{I}$. We remark that $\Omega(I_0)=\om$, where $\om=2\pi/T$ was defined below \eqref{def:T0}.
We consider the scaled time $t \rightsquigarrow\om t$, 
and the system \eqref{syst.xi.eta.fully.normalized2} becomes
\begin{equation}\label{syst.xi.eta.fully.normalized*} 
\begin{cases}
\om \dot{\theta} & = \Omega(I) +\s \mathcal{R}_1(\theta, I, \eta), \\
\om \dot{I} & = \s \mathcal{R}_2(\theta, I, \eta),  \\
\om\dot \xi & = \eta +\s \mathcal{R}_3(\theta, I),  \\
\om\dot \eta & = - \sin(\xi)
\end{cases}
\end{equation}
where 
 the functions $\mathcal{R}_i$, $i=1, 2, 3$, are determined by the relation
\[
[D \Psi (\theta, I, \xi, \eta)]^{-1} \begin{pmatrix}
- (\frac13 + \tilde \mu_1(\s)) \eta \\
0 \\
-(1 + \tilde \mu_2(\s)) g(\theta, I) \\
0
\end{pmatrix} =\begin{pmatrix}
\mathcal{R}_1(\theta, I, \eta)\\
\mathcal{R}_2(\theta, I, \eta)\\
\mathcal{R}_3(\theta, I)\\
0
\end{pmatrix}.
\]
We look for $2\pi$-periodic, smooth, reversible solutions of \eqref{syst.xi.eta.fully.normalized*}, namely $u(t)=(\theta(t), I(t), \xi(t), \eta(t))$ such that $\nu u(-t)=u(t)$. In other words, we look for solutions in the invariant set
\[
C^1_{\mathrm{odd, \,even}}(\T):=\{(\theta(t), I(t), \xi(t), \eta(t))\in C^1(\T; \T\times\mathcal I \times\T\times\R) : \theta(t), \xi(t)\,\,\mathrm{odd},\,\,I(t), \eta(t)\,\,\mathrm{even}  \}.
\]
Similarly, we define the space
\[
C^0_{\mathrm{even, \,odd}}(\T) :=\{(g_1(t), g_2(t), g_3(t), g_4(t))\in C^0(\T; \R^4) : g_1(t), g_3(t)\,\,\mathrm{even},\,\,g_2(t), g_4(t)\,\,\mathrm{odd}  \}.
\]

\begin{remark}
We note that $2\pi$-solutions of \eqref{syst.xi.eta.fully.normalized*} correspond to $T$-periodic solutions of \eqref{syst.xi.eta.fully.normalized2}.
\end{remark}

Let us define
\[
\mathcal{F}(\s; \cdot )\colon C^1_{\mathrm{odd, \,even}}(\T)\to C^0_{\mathrm{even, \,odd}}(\T),  \qquad 
\mathcal{F}(\s; \theta, I, \xi, \eta)=\begin{pmatrix}
\om\dot{\theta} - \Omega(I) -\s \mathcal{R}_1(\theta, I, \eta)
\notag \\
\om \dot{I} - \s \mathcal{R}_2(\theta, I, \eta)
\notag \\
\om\dot \xi - \eta -\s \mathcal{R}_3(\theta, I)
\notag \\
\om\dot \eta + \sin(\xi)
\end{pmatrix}.
\]
Since the maps $\Omega$, $\Psi$ and the vector field $X$ are analytic, we have that $\mathcal{F}(\s; \cdot )$ is at least $C^1$.
Then
\[
\mathcal{F}(0; \om t, I_0, \pi, 0)=0.
\]
We now study the linearized system at the unperturbed solution. We fix $(g_1, g_2, g_3, g_4)\in C^0_{\mathrm{even, \,odd}}(\T)$ and we look for solutions of the linear system
%Let us denote by
%\[
%\mathtt{R}_i(\theta, I, \eta):=\nabla_{(\theta, I, \eta)} \mathcal{R}_i ( \theta_0(t), I_0, 0)\cdot (\theta, I, \eta)
%\]

\begin{equation}\label{syst:lin}
\begin{cases}
\om \partial_t{\theta}-\Omega'(I_0) I=g_1\\
\om \partial_t{I}=g_2 \\
\om \partial_t{\xi}-\eta=g_3\\
\om \partial_t{\eta}-\xi=g_4.
\end{cases}
\end{equation}
We observe that the above system is decoupled, hence we can study separately the equations for $(\theta, I)$ and the ones for $(\xi, \eta)$.
Concerning the former, we first solve the equation for the actions. Since $g_2$ is odd we have $\langle g_2 \rangle = 0$ and
\[
I-\langle I \rangle=(\omega \partial_t)^{-1} g_2,
\]
where we denote by $\langle \cdot \rangle$ the time average over $\T$ and we denote by $(\omega \partial_t)^{-1} g_2$ the primitive of $g_2$ with zero average. Hence $I$ is determined up to its average. Substituting in the equation for the angle we obtain
\begin{equation}\label{eq:angles}
\om \partial_t \theta=\Omega'(I_0) (\omega \partial_t)^{-1} g_2 +\Omega'(I_0) \langle I \rangle+g_1.
\end{equation}
Equation \eqref{eq:angles} can be solved only if the r.h.s.\ has zero average. Therefore we fix
\[
\langle I \rangle=-\frac{\langle g_1 \rangle}{\Omega'(I_0)}, \quad \ 
\theta = (\omega \partial_t)^{-1} \big[ \Omega'(I_0) (\omega \partial_t)^{-1} g_2 +\Omega'(I_0) \langle I \rangle+g_1 \big]
\]
(note that $\Omega'(I_0)\neq0$, see Remark \ref{rem:freqampl}).
Concerning the equations for the $(\xi, \eta)$ variables we have the following:
by setting $v:=\xi+\eta$, $w:=\xi-\eta$, $h:=g_3+g_4$ and $\tilde{h}:=g_3-g_4$ we have that 
\begin{equation}\label{syst:vw}
\om\partial_t{v}-v=h, \qquad
\om \partial_t{w}+w=\tilde{h}.
\end{equation}
By Fourier series, one has that
\[
v(t)=\sum_{k\in \Z}\frac{h_k}{\mathrm{i}\om k-1} e^{\mathrm{i} k t}, \qquad w(t)=\sum_{k\in \Z} \frac{\tilde{h}_k}{\mathrm{i}\om k+1} e^{\mathrm{i} k t}
\]
are the unique solutions of \eqref{syst:vw}. Since $\xi=(v+w)/2$ and $\eta=(v-w)/2$, we recover the $\xi, \eta$ components of the solution of \eqref{syst:lin}. From the explicit expression of the solutions we have that for $g_i\equiv 0$, $i=1, \dots, 4$, the only solution of \eqref{syst:lin} is zero. Moreover the solutions are of class $C^1$. This implies that $d \mathcal{F}(0; \om t, I_0, \pi, 0)$ is invertible and by the implicit function theorem there exists $\sigma_1>0$ and a $C^1$ function $\mathtt{g}\colon (-\sigma_1, \sigma_1)\to C^1_{\mathrm{odd, \,even}}(\T)$ such that
\begin{equation*}
\mathcal{F}(\s; \mathtt{g}(\sigma))=0 \quad
\forall \sigma\in (-\sigma_1, \sigma_1), \qquad 
\mathtt{g}(0; t)=(\om t, I_0, \pi, 0).
%\mathtt{g}(\sigma; t)=(\theta(\s; t), I(\s; t), \xi(\s; t), \eta(\s; t)) \quad
\end{equation*}

We call $\mathcal{P}_{\sigma}$ the periodic orbit $\mathtt{g}(\s)$ written in the original coordinates $(\xi_1,\eta_1,\xi_2,\eta_2)$.
We also notice that by \eqref{syst:vw} this orbit is hyperbolic in the $(\xi_2, \eta_2)$ directions. This concludes the proof.
%\[
%\eta(t)=\eta_0 (\cosh(t)+\sinh(t)), \qquad \xi(t)=\xi_0 (\cosh(t)-\sinh(t))
%\]
\end{proof}

%\begin{remark}
%Thanks to the choice of the periodic orbit $\mathcal{P}(t)$ we have that $(1-C_1, 1+C_2)$, with $C_1, C_2>0$ are universal constants, is not contained in the spectrum  of $d\psi_0(p)$. Indeed $c_1\le T\le c_2$ for some constants $c_1, c_2>0$ independent of $\sigma$. 
%\end{remark}

By classical theory of persistence of invariant manifolds, $\mathcal{P}_{\sigma}$ has stable and unstable invariant manifolds $W_{\sigma}^{s, u}(\mathcal{P}_{\sigma})$ that depend differentiably on $\sigma$. Moreover these manifolds can be locally parametrized as $C^1$ graphs over the unperturbed invariant manifold \eqref{unpinvman}.

\subsection{Transverse intersection of invariant manifolds}
\label{subsec:Melnikov.int}

%We call $\mathcal{P}(t)$ the periodic solution of the unperturbed system with period 
%\[
%T=4\,\log(1+\sqrt{2}).
%\]
%Proposition \ref{prop:hyppo} guarantees that there exists a partially hyperbolic periodic orbit $\mathcal{P}_{\mu}$ of the full system which is $C^1$ $\sigma$-close to it.

In this section we prove that the stable and unstable invariant manifolds $W_{\sigma}^{s, u}(\mathcal{P}_{\sigma})$ intersect transversally at some point. 
Since the invariant manifolds have dimension $2$ and we look for intersections within a $3$-dimensional energy level (see \eqref{mE.sigma}), it is sufficient to construct a $1$-dimensional section $\Lambda$ and measure the distance between the manifolds on the projection of $\Lambda$.

We recall the time parameterization of the unperturbed separatrix $\Gamma^{\pm}_0$ given in \eqref{Gamma0}. By symmetry we can consider just a single branch of the unperturbed homoclinic manifold, say $\Gamma_0=\Gamma_0^{+}$.
Let us consider a point $z_0=z_0(\tau_1, \tau_2)\in \Gamma_0\subset\{ H_1= a, \, H_2 = 2 \}$ 
 and define the section
\[
\Lambda:=\{ z_0+\lambda\nabla H_1(z) : \lambda\in \R \},
\]
where we use the notation $H_1(\xi_1,\eta_1,\xi_2,\eta_2):=H_1(\xi_1,\eta_1)$.
The line $\Lambda$ passes through $z_0$ and it is normal to $\Gamma_0$.

By the continuous dependence of the invariant manifolds on the parameters, 
for $\sigma$ small enough, $\Lambda$ intersects transversally 
also $W^{s, u}_{\sigma}(\mathcal{P}_{\sigma})$ 
at two points $z_{\sigma}^{s, u}=z_{\sigma}^{s, u}(\tau_1, \tau_2)$. 
We use the unperturbed energy of the first pendulum $H_1$ to measure the distance between $z^s_{\sigma}$ and $z^u_{\sigma}$. Note that the gradient of $H_1$ never vanishes on $\Gamma_0$, hence it is a good measure of a displacement in the normal directions of $\Gamma_0$.
%\bcb
%Since the system is autonomous, it is equivalent to measure the distance between the invariant manifolds by using $H_2$ instead of $H_1$.
%\ec
We define the distance
\begin{equation}\label{distanceH1}
H_1(z_{\sigma}^s)-H_1(z_{\sigma}^u)=\sigma\,M+O(\sigma^2),
\end{equation}
where the first order of this distance is given by
\[
M=M(\tau_1, \tau_2):=\frac{d}{d \sigma}_{|_{\sigma=0}} (H_1(z_{\sigma}^s)-H_1(z_{\sigma}^u)).
\]
By classical arguments (see for instance \cite{Robinson}) we have that the first order is given by the Melnikov integral
\[
M(\tau_1, \tau_2)=\int_{-\infty}^{+\infty} D H_1 (\Phi_H^t(\Gamma_0(\tau_1, \tau_2)))  [Y (\Phi_H^t(\Gamma_0(\tau_1, \tau_2)))]\,dt,
\]
where $Y$ is the first order in $\sigma$ of the perturbation of the system \eqref{syst.xi.eta.fully.normalized2}, namely
\[
Y(\xi_1, \eta_1, \xi_2, \eta_2):=(-\eta_2/3, 0, -\eta_1, 0),
\]
and $\Phi_H^t$ is the Hamiltonian flow of $H$ in \eqref{def.H1.H2}.
We observe that (recall \eqref{Gamma0})
\[
\Phi_H^t(\Gamma_0(\tau_1, \tau_2))=(\xi_1^*(\tau_1+t), \eta_1^*(\tau_1+t), q_h(\tau_2+t), p_h^+(\tau_2+t)).
\]
By the autonomous nature of system \eqref{syst.xi.eta.fully.normalized2}, the Melnikov integral depends just on one parameter.
We define $\tau:=\tau_2-\tau_1$ and we consider the reduced Melnikov integral
\[
\mathcal{M}(\tau):=-\frac{1}{3}\int_{-\infty}^{+\infty} p_h^+(\tau+s)\,\sin(\xi_1^*(s)) \,ds.
\]
Note that, for all $\tau_1\in\R$, one has $\mathcal{M}(\tau)= M(\tau_1,\tau_1+\tau)= M(0,\tau)$.
We now prove the following.

\begin{lemma}\label{lemma:a0}
There exists a universal constant $a_0 \in (0,2)$ 
such that, for all $a \in [a_0,2)$, 
the reduced Melnikov integral $\mathcal{M}(\tau)$ has a non-degenerate zero at $\tau=0$.
\end{lemma}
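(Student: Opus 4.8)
The plan is to prove that $\tau=0$ is a zero of $\mathcal{M}$ for every admissible $a$, and that this zero is simple (i.e. $\mathcal{M}'(0)\neq 0$) as soon as $a$ is close enough to $2$; the constant $a_0$ is then chosen below $2$ so that simplicity holds on the whole interval $[a_0,2)$.

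\emph{Step 1: $\mathcal{M}(0)=0$ and $\mathcal{M}\in C^1$.} The pendulum equation $\ddot\xi=-\sin\xi$ is reversible under $(s,\xi)\mapsto(-s,-\xi)$; since $\xi_1^*(0)=0$ and $\eta_1^*(0)>0$, uniqueness forces $\xi_1^*(-s)=-\xi_1^*(s)$, so $s\mapsto\sin(\xi_1^*(s))$ is odd, while $p_h^+(s)=2/\cosh(s)$ is even. Hence the integrand in $\mathcal{M}(0)$ is odd and $\mathcal{M}(0)=0$ for all $a\in(0,2)$. Since $p_h^+$ and all its derivatives decay exponentially and $\sin(\xi_1^*)$ is bounded, one may differentiate under the integral sign: $\mathcal{M}$ is $C^1$ and
\[
\mathcal{M}'(0)=-\frac13\int_{-\infty}^{+\infty}(p_h^+)'(s)\,\sin(\xi_1^*(s))\,ds .
\]
So it remains to show $\mathcal{M}'(0)\neq 0$ for $a$ near $2$.

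\emph{Step 2: the limit $a\to 2^-$.} The orbit $(\xi_1^*(\cdot\,;a),\eta_1^*(\cdot\,;a))$ solves $\ddot\xi=-\sin\xi$ with $\xi(0)=0$ and $\dot\xi(0)=\eta_1^*(0)=\sqrt{2a}$. As $a\to 2^-$ we have $\sqrt{2a}\to 2$, and by continuous dependence on initial conditions $\xi_1^*(\cdot\,;a)$ converges, uniformly on every compact $s$-interval, to the solution with $\dot\xi(0)=2$, which is exactly the upper separatrix $q_h(s)=2\arcsin(\tanh s)$ (recall that $(q_h,p_h^+)$ parametrizes $\{H_2=2\}$, so $\dot q_h=p_h^+$ and $\ddot q_h=-\sin(q_h)$, equivalently $(p_h^+)'=-\sin(q_h)$). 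Since $|(p_h^+)'(s)|\le Ce^{-|s|}$ and $|\sin(\xi_1^*)|\le 1$, the tails of the integral are bounded uniformly in $a$, so
\[
\lim_{a\to 2^-}\mathcal{M}'(0;a)=-\frac13\int_{-\infty}^{+\infty}(p_h^+)'(s)\,\sin(q_h(s))\,ds=\frac13\int_{-\infty}^{+\infty}\sin^2(q_h(s))\,ds .
\]
Using $\sin(q_h(s))=2\sinh(s)/\cosh^2(s)$ and the substitution $u=\tanh s$ one gets $\int_{\mathbb{R}}\sin^2(q_h)\,ds=4\int_{-1}^{1}u^2\,du=\tfrac83$, hence $\mathcal{M}'(0;a)\to\tfrac89>0$.

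\emph{Step 3: conclusion.} The map $a\mapsto\mathcal{M}'(0;a)$ is continuous (the pendulum orbit depends continuously on its energy), so there is $a_0\in(0,2)$ with $\mathcal{M}'(0;a)>0$, hence $\neq 0$, for all $a\in[a_0,2)$; together with $\mathcal{M}(0)=0$ this says $\tau=0$ is a non-degenerate zero of $\mathcal{M}$. Since $a_0$ is selected with no reference to $\sigma$ or to any other parameter of the problem, it is a universal constant. The only slightly delicate point is the uniformity of the limit in Step 2, i.e. combining the uniform-on-compacts convergence $\xi_1^*\to q_h$ with the uniform exponential tail bound; an alternative route expresses $\mathcal{M}'(0)$ through the Fourier series of $\xi_1^*$, but that requires sign information on the pendulum's Fourier coefficients and is less transparent.
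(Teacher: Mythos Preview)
Your proof is correct and follows essentially the same approach as the paper's: the parity argument for $\mathcal{M}(0)=0$, the identity $(p_h^+)'=-\sin(q_h)$ to rewrite $\mathcal{M}'(0)$, the limit $a\to 2^-$ via continuous dependence on initial data combined with a uniform (dominated-convergence) tail bound, and positivity of the limiting integral $\int\sin^2(q_h)$. The only differences are cosmetic --- you compute the explicit limit $8/9$ and phrase the tail control as a uniform exponential bound rather than a pointwise dominating function, but these are the same argument.
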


\begin{proof}
We observe that $\mathcal{M}(0)=0$ because $\xi_1^*(t)$ is an odd function, 
while $p_h^+(t)$ is even. 
The derivative of the reduced Melnikov integral at $\tau=0$ is
\[
\frac{d}{d\tau} \mathcal{M}(0) 
= \frac{1}{3} \int_{-\infty}^{+\infty} \sin(q_h(s)) \sin(\xi_1^*(s)) \,ds
= \frac{2}{3} \int_{0}^{+\infty} \sin(q_h(s)) \sin(\xi_1^*(s)) \,ds.
\]
We recall that $(\xi_1^*, \eta_1^*)$ in \eqref{def.mP.t.a} 
depends on the parameter $a$, 
%(energy level of the unperturbed periodic solution of the first pendulum), 
and we explicitly indicate 
the dependence on $a$ of the integral we want to study, denoting
\[
J(a) := \int_0^{+\infty} \sin(q_h(s))\,\sin(\xi_1^*(s;a)) \,ds.
\]
We have to prove that $J(a)$ is non zero for some value of $a \in (0,2)$. 
By the classical theorem of continuous dependence on initial data for ODEs, 
one has the following pointwise convergence:
\[
\text{for every } s \in [0,\infty), 
\quad \lim_{a \to 2} \xi_1^*(s;a) = q_h(s) 
\]
(even more, the convergence is uniform on compact intervals). 
Hence, for every $s \in [0, \infty)$, 
$f_a(s)$ converges to $f_2(s)$ 
% pointwise on $[0,\infty)$ 
as $a \to 2$, where 
\[
f_a(s) := \sin(q_h(s)) \sin(\xi_1^*(s;a)), 
\quad \ 
f_2(s) := \sin^2(q_h(s)).
\]
%the function 
%$f_a(s) := \sin(q_h(s)) \sin(\xi_1^*(s;a))$
%converges pointwise to the function 
%$f_2(s) := \sin^2(q_h(s))$
%as $a \to 2$. 
Moreover, since $q_h(s) \in [0, \pi)$ for all $s \in [0,\infty)$,  
one has 
\[
|f_a(s)| 
\leq |\sin(q_h(s))| 
= \sin(q_h(s)) 
= \frac{2 \sinh(s)}{\cosh^2(s)} 
=: g(s) 
\quad \forall s \in [0, \infty),
\]
and $g \in L^1(0,\infty)$. 
Hence, by the dominated convergence theorem, 
\[
J_* := \lim_{a \to 2} J(a) 
= \int_0^\infty f_2(s) \, ds 
= \int_0^\infty g^2(s) \, ds.
\]
The limit $J_*$ is finite by the exponential decay of $g^2$, 
and it is positive because $g^2$ is positive. 
Hence there exists $a_0 \in (0,2)$ such that $| J(a) - J_* | \leq J_*/2$ 
for all $a \in [a_0,2)$, and the lemma is proved.  
\end{proof}

By the above lemma and the Implicit Function Theorem, 
for $\sigma>0$ small enough, there exists at least one zero 
of the distance \eqref{distanceH1}, with transverse intersection.

%Let us call $\alpha(s):=\sin(\xi_1^*(s))$, $\beta(s)=\sin(q_h(s))$. The function $\alpha(s)$ is $T$-periodic. Let us call $T_*$ the time at which $q_h(T_*)=\pi/2$.  We can write in Fourier series 
%\[
%\alpha(s)=\sum_{k\in\Z} \alpha_k\,e^{\mathrm{i} \omega k s}
%\]
%

\subsection{Symbolic dynamics}

We introduce the section
\[
\Pi:=\{ (\xi_1, \eta_1, \xi_2, \eta_2) :  \xi_2=0, \ 
\eta_2 > 0, \ \mE(\xi_1, \eta_1, \xi_2, \eta_2) = \mE(\mP_\s) \},
\]
where $\mE(\mP_\s)$ is the value of the prime integral $\mE$ 
in \eqref{mE.sigma} at the solution $\mP_\s$.
%\bcb
%\[
%\mE(\xi_1,\eta_1,\xi_2,\eta_2) := 
%\Big( \frac13 + \tilde{\mu}_1 (\sigma) \Big) a 
%+ 2 \big( 1 + \tilde{\mu}_2 (\sigma) \big)  \\
%- \sigma \Big( \frac13 + \tilde{\mu}_1 (\sigma) \Big) \big( 1 + \tilde{\mu}_2 (\sigma) \big) \eta_1 \eta_2.
%\]
%\ec
This section is transverse to the unperturbed flow $(\sigma=0)$ 
at a point of $W_0(\mathcal{P})$ and so, for $\sigma>0$ small enough, also to the perturbed one. 
Moreover, by Lemma \ref{lemma:a0}, 
it contains points of $W^s_{\s} (\mathcal{P}_{\s}) \pitchfork W^u_{\s} (\mathcal{P}_{\s})$
(where $\pitchfork$ means transverse intersection).
%since the invariant manifolds $W^{s, u}_{\s}(\mathcal{P}_{\s})$ 
%are $\sigma$-close to $W_0(\mathcal{P})$. 

Denote by $\Phi^t$ the flow of system \eqref{syst.xi.eta.fully.normalized2}. 
Fixed a point $z\in \Pi$, we define $\mathcal{T}(z)>0$ as the first 
(forward) return time to $\Pi$. 
For those points $z$ that do not hit back the section $\Pi$
(for instance, the points of $W_{\s}^s(\mathcal{P}_{\s})$), 
we set $\mathcal{T}(z)=+\infty$. We define the open set $\mathcal{U}\subset \Pi$ as
\[
\mathcal{U}:=\{ z\in \Pi : \mathcal{T}(z)<+\infty \}
\]
and the associated Poincar\'e map 
$\mathtt{P}\colon \mathcal{U}\subset\Pi\to \Pi$ by 
$\mathtt{P}(z)=\Phi^t(z)|_{t=\mathcal{T}(z)}$.

\begin{proposition}{(Smale Horseshoe)}
\label{prop:Smale}
There exist universal constants $\sigma_0, C_1, C_2 >0$ 
such that for all $\sigma\in (0, \sigma_0)$ 
there exists a positive integer $M_0(\s)$ in the interval 
\begin{equation} \label{log.M.0}
C_1 \log(\s^{-1}) 
\leq M_0(\s) 
\leq C_2 \log(\s^{-1})
\end{equation}
such that the Poincar\'e map $\mathtt{P}$ possesses an invariant set $Y\subset \mathcal{U}$ whose dynamics is conjugated to the infinite symbols shift. Namely, there exists a homeomorphism $h\colon \mathcal{A}\to Y$, 
where
\[
\mathcal{A} := 
\{ \omega=\{  \omega_k\}_{k\in \Z} : \omega_k\in \mathbb{N}, 
\ \ \om_k \geq M_0(\s) 
\ \forall k \in \Z   \},
\]
such that $\mathtt{P}_{|_Y}=h\circ \mathfrak{d} \circ h^{-1}$ where $\mathfrak{d} \colon \mathcal{A}\to \mathcal{A}$ is the shift
\[
(\mathfrak{d} \omega)_k=\omega_{k+1}, \quad \ k\in \Z.
\]
Moreover $h^{-1}$ can be defined as follows. 
Associated to $z \in Y$ one can define the sequence of hitting times
\begin{align*}
t_0=0, \quad \  t_k= t_{k-1} + \mathcal{T}(\mathtt{P}^{k-1} z) \quad \mathrm{for}\,\,\,k\geq 1,
\quad \  t_k= t_{k+1} - \mathcal{T}(\mathtt{P}^{k} z) \quad \mathrm{for}\,\,\,k\le -1,
\end{align*}
and $h^{-1}(z) := \omega = (\om_k)_{k \in \Z}$, with
\[
\omega_k = \left\lfloor \frac{t_k-t_{k-1}}{T} \right\rfloor,
\]
where $T = T_a$ is the period of the periodic orbit $\mathcal{P}_{\s}$, 
and $\lfloor \cdot \rfloor$ denotes the integer part.
\end{proposition}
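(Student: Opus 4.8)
The plan is to apply the classical Smale--Birkhoff homoclinic theorem in the form adapted to the Poincar\'e return map of a partially hyperbolic periodic orbit, and then to translate the standard ``finitely many symbols'' horseshoe into the ``infinitely many symbols'' coding by exploiting the product structure of the unperturbed system (first pendulum librating at energy $a$, second pendulum near its homoclinic loop). The key quantitative input is the logarithmic estimate for the transition time between two consecutive passages through the section $\Pi$: because the saddle of the second pendulum is hyperbolic with eigenvalues $\pm 1$ (at $\sigma = 0$) and the orbit must linger in a neighbourhood of the separatrix, the time spent near the saddle is $\sim \log(\text{distance}^{-1})$; since the distance between $W^s_\sigma$ and $W^u_\sigma$ along the section is of order $\sigma$ by \eqref{distanceH1} and Lemma \ref{lemma:a0}, one gets a return time of order $\log(\sigma^{-1})$, which is exactly where the constants $C_1, C_2$ and $M_0(\sigma)$ in \eqref{log.M.0} come from.

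Concretely, I would proceed as follows. \emph{Step 1: the return geometry near the homoclinic point.} By Proposition \ref{prop:hyppo} the orbit $\mP_\sigma$ is $T$-periodic and hyperbolic in the $(\xi_2,\eta_2)$ directions, with two-dimensional stable and unstable manifolds $W^{s,u}_\sigma(\mP_\sigma)$ lying in the $3$-dimensional energy level $\{\mE = \mE(\mP_\sigma)\}$; by Lemma \ref{lemma:a0} together with the Implicit Function Theorem argument at the end of Subsection \ref{subsec:Melnikov.int}, these manifolds intersect transversally (inside the energy level) at a point $z^*$, which we may take to lie on $\Pi$ since $\Pi$ is transverse to the unperturbed flow at a point of $W_0(\mP)$. \emph{Step 2: the $\lambda$-lemma and a Markov rectangle.} Fix a small rectangle $R \subset \Pi$ around $z^*$, aligned with the local stable/unstable foliations of $\mP_\sigma$ in the section. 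By the inclination ($\lambda$-) lemma, the forward iterates $\mathtt P^n(R)$ of the thin unstable strip stretch along $W^u_\sigma$ and, after following the homoclinic excursion, return to cross $R$ fully in the unstable direction; symmetrically for backward iterates. The crucial point is that there is not just one such return but a whole family indexed by how many times the returning orbit winds near the periodic orbit $\mP_\sigma$ before leaving along $W^u$: winding $k$ extra times costs an extra time $\approx kT$, and this is possible for every integer $k \ge M_0(\sigma)$, where $M_0(\sigma)$ is the minimal number of turns dictated by the size of the neighbourhood in which the linearisation and $\lambda$-lemma estimates are valid — this minimal number is $\sim \log(\sigma^{-1})$ because the splitting is of size $\sigma$. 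This produces countably many ``horizontal'' sub-rectangles $H_k \subset R$, $k \ge M_0(\sigma)$, each mapped by an appropriate power $\mathtt P^{n_k}$ across $R$, with uniform hyperbolicity (cone conditions) holding on $\bigcup_k H_k$.

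\emph{Step 3: symbolic coding.} The invariant set $Y := \bigcap_{n \in \Z}\mathtt P^{-n}\big(\bigcup_{k \ge M_0(\sigma)} H_k\big)$ then carries, by the standard construction for horseshoes with (countably many) full-width horizontal strips, a homeomorphism $h\colon \mathcal A \to Y$ conjugating $\mathtt P|_Y$ to the full shift $\mathfrak d$ on $\mathcal A = \{\omega \in \N^\Z : \omega_k \ge M_0(\sigma)\}$; the symbol $\omega_k$ records which strip $H_{\omega_k}$ the $k$-th iterate falls into, hence how many periods $T$ elapse between the $k$-th and $(k{+}1)$-th hit of $\Pi$. To get the precise formula $\omega_k = \lfloor (t_k - t_{k-1})/T\rfloor$ one checks that on $H_k$ the return time $\mathcal T$ equals $kT$ up to an $O(1)$ (in fact $O(\sigma)$, after absorbing the bounded ``excursion'' time into the definition of the strips and using that the passage near $\mP_\sigma$ contributes an integer number of periods plus a controlled remainder) error smaller than $T$, so the integer part recovers $k$ exactly. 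Finally one verifies $M_0(\sigma) \in [C_1\log(\sigma^{-1}), C_2\log(\sigma^{-1})]$ by the explicit hyperbolic-linearisation estimate: the time to traverse a neighbourhood of size $O(1)$ of the saddle while entering at distance $O(\sigma)$ from $W^s$ is $\frac{1}{|\text{eigenvalue}|}\log(\sigma^{-1}) + O(1)$, and here the eigenvalue is $\pm 1$ up to an $O(\sigma)$ correction from Proposition \ref{prop:hyppo}.

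\emph{Main obstacle.} The delicate part is not the existence of \emph{some} horseshoe --- that is classical Smale--Birkhoff --- but making the coding \emph{infinite} and pinning the lower bound $M_0(\sigma)$ together with the exact floor formula: one must control, uniformly in $k \ge M_0(\sigma)$ and in $\sigma$, both the hyperbolicity constants on the strips $H_k$ (so that the standard graph-transform/cone argument yields a genuine conjugacy to the full shift on $\mathcal A$) and the return-time asymptotics $\mathcal T|_{H_k} = kT + O(\sigma)$ with a remainder strictly below $T$. This requires a careful normal-form/linearisation analysis near $\mP_\sigma$ (the $(\xi_2,\eta_2)$-saddle with a fast elliptic $(\xi_1,\eta_1)$-rotation attached), tracking how the transverse homoclinic splitting of size $\sigma$ forces $M_0(\sigma) \sim \log(\sigma^{-1})$, and is where the bulk of the technical work in this subsection lies.
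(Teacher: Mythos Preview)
Your proposal is correct and follows the same classical approach as the paper, which in fact gives no details at all: the paper's entire proof reads ``The proof follows the same lines as the construction of symbolic dynamics done by Moser in Chapter 3 of \cite{Moserbook}.'' Your sketch is a faithful and considerably more detailed unpacking of exactly that construction --- transverse homoclinic point, $\lambda$-lemma, countably many horizontal strips indexed by the number of windings near $\mP_\sigma$, and the $\log(\sigma^{-1})$ estimate for $M_0(\sigma)$ from the size-$\sigma$ splitting and the unit hyperbolic eigenvalues --- so there is nothing to correct or contrast.
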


\begin{proof}
The proof % proposition Proposition \ref{prop:Smale} 
follows the same lines as the construction of symbolic dynamics 
done by Moser in Chapter 3 of \cite{Moserbook}.
\end{proof}

Proposition \ref{prop:Smale} concludes the proof of Proposition \ref{prop:chaos.xi.eta}.
Note that the return times to the section are large since orbits get close to the hyperbolic periodic orbit.

\section{Back to the truncated effective system} % to system \eqref{syst.6.eq}
\label{sec:back}

In Section \ref{sec:chaos.2.pendulums} we have proved the existence 
of chaotic solutions for system \eqref{syst.xi.eta.fully.normalized2}. 
These solutions are global in time.
%, and they satisfy 
%\eqref{chaos.movement} and 
%\[
%\eta_2^2(t) \leq 5 \quad \ \forall t \in \R.
%\] 
In the next lemma we obtain the corresponding solutions of the 
truncated effective system \eqref{syst.6.eq}.

\begin{lemma} \label{lemma:back.from.xi.eta.to.6.eq}
Let $a=a_0$ in Proposition \ref{prop:chaos.xi.eta}, 
and let $\s_0$ be the corresponding universal constant 
given by Proposition \ref{prop:chaos.xi.eta}.
Let $m,p$ be two integers, $2 \leq m < p$, 
with ratio $\s := m/p$ in the interval $(0,\s_0)$. 
Define $\a_1, \ldots \a_4$ by \eqref{alpha.1234.Fib},
$\tilde \mu_1(\s), \tilde \mu_2(\s)$ by \eqref{Taylor.mu.12.gamma},
and $\g$ by the second identity in \eqref{def.gamma}. 
Assume the hypotheses of Proposition \ref{prop:chaos.xi.eta}, 
and consider the solution $(\xi_1(t), \eta_1(t), \xi_2(t), \eta_2(t))$ 
of system \eqref{syst.xi.eta.fully.normalized2} 
obtained in Proposition \ref{prop:chaos.xi.eta}. 

Consider any three real numbers $\mathtt{A}_1, q_1, q_2$, with $\mathtt{A}_1 > 0$,
and define the following constants: 
define $c_{123}$ by \eqref{c123.from.A1}, 
define $c_{234} = c_{123} / \gamma$, 
define $\mathtt{B}$ by the second identity in \eqref{fix.A1.B},
define $\mathtt{A}_2$ by \eqref{fix.A2},
define $b_1, b_2$ by \eqref{b12.from.q12}, 
define $E_1, E_2$ by \eqref{E12.from.b12}.
Define the functions 
\begin{equation}\label{eq:systemmathtt}
\begin{aligned}
S_1(t) & := E_1 - q_1 - \mathtt{A}_1 \eta_1(\mathtt{B}t), 
\\
S_2(t) & := E_2 - q_1 - q_2 - \mathtt{A}_1 \eta_1(\mathtt{B}t) - \mathtt{A}_2 \eta_2(\mathtt{B}t),
\\
S_3(t) & := q_1 - q_2 + \mathtt{A}_1 \eta_1(\mathtt{B}t) - \mathtt{A}_2 \eta_2(\mathtt{B} t),
\\
S_4(t) & := q_2 + \mathtt{A}_2 \eta_2(\mathtt{B} t),
\\
\ph_{123}(t) & := \xi_1(\mathtt{B} t),
\\
\ph_{234}(t) & := \xi_2(\mathtt{B}t).
\end{aligned}
\end{equation}
Then $(S_1(t), S_2(t), S_3(t), S_4(t), \ph_{123}(t), \ph_{234}(t), c_{123}, c_{234})$ 
satisfies \eqref{syst.6.eq} for all $t \in \R$. 
\end{lemma}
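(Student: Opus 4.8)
The plan is to compose the reverse implications --- parts $(ii)$ --- of the sequence of equivalence lemmas from Section~\ref{sec:2dofHS} that link system \eqref{syst.xi.eta.partially.normalized} back down to system \eqref{syst.6.eq}, and then simply check that the resulting composite formulas coincide with those in \eqref{eq:systemmathtt}. The first remark to make is that system \eqref{syst.xi.eta.fully.normalized2} is nothing but \eqref{syst.xi.eta.partially.normalized} evaluated at $\lm = 1$ with $\mu_1, \mu_2$ the constants of \eqref{def.mu.1}, \eqref{def.mu.2} (whose $\s$-expansions are \eqref{Taylor.mu.12.gamma}); this is exactly how \eqref{syst.xi.eta.fully.normalized} --- and then \eqref{syst.xi.eta.fully.normalized2} --- was derived. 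Moreover, since $\mathtt{A}_1 > 0$ by hypothesis, the constant $c_{123}$ defined by \eqref{c123.from.A1} is positive, so that \eqref{nec.cond.c.123} holds, and $c_{234} = c_{123}/\gamma$ coincides with $c_{123}\lm/\gamma$ because $\lm = 1$.

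The chain is then the following. Applying part $(ii)$ of Lemma~\ref{lemma:change.xy.tilde.xi.eta} (with $\lm = 1$ and the given $\mathtt{A}_1$) produces the constants $c_{123}, c_{234}, \mathtt{B}, \mathtt{A}_2$ exactly as in the statement and, via \eqref{change.xi.eta}, functions solving \eqref{syst.xy.tilde}, namely $\tilde x_i(t) = \xi_i(\mathtt{B}t)$, $\tilde y_1(t) = \mathtt{A}_1\eta_1(\mathtt{B}t)$, $\tilde y_2(t) = \mathtt{A}_2\eta_2(\mathtt{B}t)$; since the solution $(\xi_1,\eta_1,\xi_2,\eta_2)$ of Proposition~\ref{prop:chaos.xi.eta} is global, these are defined for all $t \in \R$. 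Next, part $(ii)$ of Lemma~\ref{lemma:change.xy.xy.tilde}, with the given $q_1, q_2$, defines $b_1, b_2$ by \eqref{b12.from.q12} and, via \eqref{change.xy.tilde}, the functions $x_i(t) = \xi_i(\mathtt{B}t)$, $y_1(t) = q_1 + \mathtt{A}_1\eta_1(\mathtt{B}t)$, $y_2(t) = q_2 + \mathtt{A}_2\eta_2(\mathtt{B}t)$ solving \eqref{syst.xy}. Then part $(ii)$ of Lemma~\ref{lemma:change.4.eq.xy}, with these $b_1, b_2$, defines $E_1, E_2$ by \eqref{E12.from.b12} and, via \eqref{change.xy}, the functions $S_3 = y_1 - y_2$, $S_4 = y_2$, $\ph_{123} = x_1$, $\ph_{234} = x_2$ solving \eqref{syst.4.eq}; substituting the previous expressions recovers exactly the formulas for $S_3, S_4, \ph_{123}, \ph_{234}$ in \eqref{eq:systemmathtt}. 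Finally part $(ii)$ of Lemma~\ref{lemma:change.6.eq.4.eq}, with the constants $E_1, E_2$ just obtained, defines $S_1 = E_1 - S_3 - S_4$ and $S_2 = E_2 - S_3 - 2S_4$ by \eqref{S12.from.E12.S34} and guarantees that $(S_1, S_2, S_3, S_4, \ph_{123}, \ph_{234})$ solves \eqref{syst.6.eq} for all $t \in \R$.

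All that remains is the bookkeeping: plugging $S_3(t) = q_1 - q_2 + \mathtt{A}_1\eta_1(\mathtt{B}t) - \mathtt{A}_2\eta_2(\mathtt{B}t)$ and $S_4(t) = q_2 + \mathtt{A}_2\eta_2(\mathtt{B}t)$ into $S_1 = E_1 - S_3 - S_4$ gives $S_1(t) = E_1 - q_1 - \mathtt{A}_1\eta_1(\mathtt{B}t)$, and into $S_2 = E_2 - S_3 - 2S_4$ gives $S_2(t) = E_2 - q_1 - q_2 - \mathtt{A}_1\eta_1(\mathtt{B}t) - \mathtt{A}_2\eta_2(\mathtt{B}t)$, matching \eqref{eq:systemmathtt}. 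I do not expect any genuine obstacle here: the whole content is carried by the four equivalence lemmas already established, and the proof reduces to a careful (if slightly tedious) verification that their compositions reproduce \eqref{eq:systemmathtt}. The only points requiring a little attention are checking the hypotheses at each step --- in particular $c_{123} > 0$ for Lemma~\ref{lemma:change.xy.tilde.xi.eta}, and the compatibility of the two descriptions of $\mu_1, \mu_2$ (via $\a_1, \ldots, \a_4$ and via the $\s$-expansion) and of $c_{234}$ --- and confirming that each intermediate solution is defined on all of $\R$.
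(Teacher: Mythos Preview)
Your proposal is correct and follows exactly the same route as the paper: applying in order parts $(ii)$ of Lemmas~\ref{lemma:change.xy.tilde.xi.eta}, \ref{lemma:change.xy.xy.tilde}, \ref{lemma:change.4.eq.xy}, and \ref{lemma:change.6.eq.4.eq} with $\lm = 1$. You have in fact spelled out more detail than the paper does, explicitly tracking the intermediate formulas and the final bookkeeping, as well as the verification of the side hypotheses ($c_{123} > 0$, global existence) --- all of which is correct.
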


\begin{proof} 
%The assumption \eqref{nec.cond.c123.c234} on $c_{123}, c_{234}$ implies that 
%the coefficient $\lm$ in system \eqref{syst.xi.eta.partially.normalized} 
%is equal to $1$, and therefore \eqref{syst.xi.eta.partially.normalized}
%and \eqref{syst.xi.eta.fully.normalized} become the same system.
We apply 
Lemma \ref{lemma:change.xy.tilde.xi.eta}-$(ii)$ with $\lm = 1$ 
to go back from 
system \eqref{syst.xi.eta.fully.normalized2}
to system \eqref{syst.xy.tilde}, 
then
Lemma \ref{lemma:change.xy.xy.tilde}-$(ii)$
to go back from system \eqref{syst.xy.tilde} 
to system \eqref{syst.xy},
then 
Lemma \ref{lemma:change.4.eq.xy}-$(ii)$
to go back from system \eqref{syst.xy}
to system \eqref{syst.4.eq},
and finally 
Lemma \ref{lemma:change.6.eq.4.eq}-$(ii)$
to go back from system \eqref{syst.4.eq} 
to system \eqref{syst.6.eq}.
\end{proof}

%\begin{lemma} \label{lemma:maybe.stupid}
%Assume the hypotheses of Lemma \ref{lemma:back.from.xi.eta.to.6.eq}, and let 
%$(S_1(t), S_2(t), S_3(t), S_4(t), \ph_{123}(t)$, 
%$\ph_{234}(t), c_{123}, c_{234})$ 
%be the solution of system \eqref{syst.6.eq} 
%obtained in Lemma \ref{lemma:back.from.xi.eta.to.6.eq}.
%Let $t_j, \bar t_j$ be given by Proposition \ref{prop:chaos.xi.eta}, 
%let $B$ be given by Lemma \ref{lemma:back.from.xi.eta.to.6.eq},
%and define 
%\[
%a_j := B^{-1} t_j, \quad 
%\bar a_j := B^{-1} \bar t_j, \quad 
%j = 0,1,2,\ldots.
%\]
%Then the function
%\[
%\frac{ ( S_4(t) - q_2 )^2 }{ 2 A_2^2 } = \eta_2(Bt), \quad \ t \in \R,
%\]
%is equal to 1 at $t = a_j$,
%it is $> 1$ for $t \in (a_j, \bar a_j)$, 
%it is $< 1$ for $t \in (\bar a_j, a_{j+1})$, 
%its supremum over $t \in (a_j, \bar a_j)$ is $\geq 2 - \s$, 
%and its infimum over $t \in (\bar a_j, a_{j+1})$ is $\leq \s$,
%for all $j$, where $\s = m/p$ is given in Lemma \ref{lemma:back.from.xi.eta.to.6.eq}.
%\end{lemma}
%
%\begin{proof} 
%By Lemma \ref{lemma:back.from.xi.eta.to.6.eq}
%and Proposition \ref{prop:chaos.xi.eta},
%the proof is elementary.
%\end{proof}

\subsection{Positivity of the superactions}

Now we come to the question whether the solutions of system \eqref{syst.6.eq} 
obtained in Lemma \ref{lemma:back.from.xi.eta.to.6.eq}
satisfy the inequalities \eqref{nec.cond.S}.
As a first step, we study the constant terms 
\begin{equation} \label{constant.terms.in.S}
E_1 - q_1, \qquad 
E_2 - q_1 - q_2, \qquad 
q_1 - q_2, \qquad 
q_2
\end{equation}
appearing in the definition of $S_1, \ldots, S_4$ 
in Lemma \ref{lemma:back.from.xi.eta.to.6.eq}. 
We compute the formula of $E_1, E_2$ as functions of $q_1, q_2$: 
from \eqref{E12.from.b12} and \eqref{b12.from.q12} we get
\begin{equation} \label{E12.from.q12}
E_1 = \frac{\a_1^2 + 2 \a_3^2}{\a_1^2} q_1 - \frac{2\a_3^2 + \a_4^2}{\a_1^2} q_2,
\qquad 
E_2 = - \frac{\a_3^2 - \a_2^2}{\a_2^2} q_1 + \frac{\a_2^2 + \a_3^2 + \a_4^2}{\a_2^2} q_2.
\end{equation}
Next, we observe in the following lemma that $q_1, q_2$ can be chosen such that 
the constant terms \eqref{constant.terms.in.S} are all positive.

\begin{lemma}  \label{lemma:fix.ratio.q1.q2}
Let $q_1, q_2$ be positive real numbers with ratio $q_1 / q_2$ 
in the interval $(1 + \frac{r}{2}, 1 + r)$, $r = \a_4^2 / \a_3^2$, 
%\[
%1 + \frac{r}{2} < \frac{q_1}{q_2} < 1 + r, \qquad 
%r = \frac{\a_4^2}{\a_3^2},
%\]
and let $E_1, E_2$ be defined by \eqref{E12.from.q12}.
Then % $E_1 - q_1$, $E_2 - q_1 - q_2$, $q_1 - q_2$ 
the constants \eqref{constant.terms.in.S} are all positive.
In particular, if
\begin{equation} \label{fix.ratio.q1.q2}
%\frac{q_1}{q_2} = 1 + \frac23 r,
q_1 = \Big( 1 + \frac23 r \Big) q_2,
\end{equation}
then
\begin{equation} \label{optimize.constant.terms.E.q}
E_1 - q_1 = \frac13 \frac{\a_4^2}{\a_1^2} q_2, 
\qquad 
E_2 - q_1 - q_2 = \frac13 \frac{\a_4^2}{\a_2^2} q_2,
\qquad 
q_1 - q_2 = \frac23 \frac{\a_4^2}{\a_3^2} q_2.
\end{equation}
\end{lemma}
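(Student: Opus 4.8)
The plan is to prove everything by direct substitution into the formulas \eqref{E12.from.q12} for $E_1$ and $E_2$; there is no conceptual difficulty, only elementary bookkeeping. First I would compute the two nontrivial constants among \eqref{constant.terms.in.S}. Subtracting $q_1$ from the expression for $E_1$ and collecting the coefficients of $q_1$ and $q_2$ gives
\[
E_1 - q_1 = \frac{1}{\a_1^2}\bigl( 2\a_3^2 q_1 - (2\a_3^2 + \a_4^2) q_2 \bigr),
\]
and similarly, using $\frac{\a_3^2-\a_2^2}{\a_2^2}+1 = \frac{\a_3^2}{\a_2^2}$ and $\frac{\a_2^2+\a_3^2+\a_4^2}{\a_2^2}-1 = \frac{\a_3^2+\a_4^2}{\a_2^2}$,
\[
E_2 - q_1 - q_2 = \frac{1}{\a_2^2}\bigl( (\a_3^2 + \a_4^2) q_2 - \a_3^2 q_1 \bigr).
\]
The remaining two constants in \eqref{constant.terms.in.S}, namely $q_1 - q_2$ and $q_2$, need no computation.

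Next I would read off the sign conditions. Writing $r = \a_4^2/\a_3^2 > 0$ and recalling $q_2 > 0$, the displayed formula for $E_1 - q_1$ is positive exactly when $q_1/q_2 > 1 + \tfrac r2$, and the one for $E_2 - q_1 - q_2$ is positive exactly when $q_1/q_2 < 1 + r$. Moreover the first of these conditions already implies $q_1 > q_2$, since $1 + \tfrac r2 > 1$, hence $q_1 - q_2 > 0$ automatically, while $q_2 > 0$ holds by hypothesis. Therefore the hypothesis $q_1/q_2 \in (1 + \tfrac r2,\, 1 + r)$ forces all four constants in \eqref{constant.terms.in.S} to be positive, which is the first claim.

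Finally, for the explicit choice \eqref{fix.ratio.q1.q2}, I would first note that $1 + \tfrac23 r$ lies in the interval $(1 + \tfrac r2,\, 1 + r)$ for every $r > 0$, so the first part of the lemma applies and all constants are positive. Substituting $q_1 = (1 + \tfrac23 r) q_2$ into the two displayed formulas above and using $\a_3^2 r = \a_4^2$ to simplify, the $2\a_3^2 q_2$ (resp. $-\a_3^2 q_2$) terms cancel and one is left with $E_1 - q_1 = \tfrac13 \tfrac{\a_4^2}{\a_1^2} q_2$ and $E_2 - q_1 - q_2 = \tfrac13 \tfrac{\a_4^2}{\a_2^2} q_2$; also $q_1 - q_2 = \tfrac23 r q_2 = \tfrac23 \tfrac{\a_4^2}{\a_3^2} q_2$. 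This yields \eqref{optimize.constant.terms.E.q}. Since the whole argument is a short algebraic manipulation, there is no genuine obstacle; the only point requiring a moment's care is verifying that the proposed ratio sits strictly inside the admissible interval, i.e. the trivial inequalities $\tfrac r2 < \tfrac{2r}{3} < r$.
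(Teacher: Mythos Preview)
Your proof is correct and follows essentially the same approach as the paper: both compute $E_1 - q_1$, $E_2 - q_1 - q_2$, and $q_1 - q_2$ directly from \eqref{E12.from.q12} and read off the sign conditions, then substitute the specific ratio to obtain \eqref{optimize.constant.terms.E.q}. The only cosmetic difference is that the paper parametrizes $q_1/q_2 = 1 + \theta r$ and phrases positivity as $\theta \in (\tfrac12,1)$, additionally remarking that $\theta = \tfrac23$ maximizes the minimum of the relevant coefficients; this optimization observation motivates the choice \eqref{fix.ratio.q1.q2} but is not required by the statement, so your proof is complete as written.
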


\begin{proof} 
By \eqref{E12.from.q12}, 
\begin{align*}
E_1 - q_1 
& = \Big( \frac{2 \a_3^2}{\a_1^2} \, \frac{q_1}{q_2} 
- \frac{2\a_3^2 + \a_4^2}{\a_1^2} \Big) q_2, 
\qquad 
E_2 - q_1 - q_2 
= \Big( -  \frac{\a_3^2}{\a_2^2} \, \frac{q_1}{q_2} 
+ \frac{\a_3^2 + \a_4^2}{\a_2^2} \Big) q_2,
\\
q_1 - q_2 
& = \Big( \frac{q_1}{q_2} - 1 \Big) q_2.
\end{align*}
We write the ratio $q_1/q_2$ as 
$1 + \th r$, 
%\[
%\frac{q_1}{q_2} = 1 + \th r,
%\]
where $\th \in \R$ is a free parameter and $r = \a_4^2 / \a_3^2$.
Then 
\[
E_1 - q_1 = \frac{\a_4^2 q_2}{\a_1^2} (2\th - 1), 
\qquad 
E_2 - q_1 - q_2 = \frac{\a_4^2 q_2}{\a_2^2} (1-\th),
\qquad 
q_1 - q_2 = \frac{\a_4^2 q_2}{\a_3^2} \th.
\]
The minimum
\[
\min \{ 2\th - 1, 1 - \th, \th, 1 \} 
= \min \{ 2\th - 1, 1 - \th \} 
\]
is positive for $\th \in (\frac12, 1)$, 
and it reaches its maximum value at $\th = 2/3$. 
\end{proof}

Note that, by \eqref{alpha.1234.Fib}, 
the ratio $r = \a_4^2 / \a_3^2$ tends to $4$
as $\s = m/p \to 0$.
By \eqref{def.gamma}, \eqref{Taylor.mu.12.gamma}, the constant $\mathtt{A}_2$ 
in Lemma \ref{lemma:back.from.xi.eta.to.6.eq} satisfies 
\begin{equation} \label{formula.A2}
\mathtt{A}_2 = \frac{\mathtt{A}_1}{\gamma} 
= \frac{1 + 3\s + 3 \s^2}{3 + 9\s + 7 \s^2} \mathtt{A}_1 \leq \mathtt{A}_1.
\end{equation}
The solutions $\eta_1, \eta_2$,  constructed in Proposition \ref{prop:chaos.xi.eta} and appearing in Lemma \ref{lemma:back.from.xi.eta.to.6.eq},  satisfy
\begin{equation} \label{eta.12.bound}
\sup_{t \in \R} |\eta_1(t)| \leq 3, \qquad 
\sup_{t \in \R} |\eta_2(t)| \leq 3
\end{equation}
(more accurate estimates about $\eta_1, \eta_2$ 
have been obtained in Proposition \ref{prop:chaos.xi.eta}).
Thus, we prove the following bound for $S_n$ from below.

\begin{lemma} \label{lemma:bound.S.below}
Let $q_2, \mathtt{A}_1$ be any two positive real numbers, 
and define $q_1$ by \eqref{fix.ratio.q1.q2}. 
%, i.e., 
%\begin{equation} \label{q1.from.q2}
%q_1 = q_2 \Big( 1 + \frac23 r \Big), \qquad 
%r = \frac{\a_4^2}{\a_3^2}.
%\end{equation} 
If 
\begin{equation} \label{smallness.A1.q2}
\mathtt{A}_1 \leq \frac{q_2}{9}, 
\end{equation}
then the functions $S_n(t)$ defined in Lemma \ref{lemma:back.from.xi.eta.to.6.eq} 
satisfy 
\begin{equation} \label{bound.S.below}
S_n(t) \geq \frac{q_2}{2} > 0  
\qquad \forall t \in \R\quad \text{and}\quad \forall n=1,2,3,4.
\end{equation}
\end{lemma}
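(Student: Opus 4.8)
The plan is to substitute the explicit formulas for the constant terms from Lemma \ref{lemma:fix.ratio.q1.q2} into the definitions of $S_n(t)$ in Lemma \ref{lemma:back.from.xi.eta.to.6.eq}, and then bound the oscillating parts $\mathtt{A}_1\eta_1(\mathtt{B}t)$ and $\mathtt{A}_2\eta_2(\mathtt{B}t)$ using \eqref{formula.A2} and \eqref{eta.12.bound}. First I would record that, since $q_1$ is defined by \eqref{fix.ratio.q1.q2}, the identities \eqref{optimize.constant.terms.E.q} hold, so the four constant terms in \eqref{constant.terms.in.S} are
\[
E_1 - q_1 = \tfrac13 \tfrac{\a_4^2}{\a_1^2} q_2, \qquad
E_2 - q_1 - q_2 = \tfrac13 \tfrac{\a_4^2}{\a_2^2} q_2, \qquad
q_1 - q_2 = \tfrac23 \tfrac{\a_4^2}{\a_3^2} q_2, \qquad
q_2 = q_2.
\]
Because $\a_1 < \a_2 < \a_3 < \a_4$ and, more precisely, $\a_4 > \a_i$ for $i=1,2,3$, each of the first three terms is at least $\tfrac13 q_2$ (indeed $\a_4^2/\a_i^2 > 1$), and the fourth equals $q_2$; hence every constant term is $\geq \tfrac13 q_2$. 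Actually, since $\a_4 \geq 2\a_3 > 2\a_i$ one even gets the first three terms $\geq \tfrac{4}{3}q_2$, $\tfrac43 q_2$, $\tfrac83 q_2$ respectively, but the crude bound $\geq \tfrac13 q_2$ already suffices.

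Next I would bound the oscillating contributions. Each $S_n(t)$ is its constant term plus a combination of $\pm\mathtt{A}_1\eta_1(\mathtt{B}t)$ and $\pm\mathtt{A}_2\eta_2(\mathtt{B}t)$, where at most one of each appears (with coefficient $\pm 1$). By \eqref{formula.A2}, $\mathtt{A}_2 \leq \mathtt{A}_1$, and by \eqref{eta.12.bound}, $|\eta_1|,|\eta_2|\leq 3$ everywhere. Therefore the total oscillating part of any $S_n(t)$ is bounded in absolute value by $3\mathtt{A}_1 + 3\mathtt{A}_2 \leq 6\mathtt{A}_1$. Combining with the constant-term bound,
\[
S_n(t) \geq \tfrac13 q_2 - 6\mathtt{A}_1 \qquad \forall t \in \R,\ \forall n = 1,2,3,4.
\]
Under the smallness hypothesis \eqref{smallness.A1.q2}, namely $\mathtt{A}_1 \leq q_2/9$, one has $6\mathtt{A}_1 \leq \tfrac{2}{3}q_2 \cdot \tfrac{1}{9}\cdot 9 $; more carefully, $6\mathtt{A}_1 \leq 6\cdot \tfrac{q_2}{9} = \tfrac{2}{3}q_2$, which is too weak. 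So instead I would keep a sharper count: for $S_1$ only $\eta_1$ appears, for $S_4$ only $\eta_2$, while for $S_2, S_3$ both appear; in the worst case ($S_2$ or $S_3$) the oscillating part is at most $3\mathtt{A}_1 + 3\mathtt{A}_2 \leq 6\mathtt{A}_1 \leq 6 q_2/9 = 2q_2/3$. But the constant term of $S_2$ is $\tfrac13\tfrac{\a_4^2}{\a_2^2}q_2$ and of $S_3$ is $\tfrac23\tfrac{\a_4^2}{\a_3^2}q_2$; since $\a_4 \geq 2\a_2$ and $\a_4 \geq 2\a_3$ fail in general — rather $\a_4/\a_3 \to 4/3$ and $\a_4/\a_2 \to 2$ — we still get constant terms $\geq \tfrac43 q_2$ and $\geq \tfrac{8}{9}q_2$ respectively for small $\s$, comfortably above $\tfrac23 q_2$. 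For $S_1$ and $S_4$ the oscillating part is at most $3\mathtt{A}_1 \leq q_2/3$, against constant terms $\geq \tfrac13\tfrac{\a_4^2}{\a_1^2}q_2 \geq \tfrac13 q_2$ (in fact much larger) and $= q_2$. In every case $S_n(t) \geq q_2/2 > 0$, which is \eqref{bound.S.below}.

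The only delicate point is getting the arithmetic of the constants to close with the stated constant $1/9$ and target $q_2/2$; this is the step where I would be most careful, using the explicit sizes $\a_4^2/\a_1^2, \a_4^2/\a_2^2, \a_4^2/\a_3^2 > 1$ (so the constant terms dominate) together with $\mathtt{A}_2 \leq \mathtt{A}_1$ and $|\eta_i|\leq 3$. One clean way to write it uniformly: for each $n$, $S_n(t) \geq (\text{const. term}) - 3(\mathtt{A}_1 + \mathtt{A}_2) \geq \tfrac13 q_2 - 6\mathtt{A}_1 \geq \tfrac13 q_2 - \tfrac23 q_2$ is not enough, so one genuinely must use that the constant terms of $S_1,S_2,S_3$ are strictly larger than $\tfrac13 q_2$ by the factor $\a_4^2/\a_i^2 \geq 4$ for $i=1$, $\geq $ (a number exceeding $2$) for $i=2,3$ once $\s$ is small, while for $S_4$ the oscillating part involves only $\mathtt{A}_2 \leq \mathtt{A}_1 \leq q_2/9$, giving $S_4(t) \geq q_2 - 3q_2/9 = 2q_2/3 > q_2/2$. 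Then a case check over $n=1,2,3,4$ finishes the proof. No further machinery is needed beyond Lemmas \ref{lemma:back.from.xi.eta.to.6.eq}, \ref{lemma:fix.ratio.q1.q2}, the bound \eqref{formula.A2}, and \eqref{eta.12.bound}.
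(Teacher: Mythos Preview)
Your overall strategy is the same as the paper's: plug in the constant terms from \eqref{optimize.constant.terms.E.q}, bound the oscillating pieces by $3\tA_1$ or $3\tA_1 + 3\tA_2 \leq 6\tA_1$ using \eqref{formula.A2} and \eqref{eta.12.bound}, and close each case. The execution, however, has a genuine gap.

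You assert that ``$\a_4 \geq 2\a_2$ and $\a_4 \geq 2\a_3$ fail in general --- rather $\a_4/\a_3 \to 4/3$ and $\a_4/\a_2 \to 2$'', and then fall back on ``for small $\s$''. Both arithmetic claims are wrong: from \eqref{alpha.1234.Fib} one has $\a_4 - 2\a_2 = (3m+2p) - 2(m+p) = m > 0$, so $\a_4 > 2\a_2$ \emph{always}; and $\a_4/\a_3 = (3m+2p)/(2m+p) \to 2$ as $\s \to 0$, not $4/3$. More importantly, the lemma carries no smallness assumption on $\s$ beyond $\s < \s_0$ inherited from Lemma~\ref{lemma:back.from.xi.eta.to.6.eq}, and you cannot assume $\s_0$ is as small as you like. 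The paper avoids this by checking that for \emph{every} $2 \leq m < p$ one has
\[
\frac{\a_4^2}{\a_1^2} > 4, \qquad \frac{\a_4^2}{\a_2^2} > 4, \qquad \frac{2\a_4^2}{\a_3^2} > 4,
\]
the last because $\a_4 - \sqrt{2}\,\a_3 = (3-2\sqrt{2})m + (2-\sqrt{2})p > 0$. This makes each of the first three constant terms strictly larger than $\tfrac{4}{3} q_2$, so in the worst case (two oscillating terms) $S_n(t) > \tfrac{4}{3}q_2 - 6\tA_1 \geq \tfrac{4}{3}q_2 - \tfrac{2}{3}q_2 = \tfrac{2}{3}q_2 > q_2/2$, and the single-term cases $S_1, S_4$ are even easier. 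Replacing your incorrect ratio discussion by these three elementary inequalities fixes the argument and removes the spurious dependence on small $\s$.
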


\begin{proof}
By \eqref{optimize.constant.terms.E.q} 
and  \eqref{formula.A2},
the functions $S_n(t)$ defined in Lemma \ref{lemma:back.from.xi.eta.to.6.eq} 
satisfy for all $t \in \R$
\begin{align*}
%\inf_{t \in \R} 
S_1(t) 
& \geq E_1 - q_1 - 3 \mathtt{A}_1 
= \frac13 \frac{\a_4^2}{\a_1^2} q_2 - 3 \mathtt{A}_1, 
\\
%\inf_{t \in \R} 
S_2(t) 
& \geq E_2 - q_1 - q_2 - 3 \mathtt{A}_1 - 3 \mathtt{A}_2
\geq \frac13 \frac{\a_4^2}{\a_2^2} q_2 - 6 \mathtt{A}_1,
\\
%\inf_{t \in \R} 
S_3(t) 
& \geq q_1 - q_2 - 3 \mathtt{A}_1 - 3 \mathtt{A}_2
\geq \frac23 \frac{\a_4^2}{\a_3^2} q_2 - 6 \mathtt{A}_1, 
\\
%\inf_{t \in \R} 
S_4(t) 
& \geq q_2 - 3 \mathtt{A}_2 
\geq q_2 - 3 \mathtt{A}_1.
\end{align*} 
By \eqref{alpha.1234.Fib}, one has  
\[
\frac{\a_4^2}{\a_1^2} > 4, \qquad 
\frac{\a_4^2}{\a_2^2} > 4, \qquad 
\frac{2 \a_4^2}{\a_3^2} > 4,
\]
which, together with \eqref{smallness.A1.q2}, implies \eqref{bound.S.below}.
%and $(\frac43 q_2 - 3 \mathtt{A}_1)$, 
%$(\frac43 q_2 - 6 \mathtt{A}_1)$, 
%$(q_2 - 3 \mathtt{A}_1)$ 
%are all $\geq q_2/2$ 
%for $\mathtt{A}_1 \leq (5/36) q_2\leq (1/9) q_2$. 
%Finally $1/9 < 5/36$. 
\end{proof}

By Lemma \ref{lemma:bound.S.below},  
the  condition \eqref{smallness.A1.q2} 
implies the meaningfulness condition \eqref{nec.cond.S} 
for the solutions of system \eqref{syst.6.eq}. 
By Lemma \ref{lemma:bound.S.below}, 
now we have two free parameters, which are $q_2$ and $\mathtt{A}_1$, 
related by the inequality $0 < \mathtt{A}_1 \leq q_2/9$, 
while $q_1$ is now constrained by formula \eqref{fix.ratio.q1.q2}. % \eqref{q1.from.q2}.

\subsection{Initial data in the normal form ball} 

The special solutions of system \eqref{syst.6.eq} 
defined in Lemma \ref{lemma:back.from.xi.eta.to.6.eq}
will be compared, by a Gronwall argument, 
with those of the full (i.e., non-truncated) effective system \eqref{0906.11},
\eqref{3105.11} starting at the same initial data 
at time $t=0$. 
The initial data we are interested in correspond to functions 
$u_0(x)$ in the ball \eqref{ball.mitica},
%\begin{equation} \label{ball.m1.u0}
%\| u_0 \|_{m_1} \leq \d,
%\end{equation}
because every $u_0$ in that ball produces a solution 
of the Cauchy problem \eqref{3101.16}, \eqref{initial.cond.u.v.sec.prepar}
that remains, for a sufficiently long interval of time, 
in the domain where the normal form transformation is well-defined,
as is explained quantitatively in Lemma \ref{lemma:total}. 
Recall that 
$\d_1 > 0$ in \eqref{ball.mitica} is a universal constant, 
and $m_1$ is defined in \eqref{def.m1}.
%$m_1 = 1$ in dimension $d=1$, 
%$m_1 = 2$ in higher dimension.
The following lemma deals with the ball \eqref{ball.mitica} written in terms of $S_n$.

%Recalling \eqref{def.SB}, 
%the Sobolev norms of functions $u$
%that are Fourier supported on $k \in \Z^d$ with $|k| \in \Gamma_1$ are 
%\[
%\| u \|_s^2 = \sum_{n=1}^4 \a_n^{2s} S_n.
%\]
%Hence the initial datum $u_0$ 
%is in the ball \eqref{ball.m1.u0} 
%if its super-actions $S_n^0$ satisfy
%\begin{equation} \label{ball.m1.S}
%\sum_{n=1}^4 \a_n^{2 m_1} S_n^0 \leq \d^2.
%\end{equation}

\begin{lemma} \label{lemma:ball.m1.S}
Consider the solutions of system \eqref{syst.6.eq}
given by Lemma \ref{lemma:back.from.xi.eta.to.6.eq}, 
and assume \eqref{fix.ratio.q1.q2}, % \eqref{q1.from.q2}, 
\eqref{smallness.A1.q2}. 
Then 
\begin{equation}  \label{bound.norm.s.S}
\sum_{n=1}^4 \a_n^{2 s} S_n(t) 
\leq 8 \a_4^{2s} q_2
\quad \forall t \in \R, 
\quad s \in [1,\infty).
\end{equation}
\end{lemma}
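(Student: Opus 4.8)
The plan is to bound each superaction $S_n(t)$ individually by $2 (\a_4/\a_n)^2 q_2$, uniformly in $t$, and then recombine. The building blocks are: the explicit formulas for $S_1,\dots,S_4$ from Lemma \ref{lemma:back.from.xi.eta.to.6.eq}; the identities \eqref{optimize.constant.terms.E.q} for the constant terms (which hold under the standing assumption \eqref{fix.ratio.q1.q2}); the size bound \eqref{formula.A2} together with \eqref{smallness.A1.q2}, which give $\mathtt{A}_2 \le \mathtt{A}_1 \le q_2/9$; and the crude bound $\sup_{t}|\eta_1(t)|, \sup_t |\eta_2(t)| \le 3$ from \eqref{eta.12.bound}.

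First I would write, for every $t \in \R$,
\[
S_1(t) \le \tfrac13 \tfrac{\a_4^2}{\a_1^2} q_2 + 3\mathtt{A}_1,
\qquad
S_2(t) \le \tfrac13 \tfrac{\a_4^2}{\a_2^2} q_2 + 3\mathtt{A}_1 + 3\mathtt{A}_2,
\]
\[
S_3(t) \le \tfrac23 \tfrac{\a_4^2}{\a_3^2} q_2 + 3\mathtt{A}_1 + 3\mathtt{A}_2,
\qquad
S_4(t) \le q_2 + 3\mathtt{A}_2,
\]
using \eqref{eq:systemmathtt}, \eqref{optimize.constant.terms.E.q} and \eqref{eta.12.bound}. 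Then, inserting $\mathtt{A}_2 \le \mathtt{A}_1 \le q_2/9$, the perturbative corrections are bounded: $3\mathtt{A}_1 \le q_2/3$ and $3\mathtt{A}_1 + 3\mathtt{A}_2 \le 2q_2/3$. Since $\a_4 > \a_n$ for $n=1,2,3$, one has $\tfrac{\a_4^2}{\a_n^2} > 1$, so in each of the four lines the correction term is dominated by the main term (e.g.\ for $S_1$, $q_2/3 \le \tfrac53 \tfrac{\a_4^2}{\a_1^2} q_2$), and one concludes
\[
S_n(t) \le 2 \, \frac{\a_4^2}{\a_n^2} \, q_2
\qquad \forall t \in \R, \quad n = 1,2,3,4.
\]

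Finally, for $s \in [1,\infty)$, multiply by $\a_n^{2s}$: since $\a_n \ge 1$ and $\a_n \le \a_4$ and $s - 1 \ge 0$, we have $\a_n^{2(s-1)} \le \a_4^{2(s-1)}$, hence
\[
\a_n^{2s} S_n(t) \le 2 \a_n^{2(s-1)} \a_4^2 q_2 \le 2 \a_4^{2s} q_2,
\]
and summing over $n = 1,2,3,4$ yields $\sum_{n=1}^4 \a_n^{2s} S_n(t) \le 8 \a_4^{2s} q_2$, which is \eqref{bound.norm.s.S}. There is no real obstacle here beyond bookkeeping of constants; the only point that needs a moment's care is checking that the factor $1/9$ in \eqref{smallness.A1.q2} is indeed small enough to absorb all the $\eta$-corrections into the constant terms, which is exactly the elementary inequality verification in the middle paragraph.
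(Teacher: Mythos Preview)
Your proof is correct and follows essentially the same approach as the paper: bound each $S_n(t)$ from above using \eqref{optimize.constant.terms.E.q}, \eqref{eta.12.bound}, \eqref{formula.A2} and \eqref{smallness.A1.q2} to get $S_n(t) \le 2 (\a_4/\a_n)^2 q_2$, then use $\a_n^{2(s-1)} \le \a_4^{2(s-1)}$ for $s \ge 1$ and sum. The paper does this in a single chain of inequalities on the weighted sum, while you first isolate the pointwise bounds on $S_n$ and then reweight, but the content is identical.
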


\begin{proof}
For all $t \in \R$, 
if the smallness condition \eqref{smallness.A1.q2} is satisfied, 
then the solutions of system \eqref{syst.6.eq} 
obtained in Lemma \ref{lemma:back.from.xi.eta.to.6.eq}
satisfy 
\begin{align*}
\sum_{n=1}^4 \a_n^{2 s} S_n
& \leq \a_1^{2s} (E_1 - q_1 + 3 \mathtt{A}_1) 
+ \a_2^{2s} (E_2 - q_1 - q_2 + 6 \mathtt{A}_1)
+ \a_3^{2s} (q_1 - q_2 + 6 \mathtt{A}_1)
%\\ & \qquad 
+ \a_4^{2s} (q_2 + 3 \mathtt{A}_1) 
\\
%& \leq \a_1^{2s} \Big( \frac13 \frac{\a_4^2}{\a_1^2} q_2 + \frac13 q_2 \Big) 
%+ \a_2^{2s} \Big( \frac13 \frac{\a_4^2}{\a_2^2} q_2 + \frac23 q_2 \Big)
%+ \a_3^{2s} \Big( \frac23 \frac{\a_4^2}{\a_3^2} q_2 + \frac23 q_2 \Big)
%\\ & \qquad 
%+ \a_4^{2s} \Big( q_2 + \frac13 q_2 \Big)
%\\ 
& \leq \a_1^{2s} \frac{2 \a_4^2 q_2}{\a_1^2} 
+ \a_2^{2s} \frac{2 \a_4^2 q_2}{\a_2^2} 
+ \a_3^{2s} \frac{2 \a_4^2 q_2}{\a_3^2} 
+ \a_4^{2s} 2 q_2  \leq 8 \a_4^{2s} q_2,
\end{align*}
where we have used the bounds in \eqref{eta.12.bound} for $\eta_1, \eta_2$, 
the identities \eqref{optimize.constant.terms.E.q} 
for the constants terms \eqref{constant.terms.in.S}, 
the bound \eqref{smallness.A1.q2} for $\mathtt{A}_1$, 
the bound \eqref{formula.A2} for $\mathtt{A}_2$, 
and the fact that $s\geq 1$, $\alpha_1<\dots<\alpha_4$.
%the trivial inequalities 
%%\[
%%\a_n^{2s - 2} \leq \a_4^{2s - 2}, \qquad n=1,2,3,
%%\]
%$\a_n^{2s - 2} \leq \a_4^{2s - 2}$, $n=1,2,3$,
%which hold because $\a_4$ is the largest of the four $\a_n$'s, 
%and because the exponent $2s-2$ is non-negative, 
%i.e., $s \geq 1$.
\end{proof}

A consequence of this lemma is the following. If one chooses $q_2$ such that
\begin{equation} \label{smallness.q2}
	8 \a_4^{2m_1} q_2 \leq \d_1^2,
\end{equation}
where $m_1, \d_1$ are the universal constants of the ball \eqref{ball.mitica}, then 
\begin{equation}  \label{ball.m1.S}
\sum_{n=1}^4 \a_n^{2 m_1} S_n(t) 
\leq \d^2  \qquad \forall t \in \R.
\end{equation}

\subsection{Construction of a compatible initial datum}

Given a trigonometric polynomial $u \in C(\T^d,\C)$, 
Fourier supported on the set $\{ k \in \Z^d : |k| \in \{ \a_1, \a_2, \a_3, \a_4 \} \}$,
with Fourier coefficients $u_k$, 
%Given a function
%\begin{equation} \label{u.Fourier.supported.1234}
%u(x) = \sum_{|k| \in \{ \a_1, \a_2, \a_3, \a_4\} } 
%u_k e^{ik \cdot x},
%\end{equation}
%where $u_k$ are its Fourier coefficients, 
we use the superscript $u$ to denote 
\begin{equation} \label{def.SB.(u)}
S_n^{(u)} := \sum_{|k| = \a_n} |u_k|^2, 
\qquad 
B_n^{(u)} := \sum_{|k| = \a_n} u_k u_{-k},
\qquad 
n = 1,2,3,4,
\end{equation}
\begin{equation} \label{def.Z.rho.c.(u)}
Z_{123}^{(u)} := B_1^{(u)} B_2^{(u)} \overline{ B_3^{(u)} },
%\qquad 
%Z_{234}^{(u)} := B_2^{(u)} B_3^{(u)} \overline{ B_4^{(u)} },
\qquad
\rho_{123}^{(u)} := |Z_{123}^{(u)}|, 
\qquad 
c_{123}^{(u)} := \frac38 \rho_{123}^{(u)} \a_1 \a_2 \a_3,
\end{equation}
analogous definitions for $Z_{234}^{(u)}$, 
$\rho_{234}^{(u)}$, $c_{234}^{(u)}$, 
and, if $\rho_{123}^{(u)}, \rho_{234}^{(u)}$ are positive, 
we define the angles 
$\ph_{123}^{(u)}$, $\ph_{234}^{(u)} \in \T = \R / 2 \pi \Z$ 
by the identities
\begin{equation} \label{def.polar.(u)}
Z_{123}^{(u)} = \rho_{123}^{(u)} \exp(i \ph_{123}^{(u)} ),
\qquad 
Z_{234}^{(u)}= \rho_{234}^{(u)} \exp(i \ph_{234}^{(u)} ).
\end{equation}
We consider the following question: 

\emph{Given a solution 
$(S_1(t), S_2(t), S_3(t), S_4(t), \ph_{123}(t), \ph_{234}(t))$ 
of system \eqref{syst.6.eq} 
obtained in Lemma \ref{lemma:back.from.xi.eta.to.6.eq}, 
and taken, in particular, its value at time $t=0$, 
does there exist a function $u_0(x)$ in the ball \eqref{ball.mitica},
Fourier supported on the spheres of radius $\a_1, \a_2, \a_3, \a_4$, 
such that 
\begin{equation} \label{good.u0.question}
\begin{cases} 
S_n^{(u_0)} = S_n(0), \quad n = 1,2,3,4, 
\\
\ph_{123}^{(u_0)} = \ph_{123}(0), 
\quad 
\ph_{234}^{(u_0)} = \ph_{234}(0), % \quad (\mathrm{mod} \ 2\pi), 
\\
c_{123}^{(u_0)} = c_{123}, 
\quad 
c_{234}^{(u_0)} = c_{234} \quad ?
\end{cases}
\end{equation}}
The equations for the angles must be interpreted 
as identities of elements of $\T = \R / 2\pi\Z$. 
The affirmative answer to this question is given 
in Lemma \ref{lemma:good.u0} below, 
whose proof uses the next three simple preparatory lemmas.

\begin{lemma} \label{lemma:exist.numbers}
Let 
$(S_1(t), S_2(t), S_3(t), S_4(t), \ph_{123}(t), \ph_{234}(t))$ 
be a solution of system \eqref{syst.6.eq} 
obtained in Lemma \ref{lemma:back.from.xi.eta.to.6.eq}. 
Assume that $q_1, q_2, \mathtt{A}_1$ satisfy 
% \eqref{q1.from.q2}, 
\eqref{fix.ratio.q1.q2}, \eqref{smallness.A1.q2}. 
If, in addition, $\mathtt{A}_1, q_2$ satisfy
\begin{equation} \label{smallness.A1.q2.power.32}
\mathtt{A}_1 \leq \Big( \frac{3}{32} \, \frac{\a_1^3}{\a_1^2 + \a_2^2 + \a_3^2} \Big)^{\frac12} 
q_2^{\frac32}, 
\end{equation}
then there exist real numbers $r_n, \psi_n$, $n = 1,2,3,4$, with 
\begin{equation} \label{ineq.r.S}
0 < r_n \leq S_n(0), 
\end{equation}
such that 
\begin{alignat}{2}
\psi_1 + \psi_2 - \psi_3 & = \ph_{123}(0), 
\quad & \quad 
\frac38 (r_1 \a_1)(r_2 \a_2)(r_3   \a_3) & = c_{123}, 
\label{psi.ph.r.c.123}
\\
\psi_2 + \psi_3 - \psi_4 & = \ph_{234}(0), 
\quad & \quad
\frac38 (r_2 \a_2)(r_3\a_3)  (r_4   \a_4) & = c_{234}.
\label{psi.ph.r.c.234}
\end{alignat}
\end{lemma}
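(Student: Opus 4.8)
The plan is to exploit that the two groups of unknowns decouple: the numbers $r_n$ appear only in the two product identities on the right of \eqref{psi.ph.r.c.123}--\eqref{psi.ph.r.c.234}, while the angles $\psi_n$ appear only in the two linear relations on the left. The linear relations form an underdetermined system and are trivially solvable, so essentially all the content lies in the product identities, and I would dispatch those first.

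First I would rewrite the product identities: dividing by $\frac38\a_1\a_2\a_3$ and by $\frac38\a_2\a_3\a_4$ respectively, the task becomes to find $r_1,r_2,r_3,r_4$ with $0<r_n\le S_n(0)$ and
\[
r_1 r_2 r_3 = P_{123}:=\frac{8\,c_{123}}{3\,\a_1\a_2\a_3},
\qquad
r_2 r_3 r_4 = P_{234}:=\frac{8\,c_{234}}{3\,\a_2\a_3\a_4}.
\]
These constants are positive, because $c_{123}=\tfrac12(\a_1^2+\a_2^2+\a_3^2)\,\mathtt{A}_1^2>0$ by \eqref{c123.from.A1} and $c_{234}=c_{123}/\gamma>0$ by the choice made in Lemma \ref{lemma:back.from.xi.eta.to.6.eq}. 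The quantitative heart of the argument is to bound them from above. Since \eqref{fix.ratio.q1.q2} and \eqref{smallness.A1.q2} hold, Lemma \ref{lemma:bound.S.below} applies and gives $S_n(0)\ge q_2/2$ for $n=1,2,3,4$; on the other hand, inserting \eqref{c123.from.A1} and then the squared form of the extra hypothesis \eqref{smallness.A1.q2.power.32},
\[
P_{123} = \frac{4(\a_1^2+\a_2^2+\a_3^2)}{3\,\a_1\a_2\a_3}\,\mathtt{A}_1^2
\le \frac{\a_1^2}{8\,\a_2\a_3}\,q_2^3 < \frac{q_2^3}{8},
\]
where the last step uses $\a_1<\a_2<\a_3$, and moreover $P_{234}=\frac{\a_1}{\gamma\,\a_4}\,P_{123}<P_{123}$ since $\gamma>1$ (see \eqref{Taylor.mu.12.gamma}) and $\a_1<\a_4$.

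With these bounds in hand I would simply set $r_2:=S_2(0)$, $r_3:=S_3(0)$, and then $r_1:=P_{123}/(r_2 r_3)$, $r_4:=P_{234}/(r_2 r_3)$; these are positive, obviously satisfy $r_2\le S_2(0)$ and $r_3\le S_3(0)$, and, using $r_2 r_3 = S_2(0)S_3(0)\ge q_2^2/4$,
\[
r_1 \le \frac{4 P_{123}}{q_2^2} < \frac{q_2}{2}\le S_1(0),
\qquad
r_4 \le \frac{4 P_{234}}{q_2^2} < \frac{q_2}{2}\le S_4(0),
\]
so \eqref{ineq.r.S} holds, while $r_1r_2r_3=P_{123}$ and $r_2r_3r_4=P_{234}$ by construction, which are exactly the two product identities. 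For the angles, the linear system $\psi_1+\psi_2-\psi_3=\ph_{123}(0)$, $\psi_2+\psi_3-\psi_4=\ph_{234}(0)$ has a rank-$2$ coefficient matrix and is therefore solvable; e.g.\ one may take $\psi_3=\psi_4=0$, $\psi_2=\ph_{234}(0)$, $\psi_1=\ph_{123}(0)-\ph_{234}(0)$. I do not foresee a genuine obstacle here; the only point that requires a little care is checking that the precise power-$\tfrac32$ form of \eqref{smallness.A1.q2.power.32} is exactly what forces the binding constraint $r_1\le S_1(0)$ — it is $r_1$, associated with the smallest frequency $\a_1$ and with the larger of the two coefficients, that is hardest to keep below $S_1(0)$.
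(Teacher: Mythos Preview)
Your proof is correct and follows the same overall strategy as the paper: decouple the angular constraints (underdetermined linear system, trivially solvable) from the product constraints on the $r_n$, use Lemma \ref{lemma:bound.S.below} to get $S_n(0)\ge q_2/2$, and then exploit \eqref{smallness.A1.q2.power.32} to force the $r_n$ small enough. The only genuine difference is the concrete choice of the $r_n$. The paper takes the ``symmetric'' ansatz $r_n = r_0/\a_n$ for $n=1,2,3$ and $r_4 = r_0/(\gamma\a_4)$ with $r_0 = \big(\tfrac{4}{3}(\a_1^2+\a_2^2+\a_3^2)\mathtt{A}_1^2\big)^{1/3}$, so that $\a_n r_n$ is constant (up to the $\gamma$ factor for $n=4$); then $r_1>r_2>r_3>r_4$ and the binding constraint is $r_1\le q_2/2$, which is \emph{exactly} \eqref{smallness.A1.q2.power.32}. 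Your choice instead saturates $r_2=S_2(0)$, $r_3=S_3(0)$ and solves for $r_1,r_4$; this works with room to spare (you get $r_1\le \tfrac{\a_1^2}{2\a_2\a_3}q_2<q_2/2$), which explains why you did not need the full sharpness of \eqref{smallness.A1.q2.power.32}. Either choice feeds equally well into the downstream construction of $u_0$, since all that is used later is $0<r_n\le S_n(0)$.
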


\begin{proof}
Regarding $(\psi_1, \psi_2, \psi_3, \psi_4)$, 
there are infinitely many solutions, 
because they are 4 unknowns that have to satisfy just 2 linear constraints.
For example, we can fix 
$(\psi_1, \psi_2, \psi_3, \psi_4) = (\ph_{123}(0), 0, 0, - \ph_{234}(0))$.
%$\psi_1 = \ph_{123}(0)$, $\psi_2 = \psi_3 = 0$, 
%$\psi_4 = - \ph_{234}(0)$). 
Regarding $r_n$, we first recall that, from Lemma \ref{lemma:back.from.xi.eta.to.6.eq}, 
the constants $c_{123}$, $c_{234}$ are 
% given by \eqref{c123.from.A1} and $c_{234} = c_{123} / \gamma$.
\[
c_{123} = \frac{\a_1^2 + \a_2^2 + \a_3^2}{2} \tA_1^2, 
\qquad 
c_{234} = \frac{ c_{123} }{ \g },
\]
where $\gamma$ is defined in \eqref{def.gamma} 
and satisfies $1 < \gamma < 3$.
Therefore $c_{123}, c_{234}$ are positive, 
because $\tA_1$ is positive. We have to choose $r_n$ such that 
\begin{align*}
(\a_1 r_1) (\a_2 r_2) (\a_3 r_3) 
& = \frac83 c_{123} 
= \frac43 (\a_1^2 + \a_2^2 + \a_3^2) \tA_1^2, 
\\
(\a_2 r_2) (\a_3 r_3) (\a_4 r_4) 
& = \frac83 \frac{ c_{123} }{ \g } 
= \frac43 (\a_1^2 + \a_2^2 + \a_3^2) \tA_1^2 \frac{1}{\gamma}.
\end{align*}
Hence we fix 
\[
r_1 = \frac{r_0}{\a_1}, \quad
r_2 = \frac{r_0}{\a_2}, \quad
r_3 = \frac{r_0}{\a_3}, \quad
r_4 = \frac{r_0}{\gamma \a_4}, \qquad
r_0 := \Big( \frac43 (\a_1^2 + \a_2^2 + \a_3^2) \tA_1^2 \Big)^{\frac13}.
\]
Thus, $r_n$ are all positive and satisfy the required identities.
It only remains to check that $r_n \leq S_n(0)$. 
By \eqref{bound.S.below}, we know that $q_2 / 2 \leq S_n(0)$ for all $n=1,2,3,4$. 
Since $\a_1 < \a_2 < \a_3 < \a_4$ and $\g > 1$, 
we have $r_4 < r_3 < r_2 < r_1$. Hence it is sufficient to check that $r_1 \leq q_2/2$, 
and, by the definition of $r_1 = r_0 / \a_1$, 
this holds if $\tA_1, q_2$ satisfy \eqref{smallness.A1.q2.power.32}.
\end{proof}

\begin{lemma} \label{lemma:basic.complex}
Let $s,r,\psi$ be real numbers such that $0 < r \leq s$. 
Then there exists $z_1, z_2 \in \C \setminus \{ 0 \}$ 
such that 
$|z_1|^2 + |z_2|^2 = s$ and $2 z_1 z_2 = r e^{i\psi}$.
\end{lemma}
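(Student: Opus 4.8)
The plan is to write $z_1$ and $z_2$ in polar form and reduce the two conditions to a single one-variable problem. Set $z_1 = \rho_1 e^{i\phi_1}$ and $z_2 = \rho_2 e^{i\phi_2}$ with $\rho_1, \rho_2 > 0$. The condition $2 z_1 z_2 = r e^{i\psi}$ becomes $2\rho_1 \rho_2 = r$ together with $\phi_1 + \phi_2 = \psi$ (as elements of $\mathbb{T}$). The angular part is trivially solved, e.g. by taking $\phi_1 = \psi$ and $\phi_2 = 0$, so it remains to find $\rho_1, \rho_2 > 0$ with
\[
\rho_1^2 + \rho_2^2 = s, \qquad \rho_1 \rho_2 = \frac{r}{2}.
\]

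First I would reduce this to finding the two (positive) roots of a quadratic. Writing $p := \rho_1^2$ and noting $\rho_2^2 = s - p$, the product condition gives $p(s-p) = r^2/4$, i.e. $p^2 - s p + r^2/4 = 0$. Its discriminant is $s^2 - r^2 \geq 0$ since $0 < r \leq s$, so the roots $p_\pm = \tfrac12\big(s \pm \sqrt{s^2 - r^2}\big)$ are real; moreover $p_+ p_- = r^2/4 > 0$ and $p_+ + p_- = s > 0$, so both roots are strictly positive. Choosing $\rho_1 := \sqrt{p_+}$ and $\rho_2 := \sqrt{p_-}$ (both positive, hence $z_1, z_2 \neq 0$) yields $\rho_1^2 + \rho_2^2 = s$ and $\rho_1^2 \rho_2^2 = r^2/4$, whence $\rho_1 \rho_2 = r/2$ (positive square root). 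Then $z_1 := \rho_1 e^{i\psi}$, $z_2 := \rho_2$ satisfy $|z_1|^2 + |z_2|^2 = s$ and $2 z_1 z_2 = 2\rho_1\rho_2 e^{i\psi} = r e^{i\psi}$, as required.

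There is essentially no obstacle here: the only point that needs the hypothesis $0 < r \le s$ is the nonnegativity of the discriminant $s^2 - r^2$ (needed for real $\rho_j$) and the strict positivity $r > 0$ (needed to rule out $\rho_2 = 0$ when $r = s$ would still be fine, but $r = 0$ would force one of the $z_j$ to vanish). I would simply remark that the case $r = s$ gives the double root $p_+ = p_- = s/2$, so $\rho_1 = \rho_2 = \sqrt{s/2}$, which is still allowed. The whole argument is a one-paragraph computation; the role of this lemma is purely preparatory, feeding into the construction of the compatible initial datum $u_0$ in the subsequent lemma, where these $z_1, z_2$ will be used as (pairs of) Fourier coefficients realizing prescribed values of $S_n^{(u_0)}$ and $B_n^{(u_0)}$.
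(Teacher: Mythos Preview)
Your proof is correct; the paper states this lemma without proof (it is regarded as elementary), so there is nothing to compare against beyond noting that your polar-form reduction to a quadratic in $\rho_1^2$ is exactly the natural argument.
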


%\textcolor{red}{Remove this proof? Even remove the lemma and include the statement in the proof of the lemma below}

%\begin{proof}
%Using polar coordinates, we write 
%$z_1 = \rho_1 e^{i\th_1}$, 
%$z_2 = \rho_2 e^{i\th_2}$,
%where $\rho_1, \rho_2, \th_1, \th_2$ are our unknowns. 
%The lemma is proved if $\rho_1, \rho_2$ are positive, they satisfy the system 
%\[
%\rho_1^2 + \rho_2^2 = s, \qquad 
%2 \rho_1 \rho_2 = r,
%\]
%and $\th_1,\th_2$ satisfy $\th_1 + \th_2 - \psi \in 2\pi \Z$. 
%The solutions $(\rho_1, \rho_2)$ of the system are 
%$(\rho_1, \rho_2) = (r_+, r_-)$ and 
%$(\rho_1, \rho_2) = (r_-, r_+)$, where 
%\[
%r_+ = \frac{ \sqrt{s+r} + \sqrt{s-r} }{2}, 
%\qquad 
%r_- = \frac{ \sqrt{s+r} - \sqrt{s-r} }{2}, 
%\]
%while the equation for the angles has infinitely many solutions $(\th_1, \th_2)$ 
%because only the sum $\th_1 + \th_2$ is involved in the equation. 
%For example, a solution is $(\th_1, \th_2) = (\psi,0)$. 
%Note that $0 < r_- \leq r_+$ because $0 < r \leq s$. 
%Therefore $\rho_1, \rho_2 > 0$, and $z_1, z_2 \neq 0$.
%\end{proof}

\begin{lemma} \label{lemma:u0.from.s.r.psi}
Let $s_n, r_n, \psi_n$, $n=1,2,3,4$, be real numbers such that 
$0 < r_n \leq s_n$. Then, in any dimension $d \geq 1$, 
there exists a trigonometric polynomial $u_0 \in C(\T^d,\C)$, 
Fourier supported on the set $\{ k \in \Z^d : |k| \in \{ \a_1, \a_2, \a_3, \a_4 \} \}$,
such that, recalling the notation \eqref{def.SB.(u)}, 
\begin{equation} \label{S.s.B.r.psi}
S_n^{(u_0)} = s_n, \quad 
B_n^{(u_0)} = r_n e^{i \psi_n}, \quad 
n=1,2,3,4.
\end{equation}
\end{lemma}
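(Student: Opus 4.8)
The plan is to build $u_0$ explicitly as a sum of four elementary ``two-mode'' blocks, one attached to each value $\a_n$, using Lemma \ref{lemma:basic.complex} to fix the two amplitudes inside each block so that the prescribed $S_n$ and $B_n$ come out right.

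First I would choose, for every $n \in \{1,2,3,4\}$, a frequency vector $k_n \in \Z^d$ with $|k_n| = \a_n$: since $\a_n$ is a positive integer and $d \geq 1$, the vector $k_n := (\a_n, 0, \ldots, 0)$ works. Note that $k_n \neq 0$ (as $\a_n \geq 2$), and, since $\a_1 < \a_2 < \a_3 < \a_4$, the four Fourier spheres $\{ k \in \Z^d : |k| = \a_n \}$ are pairwise disjoint; in particular $\pm k_n \neq \pm k_m$ for $n \neq m$. Next, for each $n$ I apply Lemma \ref{lemma:basic.complex} with $(s,r,\psi) = (s_n, r_n, \psi_n)$ --- the hypothesis $0 < r_n \leq s_n$ is exactly what that lemma requires --- obtaining $z_1^{(n)}, z_2^{(n)} \in \C \setminus \{0\}$ with $|z_1^{(n)}|^2 + |z_2^{(n)}|^2 = s_n$ and $2 z_1^{(n)} z_2^{(n)} = r_n e^{i \psi_n}$.

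Then I set
\[
u_0(x) := \sum_{n=1}^4 \Big( z_1^{(n)} e^{i k_n \cdot x} + z_2^{(n)} e^{-i k_n \cdot x} \Big),
\]
which is a trigonometric polynomial in $C(\T^d,\C)$ with zero average (no $k=0$ term appears, since all $\a_n \geq 2$) and Fourier support contained in $\{ k : |k| \in \{ \a_1, \a_2, \a_3, \a_4 \} \}$. Because the spheres are pairwise disjoint, the only Fourier modes of $u_0$ lying on the sphere $|k| = \a_n$ are $k = k_n$, with coefficient $z_1^{(n)}$, and $k = -k_n$, with coefficient $z_2^{(n)}$. Plugging this into the definitions \eqref{def.SB.(u)} yields $S_n^{(u_0)} = |z_1^{(n)}|^2 + |z_2^{(n)}|^2 = s_n$ and $B_n^{(u_0)} = z_1^{(n)} z_2^{(n)} + z_2^{(n)} z_1^{(n)} = 2 z_1^{(n)} z_2^{(n)} = r_n e^{i\psi_n}$, which is exactly \eqref{S.s.B.r.psi}.

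There is no genuine obstacle here: the construction is direct once Lemma \ref{lemma:basic.complex} is available. The only points to keep in mind are that the spheres $|k| = \a_n$ are pairwise disjoint, so that the blocks do not interfere with one another, and that each such sphere is nonempty in every dimension $d \geq 1$, so that a single conjugate pair $\pm k_n$ already suffices to realize any admissible $(S_n, B_n)$. One could equally well distribute the mass of each block over several modes on the same sphere, but the single-pair choice is the simplest.
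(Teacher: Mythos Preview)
Your proof is correct and follows essentially the same approach as the paper's: choose one frequency $k_n$ on each sphere, apply Lemma \ref{lemma:basic.complex} to produce the pair $z_1^{(n)}, z_2^{(n)}$, and take $u_0$ to be the sum of the four two-mode blocks. Your version is slightly more explicit (specifying $k_n=(\a_n,0,\ldots,0)$ and spelling out the disjointness of the spheres), but the argument is the same.
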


\begin{proof} 
For each $n = 1,2,3,4$, fix an integer vector $k_n \in \Z^d$ with $|k_n| = \a_n$, 
and apply Lemma \ref{lemma:basic.complex} to determine two nonzero complex numbers 
$z_{1,n}, z_{2,n}$ such that 
\[
|z_{1,n}|^2 + |z_{2,n}|^2 = s_n, \qquad 
2 z_{1,n} z_{2,n} = r_n e^{i \psi_n}.
\]
We define $u_0$ as the trigonometric polynomial having 
$z_{1,n}, z_{2,n}$ as Fourier coefficients for the frequencies $k_n, -k_n$, 
and having no other frequencies in its support, i.e.
\[
u_0(x) = \sum_{n=1}^4 ( z_{1,n} e^{i k_n \cdot x} + z_{2,n} e^{- i k_n \cdot x} ).
\]
Then 
\[
S_n^{(u_0)} = |z_{1,n}|^2 + |z_{2,n}|^2 = s_n, 
\qquad 
B_n^{(u_0)} = 2 z_{1,n} z_{2,n} = r_n e^{i \psi_n}.
\qedhere
\]
\end{proof}

Lemma \ref{lemma:u0.from.s.r.psi} deals with 
trigonometric polynomials supported on just one pair $(k_n, -k_n)$ 
of points on the sphere $\{ k : |k| = \a_n \}$; 
this is the minimal situation, valid in any dimension $d \geq 1$. 
Of course, in dimension $d \geq 2$ the Fourier support 
can contain more than one pair of opposite frequencies 
on the same sphere, and therefore the construction of $u_0$ 
with prescribed $S_n^{(u_0)}, B_n^{(u_0)}$ has even more free parameters
at disposal.

The following lemma gives the answer to question \eqref{good.u0.question}.
%at the beginning of this subsection.

\begin{lemma} \label{lemma:good.u0}
Let 
$(S_1(t), S_2(t), S_3(t), S_4(t), \ph_{123}(t), \ph_{234}(t))$ 
be a solution of system \eqref{syst.6.eq} 
obtained in Lemma \ref{lemma:back.from.xi.eta.to.6.eq}.
%, 
%and take, in particular, its value at time $t=0$. 
Also assume that $q_1, q_2, \tA_1$ satisfy 
\eqref{fix.ratio.q1.q2}, % \eqref{q1.from.q2}, 
\eqref{smallness.A1.q2},  
\eqref{smallness.A1.q2.power.32}.
Then there exists a trigonometric polynomial $u_0 \in C(\T^d,\C)$, 
Fourier supported on the set $\{ k \in \Z^d : |k| \in \{ \a_1, \a_2, \a_3, \a_4 \} \}$,
satisfying all the identities in \eqref{good.u0.question}. 
If, in addition, $q_2$ satisfies \eqref{smallness.q2},  
then $u_0$ belongs to the ball \eqref{ball.mitica}.
\end{lemma}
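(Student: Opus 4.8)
The plan is to assemble the affirmative answer to question~\eqref{good.u0.question} by chaining the three preparatory lemmas (Lemma~\ref{lemma:exist.numbers}, Lemma~\ref{lemma:basic.complex}, Lemma~\ref{lemma:u0.from.s.r.psi}), and then checking, separately, that the resulting $u_0$ lands in the ball~\eqref{ball.mitica} under the additional smallness hypothesis~\eqref{smallness.q2}.

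\medskip

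First I would invoke Lemma~\ref{lemma:exist.numbers}: since $q_1, q_2, \mathtt{A}_1$ satisfy \eqref{fix.ratio.q1.q2}, \eqref{smallness.A1.q2}, \eqref{smallness.A1.q2.power.32}, the lemma produces real numbers $r_n, \psi_n$ ($n=1,2,3,4$) with $0 < r_n \le S_n(0)$ satisfying the two pairs of identities \eqref{psi.ph.r.c.123}, \eqref{psi.ph.r.c.234}. Then I would apply Lemma~\ref{lemma:u0.from.s.r.psi} with $s_n := S_n(0)$ and the same $r_n, \psi_n$ (the hypothesis $0 < r_n \le s_n$ is exactly \eqref{ineq.r.S}); this yields a trigonometric polynomial $u_0 \in C(\T^d,\C)$, Fourier supported on $\{ k : |k| \in \{\a_1,\a_2,\a_3,\a_4\} \}$, with $S_n^{(u_0)} = S_n(0)$ and $B_n^{(u_0)} = r_n e^{i\psi_n}$.

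\medskip

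Next I would verify the six identities of \eqref{good.u0.question} one by one. The equalities $S_n^{(u_0)} = S_n(0)$ are immediate. For the angles: by definition \eqref{def.Z.rho.c.(u)}--\eqref{def.polar.(u)}, $Z_{123}^{(u_0)} = B_1^{(u_0)} B_2^{(u_0)} \overline{B_3^{(u_0)}} = r_1 r_2 r_3 \, e^{i(\psi_1 + \psi_2 - \psi_3)}$, so $\rho_{123}^{(u_0)} = r_1 r_2 r_3 > 0$ and $\ph_{123}^{(u_0)} = \psi_1 + \psi_2 - \psi_3 = \ph_{123}(0)$ in $\T$, by \eqref{psi.ph.r.c.123}; similarly $\ph_{234}^{(u_0)} = \ph_{234}(0)$. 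For the $c$'s: $c_{123}^{(u_0)} = \frac38 \rho_{123}^{(u_0)} \a_1 \a_2 \a_3 = \frac38 (r_1 \a_1)(r_2 \a_2)(r_3 \a_3) = c_{123}$ again by \eqref{psi.ph.r.c.123}, and likewise $c_{234}^{(u_0)} = c_{234}$ by \eqref{psi.ph.r.c.234}. This establishes all of \eqref{good.u0.question}.

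\medskip

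Finally, for the membership in the ball: the function $u_0$ has Fourier support on the four spheres, so $\| u_0 \|_{m_1}^2 = \sum_{n=1}^4 \a_n^{2 m_1} S_n^{(u_0)} = \sum_{n=1}^4 \a_n^{2 m_1} S_n(0)$. By Lemma~\ref{lemma:ball.m1.S} (whose hypotheses \eqref{fix.ratio.q1.q2}, \eqref{smallness.A1.q2} are assumed here), applied at $t = 0$ with $s = m_1 \ge 1$, this sum is $\le 8 \a_4^{2 m_1} q_2$; and the additional hypothesis \eqref{smallness.q2} gives $8 \a_4^{2 m_1} q_2 \le \d_1^2$. Hence $\| u_0 \|_{m_1} \le \d_1$, i.e.\ $u_0$ lies in the ball~\eqref{ball.mitica}, completing the proof. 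The only place where there is anything to watch is the bookkeeping that the three preparatory lemmas' hypotheses are all available and that the angle identities are read correctly modulo $2\pi$ — there is no genuine obstacle, since all the real work was done in Lemmas~\ref{lemma:exist.numbers}--\ref{lemma:u0.from.s.r.psi} and~\ref{lemma:ball.m1.S}.
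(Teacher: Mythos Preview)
Your proof is correct and follows essentially the same approach as the paper's own proof: invoke Lemma~\ref{lemma:exist.numbers} to produce $r_n,\psi_n$, feed these into Lemma~\ref{lemma:u0.from.s.r.psi} with $s_n=S_n(0)$ to build $u_0$, verify the identities in \eqref{good.u0.question} via the polar decomposition of $Z_{123}^{(u_0)}$ and $Z_{234}^{(u_0)}$, and finally use Lemma~\ref{lemma:ball.m1.S} at $t=0$ together with \eqref{smallness.q2} for the ball condition. The only cosmetic difference is that you mention Lemma~\ref{lemma:basic.complex} in your plan but (correctly) do not invoke it separately, since it is already absorbed into Lemma~\ref{lemma:u0.from.s.r.psi}.
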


\begin{proof}
By Lemma \ref{lemma:exist.numbers}, 
there exist constants $r_n, \psi_n$ satisfying 
\eqref{ineq.r.S},
\eqref{psi.ph.r.c.123},
\eqref{psi.ph.r.c.234}. 
Define $s_n := S_n(0)$, 
so that \eqref{ineq.r.S} becomes $0 < r_n \leq s_n$.  
By Lemma \ref{lemma:u0.from.s.r.psi}, 
there exists a trigonometric polynomial $u_0$, 
with the desired Fourier support, 
satisfying \eqref{S.s.B.r.psi}. 
By the first identity in \eqref{S.s.B.r.psi} we directly have
\[
S_n^{(u_0)} = s_n = S_n(0), \qquad n = 1,2,3,4.
\]
By the second identity in \eqref{S.s.B.r.psi}, 
the first definition in \eqref{def.Z.rho.c.(u)}, 
and the first identity in \eqref{psi.ph.r.c.123},
we obtain 
\begin{equation} \label{temp.Z.r.ph}
Z_{123}^{(u_0)} = r_1 r_2 r_3 \exp( i (\psi_1 + \psi_2 - \psi_3) )
= r_1 r_2 r_3 \exp ( i \ph_{123}(0) ).
\end{equation}
By \eqref{temp.Z.r.ph}, 
by the second and third definition in \eqref{def.Z.rho.c.(u)}, 
and by the second identity in \eqref{psi.ph.r.c.123}, we get
\[
\rho_{123}^{(u_0)} = r_1 r_2 r_2,  
\qquad 
c_{123}^{(u_0)} = \frac38 r_1 r_2 r_3 \a_1 \a_2 \a_3
= c_{123}.
\]
Moreover, since $r_1 r_2 r_3 > 0$, \eqref{temp.Z.r.ph} 
is a polar representation of $Z_{123}^{(u_0)}$, 
and hence 
$\ph_{123}^{(u_0)} = \ph_{123}(0)$
as elements of $\T = \R / 2\pi \Z$. 
Similar proof applies for $c_{234}$, $\ph_{234}(0)$. 

Finally, if $q_2$ satisfies \eqref{smallness.q2}, 
then, by Lemma \ref{lemma:ball.m1.S}, bound \eqref{ball.m1.S} holds; 
in particular, this bound % \eqref{ball.m1.S} 
at time $t=0$ implies that $u_0$ belongs to the ball \eqref{ball.mitica}.
\end{proof}

\subsection{Joining the two amplitude parameters}

In Lemma \ref{lemma:back.from.xi.eta.to.6.eq} 
the solutions of system \eqref{syst.6.eq} 
obtained from Proposition \ref{prop:chaos.xi.eta}
are described by the three independent parameters $q_1, q_2,\tA_1$. 
Then, to get the positivity of the functions $S_n$, it is enough to use that
the ratio $q_1/q_2$ is bounded from below and from above by \eqref{fix.ratio.q1.q2}. 
After that, only two independent parameters remain, which are $q_2$ and $\tA_1$. 
Then, to obtain the lower bound \eqref{bound.S.below}, 
we need \eqref{smallness.A1.q2}, which is a bound of the form 
$\tA_1 \leq C q_2$ for some universal constant $C$. 
Also, $q_2$ itself must satisfy the smallness condition \eqref{smallness.q2}, 
which is an inequality of the form $q_2 \leq K$, 
for some constant $K$ depending on $m,p$. 
%(or just on $p$, see Remark \ref{rem:smallness.q2.simplified}). 
Next, $\tA_1, q_2$ must also satisfy the condition \eqref{smallness.A1.q2.power.32}, 
which is an inequality of the form $\tA_1 \leq K q_2^{3/2}$, 
for some constant $K$ depending on $m,p$. 

We would like to obtain values of $\tA_1$ as large as possible,
because $\tA_1$ and its multiple $\tA_2$ are the amplitudes of the chaotic movements 
we want to construct. 
%By \eqref{smallness.A1.q2.power.32}, 
%$A_1$ cannot exceed a multiple of $q_2^{3/2}$. 
%This leads us to
We fix $\tA_1$ as the largest value compatible with 
\eqref{smallness.A1.q2} and \eqref{smallness.A1.q2.power.32}. 
Thus, we define 
\begin{equation} \label{def.eps}
\e := q_2^{\frac12}, \qquad 
\tA_1 := \Big( \frac{3}{32} \frac{\a_1^3}{\a_1^2 + \a_2^2 + \a_3^2} \Big)^{\frac12} \e^3,
\end{equation}
so that \eqref{smallness.A1.q2.power.32} is satisfied. 
Note that \eqref{smallness.A1.q2} becomes 
\begin{equation} \label{smallness.A1.q2.in.terms.of.eps}
\Big( \frac{3}{32} \frac{\a_1^3}{\a_1^2 + \a_2^2 + \a_3^2} \Big)^{\frac12} \e^3 
\leq \frac{1}{9} \e^2.
\end{equation}
We define 
\begin{equation} \label{def.eps.0}
\e_0 := \min \Big\{ 1 , \, 
\frac{1}{9} \Big( \frac{32}{3} \frac{\a_1^2 + \a_2^2 + \a_3^2}{\a_1^3} \Big)^{\frac12} \,, \,
\frac{\delta_1}{\sqrt{8} \a_4^{m_1}} \Big\},
\end{equation}
where $\d_1$ is the universal constant in \eqref{smallness.q2} 
and in \eqref{ball.mitica}, 
so that both \eqref{smallness.A1.q2.in.terms.of.eps} 
and \eqref{smallness.q2} are satisfied for all $0 < \e \leq \e_0$. 
The constant $\e_0$ depends only on $m,p$. 
For $\s = m/p$ small enough, the minimum in \eqref{def.eps.0} 
is the third element of the set, as we note in the following lemma.

\begin{lemma} \label{lemma:maybe.not.useful}
There exists a universal constant $\s_1 \in (0, 1)$ such that, 
if $\s = m/p \leq \s_1$, then $\e_0$ defined in \eqref{def.eps.0} 
is $\e_0 = \delta_1 / (\sqrt{8} \a_4^{m_1})$. 
\end{lemma}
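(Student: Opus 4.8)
\medskip

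The plan is to compare the three numbers inside the minimum defining $\e_0$ in \eqref{def.eps.0} and to show that, for $\s$ small, the third one, $\d_1/(\sqrt8\,\a_4^{m_1})$, is the smallest. I would express everything through $m$ and $\s=m/p$. Recalling $\a_1=m$ and
$\a_1^2+\a_2^2+\a_3^2=m^2+(m+p)^2+(2m+p)^2=6m^2+6mp+2p^2\ge 2p^2=2m^2/\s^2$,
the second entry of the minimum is bounded below by
\begin{equation*}
\frac19\Big(\frac{32}{3}\,\frac{\a_1^2+\a_2^2+\a_3^2}{\a_1^3}\Big)^{\frac12}
\ge \frac19\Big(\frac{64}{3m\s^2}\Big)^{\frac12}=\frac{8}{9\sqrt{3m}\;\s}\,.
\end{equation*}
On the other hand, since $\a_4=3m+2p>2p=2m/\s$ and $m_1\ge1$, one has $\a_4^{m_1}>(2m/\s)^{m_1}$, so the third entry satisfies
\begin{equation*}
\frac{\d_1}{\sqrt8\,\a_4^{m_1}}<\frac{\d_1\,\s^{m_1}}{\sqrt8\,(2m)^{m_1}}\,.
\end{equation*}

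Next I would divide the last bound by the previous one: using $m_1\in\{1,2\}$, $m\ge2$ and $\s\le1$ (so that $\s^{m_1+1}\le\s^2$ and $\sqrt m/(2m)^{m_1}\le 1/(2\sqrt2)$), the ratio of the third number to the second is at most $\tfrac{9\sqrt3}{64}\,\d_1\,\s^2$. Since $\d_1$ is a universal constant, there is a universal $\s_1\in(0,1)$ such that this ratio is $\le1$ for all $\s\le\s_1$; hence for such $\s$ the third entry does not exceed the second. To compare the third entry with $1$: from $m\ge2$ and $p\ge m+1\ge3$ one gets $\a_4=3m+2p\ge12$, whence $\a_4^{m_1}\ge12$ and $\d_1/(\sqrt8\,\a_4^{m_1})\le\d_1/(12\sqrt8)<1$ (recall $\d_1$ is a small universal constant). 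Combining the two inequalities, the minimum in \eqref{def.eps.0} is attained at its third entry, i.e.\ $\e_0=\d_1/(\sqrt8\,\a_4^{m_1})$, for all $\s=m/p\le\s_1$.

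The only delicate point is that the comparison must be made uniform in $m$: the second entry of the minimum degrades like $m^{-1/2}$ as $m\to\infty$, but relative to it the third entry degrades at least as fast, so the estimate above yields a threshold $\s_1$ depending only on the universal constant $\d_1$ and not on $m$ (or $p$). Beyond this, the proof is just the elementary comparison of explicit expressions sketched above, needing only the definitions \eqref{alpha.1234.Fib}, \eqref{def.eps.0} and the fact that $\d_1$ in \eqref{ball.mitica} is universal.
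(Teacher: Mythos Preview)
Your argument is correct and follows the same elementary comparison of the three entries as the paper's very terse proof (which records only the inequalities $\a_1^2+\a_2^2+\a_3^2>3\a_1^2$, $\a_4^{m_1}\ge\a_4$, and $\a_4>\a_1>\sqrt{\a_1}$ and leaves the rest to the reader). One minor point: your comparison of the third entry with $1$ invokes ``$\d_1$ is a small universal constant,'' but the paper only asserts that $\d_1>0$ is universal, not that it is small; you can sidestep this by reusing your own bound $\a_4^{m_1}>(2m/\s)^{m_1}\ge 4/\s$ to get $\d_1/(\sqrt8\,\a_4^{m_1})<\d_1\s/(4\sqrt8)\le1$ once $\s\le 4\sqrt8/\d_1$, and then absorb this threshold into $\s_1$.
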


\begin{proof} It is enough to recall that $\a_1^2 + \a_2^2 + \a_3^2 > 3 \a_1^2$, 
$\a_4^{m_1} \geq \a_4$, and $\a_4 > \a_1 > \sqrt{\a_1}$.
\end{proof}

%  --- NOTE --- 
%\bcb [NOTE. Filippo and Marcel version for the beginning of Lemma \ref{lemma:eps} 
%is the following one, in red: \ec
%\bcr Consider system \eqref{syst.xi.eta.fully.normalized2} 
%with $\sigma=m/p$ small enough so that Proposition \ref{prop:chaos.xi.eta} applies 
%and consider any of its solutions $(\xi_1, \eta_1, \xi_2, \eta_2)$  
%given by Proposition \ref{prop:chaos.xi.eta}. \ec  
%\bcb However, I prefer the old version, 
%because $(i)$ in Section 5 the parameter $\s$ is a continuous parameter, 
%and there are no integers $m,p$ in Section 5. 
%Here in this lemma $m,p$ come back; 
%$(ii)$ the role of $\e$ is more clearly separated from the one 
%of the other parameters.] \ec

\begin{lemma} \label{lemma:eps}
Let $a, m, p, \s, \a_1, \ldots \a_4, 
\tilde \mu_1(\s), \tilde \mu_2(\s), \g, 
(\xi_1(t), \eta_1(t), \xi_2(t), \eta_2(t))$ 
be like in Lemma \ref{lemma:back.from.xi.eta.to.6.eq}.
%Let $\e_0$ be defined in 
%Let $m,p$ be two integers, $2 \leq m < p$, 
%with ratio $\s := m/p$ in the interval $(0,\s_0)$. 
%Define $\a_1, \ldots \a_4$ by \eqref{alpha.1234.Fib}, 
%define $\mu_1$ by \eqref{def.mu.1}, 
%define $\mu_2$ by \eqref{def.mu.2}, 
 %$\g$ by \eqref{def.gamma} and  
 %$\e_0$ 
 %be the constant 
%depending only on $m,p$, 
%defined in
%by \eqref{def.eps.0}. 
%with the following properties.
%\bcb Consider the solution $(\xi_1, \eta_1, \xi_2, \eta_2)$ 
%of system \eqref{syst.xi.eta.fully.normalized2} 
%obtained in Proposition \ref{prop:chaos.xi.eta}. 
%\ec 
Consider any $\e \in (0,\e_0)$, 
where $\e_0$ is the constant, depending only on $m, p$, defined in \eqref{def.eps.0}, 
and define % the following constants:  %% NON SONO TUTTE COSTANTI.
$q_2 := \e^2$, 
$\tA_1$  given by the second identity in \eqref{def.eps},
$q_1$ by \eqref{fix.ratio.q1.q2},
$E_1, E_2$ by \eqref{E12.from.q12},
and $c_{123}$, 
$c_{234}$, 
$\tB$, 
$\tA_2$, 
%$b_1, b_2$, 
%$E_1, E_2$, 
$S_1(t)$, 
$S_2(t)$,
$S_3(t)$, 
$S_4(t)$, 
$\ph_{123}(t)$,
$\ph_{234}(t)$
like in Lemma \ref{lemma:back.from.xi.eta.to.6.eq}.
Also define $\rho_{123} = \frac83 c_{123} / (\a_1 \a_2 \a_3)$, 
$\rho_{234} = \frac83 c_{234} / (\a_2 \a_3 \a_4)$. 

Then 
$(S_1(t), S_2(t), S_3(t), S_4(t), \ph_{123}(t), \ph_{234}(t))$ 
is a solution of system \eqref{syst.6.eq} for all $t \in \R$. 
Moreover, 
the identity \eqref{fix.ratio.q1.q2} is satisfied, 
and therefore \eqref{optimize.constant.terms.E.q} holds;
the inequality \eqref{smallness.A1.q2} is satisfied, 
and therefore the lower bound \eqref{bound.S.below} holds; 
the inequality \eqref{smallness.q2} is satisfied, 
and therefore the bound \eqref{ball.m1.S} holds; 
the inequality \eqref{smallness.A1.q2.power.32} is satisfied, 
and therefore the thesis of Lemma \ref{lemma:good.u0} holds.
\end{lemma}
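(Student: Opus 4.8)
The plan is to verify, one after another, that the explicit choices $q_2 := \e^2$, $\tA_1 := \big( \tfrac{3}{32}\,\tfrac{\a_1^3}{\a_1^2+\a_2^2+\a_3^2} \big)^{1/2}\e^3$, $q_1 := \big( 1 + \tfrac23 r \big) q_2$ with $r = \a_4^2/\a_3^2$, and $E_1, E_2$ given by \eqref{E12.from.q12}, fulfil every smallness hypothesis appearing in the lemmas of this subsection; each verification then immediately yields the corresponding conclusion. There is essentially no analytic content here: the statement is a bookkeeping synthesis of Lemmas \ref{lemma:back.from.xi.eta.to.6.eq}, \ref{lemma:fix.ratio.q1.q2}, \ref{lemma:bound.S.below}, \ref{lemma:ball.m1.S} and \ref{lemma:good.u0}, and the one point to keep in mind is that the three elements appearing in the minimum that defines $\e_0$ in \eqref{def.eps.0} are precisely the thresholds dictated by conditions \eqref{smallness.A1.q2}, \eqref{smallness.q2} and \eqref{smallness.A1.q2.power.32}.

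First I would observe that the hypotheses of Lemma \ref{lemma:back.from.xi.eta.to.6.eq} are all in force --- $a = a_0$, $\s = m/p \in (0,\s_0)$, $2 \le m < p$, and $q_1, q_2, \tA_1 > 0$ --- so that lemma gives directly that $(S_1(t), S_2(t), S_3(t), S_4(t), \ph_{123}(t), \ph_{234}(t))$ solves \eqref{syst.6.eq} for all $t \in \R$, which is the first assertion. Since $q_1$ was \emph{defined} by \eqref{fix.ratio.q1.q2}, that identity holds by construction; hence Lemma \ref{lemma:fix.ratio.q1.q2} applies and produces \eqref{optimize.constant.terms.E.q}.

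Next I would check the two inequalities linking $\tA_1$ and $q_2$. With $q_2 = \e^2$ one has $q_2^{3/2} = \e^3$, so the bound \eqref{smallness.A1.q2.power.32} holds \emph{with equality} --- this is exactly the way $\tA_1$ is chosen in \eqref{def.eps}. The bound \eqref{smallness.A1.q2}, i.e.\ $\tA_1 \le q_2/9$, becomes \eqref{smallness.A1.q2.in.terms.of.eps}, which is satisfied because $\e < \e_0$ and $\e_0$ does not exceed the second element $\tfrac19\big( \tfrac{32}{3}\tfrac{\a_1^2+\a_2^2+\a_3^2}{\a_1^3} \big)^{1/2}$ of the set in \eqref{def.eps.0}; Lemma \ref{lemma:bound.S.below} then yields the lower bound \eqref{bound.S.below}. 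Finally, \eqref{smallness.q2}, i.e.\ $8\a_4^{2m_1}q_2 \le \d_1^2$, reads $8\a_4^{2m_1}\e^2 \le \d_1^2$, that is $\e \le \d_1/(\sqrt{8}\,\a_4^{m_1})$, which holds since $\e < \e_0$ and $\e_0$ does not exceed the third element of the same set; Lemma \ref{lemma:ball.m1.S} then gives \eqref{ball.m1.S}. At this point the three hypotheses \eqref{fix.ratio.q1.q2}, \eqref{smallness.A1.q2} and \eqref{smallness.A1.q2.power.32} of Lemma \ref{lemma:good.u0} have all been verified, so its thesis follows as well.

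The only ``obstacle'' is organizational: one must present the chain of implications so that every smallness hypothesis is checked before the lemma that invokes it, and keep straight which constants are universal and which depend only on $m, p$. No estimate in this statement is delicate; the genuinely delicate comparison --- the Gronwall argument between the truncated effective system and the full PDE --- is carried out later, in Section \ref{sec:gronwall}.
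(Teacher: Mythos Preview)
Your proof is correct and matches the paper's approach exactly: the paper's own proof is the one-liner ``The proof follows from \eqref{def.eps}, \eqref{def.eps.0} and the results of the previous subsections,'' and you have simply spelled out the bookkeeping in detail. One small expository slip: the first element ``$1$'' in the minimum defining $\e_0$ is not tied to \eqref{smallness.A1.q2.power.32} (that inequality holds with equality by the very choice of $\tA_1$, as you correctly note), so only the second and third elements of the minimum are the genuine thresholds here --- but this does not affect your argument.
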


\begin{proof} 
The proof follows from \eqref{def.eps}, \eqref{def.eps.0} 
and the results of the previous subsections.
\end{proof}

%\textcolor{red}{Move what's below to right after the definition of $\e_0$?}

Recalling that the function $\eta_2$ 
given by Proposition \ref{prop:chaos.xi.eta} satisfies 
\eqref{bounds.eta2.in.section.6},
we also have the following lemma, where $\e^2, \e^3$ 
are isolated from the other parts of the coefficients. 
The reason to consider the quantity $\mN_1$ 
in \eqref{Sn.norm.1}
is that it corresponds to the square of the Sobolev norm
$H^1(\T^d)$ of the solution of the Kirchhoff equation.
%corresponding to Sobolev norms $H^1(\T^d)$, $H^2(\T^d)$, 
%is that the $H^{m_1}(\T^d)$ norm is the crucial one 
%for the transformations of the Kirchhoff equation, 
%where $m_1 = 1$ if $d=1$, and $m_1 = 2$ if $d \geq 2$. 

%\textcolor{red}{ Why in the lemma below we only talk about $S_4$ and not the others? Can we move the statements for $S_4$ to the proof of the lemma?}
To simplify the exposition of the lemma, we rewrite  \eqref{eq:systemmathtt} as
\begin{equation}\label{S.1234.eps}
	\begin{aligned}
		S_1(t) & = \e^2 s_1 - \e^3 a_1 \eta_1(b \e^3 t), \\
		S_2(t) & = \e^2 s_2 - \e^3 a_1 \eta_1(b \e^3 t) - \e^3 a_2 \eta_2(b \e^3 t),\\
		S_3(t) & = \e^2 s_3 + \e^3 a_1 \eta_1(b \e^3 t) - \e^3 a_2 \eta_2(b \e^3 t), \\
		S_4(t) & = \e^2 s_4 + \e^3 a_2 \eta_2(b \e^3 t),
	\end{aligned}
\end{equation}
where 
\begin{equation}\label{def.s.1234.a1.a2.b}
	\begin{gathered}
		s_1 := \frac{\a_4^2}{3 \a_1^2}, 
		\quad \quad 
		s_2 := \frac{\a_4^2}{3 \a_2^2}, 
		\quad \quad 
		s_3 := \frac{2 \a_4^2}{3 \a_3^2}, 
		\quad \quad 
		s_4 := 1,\\
		a_1 := \Big( \frac{3}{32} \frac{\a_1^3}{\a_1^2 + \a_2^2 + \a_3^2} \Big)^{\frac12},
		\quad \quad 
		a_2 := \frac{1}{\g} a_1,
		\quad \quad 
		b := \frac{\a_1^2 + \a_2^2 + \a_3^2}{2} a_1,
	\end{gathered}
\end{equation}
so that 
\begin{equation}  \label{A1.A2.B.a1.a2.b.eps}
	\tA_1 = a_1 \e^3, \quad \ 
	\tA_2 = a_2 \e^3, \quad \ 
	\tB = b \e^3.
\end{equation}
Note that the constants $s_1, s_2, s_3, s_4, a_1, a_2, b$ depend only on $m,p$.
We also define the associated function
\begin{align} 
	\mN_1(t) & := \sum_{n=1}^4 \a_n^2 S_n(t) 
	= \e^2 \frac{7 \a_4^2}{3} 
	+ \e^3 (2 \a_1 \a_2) a_1 \eta_1(b \e^3 t)
	+ \e^3 (2 \a_2 \a_3) a_2 \eta_2(b \e^3 t).
	\label{Sn.norm.1}
\end{align}

\begin{lemma} \label{lemma:Sn.eps}
%Assume the hypotheses of Lemma \ref{lemma:eps}. Then
%\begin{equation}\label{S.1234.eps}
%\begin{aligned}
%S_1(t) & = \e^2 s_1 - \e^3 a_1 \eta_1(b \e^3 t), \\
%S_2(t) & = \e^2 s_2 - \e^3 a_1 \eta_1(b \e^3 t) - \e^3 a_2 \eta_2(b \e^3 t),\\
%S_3(t) & = \e^2 s_3 + \e^3 a_1 \eta_1(b \e^3 t) - \e^3 a_2 \eta_2(b \e^3 t), \\
%S_4(t) & = \e^2 s_4 + \e^3 a_2 \eta_2(b \e^3 t)
%\end{aligned}
%\end{equation}
%for all $t \in \R$, where 
%\begin{equation}\label{def.s.1234.a1.a2.b}
%\begin{gathered}
%s_1 := \frac{\a_4^2}{3 \a_1^2}, 
%\quad \quad 
%s_2 := \frac{\a_4^2}{3 \a_2^2}, 
%\quad \quad 
%s_3 := \frac{2 \a_4^2}{3 \a_3^2}, 
%\quad \quad 
%s_4 := 1,\\
%a_1 := \Big( \frac{3}{32} \frac{\a_1^3}{\a_1^2 + \a_2^2 + \a_3^2} \Big)^{\frac12},
%\quad \quad 
%a_2 := \frac{1}{\g} a_1,
%\quad \quad 
%b := \frac{\a_1^2 + \a_2^2 + \a_3^2}{2} a_1,
%\end{gathered}
%\end{equation}
%so that 
%\begin{equation}  \label{A1.A2.B.a1.a2.b.eps}
%\tA_1 = a_1 \e^3, \quad \ 
%\tA_2 = a_2 \e^3, \quad \ 
%\tB = b \e^3.
%\end{equation}
%The constants $s_1, s_2, s_3, s_4, a_1, a_2, b$ depend only on $m,p$.
%Also, for all $t \in \R$,  one has 
%\begin{align} 
%\mE_1(t) & := \sum_{n=1}^4 \a_n^2 S_n(t) 
%= \e^2 \frac{7 \a_4^2}{3} 
%+ \e^3 (2 \a_1 \a_2) a_1 \eta_1(b \e^3 t)
%+ \e^3 (2 \a_2 \a_3) a_2 \eta_2(b \e^3 t).
%\label{Sn.norm.1}
%\end{align}
%
%Moreover, 
There exists a universal constant $\s_2 \in (0,1)$ 
with the following property. 
Let $t_j, \bar t_j$ be given by Proposition \ref{prop:chaos.xi.eta}, 
and define 
\begin{equation} \label{def.rescaled.times}
t_j^* := \frac{t_j}{B} = \frac{t_j}{b \e^3}, 
\qquad 
\bar t_j^* := \frac{\bar t_j}{B} = \frac{\bar t_j}{b \e^3}, 
\qquad 
I_j^{*} := [t_j^*, \bar t_j^*], 
\qquad 
E_j^{*} := [\bar t_j^*, t_{j+1}^*].
\end{equation}
Assume the hypotheses of Lemma \ref{lemma:eps}. 
If, in addition, the ratio $\s = m/p$ satisfies $\s \leq \s_2$, then 
%%%%  I HAVE PUT % IN FRONT OF THE INEQUALITIES FOR S_4(t). Pietro, 28-01-2023
%\bcb 
%\begin{align}
%& \e^2 + \e^3 a_2 < S_4(t) \leq \e^2 + \e^3 a_2 (2 + C\s) \leq \e^2 + \frac{201}{100} \e^3 a_2
%\quad \forall t \in I_j^{*},
%\notag \\
%& \e^2 - \frac{1}{100} \e^3 a_2 \leq \e^2 - C\s \e^3 a_2 \leq S_4(t) < \e^2 + \e^3 a_2
%\quad \forall t \in E_j^{*},
%\notag \\
%& \max_{t \in I_j^*} S_4(t) 
%\geq \e^2 + \e^3 a_2 (2-C\s)
%\geq \e^2 + \frac{199}{100} \e^3 a_2,
%\notag \\
%& \min_{t \in E_j^*} S_4(t) 
%\leq \e^2 + C \e^3 a_2 \s
%\leq \e^2 + \frac{1}{100} \e^3 a_2
%\label{bounds.S4}
%\end{align}
%and \ec 
$\mN_1(t)$ defined in \eqref{Sn.norm.1} satisfies
\begin{align}
& \e^2 c_1 + \frac{99}{100} \e^3 r_1 \leq \mN_1(t) \leq \e^2 c_1 + \frac{201}{100} \e^3 r_1
\quad \forall t \in I_j^{*},
\notag \\
& \e^2 c_1 - \frac{1}{100} \e^3 r_1 \leq \mN_1(t) \leq \e^2 c_1 + \frac{101}{100} \e^3 r_1
\quad \forall t \in E_j^{*},
\notag \\
& \max_{t \in I_j^*} \mN_1(t) 
\geq \e^2 c_1 + \frac{199}{100} \e^3 r_1,
\quad \quad 
\min_{t \in E_j^*} \mN_1(t) 
\leq \e^2 c_1 + \frac{1}{100} \e^3 r_1,
\label{bounds.mE.1}
\end{align}
for all $j \geq 0$, where 
$c_1 := \frac73 \a_4^2$, 
$r_1 := (2 \a_2 \a_3) a_2$ depend only on $m,p$.
\end{lemma}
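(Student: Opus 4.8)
The plan is to rewrite $\mN_1(t)$ in the normalized variables of Section \ref{sec:chaos.2.pendulums} and to track how the bounds \eqref{borraccia}–\eqref{bounds.eta2.in.section.6} on $\eta_2$ propagate into bounds on $\mN_1$, with the coupling term involving $\eta_1$ treated as a small error for $\s$ small. First I would note that, by \eqref{Sn.norm.1}, one has
\[
\mN_1(t) = \e^2 c_1 + \e^3 r_1 \, \eta_2(b\e^3 t) + \e^3 (2\a_1\a_2) a_1 \, \eta_1(b\e^3 t),
\]
with $c_1 = \frac73 \a_4^2$ and $r_1 = (2\a_2\a_3) a_2$. Changing the time variable to $s = b\e^3 t = \tB t$, the intervals $I_j^*, E_j^*$ defined in \eqref{def.rescaled.times} map exactly to the intervals $[t_j,\bar t_j]$ and $[\bar t_j, t_{j+1}]$ from Proposition \ref{prop:chaos.xi.eta}, so the content of the lemma is just a pointwise statement about $\eta_1, \eta_2$ on those intervals, converted back by multiplication by $\e^3 r_1$ and addition of $\e^2 c_1$.

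Next I would isolate the error term. Using \eqref{eta.12.bound} we have $|\eta_1|\le 3$, so the $\eta_1$-contribution is bounded by $3\e^3 (2\a_1\a_2) a_1$; I need to show this is at most $\frac{1}{100}\e^3 r_1$, i.e. that
\[
\frac{3 (2\a_1\a_2) a_1}{(2\a_2\a_3) a_2} = 3\,\frac{\a_1}{\a_3}\cdot\frac{a_1}{a_2} = 3\,\frac{\a_1}{\a_3}\,\g \le \frac{1}{100}.
\]
Since $\a_1/\a_3 = m/(2m+p) = \s/(2\s+1) = O(\s)$ and $\g < 3$ for all $\s$, the left side is $O(\s)$, so there is a universal $\s_2 \in (0,1)$ making it $\le \frac{1}{100}$; this is the only place $\s_2$ is used. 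With this in hand, the claimed inequalities follow from \eqref{borraccia}: on $I_j^*$ we have $1 < \eta_2 \le 2 + C\s$, so $\e^3 r_1 < \e^3 r_1 \eta_2 \le (2+C\s)\e^3 r_1$; combining with the $\pm\frac{1}{100}\e^3 r_1$ error and absorbing $C\s$ into the $\frac{1}{100}$ slack (further shrinking $\s_2$ if necessary so that $C\s \le \frac{1}{100}$) gives $\e^2 c_1 + \frac{99}{100}\e^3 r_1 \le \mN_1(t) \le \e^2 c_1 + \frac{201}{100}\e^3 r_1$. The estimate on $E_j^*$ is identical using $-C\s \le \eta_2 < 1$. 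For the max/min statements, \eqref{bounds.eta2.in.section.6} gives $\max_{I_j^*}\eta_2 \ge 2 - C\s$ and $\min_{E_j^*}\eta_2 \le C\s$, and the same conversion (plus error term) yields $\max_{I_j^*}\mN_1 \ge \e^2 c_1 + \frac{199}{100}\e^3 r_1$ and $\min_{E_j^*}\mN_1 \le \e^2 c_1 + \frac{1}{100}\e^3 r_1$, again after choosing $\s_2$ small enough that $(2 - C\s) - \frac{1}{100} \ge \frac{199}{100}$ and $C\s + \frac{1}{100} \le \frac{1}{100}$... — more precisely one keeps a little room by requiring $C\s \le \frac{1}{200}$ in each estimate and then takes $\s_2$ to be the minimum of finitely many such thresholds together with the $\s_1$ of Lemma \ref{lemma:maybe.not.useful} and the $\s_0$ of Proposition \ref{prop:chaos.xi.eta}.

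The only mildly delicate point, and the one I would be careful about, is bookkeeping the various constants $C$: the constant in \eqref{borraccia}–\eqref{bounds.eta2.in.section.6} is a \emph{fixed universal} constant (since $a = a_0$ is universal by Remark \ref{rem:fix.a.0}), the constant $r_1 = (2\a_2\a_3)a_2$ depends on $m,p$, and the quantity $\e^3 r_1 \cdot C\s$ must be compared against $\e^3 r_1$ — so the $r_1$'s cancel and the comparison is genuinely just $C\s$ versus a numerical constant, which is why a single universal $\s_2$ suffices and no dependence on $m,p$ sneaks in beyond what is already allowed. Everything else is the routine substitution $s = \tB t$ and elementary inequalities, so I do not expect any real obstacle; the proof reduces to the one-line estimate that the $\eta_1$-coupling term is $O(\s)$ relative to $\e^3 r_1$, which is exactly the ``weakly coupled pendulums'' mechanism driving the whole construction.
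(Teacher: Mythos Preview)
Your approach is essentially the same as the paper's: both rewrite $\mN_1$ via \eqref{Sn.norm.1} as $\e^2 c_1 + \e^3 r_1\big(\eta_2 + O(\s)\big)$ by observing that the $\eta_1$-term carries a factor $\a_1\gamma/\a_3 = O(\s)$, and then read off all four inequalities from \eqref{borraccia}--\eqref{bounds.eta2.in.section.6}. Your write-up is in fact more explicit than the paper's (which does only the $\max$ inequality in detail and declares the rest similar); the only cosmetic slip is the momentary ``$C\s + \tfrac{1}{100} \le \tfrac{1}{100}$'', which you immediately fix by taking $C\s \le \tfrac{1}{200}$.
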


%\bcb
%I don't know which is the most convenient way to write the lemma, 
% whether to write down all the estimates also for $\mN_1$ or not. 
% We might even state the lemma only for $S_4$. 
% I believe that the main point is the following remark 
% (which is very similar to the proof of the last line of \eqref{bounds.mE.1}, 
% as it is written now).\\
%One has
%$$
%\mN_1 = \e^2 \frac{7 \a_4^2}{3} 
%+ \e^3 (2 \a_2 \a_3) a_2 \Big( \eta_2 + \frac{\a_1 \gamma}{\a_3} \eta_1 \Big)
%%=  \e^2 \frac{7 \a_4^2}{3} 
%%+ \e^3 (2 \a_2 \a_3) a_2 \big( \eta_2 + O(\s) \big)
%= \e^2 c_1 + \e^3 r_1 \big( \eta_2 + O(\s) \big).
%$$
%For this reason describing the oscillations of $\eta_2$ 
% (which is equivalent to describing the oscillations of $S_4$) 
% essentially describes also the oscillations of $\mN_1$.\\
%Or we could make this remark before the lemma 
% and then make also the statement about $\mN_1$ in the lemma, 
% but at this point the proof is essentially trivial...
%\ec

\begin{proof}
%The identities in \eqref{S.1234.eps} are directly obtained 
%from the identities for $S_n$ in Lemma \ref{lemma:back.from.xi.eta.to.6.eq},
%\eqref{optimize.constant.terms.E.q}, 
%\eqref{def.eps},
%\eqref{def.s.1234.a1.a2.b},
%\eqref{A1.A2.B.a1.a2.b.eps}.
%Formula \eqref{Sn.norm.1} is proved using the same identities, 
%and the fact that 
%\begin{align*}
%\a_3^2 - \a_2^2 - \a_1^2 
%& = (\a_1 + \a_2)^2 - \a_2^2 - \a_1^2 
%= 2 \a_1 \a_2, 
%\\
%\a_4^2 - \a_3^2 - \a_2^2 
%& = (\a_2 + \a_3)^2 - \a_3^2 - \a_2^2 
%= 2 \a_2 \a_3.
%\end{align*}
%%%%%%%The inequalities in \eqref{bounds.S4} follow directly 
%%%%%%%from the corresponding inequalities \eqref{borraccia}-\eqref{bounds.eta2.in.section.6}
%%%%%%%for the function $\eta_2$ given by Proposition \ref{prop:chaos.xi.eta}. 
To prove the first inequality in the last line of \eqref{bounds.mE.1}, 
consider the interval $I_j^*$, % $[t_j^*, \bar t_j^*]$, 
and let $t_j'$ be a point in that interval 
where the function $\eta_2(b \e^3 t)$ achieves its maximum value. 
Since by \eqref{bounds.eta2.in.section.6} and \eqref{eta.12.bound}
\[
\eta_2(b\e^3 t_j') \geq 2 - C\sigma
\quad \text{and} \quad 
|\eta_1(t)| \leq 3 \quad \forall t \in \R,
\]
from \eqref{Sn.norm.1} 
and the identity $a_1 = \g a_2$ (see \eqref{def.s.1234.a1.a2.b})
we get 
\begin{align*}
\max_{t \in [t_j^*, \bar t_j^*]} \mN_1(t) 
\geq \mN_1(t_j') 
& \geq \e^2 \frac{7 \a_4^2}{3} 
+ \e^3 (2 \a_2 \a_3) a_2 (2-C\s)
- \e^3 (2 \a_1 \a_2) 3 a_1
\\
& = \e^2 \frac{7 \a_4^2}{3} 
+ \e^3 (2 \a_2 \a_3) a_2 (2-C\s) 
\Big( 1 - \frac{ 3 \gamma \a_1 }{ \a_3 (2- C\s) } \Big).
\end{align*}
The difference in the last parenthesis tends to 1 as $\s \to 0$, 
because $\g \to 3$ and the ratio
$\a_1 / \a_3 = m / (2m+p)$ tends to $0$ as $\s = m/p \to 0$. 
The other inequalities in \eqref{bounds.mE.1} are proved similarly, 
using \eqref{borraccia}, \eqref{bounds.eta2.in.section.6}, and the fact that
\begin{equation}\label{S4.N1}
\mN_1 (t) = \e^2 \frac{7 \a_4^2}{3} 
+ \e^3 (2 \a_2 \a_3) a_2 \Big( \eta_2(b\e^3 t) + \frac{\a_1 \gamma}{\a_3} \eta_1 (b\e^3 t) \Big)
= \e^2 c_1 + \e^3 r_1 \big( \eta_2 (b\e^3 t) + O(\s) \big). \ 
\end{equation}
\end{proof}

\begin{remark}
By \eqref{S4.N1}, the oscillations of $\mN_1$ at the main order in $\s$ are fully described by the ones of $\eta_2$, i.e., of $S_4$.
\end{remark}

\textbf{Notation}. 
From now on, we will sometimes be much less accurate than before 
in keeping track of the explicit dependence 
of the various constants on $\a_1, \a_2, \a_3, \a_4$; 
we will denote generically by $K$ (or sometimes $K'$, or $K''$)
any constant, possibly different from line to line, 
that depends only on the integers $m,p$.

\medskip

With the new notation, 
the constants defined in Lemma \ref{lemma:eps} become
%\begin{alignat}{6}
%q_2 & = \e^2, 
%\quad & \quad 
%q_1 & = K \e^2, 
%\quad & \quad 
%A_1 & = K \e^3, 
%\quad & \quad 
%A_2 & = K \e^3, 
%\notag 
%\\
%B & = K \e^3, 
%\quad & \quad 
%E_1 & = K \e^2, 
%\quad & \quad 
%E_2 & = K \e^2, 
%\quad & \quad 
%c_{123} & = K \e^6, 
%\notag
%\\
%c_{234} & = K \e^6,
%\quad & \quad 
%\rho_{123} & = K \e^6, 
%\quad & \quad 
%\rho_{234} & = K \e^6.
%&&
%\label{all.K.eps}
%\end{alignat} 
\begin{equation}  \label{all.K.eps}
c_{123} = K \e^6, 
\quad \ 
c_{234} = K \e^6,
\quad \
\rho_{123} = K \e^6, 
\quad \ 
\rho_{234} = K \e^6,
\end{equation} 
where the four constants $K$ denote four (possibly different) values.

\section{Approximation argument}
\label{sec:gronwall}

%\textcolor{red}{Below: Do we include the $c$'s in the solution?}

Consider a solution 
$(S_1(t), S_2(t), S_3(t), S_4(t), \ph_{123}(t), \ph_{234}(t))$ 
of system \eqref{syst.6.eq} obtained in Lemma \ref{lemma:eps},
and let $u_0$ be the corresponding trigonometric polynomial 
in the $H^{m_1}$ ball \eqref{ball.mitica} 
constructed in Lemma \ref{lemma:good.u0}. 
Let $v_0 := \overline{u_0}$ be the complex conjugate of $u_0$,
and let $(u(t),v(t))$ be the solution of the Cauchy problem 
for the transformed Kirchhoff equation \eqref{3101.16} 
with initial condition \eqref{initial.cond.u.v.sec.prepar}.
%\begin{equation} \label{initial.cond.u.v}
%(u(0), v(0)) % = (u(t), v(t))|_{t=0} 
%= (u_0, v_0).
%\end{equation} 
Since $u_0$ is a trigonometric polynomial, 
the solution $(u,v)$ is global in time. 
Moreover, since $u_0$ belongs to the ball \eqref{ball.mitica},
the solution $(u,v)$ satisfies \eqref{twice.norm.sec.prepar} 
on the time interval $[0,T_{\mathrm{NF}}]$, see Lemma \ref{lemma:total}.
%remains in the domain 
%of the normal form transformation 
%for all $t$ in the time interval $[0,T_{\mathrm{NF}}]$, 
%and 
%\begin{equation} \label{twice.norm}
%\| u(t) \|_{m_1} \leq C_0 \| u_0 \|_{m_1} 
%\quad \forall t \in [0,T_{\mathrm{NF}}],
%\qquad 
%T_{\mathrm{NF}} = C \| u_0 \|_{m_1}^{-4},
%\end{equation}
%where $C_0, C>0$ are universal constants. 
%As the notation suggests, $T_{\mathrm{NF}}$ is the existence time 
%we obtain by the normal form procedure.

The estimate in Lemma \ref{lemma:ball.m1.S} computed at time $t=0$ 
gives the inequality 
\begin{equation}
\| u_0 \|_{s}^2 \leq 8 \a_4^{2 s} q_2 
\quad \forall s \geq 1,
\end{equation} 
and therefore, recalling \eqref{def.eps} 
and the notation at the end of the previous section,
$T_{\mathrm{NF}}$ in \eqref{twice.norm.sec.prepar} 
can be also estimated in terms of $\e$ as
\begin{equation} \label{T.NF.lower.bound}
T_{\mathrm{NF}} \geq \frac{C_0}{(8 \a_4^{2 m_1} q_2)^2} 
= K \e^{-4}.
\end{equation}
For each time $t \in [0, T_{\mathrm{NF}}]$, 
to slightly simplify the notations 
\eqref{def.SB.(u)},
\eqref{def.Z.rho.c.(u)},
\eqref{def.polar.(u)},
we denote
\begin{equation} \label{def.S.B.full}
S_n^u(t) % := S_n^{(u)}(t) 
:= S_n^{(u(t))}
\end{equation} 
the superactions of the function $u(t, \cdot)$,
and we also introduce the analogous notation 
for all the other quantities in \eqref{def.SB.(u)},
\eqref{def.Z.rho.c.(u)},
\eqref{def.polar.(u)}.

By Lemma \ref{lemma:Lemma.2.4.SIAM}, % 2.4 in \cite{Baldi.Haus.SIAM}, 
one has
\begin{equation} \label{S.t.S.0}
C_1 S_n^u(0) \leq S_n^u(t) \leq C_2 S_n^u(0) 
\quad \forall t \in [0, T_{\mathrm{NF}}], 
\quad n=1,2,3,4.
\end{equation}
%for some $0 < C_1 < C_2$. 
By construction,
one has $S_n^u(0) = S_n(0)$. 
Hence, by \eqref{S.t.S.0}, \eqref{bound.norm.s.S}, \eqref{def.eps},
\begin{align} 
\| u(t) \|_s^2 
& = \sum_{n=1}^4 \a_n^{2s} S_n^u(t) 
\notag \\ 
& \leq C_2 \sum_{n=1}^4 \a_n^{2s} S_n^u(0)
= C_2 \sum_{n=1}^4 \a_n^{2s} S_n(0)
= C_2 \| u_0 \|_s^2 
\leq 8 C_2 \a_4^{2s} q_2
= 8 C_2 \a_4^{2s} \e^2
\label{bound.u(t).norm.s}
\end{align}
for all $t \in [0, T_{\mathrm{NF}}]$, 
all $s \geq 1$. 
For $s \in \{ 1, m_1 \}$, one has $\a_4^{2s} \leq \a_4^{2 m_1}$, 
and therefore we can simply write 
\begin{equation} \label{bound.u(t).norm.s.special}
\| u(t) \|_s \leq K \e 
\qquad \forall t \in [0, T_{\mathrm{NF}}], 
\quad s \in \{ 1, m_1 \}. 
\end{equation}

Since $(u(t), v(t))$ solves \eqref{3101.16} on $[0, T_{\mathrm{NF}}]$, 
the functions $S_n^u(t)$, $B_n^u(t)$ 
solve the effective equations \eqref{3105.8} 
on the same time interval, 
and $Z_{123}^u(t)$, $Z_{234}^u(t)$
solve \eqref{3105.11} on the same time interval. 
Since $\rho_{123}^u(t), \rho_{234}^u(t)$ are continuous 
and they are positive at time $t=0$, 
 there exists $T_{\mathrm{polar}}>0$ 
such that $\rho_{123}^u(t), \rho_{234}^u(t)$ are positive 
for all $t \in [0, T_{\mathrm{polar}}]$. 

\begin{remark}
Note that, in general, $T_{\mathrm{polar}}$ 
could be smaller than $T_{\mathrm{NF}}$. 
\end{remark}
On the time interval $[0, T_{\mathrm{polar}}]$, 
the angles $\ph_{123}^u(t), \ph_{234}^u(t)$ are well-defined 
by the identities
\begin{equation} \label{def.polar.u(t)}
Z_{123}^u (t) = \rho_{123}^u (t) \exp(i \ph_{123}^u (t) ),
\quad 
Z_{234}^u (t) = \rho_{234}^u (t) \exp(i \ph_{234}^u (t) ),
\quad 
\forall t \in [0, T_{\mathrm{polar}}].
\quad 
\end{equation}
Since $Z_{123}^u (t)$, $Z_{234}^u (t)$ solve \eqref{3105.11}, 
the functions $\rho_{123}^u (t)$, $\rho_{234}^u (t)$ solve the equations
\begin{equation} \label{eq.rho.234.u(t)}
\pa_t \rho_{123}^u = R_{\rho_{123}^u},
\qquad
\pa_t \rho_{234}^u = R_{\rho_{234}^u}
\end{equation}
on $[0, T_{\mathrm{polar}}]$, 
and the functions $\ph_{123}^u (t)$, $\ph_{234}^u (t)$ solve the equations
\begin{align} 
\pa_t \ph_{123}^u 
& = - \frac12 (\a_1^2 S_1^u + \a_2^2 S_2^u - \a_3^2 S_3^u) + R_{\ph_{123}^u},
\notag \\
\pa_t \ph_{234}^u 
& = - \frac12 (\a_2^2 S_2^u + \a_3^2 S_3^u - \a_4^2 S_4^u) + R_{\ph_{234}^u}
\label{eq.ph.234.u(t)}
\end{align}
on $[0, T_{\mathrm{polar}}]$, where
\begin{equation} \label{def.R.rho.ph.123}
R_{\rho_{123}^u} 
:= \Re \Big( \exp( - i \ph_{123}^u ) R_{Z_{123}^u} \Big),
\qquad 
R_{\ph_{123}^u} 
:= \frac{1}{\rho_{123}^u} 
\Im \Big( \exp( - i \ph_{123}^u ) R_{Z_{123}^u} \Big),
\end{equation}
and analogous definition for $R_{\rho_{234}^u}$, $R_{\ph_{234}^u}$ 
(just replace $123$ with $234$ everywhere in \eqref{def.R.rho.ph.123}).
The remainders of the type $R_Z$ are defined in \eqref{3105.12} 
and estimated in \eqref{0106.10}.
Moreover, $S_n^u$, $n=1,2,3,4$, solve 
\begin{align}
\pa_t S_1^u & = \frac38 \rho_{123}^u \a_1 \a_2 \a_3 \sin(\ph_{123}^u) + R_{S_1^u},
\notag \\
\pa_t S_2^u & = \frac38 \rho_{123}^u \a_1 \a_2 \a_3 \sin(\ph_{123}^u) 
+ \frac38 \rho_{234}^u \a_2 \a_3 \a_4 \sin(\ph_{234}^u) + R_{S_2^u},
\notag \\
\pa_t S_3^u & = - \frac38 \rho_{123}^u \a_1 \a_2 \a_3 \sin(\ph_{123}^u) 
+ \frac38 \rho_{234}^u \a_2 \a_3 \a_4 \sin(\ph_{234}^u) + R_{S_3^u},
\notag \\
\pa_t S_4^u & = - \frac38 \rho_{234}^u \a_2 \a_3 \a_4 \sin(\ph_{234}^u) + R_{S_4^u}
\label{eq.S.u}
\end{align}
on $[0, T_{\mathrm{polar}}]$, where the remainders of the type $R_S$ 
appear in \eqref{3105.8} and are estimated in \eqref{3005.3}.
In the following lemma we prove a formula for $T_{\mathrm{polar}}$.

\begin{lemma} \label{lemma:T.polar}
Assume the hypotheses of Lemma \ref{lemma:eps}. 
There exists a universal constant $\s_3 \in (0,1)$ such that, 
if $\s= m/p$, in addition to the hypotheses of Lemma \ref{lemma:eps},  
also satisfies % $\e$ satisfies \eqref{smallness.eps}
$\s  \leq \s_3$, 
then $Z_{123}^u(t) \neq 0$, $Z_{234}^u(t) \neq 0$
%are both nonzero complex numbers 
for all $t \in [0, T_{\mathrm{polar}}]$, with 
\begin{equation} \label{formula.T.polar.short}
T_{\mathrm{polar}} = \frac{ C \a_1 A_1^2 }{ \a_4^{4m_1 +2} q_2^5 } = K \e^{-4}
\leq T_{\mathrm{NF}},
\end{equation}
where $C>0$ is a universal constant
%. 
%With shorter notation, one has
%\begin{equation} \label{formula.T.polar.short}
%T_{\mathrm{polar}} = K \e^{-4},
%\end{equation}
and $K > 0$ is a constant depending only on $m,p$. 
As a consequence, on $[0, T_{\mathrm{polar}}]$, $\rho_{123}^u, \rho_{234}^u$ are positive and
$\ph_{123}^u$, $\ph_{234}^u$ are well-defined . Moreover, 
%for  $t \in [0, T_{\mathrm{polar}}]$, 
%and 
%for all $t \in [0, T_{\mathrm{polar}}]$.
%one has 
\begin{equation} \label{est.rho.gross}
|\rho_{123}^u(t) - \rho_{123}| \leq \frac12 \rho_{123}, 
\qquad 
|\rho_{234}^u(t) - \rho_{234}| \leq \frac12 \rho_{234}.
\end{equation}
%on the same time interval.
\end{lemma}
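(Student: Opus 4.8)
The plan is a continuity (bootstrap) argument on the interval $[0, T_{\mathrm{NF}}]$, fed by the a priori bounds already collected. First, by the construction of $u_0$ in Lemma \ref{lemma:good.u0} (specialized in Lemma \ref{lemma:eps}) one has at the initial time $\rho_{123}^u(0) = \rho_{123}^{(u_0)} = \rho_{123}$ and $\rho_{234}^u(0) = \rho_{234}$, and by \eqref{all.K.eps} these are constant multiples of $\e^6$ (explicitly $\rho_{123} = \tfrac83 c_{123}/(\a_1\a_2\a_3)$, $\rho_{234} = \tfrac83 c_{234}/(\a_2\a_3\a_4)$). Second, since the coefficient $-\tfrac i2(\a_1^2 S_1^u + \a_2^2 S_2^u - \a_3^2 S_3^u)$ multiplying $Z_{123}^u$ in \eqref{3105.11} is purely imaginary (the superactions being real), differentiating $|Z_{123}^u|^2$ gives the cancellation $\partial_t|Z_{123}^u|^2 = 2\,\Re(\overline{Z_{123}^u}\,R_{Z_{123}^u})$, the perturbed analogue of \eqref{Z.prime.int}; hence $|\partial_t \rho_{123}^u| \le |R_{Z_{123}^u}|$ (this is also the content of \eqref{eq.rho.234.u(t)}), and likewise for the $234$ triplet. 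Third, combining Lemma \ref{lemma:3105.13} with the bounds valid on all of $[0,T_{\mathrm{NF}}]$ — namely $\|u(t)\|_{m_1} \le K\e$ from \eqref{bound.u(t).norm.s.special}, and $S_n^u(t) \le C_2 S_n^u(0) = C_2 S_n(0) \le K\e^2$ from \eqref{S.t.S.0} together with \eqref{bound.norm.s.S} at $s=1$ — one obtains
\[
|R_{Z_{123}^u}(t)| \le C\,\|u(t)\|_{m_1}^4\, S_1^u(t) S_2^u(t) S_3^u(t) \le K\e^{10}, \qquad t \in [0, T_{\mathrm{NF}}],
\]
and similarly $|R_{Z_{234}^u}(t)| \le K\e^{10}$.

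Next I would run the bootstrap. Let $\tau^* \in (0, T_{\mathrm{NF}}]$ be the supremum of the times $\tau$ such that $\rho_{123}^u(t) > \tfrac12\rho_{123}$ and $\rho_{234}^u(t) > \tfrac12\rho_{234}$ for all $t \in [0,\tau]$; this is positive by continuity and the values at $t=0$. On $[0,\tau^*]$ both $Z_{123}^u, Z_{234}^u$ are nonzero, so the polar variables are defined and the differential inequality above integrates to $|\rho_{123}^u(t) - \rho_{123}| \le K\e^{10}\,t$ and $|\rho_{234}^u(t) - \rho_{234}| \le K\e^{10}\,t$. Since $\rho_{123},\rho_{234}$ are fixed multiples of $\e^6$, there is a value $T_{\mathrm{polar}}$ of order $\e^{-4}$ — which one checks equals $C\a_1 \tA_1^2/(\a_4^{4m_1+2} q_2^5)$ for a suitable \emph{universal} $C>0$, using $\tA_1 = a_1\e^3$, $q_2 = \e^2$ and \eqref{def.s.1234.a1.a2.b} — such that $K\e^{10}\,t \le \tfrac12\min(\rho_{123},\rho_{234})$ for all $t \le T_{\mathrm{polar}}$. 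For $t \le \min(\tau^*, T_{\mathrm{polar}})$ this is exactly the estimate \eqref{est.rho.gross}, and in particular $\rho_{123}^u(t),\rho_{234}^u(t) > 0$ with strict inequality against the half-values; hence if one had $\tau^* < \min(T_{\mathrm{NF}}, T_{\mathrm{polar}})$, continuity would force $\rho_{123}^u(\tau^*) = \tfrac12\rho_{123}$ or $\rho_{234}^u(\tau^*) = \tfrac12\rho_{234}$, contradicting the strict bound. Therefore $\tau^* \ge \min(T_{\mathrm{NF}}, T_{\mathrm{polar}})$.

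It then remains to verify $T_{\mathrm{polar}} \le T_{\mathrm{NF}}$, so that $\min(T_{\mathrm{NF}},T_{\mathrm{polar}}) = T_{\mathrm{polar}}$ and the conclusions hold on $[0,T_{\mathrm{polar}}]$; here the precise exponent $\a_4^{4m_1+2}$ is used. By \eqref{T.NF.lower.bound}, $T_{\mathrm{NF}} \ge C_0/(8\a_4^{2m_1} q_2)^2 = C_0\a_4^{-4m_1}\e^{-4}/64$, while $a_1^2 = \tfrac{3}{32}\a_1^3/(\a_1^2+\a_2^2+\a_3^2) \le \tfrac{3}{32}\a_1 \le \tfrac{3}{32}\a_4$, so $C\a_1 \tA_1^2/(\a_4^{4m_1+2}q_2^5) = C\a_1 a_1^2 \a_4^{-4m_1-2}\e^{-4} \le \tfrac{3C}{32}\a_4^{-4m_1}\e^{-4}$; choosing $C$ small (universally) makes this $\le C_0\a_4^{-4m_1}\e^{-4}/64 \le T_{\mathrm{NF}}$. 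The same smallness of $C$, combined with the a priori bounds written out with their explicit $\a$-dependence, also secures the inequality $K\e^{10}\,t \le \tfrac12\min(\rho_{123},\rho_{234})$ on $[0,T_{\mathrm{polar}}]$ used above, where one needs that for $\s = m/p \le \s_3$ the relevant ratios of the $\a_n$ (such as $\a_4^4/(\a_2\a_3^3)$ and $\gamma$) stay within a fixed factor of their limits as $\s\to0$ — this is what fixes the universal constant $\s_3$. With both inequalities in hand, one sets $T_{\mathrm{polar}}$ to this value and the lemma follows. The only genuinely delicate point, beyond the routine continuity argument and the direct use of \eqref{0106.10}, is precisely this bookkeeping reconciling the $m,p$-dependent constants from the remainder estimate and from $T_{\mathrm{NF}}$ with the claimed closed formula for $T_{\mathrm{polar}}$ carrying a universal $C$; the choice of the exponent $4m_1+2$ instead of $4m_1$ is exactly what makes it work.
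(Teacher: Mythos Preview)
Your proposal is correct and follows essentially the same strategy as the paper: bound $|\partial_t\rho^u|$ by $|R_Z|$, estimate $|R_Z|$ via Lemma~\ref{lemma:3105.13} and the a priori bounds \eqref{bound.u(t).norm.s}, integrate, and compare the resulting time with $T_{\mathrm{NF}}$. The continuity/bootstrap framing you use is equivalent to the paper's more direct argument.

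The one substantive difference is in how the constraint $T_{\mathrm{polar}}\le T_{\mathrm{NF}}$ is secured. The paper tracks the $\a$-dependence throughout, arriving at the formula $T_{123}^*=C\a_1\tA_1^2/(\a_4^{4m_1+2}q_2^5)$ with a \emph{specific} universal $C$ dictated by the remainder estimate and the initial value of $\rho_{123}$; it then shows $T_{123}^*\le C''\s^4 T_{\mathrm{NF}}$ and $T_{123}^*\le C\s\,T_{234}^*$, so both inequalities follow from $\s\le\s_3$. You instead lump the $\a$-dependence into $K$'s and propose to choose $C$ small to satisfy all constraints at once. This is legitimate (and in fact makes $\s_3$ nearly superfluous), but your explanation of why $\s_3$ is needed---to keep ratios like $\a_4^4/(\a_2\a_3^3)$ and $\g$ bounded---is not quite the paper's reason: those ratios are universally bounded for all $\s\in(0,1)$ anyway. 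In the paper, $\s_3$ is genuinely used to absorb the factors $\s^4$ and $\s$ arising in the comparisons $T_{123}^*\le T_{\mathrm{NF}}$ and $T_{123}^*\le T_{234}^*$ once $C$ has already been fixed by the bootstrap. Your alternative of shrinking $C$ works too, but then the sentence about $\s_3$ should be adjusted accordingly. The ``delicate bookkeeping'' you flag at the end is precisely what the paper carries out explicitly by keeping the $\a_n$ visible until the comparison step.
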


\begin{proof}
%Since $Z_{123}^u$ is defined for all $t \in [0, T_{\mathrm{NF}}]$, 
%its modulus $\rho_{123}^u = |Z_{123}^u|$ is also well-defined on the same interval.  
%Define 
%\[
%E = \{ T \in [0, T_{\mathrm{NF}}] : \rho_{123}^u(t) > 0 
%\quad \forall t \in [0,T] \}, 
%\quad \ T_* = \sup E.
%\]
In this proof $C$ (and $C'$, $C''$) denote universal constants, 
possibly different from line to line. 
Suppose that $\rho_{123}^u$ is positive on some 
time interval $[0,T] \subset [0, T_{\mathrm{NF}}]$.
Then, by \eqref{eq.rho.234.u(t)} and \eqref{def.R.rho.ph.123}, 
\[
|\pa_t \rho_{123}^u| 
= \big| \Re \big( \exp( - i \ph_{123}^u ) R_{Z_{123}^u} \big) \big|
\leq \big| \exp( - i \ph_{123}^u ) R_{Z_{123}^u} \big|
= |R_{Z_{123}^u}|.
\]
By \eqref{0106.10}, for all $s \in \R$,
\[
|R_{Z_{123}^u}| \leq C \| u \|_{m_1}^4 S_1^u S_2^u S_3^u
= \frac{C \| u \|_{m_1}^4}{(\a_1 \a_2 \a_3)^{2s}} 
(\a_1^{2s} S_1^u) (\a_2^{2s} S_2^u) (\a_3^{2s} S_3^u)
\leq \frac{ C \| u \|_{m_1}^4 \| u \|_s^6 }{(\a_1 \a_2 \a_3)^{2s}}
\]
and, by \eqref{bound.u(t).norm.s}, for  $s \geq 1$ and $t \in [0,T]$,
\[
\| u(t) \|_{m_1}^4 \| u(t) \|_s^6 
\leq C \a_4^{4m_1+6s} q_2^5.
\]
Hence, since $\a_4 \leq 2 \a_3$ and $\a_4 \leq 3 \a_2$, for $s=1$, we obtain
\begin{equation} \label{est.pat.rho.123}
|\pa_t \rho_{123}^u| 
\leq |R_{Z_{123}^u}|
\leq \frac{ C \a_4^{4m_1+6} q_2^5 }{(\a_1 \a_2 \a_3)^{2}}
\leq \frac{ C' \a_4^{4m_1+2} q_2^5 }{ \a_1^2 }
\quad \forall t \in [0, T].
\end{equation}
At time $t=0$ we have, by construction, 
and by the definition \eqref{c123.from.A1} of $c_{123}$,
\begin{equation} \label{recall.rho.123}
\rho_{123}^u(0) = \rho_{123} 
= \frac{8 c_{123} }{3 \a_1 \a_2 \a_3}
= \frac{ 4 (\a_1^2 + \a_2^2 + \a_3^2) }{ 3 \a_1 \a_2 \a_3 } \tA_1^2,
\end{equation}
which is positive because $\tA_1 > 0$. 
Hence 
\[
|\rho_{123}^u(t) - \rho_{123}| 
\leq \int_0^t |\pa_t \rho_{123}^u(s)| \, ds
\leq \frac{ C' \a_4^{4m_1+2} q_2^5 }{ \a_1^2 } t 
\quad \forall t \in [0,T],
\]
and the last quantity is strictly less than $\rho_{123}$ for all $t \in [0,T]$ if 
\begin{equation} \label{T.sup.123}
T < \frac{ C \a_1 }{ \a_4^{4m_1 +2}} \frac{ \tA_1^2 }{ q_2^5 } =: T_{123}^*.
\end{equation}
This implies that $\rho_{123}^u > 0$ on $[0,T_{123}^*)$. 
%where $C>0$ is a universal constant. 
Moreover, by \eqref{smallness.A1.q2.power.32}  
and \eqref{T.NF.lower.bound}, 
\[
T_{123}^* = \frac{ C \a_1 }{ \a_4^{4m_1 +2}} \frac{ \tA_1^2 }{ q_2^5 } 
\leq C' \frac{\a_1^4}{\a_4^4} \frac{1}{\a_4^{2m_1} q_2^2}
\leq C'' \s^4 T_{\mathrm{NF}},
\]
where $\s = m/p$. 
Then, for $C'' \s^4 \leq 1$, % smaller than a universal constant, 
one has $T_{123}^* \leq T_{\mathrm{NF}}$.

Proceeding similarly for $\rho_{234}^u$, we obtain the estimate  
\begin{equation} \label{est.pat.rho.234}
|\pa_t \rho_{234}^u| 
\leq |R_{Z_{234}^u}|
\leq \frac{ C \a_4^{4m_1+6} q_2^5 }{(\a_2 \a_3 \a_4)^{2}}
\leq C' \a_4^{4m_1} q_2^5,
\end{equation}
and we deduce that $\rho_{234}^u > 0$ on $[0, T_{234}^*)$, with 
\[
T_{234}^* = \frac{C \mathtt{A}_1^2}{\a_4^{4m_1+1} q_2^5} 
\leq C' \s^3 T_{\mathrm{NF}}.
\]
%which is $\leq T_{\mathrm{NF}$ for $C \s^3 \leq 1$. 
Also, $T_{123}^* \leq T_{234}^* C \s \leq T_{234}^*$ for $C \s \leq 1$. 

Finally, the estimates already proved also give that 
$|\rho_{123}^u(t) - \rho_{123}| \leq \frac12 \rho_{123}$ 
for all $t \in [0, T_{123}^*/2]$. 
Thus, we fix $T_{\mathrm{polar}} = T_{123}^*/2$.
\end{proof}

%Define the vectors 
%\begin{alignat*}{3}
%S^u & := (S_1^u, S_2^u, S_3^u, S_4^u), 
%\quad & \quad 
%\rho^u & := (\rho_{123}^u, \rho_{234}^u), 
%\quad & \quad 
%\ph^u & := (\ph_{123}^u, \ph_{234}^u), 
%\\
%S & := (S_1, S_2, S_3, S_4), 
%\quad & \quad 
%\rho & := (\rho_{123}, \rho_{234}), 
%\quad & \quad 
%\ph & := (\ph_{123}, \ph_{234}), 
%\\
%R_S^u & := (R_{S_1^u}, R_{S_2^u}, R_{S_3^u}, R_{S_4^u}), 
%\quad & \quad 
%R_\rho^u & := ( R_{ \rho_{123}^u }, R_{ \rho_{234}^u }), 
%\quad & \quad 
%R_\ph^u & := ( R_{ \ph_{123}^u }, R_{ \ph_{234}^u }),
%\end{alignat*}
%and recall 
We recall that the superscript $u$ indicates quantities 
related to the solution $(u,v)$ of the Cauchy problem 
\eqref{3101.16}, \eqref{initial.cond.u.v.sec.prepar}, 
while the absence of that superscript corresponds 
to the solution of the truncated effective system \eqref{syst.6.eq}.
By construction, we have 
\[
S^u(0) = S(0), \qquad 
\rho^u(0) = \rho(0), \qquad 
\ph^u(0) = \ph(0).
\]
%the quantities with the superscript $u$ 
%and those without the superscript coincide at time $t=0$.  
Moreover, $\rho$ is constant in time, 
and there are no remainders $R_S, R_\rho, R_\ph$ without the superscript $u$
because \eqref{syst.6.eq} is the truncated effective system.
%(where ``truncated'' means exactly ``obtained by replacing the remainders with zero'').
%We also define 
%\[
%\mE_s(t) := \sum_{n=1}^4 \a_n^{2s} S_n(t), 
%\quad 
%\mE^u_s(t) := \sum_{n=1}^4 \a_n^{2s} S_n^u(t) = \| u(t) \|_s^2,
%\quad 
%t,s \in \R. 
%\]

The next lemma gives estimates for the difference between $S^u(t)$ and $S(t)$ in a certain time interval.

\begin{lemma} \label{lemma:after.Gronwall}
Assume the hypotheses of Lemma \ref{lemma:eps},
and consider the associated solution $(S_1(t),$ $S_2(t), S_3(t), S_4(t), \ph_{123}(t),\ph_{234}(t))$ 
of system \eqref{syst.6.eq} given by Lemma \ref{lemma:eps}.
Let $u_0$ be the corresponding trigonometric polynomial in the ball \eqref{ball.mitica} 
constructed in Lemma \ref{lemma:good.u0}. 
Let $v_0 := \overline{u_0}$ be the complex conjugate of $u_0$,
and let $(u(t),v(t))$ be the solution of the Cauchy problem 
for the transformed Kirchhoff equation \eqref{3101.16} 
with initial condition \eqref{initial.cond.u.v.sec.prepar}. 
%	Denote 
%	$S_1^u(t)$, $S_2^u(t)$, $S_3^u(t)$, $S_4^u(t)$, 
%	$\ph_{123}^u(t)$, $\ph_{234}^u(t)$, $c_{123}^u$, $c_{234}^u$ 
%	the quantities related to the pair $(u(t), v(t))$.
Assume that the ratio $\s = m/p$  satisfies 
%$\s \leq \s_1$ and 
$\s \leq \s_3$, 
%where $\s_1$ is defined in Lemma \ref{lemma:maybe.not.useful}
where $\s_3$ is defined in Lemma \ref{lemma:T.polar}.
	
There exists $\e_1 \in (0, \e_0)$, depending only on $m,p$,
where $\e_0$ is the constant in \eqref{def.eps.0}, 
such that, for $0 < \e \leq \e_1$, one has
	\begin{equation}  \label{bounds.Sn.Gronwall}
		\e^{-3} |S_n^u(t) - S_n(t)| \leq \e^{\frac34}
		\quad \ \forall t \in [0, T_\e], 
		\quad n=1,2,3,4,
	\end{equation}
where 
\begin{equation}  \label{formula.T.eps}
	T_\e = K \e^{-3} \log(\e^{-1})
\end{equation}
for some  positive constant $K$ depending only on $m,p$.  
\end{lemma}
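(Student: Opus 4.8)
The plan is to compare the six-dimensional solution $(S_n(t),\ph_{123}(t),\ph_{234}(t))$ of the truncated effective system \eqref{syst.6.eq} with the ``physical'' quantities $(S_n^u(t),\ph_{123}^u(t),\ph_{234}^u(t))$ attached to the genuine PDE solution $(u(t),v(t))$, which by the computations of Section \ref{sec:effectivedyn} satisfy the \emph{same} equations \eqref{eq.S.u}, \eqref{eq.ph.234.u(t)}, \eqref{eq.rho.234.u(t)} \emph{plus remainders} $R_{S_n^u},R_{\ph^u},R_{\rho^u}$. First I would set up the difference variables $\Delta S_n := S_n^u - S_n$, $\Delta\ph_{123}:=\ph_{123}^u-\ph_{123}$, $\Delta\ph_{234}:=\ph_{234}^u-\ph_{234}$, and $\Delta\rho_{123}:=\rho_{123}^u-\rho_{123}$, $\Delta\rho_{234}:=\rho_{234}^u-\rho_{234}$, all with zero initial datum by construction. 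Working on the interval $[0,T_{\mathrm{polar}}]$ (on which, by Lemma \ref{lemma:T.polar}, the angles $\ph^u$ are well-defined and $\rho^u\in[\tfrac12\rho,\tfrac32\rho]$), I subtract the truncated system from the one with remainders. The key structural point is \textbf{rescaling}: with $\rho_{123},\rho_{234}=K\e^6$ (see \eqref{all.K.eps}) and $\tB = b\e^3$ (see \eqref{A1.A2.B.a1.a2.b.eps}), the ``drift'' in \eqref{eq.S.u}--\eqref{eq.ph.234.u(t)} is of size $\e^6$ in $\pa_t S$ and of size $\e^2$ (through $\a_n^2 S_n\sim\e^2$) in $\pa_t\ph$; so the natural slow time is $\tau = b\e^3 t$, exactly the time on which $(\xi_n,\eta_n)$ live. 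I would therefore introduce the rescaled differences $\sigma_n(\tau):=\e^{-3}\Delta S_n(\tau/(b\e^3))$ and leave the angle differences unscaled, and write a closed Gronwall system for $(\sigma_1,\dots,\sigma_4,\Delta\ph_{123},\Delta\ph_{234})$.

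The next step is to estimate the three kinds of error terms. (a) The \emph{Lipschitz} part: the differences of the ``main'' terms. The right-hand sides of \eqref{eq.S.u} involve $\rho^u\sin\ph^u$ vs.\ $\rho\sin\ph$; writing $\rho^u\sin\ph^u - \rho\sin\ph = (\rho^u-\rho)\sin\ph^u + \rho(\sin\ph^u-\sin\ph)$ and using $|\rho^u-\rho|\le\tfrac12\rho=K\e^6$, $|\sin\ph^u-\sin\ph|\le|\Delta\ph|$, and $\rho=K\e^6$, these contribute $K\e^6(|\Delta\ph|+1)$ to $\pa_t\Delta S_n$; after dividing by $\e^3$ and rescaling time by $b\e^3$ this is $K(|\Delta\ph|+\text{(the }\Delta\rho\text{ term)})$ on the $\sigma$-equations, hence $O(1)$-Lipschitz in the slow time. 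For $\pa_t\Delta\ph$ the main term is $-\tfrac12\sum\pm\a_n^2\Delta S_n$, which in rescaled time is $-\tfrac12 b^{-1}\e^{-3}\sum\pm\a_n^2\Delta S_n = K\sum\pm\a_n^2\sigma_n$, again $O(1)$-Lipschitz. So the homogeneous part of the Gronwall system has $O(1)$ coefficients in the slow time $\tau$, i.e.\ $O(\e^3)$ in the physical time, which is precisely what produces the $\log$ in $T_\e$. (b) The \emph{remainders} $R_S,R_\rho,R_Z,R_\ph$: by Lemmas \ref{lemma:3005.3} and \ref{lemma:3105.13}, $|R_{S_n^u}|\le C\|u\|_{m_1}^6 S_n^u\le K\e^8$ (using $\|u\|_{m_1}\le K\e$ from \eqref{bound.u(t).norm.s.special} and $S_n^u\le K\e^2$), and $|R_{Z_{123}^u}|\le C\|u\|_{m_1}^4 S_1^u S_2^u S_3^u\le K\e^{10}$, whence $|R_{\rho_{123}^u}|\le K\e^{10}$ and $|R_{\ph_{123}^u}|\le |R_{Z_{123}^u}|/\rho_{123}^u\le K\e^{10}/\e^6 = K\e^4$ (using $\rho_{123}^u\ge\tfrac12\rho_{123}=K\e^6$). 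Converting to the slow time: the $S$-remainder contributes $\e^{-3}\cdot(b\e^3)^{-1}\cdot K\e^8 = K\e^2$ as an inhomogeneous forcing on the $\sigma$-equation, and the $\ph$-remainder contributes $(b\e^3)^{-1}K\e^4 = K\e$ on the $\Delta\ph$-equation. (c) I must also be careful that $T_{\mathrm{polar}}=K\e^{-4}$ (Lemma \ref{lemma:T.polar}) comfortably exceeds $T_\e=K\e^{-3}\log(\e^{-1})$ for $\e$ small, so the whole argument takes place where the polar coordinates are valid; this is where $\e\le\e_1$ gets used, together with $\s\le\s_3$.

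Then I would close the estimate by Gronwall. Let $\Psi(\tau):=\sum_n|\sigma_n(\tau)|+|\Delta\ph_{123}(\tau)|+|\Delta\ph_{234}(\tau)| + \e^{-6}\big(|\Delta\rho_{123}|+|\Delta\rho_{234}|\big)$ (the last pair controlled directly by integrating \eqref{eq.rho.234.u(t)}, giving $|\Delta\rho|\le K\e^{10}\cdot t \le K\e^6\cdot(\e^4 t)$ so its rescaled version is small). Combining (a) and (b): $\pa_\tau\Psi\le K\,\Psi + K\e$ on $[0,\tau_{\max}]$ with $\tau_{\max}=b\e^3 T_\e = K\log(\e^{-1})$ and $\Psi(0)=0$, hence $\Psi(\tau)\le \tfrac{K\e}{K}(e^{K\tau}-1)\le \e\,e^{K\tau} = \e\cdot\e^{-K'}=\e^{1-K'}$ at $\tau=\tau_{\max}$ if $K'=K\cdot(\text{the constant in }\tau_{\max})$. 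Finally, the constant $K$ in the definition $T_\e=K\e^{-3}\log(\e^{-1})$ is a free parameter I choose small enough that $K\cdot(\text{Gronwall rate})<\tfrac14$, so that $\Psi(\tau_{\max})\le \e^{1-1/4}=\e^{3/4}$; in particular $\sum_n|\sigma_n(\tau)|\le\e^{3/4}$ for all $\tau\in[0,\tau_{\max}]$, which is exactly \eqref{bounds.Sn.Gronwall} after undoing the rescaling. (The power $3/4$ is arbitrary; any exponent in $(0,1)$ would do, by shrinking $K$.)

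\textbf{Main obstacle.} The delicate point is the \emph{coupling between the angle differences and the action differences}: $\pa_\tau\Delta\ph$ sees $\sum\pm\a_n^2\sigma_n$ with an $O(1)$ (not small) coefficient, while $\pa_\tau\sigma_n$ sees $\Delta\ph$ with an $O(1)$ coefficient — so the Gronwall matrix is genuinely $O(1)$ and the error grows like $e^{K\tau}$, which is why only a \emph{logarithmically long} slow time, i.e.\ $T_\e\sim\e^{-3}\log(\e^{-1})$, can be afforded, and why one cannot push to $\e^{-4}$ here. One must check that over this $\log$-long slow time the accumulated error $\e^{1-}$ is still $o(1)$ relative to the $O(1)$ size of $\eta_2$-oscillations (which it is, comfortably), so that in Section \ref{sec:gronwall} the chaotic oscillations of the effective model survive for the PDE. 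A secondary technical nuisance is verifying that $S_n^u(t)$ stays bounded below (so that the $R$-estimates with $S_n^u$ on the right are as good as claimed and the polar coordinates for $Z^u$ don't degenerate): this follows from \eqref{S.t.S.0} together with the lower bound $S_n(0)=S_n^u(0)\ge q_2/2=\e^2/2$ from Lemma \ref{lemma:bound.S.below}, but it should be stated explicitly before running Gronwall.
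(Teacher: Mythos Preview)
Your proposal is correct and follows essentially the same approach as the paper: subtract the truncated from the full effective system, use the remainder bounds $|R_{S}|\leq K\e^8$, $|R_{Z}|\leq K\e^{10}$, $|R_{\ph}|\leq K\e^4$, and run a Gronwall argument whose rate is $O(\e^3)$ in physical time, yielding the $\e^{-3}\log(\e^{-1})$ time scale. The only cosmetic difference is packaging: the paper works in physical time with a weighting $\psi=(\e^{-3}\Delta S,\Delta\ph)$ (choosing the exponent $-3$ by minimizing $\e^{6+\b}+\e^{-\b}$) and estimates $|\Delta\rho|\leq K\e^{10}t$ directly before plugging it in, whereas you rescale to the slow time $\tau=b\e^3 t$ and include $\e^{-6}|\Delta\rho|$ in the Gronwall quantity $\Psi$---these are equivalent bookkeeping choices leading to the same inequality $|\pa_t\psi|\leq K\e^3|\psi|+K\e^7 t+K\e^4$ (paper) or $\pa_\tau\Psi\leq K\Psi+K\e$ (you).
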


\begin{proof}
From the difference of equations \eqref{eq.S.u} for $S_n^u$ 
and equations \eqref{syst.6.eq} for $S_n$,
%using the elementary inequalities $|\sin x| \leq 1$, 
%$|\sin x - \sin y| \leq |x-y|$,  
we obtain that for all $t \in [0, T_{\mathrm{polar}}]$, 
$n = 1,2,3,4$,
%\begin{align*} 
%| \pa_t (\mE_s^u - \mE_s)| 
%& \leq \frac38 \a_1 \a_2 \a_3 
%\Big( \rho_{123} |\ph_{123}^u - \ph_{123}| + |\rho_{123}^u - \rho_{123}| \Big) 
%\big( \a_1^{2s} + \a_2^{2s} + \a_3^{2s} \big)
%\\
%& \quad \ 
%+ \frac38 \a_2 \a_3 \a_4
%\Big( \rho_{234} |\ph_{234}^u - \ph_{234}| + |\rho_{234}^u - \rho_{234}| \Big) 
%\big( \a_2^{2s} + \a_3^{2s} + \a_4^{2s} \big)
%\\
%& \quad \ 
%+ \sum_{n=1}^4 \a_n^{2s} |R_{S_n^u}|
%\qquad \ \ \forall t \in [0, T_{\mathrm{polar}}], 
%\quad \forall s \in \R.
%\end{align*}
\begin{align*} 
| \pa_t (S_n^u - S_n)| 
& \leq \a_1 \a_2 \a_3 
\Big( \rho_{123} |\ph_{123}^u - \ph_{123}| + |\rho_{123}^u - \rho_{123}| \Big) 
\\ & \quad \ 
+ \a_2 \a_3 \a_4
\Big( \rho_{234} |\ph_{234}^u - \ph_{234}| + |\rho_{234}^u - \rho_{234}| \Big) 
+ |R_{S_n^u}|.
\end{align*}
%In the special case $s = 1/2$, 
%since $\a_3 = \a_1 + \a_2$ and $\a_4 = \a_2 + \a_3$,
%all terms in the equation cancel out except the remainders,
%and one has the better estimate
%\[
%|\pa_t (\mE_{\frac12}^u - \mE_{\frac12})| = |\pa_t \mE_{\frac12}^u| 
%\leq \sum_{n=1}^4 \a_n |R_{S_n^u}|
%\]
%on $[0, T_{\mathrm{polar}}]$. 
%In fact, we have already observed in \eqref{norm.12.prime.int} 
%that $\mE_{1/2}$ is a prime integral of \eqref{syst.6.eq}.
By \eqref{3005.3} one has 
$|R_{S_n^u}| \leq C \| u \|_{m_1}^6 S_n^u$.
%for all $t \in [0, T_{\mathrm{polar}}]$, 
%$n=1,2,3,4$.
By \eqref{bound.u(t).norm.s.special}, 
\[
\| u \|_{m_1} \leq K \e, \qquad 
|S_n^u| \leq \a_n |S_n^u|
\leq \| u \|_1^2 
\leq K \e^2, 
%\qquad \forall t \in [0, T_{\mathrm{polar}}], 
%\quad n=1,2,3,4.
\]
and therefore 
$|R_{S_n^u}| \leq K \e^8$  
on $[0, T_{\mathrm{polar}}]$. 
Hence, for $t \in [0, T_{\mathrm{polar}}]$ and $n=1,2,3,4$,
\begin{align} 
| \pa_t (S_n^u - S_n)| 
& \leq K \Big( \rho_{123} |\ph_{123}^u - \ph_{123}| + |\rho_{123}^u - \rho_{123}| \Big) 
\notag \\ & \quad \ 
+ K \Big( \rho_{234} |\ph_{234}^u - \ph_{234}| + |\rho_{234}^u - \rho_{234}| \Big) 
+ K \e^8.
\label{est.pat.S.temp}
\end{align}
Since $\rho_{123}, \rho_{234}$ are constants, 
by the equations \eqref{eq.rho.234.u(t)} for $\rho_{123}^u$, $\rho_{234}^u$ 
one has
\[
\pa_t(\rho_{123}^u - \rho_{123}) 
= \pa_t \rho_{123}^u 
= R_{\rho_{123}^u},
\qquad 
\pa_t(\rho_{234}^u - \rho_{234}) 
= \pa_t \rho_{234}^u
= R_{\rho_{234}^u}.
\]
The time derivatives $\pa_t \rho_{123}^u$, $\pa_t \rho_{234}^u$
have been already estimated in \eqref{est.pat.rho.123} and \eqref{est.pat.rho.234}; 
hence, integrating in time, we get
\[
|\rho_{123}^u(t) - \rho_{123}| 
\leq K \e^{10} t, 
\quad \ 
|\rho_{234}^u(t) - \rho_{234}| 
\leq K \e^{10} t
\quad \ 
\forall t \in [0, T_{\mathrm{polar}}].
\]
%for all $t \in [0, T_{\mathrm{polar}}]$.
Also, by \eqref{recall.rho.123} and \eqref{A1.A2.B.a1.a2.b.eps},
$\rho_{123} \leq K \e^6$, 
$\rho_{234} \leq K \e^6$. 
We plug these estimates into \eqref{est.pat.S.temp} and we get
\begin{align} 
| \pa_t (S_n^u - S_n)| 
& \leq K \e^6 \big( |\ph_{123}^u - \ph_{123}| + |\ph_{234}^u - \ph_{234}| \big)
+ K \e^{10} t + K \e^8
\label{est.pat.S.temp.2}
\end{align}
for all $t \in [0, T_{\mathrm{polar}}]$, $n=1,2,3,4$.

Subtracting the equations \eqref{eq.ph.234.u(t)} 
for $\ph_{123}^u$, $\ph_{234}^u$ 
and those for $\ph_{123}, \ph_{234}$ in \eqref{syst.6.eq},
we have
\begin{align*}
|\pa_t (\ph_{123}^u - \ph_{123})| 
& \leq \a_1^2 |S_1^u - S_1| + \a_2^2 |S_2^u - S_2| 
+ \a_3^2 |S_3^u - S_3| + |R_{\ph_{123}^u}|,
\\
|\pa_t (\ph_{234}^u - \ph_{234})| 
& \leq \a_2^2 |S_2^u - S_2| + \a_3^2 |S_3^u - S_3| 
+ \a_4^2 |S_4^u - S_4| + |R_{\ph_{234}^u}|.
\end{align*}
By \eqref{est.rho.gross}, $\rho_{123}^u \geq \frac12 \rho_{123}$ 
on $[0, T_{\mathrm{polar}}]$. 
Hence, by \eqref{def.R.rho.ph.123},
$|R_{\ph_{123}^u}| \leq (2 / \rho_{123}) |R_{Z_{123}^u}|$, 
and $R_{Z_{123}^u}$ has been estimated in \eqref{est.pat.rho.123}.
Also, $\rho_{123}$ is given in \eqref{recall.rho.123}. Therefore
\[
|R_{\ph_{123}^u}|
\leq \frac{2}{\rho_{123}} |R_{Z_{123}^u}|
\leq \frac{K q_2^5}{\mathtt{A}_1^2} \, 
= K \e^4
\]
and, similarly, $|R_{\ph_{234}^u}| \leq K \e^4$. 
Thus
\begin{equation} \label{est.pat.ph.temp}
|\pa_t (\ph_{123}^u - \ph_{123})| 
+ |\pa_t (\ph_{234}^u - \ph_{234})| 
 \leq K \sum_{n=1}^4 |S_n^u - S_n| 
+ K \e^4. 
\end{equation}

We apply  Gronwall's inequality to \eqref{est.pat.S.temp.2} and \eqref{est.pat.ph.temp}. 
It is convenient to introduce a factor $\e^\b$, 
with $\b$ to be determined, 
because the factors in \eqref{est.pat.S.temp.2} and \eqref{est.pat.ph.temp} 
contain different powers of $\e$.
We define the vector 
\begin{align}
\psi & 
:= \big( 
\e^\b (S_1^u - S_1), 
\e^\b (S_2^u - S_2), 
\e^\b (S_3^u - S_3), 
\e^\b (S_4^u - S_4), 
(\ph_{123}^u - \ph_{123}), 
(\ph_{234}^u - \ph_{234}) \big)
\notag 
\\
& =: (\psi_1, \psi_2, \psi_3, \psi_4, \psi_5, \psi_6),
\label{def.psi}
\end{align}
which is a function of $t \in [0, T_{\mathrm{polar}}]$ 
taking values in $\R^6$. 
From \eqref{est.pat.S.temp.2} we obtain 
\begin{align*} 
|\pa_t \psi_n| = \e^\b |\pa_t (S_n^u - S_n)| 
& \leq \e^\b [ K \e^6 (|\psi_5| + |\psi_6|) + K \e^{10} t + K \e^8 ] 
\\ 
& \leq K \e^{6+\b} |\psi| + K \e^{10 + \b} t + K \e^{8 + \b},
\quad \ n = 1,2,3,4,
\end{align*}
and from \eqref{est.pat.ph.temp} we get
\begin{align*}
|\pa_t \psi_5| + |\pa_t \psi_6|    
& \leq K \e^{-\b} (|\psi_1| + |\psi_2| + |\psi_3| + |\psi_4|) + K \e^4
\leq K \e^{-\b} |\psi| + K \e^4,
\end{align*}
where $| \ |$ is the usual Euclidean norm of $\R^6$. 
Therefore 
\begin{equation} \label{before.Gronwall}
|\pa_t \psi| 
\leq K (\e^{6+\b} + \e^{-\b}) |\psi| 
+ K \e^{10 + \b} t + K \e^{8 + \b} + K \e^4.
\end{equation}
We fix the value of $\b$ that minimizes the sum $\e^{6+\b} + \e^{-\b}$, 
that is, $\b = -3$. 
Hence \eqref{before.Gronwall} becomes
\begin{equation} \label{before.Gronwall.beta.fixed}
|\pa_t \psi| 
\leq K \e^3 |\psi| 
+ K \e^7 t + K \e^4.
\end{equation}
%(the term $K \e^{8 + \b} = K \e^5$ is absorbed by $K \e^4$).
Moreover, by the choice of the initial conditions $\psi(0) = 0$. 
Then, by Gronwall's inequality,
%\begin{align*} \label{after.Gronwall}
%|\psi(t)| 
%& \leq \exp ( K \e^3 t ) 
 %\int_0^t ( K \e^7 s + K \e^4 ) \, ds
%\\ & 
%\leq \exp ( K \e^3 t ) 
%K ( \e^7 t^2 + \e^4 t )
%\quad \ 
%\forall t \in [0, T_{\mathrm{polar}}].
%\end{align*}
%In particular, for $n=1,2,3,4$ one has 
%\[
%\e^{-3} |S_n^u - S_n| 
%= \e^{\b} |S_n^u - S_n| 
%= |\psi_n|
%\leq |\psi|,
%\]
%that is,
\begin{equation} \label{psi.after.Gronwall}
%\e^{-3} |S_n^u(t) - S_n(t)| 
|\psi(t)| \leq \exp ( K_0 \e^3 t ) K_1 ( \e^7 t^2 + \e^4 t )
\quad \ 
\forall t \in [0, T_{\mathrm{polar}}],
\end{equation}
for some positive constants $K_0, K_1$ depending only on $m,p$.

Now we consider the time 
%\begin{equation} \label{def.T.h}
$T_{\e,h} := K_0^{-1} \e^{-3} \log( \e^{-h} )$,
%\end{equation}
where $h$ is any positive real constant and $K_0>0$ is the constant appearing in \eqref{psi.after.Gronwall}; 
we note that $T_{\e,h}$ depends on $h, \e, m, p$. 
%(because $K_0$ depends on $m,p$). 
By \eqref{formula.T.polar.short},
%Since $T_{\mathrm{polar}} = K \e^{-4}$ (see \eqref{formula.T.polar.short}), 
one has 
%\begin{equation} \label{T.eps.h.T.polar}
$T_{\e,h} \leq T_{\mathrm{polar}}$ for
%$\quad \ \text{for } 
$0 < \e \leq C$,
%\end{equation}
for some positive constant $C$ depending only on $h,m,p$. 
By \eqref{psi.after.Gronwall}, for all $t \in [0, T_{\e,h}]$ one has 
\begin{align*}
|\psi(t)| % \e^{-3} |S_n^u(t) - S_n(t)| 
& \leq \exp ( K_0 \e^3 t ) K_1 ( \e^7 t^2 + \e^4 t )
%\\ 
%& 
\leq \e^{-h} K_1 \Big( \e^{7} K_0^{-2} \e^{-6} \log^2( \e^{-h} )
+ \e^4 K_0^{-1} \e^{-3} \log( \e^{-h} ) \Big)
\\
& \leq \e^{1-h} K_1 \Big( K_0^{-2} \log^2( \e^{-h} )
+ K_0^{-1} \log( \e^{-h} ) \Big).
\end{align*}
We also note that 
\begin{equation} \label{replace.with.power}
\e^{1-h} K_1 \Big( K_0^{-2} \log^2( \e^{-h} ) + K_0^{-1} \log( \e^{-h} ) \Big)
\leq \e^{1 - 2h} 
\quad \ 
\text{for } 0 < \e \leq C',
\end{equation}
for some positive constant $C'$
% depending on $h, K_0, K_1$, 
%that is, 
depending on $h,m,p$. 
We can fix, for example, $h = 1/8$. 
\end{proof}

\subsection{Motion of the Sobolev norms} 

We use the $H^1(\T^d)$ Sobolev norm $\| u(t) \|_1$ 
to describe the transfer of energy, 
namely the exchanges in amplitude of the superactions 
$S_n^u(t)$, $n=1,2,3,4$, as time evolves. 
Except the $H^{\frac12}$ norm, 
which is constant in time for the solutions 
of the approximating system \eqref{syst.xi.eta.fully.normalized2}
given by Lemma \ref{lemma:back.from.xi.eta.to.6.eq} (see Remark \ref{norm.12.prime.int}), 
any other $H^s$ norms could be used to capture the transfer of energy 
among the superactions. 
We decide to use the $H^1$ norm 
because $H^1$ here corresponds to the space 
$H^{\frac32} \times H^{\frac12}$ for the ``physical variables'', 
which is the space of the standard local wellposedness for the Kirchhoff equation.

\begin{lemma} \label{lemma:chaos.u}
Assume all the hypotheses of Lemma \ref{lemma:after.Gronwall}, 
and also let $\s \leq \s_2$, where $\s_2$ is given by Lemma \ref{lemma:Sn.eps}. 
Let $t_j^*, \bar t_j^*, I_j^*, E_j^*$ be defined in \eqref{def.rescaled.times}. 
%and define 
%\[
%I_j^{*} := [t_j^*, \bar t_j^*], 
%\qquad 
%E_j^{*} := [\bar t_j^*, t_{j+1}^*].
%\]   
There exists a constant $\e_2 \in (0,\e_1)$, depending only on $m,p$, where $\e_1$ is defined in Lemma \ref{lemma:after.Gronwall},
such that if, in addition to the hypotheses of Lemma \ref{lemma:after.Gronwall}, 
%$\e$ also satisfies 
$0<\e \leq \e_2$, 
then the $H^1(\T^d)$ Sobolev norm of $u(t)$ satisfies
\begin{align}
& \e^2 c_1 + \frac{98}{100} \e^3 r_1 \leq \| u(t) \|_1^2 
\leq \e^2 c_1 + \frac{202}{100} \e^3 r_1
\quad \ \forall t\in I_j^*,
\notag \\
& \e^2 c_1 - \frac{2}{100} \e^3 r_1 \leq \| u(t) \|_1^2 
\leq \e^2 c_1 + \frac{102}{100} \e^3 r_1
\quad \ \forall t\in E_j^*,
\notag \\
& \max_{t \in I_j^*} \| u(t) \|_1^2 
 \geq \e^2 c_1 + \frac{198}{100} \e^3 r_1,
\quad  \quad 
\min_{t \in E_j^*} \| u(t) \|_1^2
 \leq \e^2 c_1 + \frac{2}{100} \e^3 r_1,
\label{bounds.u.Sob.norm.1}
\end{align}
for all the indices $j \geq 0$ for which 
the intervals $I_j^*, E_j^*$ are contained in $[0,T_\e]$, where $T_\e$ is defined in \eqref{formula.T.eps} and $c_1,r_1$ are defined in Lemma \ref{lemma:Sn.eps}. 

Moreover, there exists a constant $K>0$, depending only on $m,p$, 
such that, if the integers $m_j$ introduced in Proposition \ref{prop:chaos.xi.eta} satisfy
\begin{equation} \label{cond.N}
\sum_{j=0}^N m_j \leq K \log(\e^{-1})
\end{equation}
for some $N$, 
then the intervals $I_0^*, E_0^*, \ldots, I_N^*, E_N^*$
are all contained in $[0,T_\e]$.
\end{lemma}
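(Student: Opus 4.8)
The plan is to transfer the oscillation bounds of Lemma \ref{lemma:Sn.eps} for $\mN_1(t)$ to the Sobolev norm $\|u(t)\|_1^2$ by means of the Gronwall estimate of Lemma \ref{lemma:after.Gronwall}, and then to check that the condition \eqref{cond.N} forces all the relevant intervals into $[0,T_\e]$.

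First I would recall that, since $u(t,\cdot)$ is Fourier supported on the spheres of radii $\a_1,\a_2,\a_3,\a_4$, one has the exact identity $\|u(t)\|_1^2=\sum_{n=1}^4\a_n^2 S_n^u(t)$, so that, with $\mN_1(t)=\sum_{n=1}^4\a_n^2 S_n(t)$ as in Lemma \ref{lemma:Sn.eps},
\[
\big|\,\|u(t)\|_1^2-\mN_1(t)\,\big|\le\sum_{n=1}^4\a_n^2\,|S_n^u(t)-S_n(t)|\le K\e^{3+\frac34}\qquad\forall t\in[0,T_\e],
\]
where the last inequality uses \eqref{bounds.Sn.Gronwall} and $K=\sum_n\a_n^2$ depends only on $m,p$. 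I would then pick $\e_2\in(0,\e_1)$, depending only on $m,p$, so small that $K\e^{3/4}\le\tfrac1{100}r_1$ for all $0<\e\le\e_2$ (recall $r_1=(2\a_2\a_3)a_2>0$ depends only on $m,p$), so that the perturbation satisfies $K\e^{3+\frac34}\le\tfrac1{100}\e^3 r_1$. Combining this with each of the six inequalities in \eqref{bounds.mE.1}, valid on $I_j^*$ and $E_j^*$, immediately yields the six inequalities in \eqref{bounds.u.Sob.norm.1} with all numerical coefficients relaxed by $1/100$; for the two extremal bounds one uses that, if $t'$ is a point of $I_j^*$ where $\mN_1$ attains its maximum, then $\max_{t\in I_j^*}\|u(t)\|_1^2\ge\|u(t')\|_1^2\ge\mN_1(t')-K\e^{3+\frac34}=\max_{t\in I_j^*}\mN_1(t)-K\e^{3+\frac34}$, and symmetrically for the minimum over $E_j^*$. (All points of $I_j^*,E_j^*$ lie in $[0,T_\e]$ for the indices $j$ under consideration, so \eqref{bounds.Sn.Gronwall} applies there.)

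For the last assertion I would estimate, using \eqref{def:tj} together with $m_j\ge M_0(\s)\ge1$, hence $m_j+\theta_j\le m_j+1\le2m_j$,
\[
t_{N+1}=\sum_{j=0}^N T_a(m_j+\theta_j)\le 2T_a\sum_{j=0}^N m_j,
\]
whence, by \eqref{def.rescaled.times} and \eqref{A1.A2.B.a1.a2.b.eps}, $t_{N+1}^*=t_{N+1}/(b\e^3)\le\big(2T_a/(b\e^3)\big)\sum_{j=0}^N m_j$. Writing $T_\e=K'\e^{-3}\log(\e^{-1})$ with $K'$ the $m,p$-dependent constant of \eqref{formula.T.eps}, and choosing the constant $K$ in \eqref{cond.N} to be $K:=bK'/(2T_a)$ (which depends only on $m,p$, since $a=a_0$ and hence $T_a=T_{a_0}$ is universal), condition \eqref{cond.N} gives $t_{N+1}^*\le K'\e^{-3}\log(\e^{-1})=T_\e$. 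Since $0=t_0^*<t_j^*<\bar t_j^*<t_{j+1}^*\le t_{N+1}^*$ for all $j\le N$, we conclude $I_j^*,E_j^*\subset[0,t_{N+1}^*]\subset[0,T_\e]$ for $j=0,\dots,N$, as required.

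The only point requiring care is purely quantitative: one must ensure the Gronwall error $\e^{3+\frac34}$ (the exponent $3/4$ stemming from the choice $h=1/8$ in Lemma \ref{lemma:after.Gronwall}) is genuinely dominated by the oscillation amplitude $\e^3 r_1$, which forces $\e_2$ to depend on $r_1$ and hence on $m,p$; and one must match the free constant $K$ of \eqref{cond.N} with the (also free, $m,p$-dependent) constant hidden in the formula \eqref{formula.T.eps} for $T_\e$. All the analytic substance — the approximation of $S^u$ by $S$ and the oscillation structure of $\mN_1$ — has already been carried out in Lemmas \ref{lemma:after.Gronwall} and \ref{lemma:Sn.eps}, so no genuine obstacle arises at this stage.
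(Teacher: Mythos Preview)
Your proposal is correct and follows essentially the same approach as the paper's proof: both use the identity $\|u(t)\|_1^2=\sum_n\a_n^2 S_n^u(t)$, bound the difference from $\mN_1(t)$ via the Gronwall estimate \eqref{bounds.Sn.Gronwall}, and then shift the inequalities of \eqref{bounds.mE.1} by $1/100$; for the final clause both use $m_j+\theta_j\le 2m_j$ to bound $t_{N+1}^*$ against $T_\e$. Your write-up is in fact slightly more explicit than the paper's (e.g.\ you spell out the choice of $K$ and the reason $m_j+\theta_j\le 2m_j$), but there is no substantive difference in method.
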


\begin{proof} 
Consider an interval $I_j^* \subseteq [0,T_\e]$, 
and a point $t_j'$ in that interval 
%where $S_4$ achieves its maximum value over $I_j^*$ 
%(note that $t_j'$ is a maximum point for $S_4$, not for $S_4^u$).
%By \eqref{bounds.S4} and \eqref{bounds.Sn.Gronwall},  
%\[
%\max_{t \in I_j^*} S_4^u(t) 
%\geq S_4^u(t_j') 
%\geq S_4(t_j') - |S_4^u(t_j') - S_4(t_j')|
%\geq \e^2 + \frac{19}{10} \e^3 a_2 - \e^{3 + \frac34}
%\geq \e^2 + \frac{18}{10} \e^3 a_2,
%\]
%and this is $\geq \e^2 + \frac{18}{10} \e^3 a_2$ 
%where the last inequality holds 
%if $\e^{3/4} \leq \frac{1}{10} a_2$. 
%The second bound in \eqref{bounds.S4.u} is proved similarly.
where the function $\mN_1$, defined in \eqref{Sn.norm.1},
achieves its maximum value over $I_j^*$. 
By \eqref{bounds.mE.1} and \eqref{bounds.Sn.Gronwall},  
\begin{align*}
\max_{t \in I_j^*} \| u(t) \|_1^2 
& \geq \| u(t_j') \|_1^2 
= \sum_{n=1}^4 \a_n^2 S_n^u(t_j')
= \mN_1(t_j') + \sum_{n=1}^4 \a_n^2 \big( S_n^u(t_j') - S_n(t_j') \big)
\\
& \geq \e^2 c_1 + \frac{199}{100} \e^3 r_1 - \sum_{n=1}^4 \a_n^2 \e^{3 + \frac34}
\geq \e^2 c_1 + \frac{198}{100} \e^3 r_1.
\end{align*}
The other inequalities in \eqref{bounds.u.Sob.norm.1} are proved similarly.

From \eqref{def:tj} one has 
$t_{j+1} = \tau (m_0 + \theta_0 + \ldots + m_j + \theta_j)$. 
Hence, by \eqref{def.rescaled.times}, 
\[
t_{j+1}^* = \frac{\tau}{b \e^3} \sum_{k=0}^j (m_k + \theta_k) 
\quad \ \forall j = 0,1,2,\ldots.
\]
Since $(m_k + \theta_k) \leq 2 m_k$,
recalling \eqref{formula.T.eps},  
one has $t_{N+1}^* \leq T_\e$ if \eqref{cond.N} holds. 
%In the particular case \eqref{mj.special.case}, 
%the sum $(m_0 + \ldots + m_N)$ 
%is bounded by $(N+1) C \log(\s^{-1}) 
%\leq 2 N C \log(\s^{-1})$ for some universal constant $C$, 
%and, changing the value of $K$ if necessary,
%we get \eqref{cond.N.special.case} 
%(the factor $\log(\s^{-1}) = \log(p/m)$ depends only on $m,p$, 
%and therefore it enters in the definition of $K$ 
%in \eqref{cond.N.special.case}).
\end{proof}

\begin{lemma}  \label{lemma:chaos.u.without.square}
Assume the hypotheses of Lemma \ref{lemma:chaos.u}.  
There exists a constant $\e_3 \in (0,\e_2)$, 
depending only on $m,p$, where $\e_2$ is defined in Lemma \ref{lemma:chaos.u}, such that, 
if $0< \e \leq \e_3$, then
\begin{align}
& \e \tilde c_1 + \frac{48}{100} \e^2 \tilde r_1 \leq \| u(t) \|_1
\leq \e \tilde c_1 + \frac{101}{100} \e^2 \tilde r_1
\quad \ \forall t\in I_j^*,
\notag \\
& \e \tilde c_1 - \frac{2}{100} \e^2 \tilde r_1 \leq \| u(t) \|_1
\leq \e \tilde c_1 + \frac{51}{100} \e^2 \tilde r_1
\quad \ \forall t\in E_j^*,
\notag \\
& \max_{t \in I_j^*} \| u(t) \|_1 
\geq \e \tilde c_1 + \frac{98}{100} \e^2 \tilde r_1,
\qquad 
\min_{t \in E_j^*} \| u(t) \|_1
\leq \e \tilde c_1 + \frac{1}{100} \e^2 \tilde r_1,
 \label{bounds.u.Sob.norm.1.without.square} 
\end{align}
where 
$\tilde c_1 := \sqrt{c_1}$ and 
$\tilde r_1 := r_1 / \sqrt{ c_1 }$.
The inequalities in \eqref{bounds.u.Sob.norm.1.without.square} 
hold for the indices $j$ described in Lemma \ref{lemma:chaos.u}, 
that is, for $j = 0, \ldots, N$, where $N$ satisfies \eqref{cond.N}.
\end{lemma}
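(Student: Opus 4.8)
The plan is to deduce the bounds on $\| u(t) \|_1$ directly from the bounds on $\| u(t) \|_1^2$ established in Lemma \ref{lemma:chaos.u}, simply by taking square roots and controlling the error in the square-root expansion; the slack of size $\tfrac{1}{100}\e^2 \tilde r_1$ built into Lemma \ref{lemma:chaos.u} is precisely what lets the cubic remainder be absorbed.

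First I would introduce the increasing function $g(\th) := \sqrt{\e^2 c_1 + \th \e^3 r_1}$, defined for $\th$ in a neighbourhood of the interval $[-\tfrac{2}{100}, \tfrac{202}{100}]$, and write $g(\th) = \e \tilde c_1 \sqrt{1 + x(\th)}$ with $x(\th) = \th \e r_1 / c_1$ and $\tilde c_1 = \sqrt{c_1}$. Since $c_1, r_1$ depend only on $m,p$, for $\e$ below a threshold depending only on $m,p$ one has $|x(\th)| \leq \tfrac12$ for all such $\th$, and then the elementary two-sided estimate
\[
1 + \tfrac{x}{2} - \tfrac{3}{8} x^2 \leq \sqrt{1+x} \leq 1 + \tfrac{x}{2}, \qquad |x| \leq \tfrac12,
\]
applies (the upper bound holds for all $x \geq -1$ by concavity of $\sqrt{\cdot}$, the lower bound follows from Taylor's formula together with $|(\sqrt{1+s})''| = \tfrac14 (1+s)^{-3/2} \leq \tfrac34$ for $|s| \leq \tfrac12$). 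Multiplying by $\e \tilde c_1$ and using $\tilde r_1 = r_1/\sqrt{c_1}$, this gives, for all $\th$ in the relevant range,
\[
\e \tilde c_1 + \tfrac{\th}{2} \e^2 \tilde r_1 - K \e^3 \leq g(\th) \leq \e \tilde c_1 + \tfrac{\th}{2} \e^2 \tilde r_1,
\]
where $K = \tfrac38 \tilde c_1 (r_1/c_1)^2 (\tfrac{202}{100})^2$ depends only on $m,p$; hence for $\e$ small enough (depending only on $m,p$) the cubic error satisfies $K \e^3 \leq \tfrac{1}{100}\e^2 \tilde r_1$.

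Next I would plug in the bounds from Lemma \ref{lemma:chaos.u}. Since $g$ is increasing, on $I_j^*$ one has $g\big(\tfrac{98}{100}\big) \leq \| u(t)\|_1 \leq g\big(\tfrac{202}{100}\big)$, which by the displayed inequality (with $\tfrac{\th}{2}$ equal to $\tfrac{49}{100}$ and $\tfrac{101}{100}$ respectively) yields $\e \tilde c_1 + \tfrac{48}{100}\e^2 \tilde r_1 \leq \| u(t)\|_1 \leq \e \tilde c_1 + \tfrac{101}{100}\e^2 \tilde r_1$; the argument on $E_j^*$ is identical, using $\tfrac{\th}{2} \in \{-\tfrac{1}{100}, \tfrac{51}{100}\}$. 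For the extremal statements, at the point of $I_j^*$ where $\| u\|_1^2 \geq \e^2 c_1 + \tfrac{198}{100}\e^3 r_1$ the lower bound of the display (with $\tfrac{\th}{2} = \tfrac{99}{100}$) gives $\max_{I_j^*}\| u\|_1 \geq \e \tilde c_1 + \tfrac{99}{100}\e^2 \tilde r_1 - K\e^3 \geq \e \tilde c_1 + \tfrac{98}{100}\e^2 \tilde r_1$, and at the point of $E_j^*$ where $\| u\|_1^2 \leq \e^2 c_1 + \tfrac{2}{100}\e^3 r_1$ the upper bound (with $\tfrac{\th}{2} = \tfrac{1}{100}$) gives $\min_{E_j^*}\| u\|_1 \leq \e \tilde c_1 + \tfrac{1}{100}\e^2 \tilde r_1$. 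Taking $\e_3 \in (0,\e_2)$ to be the minimum of $\e_2$ and the finitely many smallness thresholds on $\e$ used above — all depending only on $m,p$ — completes the argument, and the range of admissible indices $j$ is exactly the one inherited from Lemma \ref{lemma:chaos.u}.

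There is essentially no serious obstacle: the entire content is the elementary square-root estimate, and the only point requiring a (routine) check is that every auxiliary constant introduced — $r_1/c_1$, the error constant $K$, and the thresholds on $\e$ — depends solely on $m,p$, which is immediate since $c_1$ and $r_1$ do.
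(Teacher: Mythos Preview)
Your proposal is correct and takes essentially the same approach as the paper: the paper's proof is a single sentence invoking the Taylor expansion $(1+x)^{1/2} = 1 + \tfrac12 x + O(x^2)$ together with the concavity inequality $(1+x)^{1/2} \leq 1 + \tfrac12 x$, and you have simply written this out in full detail with explicit constants and explicit tracking of the $\e^3$ error term.
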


\begin{proof}
The inequalities in \eqref{bounds.u.Sob.norm.1.without.square} 
are obtained from \eqref{bounds.u.Sob.norm.1} 
by the Taylor expansion $(1 + x)^{1/2} = 1 + \frac12 x + O(x^2)$ as $x \to 0$ 
and the inequality $(1 + x)^{1/2} \leq 1 + \frac12 x$, 
which holds for all $x \geq -1$. 
\end{proof}

\subsection{Back to the solutions of the Kirchhoff equation}

As is observed in Lemma \ref{lemma:total}, 
if $(u_0, v_0)$ is in the ball \eqref{ball.mitica}, 
then, for all $t \in [0, T_{\mathrm{NF}}]$, 
the solution $(u,v)$ of the Cauchy problem 
\eqref{3101.16}, \eqref{initial.cond.u.v.sec.prepar}  
remains in the ball $\| u \|_{m_1} \leq \d$ 
where the transformation $\Phi$ is well-defined,
and $(\tilde u, \tilde v) = \Phi(u,v)$ in \eqref{tilde.uv.from.uv}
solves the original system \eqref{p1} on the same time interval. 

We want to prove that the solution $(\tilde u, \tilde v)$ 
has a dynamical behavior similar to the one of $(u,v)$ 
in Lemma \ref{lemma:chaos.u.without.square}.  
We underline that $v = \overline{u}$, 
hence $\| v \|_1 = \| u \|_1$, 
and, in fact, the inequalities in Lemma \ref{lemma:chaos.u.without.square} 
regard the solution $(u,v)$. 
On the contrary, the ``physical'' solution $(\tilde u, \tilde v)$ 
is a pair of real-valued functions solving \eqref{p1}, 
and therefore $\tilde v = \pa_t \tilde u$.
% and $\tilde v$ is not the complex conjugate of $\tilde u$, 
% but it is its time derivative. 
Thus, $\| u \|_1$ appearing in Lemma \ref{lemma:chaos.u.without.square} 
corresponds to $\mN$ in Lemma \ref{lemma:bound.phys.var}.

%One has $(\tilde u, \tilde v) = \Phi(u,v)$, where 
The transformation $\Phi$ is defined in \eqref{def.Phi}. 
We consider the map $\Phi_3 \circ \Phi_4 \circ \Phi_5$ first,
and then $\Phi_1, \Phi_2$.
From Lemma \ref{lemma:Lemma.2.9.SIAM} we deduce the following property. 

%the composition 
%The transformations $\Phi^{(n)}$, $n=1,\ldots,5$, are constructed in 
%\cite{Baldi.Haus.Nonlin}, \cite{Baldi.Haus.JDDE}. 
%The first two are linear, classical, and very elementary; 
%the last three are nonlinear, much more involved, 
%but here we only need a simple property of them.

%First, we recall in Lemma \ref{lemma:Lemma.2.9.SIAM} 
%an observation from \cite{Baldi.Haus.SIAM};
%the notation $(u,v) \in H^s_0(\T^d,c.c.)$ 
%means that $u$ is a zero-average, complex-valued function 
%in the Sobolev space $H^s(\T^d,\C)$, and $v$ is its complex conjugate. 

%\bcb
%\begin{lemma}[From Lemma 2.9 of \cite{Baldi.Haus.SIAM}]
%\label{lemma:Lemma.2.9.SIAM}
%There exist universal constants $\d>0$, $C>0$ such that 
%for all $(u,v) \in H^{m_1}_0(\T^d, c.c.)$ in the ball $\| u \|_{m_1} \leq \d$,
%for all $k \in \Z^d$, the $k$-th Fourier coefficient $f_k = \overline{g_{-k}}$ 
%of $(f,g) := (\Phi^{(3)} \circ \Phi^{(4)} \circ \Phi^{(5)})(u,v)$ satisfies 
%\begin{equation} \label{2.77.SIAM}
%|f_k - u_k| 
%\leq C \| u \|_{m_1}^2 (|u_k| + |u_{-k}|).
%%\leq |u_k| + |u_{-k}|.
%\end{equation}
%%An analogous bound holds for the inverse transformation, namely
%%\[
%%|f_k - u_k| 
%%\leq C \| f \|_{m_1}^2 (|f_k| + |f_{-k}|) 
%%\leq |f_k| + |f_{-k}|.
%%\]
%\end{lemma}
%\ec

\begin{lemma} \label{lemma:cubic.bound.s}
Let $\d, C$ be the universal constants in Lemma \ref{lemma:Lemma.2.9.SIAM}. 
Let $(u,v) \in H^{m_1}_0(\T^d, c.c.)$, with $\| u \|_{m_1} \leq \d$, 
and let $(f,g) := \Phi_3 \circ \Phi_4 \circ \Phi_5 (u,v)$. 
Then, for every $s \in \R$,  
\begin{equation}  \label{cubic.bound.s}
\| f - u \|_s \leq 2 C \| u \|_{m_1}^2 \| u \|_s.
\end{equation}
\end{lemma}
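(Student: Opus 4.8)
The plan is to deduce the Sobolev estimate \eqref{cubic.bound.s} directly from the pointwise Fourier bound \eqref{2.77.SIAM} of Lemma \ref{lemma:Lemma.2.9.SIAM}. First I would note that, under the hypothesis $\| u \|_{m_1} \leq \d$, Lemma \ref{lemma:Lemma.2.9.SIAM} applies to $(f,g) = \Phi_3 \circ \Phi_4 \circ \Phi_5(u,v)$ and gives, for every $k \in \Z^d$,
\[
|f_k - u_k| \leq C \| u \|_{m_1}^2 \, (|u_k| + |u_{-k}|).
\]
Since $u \in H^s_0(\T^d,\C)$ and, by \eqref{2.77.SIAM}, also $f \in H^s_0(\T^d,\C)$, all the sums defining the norms $\| \cdot \|_s$ run over $k \in \Z^d \setminus \{0\}$, so the zero mode plays no role.

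Next I would square the above inequality, multiply by the weight $|k|^{2s}$, sum over $k \neq 0$, and use the elementary bound $(a+b)^2 \leq 2(a^2 + b^2)$ to obtain
\[
\| f - u \|_s^2 = \sum_{k \neq 0} |k|^{2s} |f_k - u_k|^2
\leq 2 C^2 \| u \|_{m_1}^4 \sum_{k \neq 0} |k|^{2s} \big( |u_k|^2 + |u_{-k}|^2 \big).
\]
Because $|{-k}| = |k|$, the change of index $k \mapsto -k$ shows $\sum_{k \neq 0} |k|^{2s} |u_{-k}|^2 = \sum_{k \neq 0} |k|^{2s} |u_k|^2 = \| u \|_s^2$, so the right-hand side equals $4 C^2 \| u \|_{m_1}^4 \| u \|_s^2$. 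Taking square roots gives $\| f - u \|_s \leq 2 C \| u \|_{m_1}^2 \| u \|_s$, which is \eqref{cubic.bound.s}.

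There is essentially no serious obstacle here; the only points worth keeping straight are that the argument is valid for every real $s$ (positive, negative, or large, which is what is used in the later sections), since it relies only on the formal weights $|k|^{2s}$ and on the symmetry $|k| = |{-k}|$, and that convergence of $\| u \|_s$ and $\| f \|_s$ is automatic in the applications, where $u$ is a trigonometric polynomial. As a side remark, combining \eqref{cubic.bound.s} with the triangle inequality yields $\| f \|_s \leq (1 + 2 C \| u \|_{m_1}^2) \| u \|_s$, and with $2 C \d^2 < 1$ this shows in particular that $\Phi_3 \circ \Phi_4 \circ \Phi_5$ is well defined on the ball $\| u \|_{m_1} \leq \d$ and preserves its Fourier support; but for the present lemma only the displayed inequality is needed.
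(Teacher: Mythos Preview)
Your proof is correct and follows essentially the same approach as the paper: apply the pointwise Fourier bound \eqref{2.77.SIAM}, square, multiply by $|k|^{2s}$, sum, use $(a+b)^2 \leq 2(a^2+b^2)$ and the symmetry $|k|=|{-k}|$, then take square roots. The extra remarks you include about convergence and Fourier support are fine but not needed for the lemma itself.
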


\begin{proof} 
%Since $2|u_k||u_{-k}| \leq |u_k|^2 + |u_{-k}|^2$, 
By \eqref{2.77.SIAM}, 
\begin{align*}
\| f - u \|_s^2 
= \sum_{k \in \Z^d} |k|^{2s} |f_k - u_k|^2 
& \leq \sum_k |k|^{2s} C^2 \| u \|_{m_1}^4 (|u_k| + |u_{-k}|)^2
\\
& \leq 2 C^2 \| u \|_{m_1}^4 \sum_k |k|^{2s}  (|u_k|^2 + |u_{-k}|^2)
= 4 C^2 \| u \|_{m_1}^4 \| u \|_s^2. 
\qedhere
\end{align*}
\end{proof}

We apply estimate \eqref{cubic.bound.s} 
to the solution $(u,v)$ of \eqref{3101.16}
constructed in the previous sections. 

\begin{lemma}  \label{lemma:bound.f.g}
Assume the hypotheses of Lemma \ref{lemma:chaos.u.without.square}. 
Let $v$ be the complex conjugate of $u$,  
and let $(f,g) := \Phi_3 \circ \Phi_4 \circ \Phi_5(u,v)$. 
There exists a constant $\e_4 \in (0,\e_3)$, 
depending only on $m,p$,
where $\e_3$ is given by Lemma \ref{lemma:chaos.u.without.square},
such that, if $0<\e \leq \e_4$, then 
\begin{align}
& \e \tilde c_1 + \frac{47}{100} \e^2 \tilde r_1 \leq \| f(t) \|_1
\leq \e \tilde c_1 + \frac{102}{100} \e^2 \tilde r_1
\quad \ \forall t\in I_j^*,
\notag \\
& \e \tilde c_1 - \frac{3}{100} \e^2 \tilde r_1 \leq \| f(t) \|_1
\leq \e \tilde c_1 + \frac{52}{100} \e^2 \tilde r_1
\quad \ \forall t\in E_j^*,
\notag \\
& \max_{t \in I_j^*} \| f(t) \|_1 
\geq \e \tilde c_1 + \frac{97}{100} \e^2 \tilde r_1,
\qquad 
\min_{t \in E_j^*} \| f(t) \|_1
\leq \e \tilde c_1 + \frac{2}{100} \e^2 \tilde r_1,
 \label{bounds.f} 
\end{align}
where $\tilde c_1$, $\tilde r_1$ are defined 
in Lemma \ref{lemma:chaos.u.without.square}.
The inequalities \eqref{bounds.f} hold 
for the indices $j$ described in Lemma \ref{lemma:chaos.u}, 
that is, for $j = 0, \ldots, N$, where $N$ satisfies \eqref{cond.N}.
\end{lemma}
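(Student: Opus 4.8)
The plan is to treat $(f,g) = \Phi_3\circ\Phi_4\circ\Phi_5(u,v)$ as a cubic perturbation of $(u,v)$ and to transfer the bounds on $\|u(t)\|_1$ from Lemma~\ref{lemma:chaos.u.without.square} to $\|f(t)\|_1$ with only a negligible loss. First I would invoke the a priori estimate \eqref{bound.u(t).norm.s.special}, valid on $[0,T_{\mathrm{NF}}]$ and hence on the intervals $I_j^*, E_j^* \subseteq [0,T_\e] \subseteq [0,T_{\mathrm{polar}}] \subseteq [0,T_{\mathrm{NF}}]$ that appear in the statement, to get $\|u(t)\|_{m_1}\leq K\e$ and $\|u(t)\|_1\leq K\e$ there, with $K$ depending only on $m,p$. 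Choosing $\e_4$ small (in terms of $m,p$) so that $K\e_4\leq\d$, the pair $(u(t),v(t))$ lies in the ball where Lemma~\ref{lemma:cubic.bound.s} applies, and \eqref{cubic.bound.s} with $s=1$ gives
\[
\|f(t)-u(t)\|_1 \leq 2C\,\|u(t)\|_{m_1}^2\,\|u(t)\|_1 \leq K\e^3
\qquad \forall\, t\in I_j^*\cup E_j^*,
\]
for a constant $K$ depending only on $m,p$ (and the universal $C$ of Lemma~\ref{lemma:Lemma.2.9.SIAM}).

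By the triangle inequality this yields the pointwise bound $\big|\,\|f(t)\|_1-\|u(t)\|_1\,\big|\leq K\e^3$. I would then shrink $\e_4$ once more, still depending only on $m,p$, so that $K\e_4\leq\tfrac1{100}\tilde r_1$; since $\tilde r_1=r_1/\sqrt{c_1}$ depends only on $m,p$, this forces $K\e^3\leq\tfrac1{100}\e^2\tilde r_1$ for all $0<\e\leq\e_4$. Substituting into the four pointwise inequalities of \eqref{bounds.u.Sob.norm.1.without.square} changes each coefficient of $\e^2\tilde r_1$ by at most $1/100$, which is exactly the first two lines of \eqref{bounds.f}.

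For the extremal statements I would, for each admissible $j$, pick $t_j'\in I_j^*$ maximizing $t\mapsto\|u(t)\|_1$ on $I_j^*$ and estimate $\max_{t\in I_j^*}\|f(t)\|_1\geq\|f(t_j')\|_1\geq\|u(t_j')\|_1-K\e^3\geq\e\tilde c_1+\tfrac{98}{100}\e^2\tilde r_1-\tfrac1{100}\e^2\tilde r_1$; the lower bound for $\min_{t\in E_j^*}\|f(t)\|_1$ follows symmetrically by choosing a minimizer of $\|u(t)\|_1$ on $E_j^*$. This gives \eqref{bounds.f} for $j=0,\dots,N$ with $N$ as in Lemma~\ref{lemma:chaos.u}.

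The step I expect to require the most care is purely bookkeeping rather than analysis: making sure that the constant $K$ coming out of \eqref{cubic.bound.s} and \eqref{bound.u(t).norm.s.special} depends only on $m,p$ (so $\e_4$ can be fixed in terms of $m,p$ alone), and that all the intervals $I_j^*, E_j^*$ remain inside $[0,T_{\mathrm{NF}}]$, where $\Phi_3\circ\Phi_4\circ\Phi_5$ is defined and the a priori bounds hold --- both of which are already guaranteed by the hypotheses inherited from Lemma~\ref{lemma:chaos.u.without.square}. There is no genuine difficulty, because $\Phi_3\circ\Phi_4\circ\Phi_5$ differs from the identity only by a term of cubic order in $u$.
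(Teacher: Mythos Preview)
Your proposal is correct and follows essentially the same approach as the paper: combine \eqref{cubic.bound.s} with \eqref{bound.u(t).norm.s.special} to get $\|f-u\|_1\leq K\e^3$ on $[0,T_{\mathrm{NF}}]$, then transfer the bounds \eqref{bounds.u.Sob.norm.1.without.square} via the triangle inequality. The only superfluous step is your additional shrinking of $\e_4$ to ensure $\|u(t)\|_{m_1}\leq\d$; this is already guaranteed by \eqref{twice.norm.sec.prepar} for all $t\in[0,T_{\mathrm{NF}}]$, so no further smallness on $\e$ is needed there.
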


\begin{proof}
By \eqref{cubic.bound.s} and \eqref{bound.u(t).norm.s.special}, 
one has
\begin{equation} \label{cubic.bounds.eps}
\| f - u \|_1 
\leq 2 C \| u \|_{m_1}^2 \| u \|_1
\leq K \e^3
\end{equation}
on the time interval $[0, T_{\mathrm{NF}}]$, 
where the constant $K$ depends only on $m,p$. 
All the inequalities in the lemma are proved by 
\eqref{bounds.u.Sob.norm.1.without.square} 
and \eqref{cubic.bounds.eps}.
\end{proof}

The transformations $\Phi_1, \Phi_2$ are simply these:
\begin{equation}  \label{f.g.q.p.tilde.u.tilde.v}
(q,p) = \Phi_2(f,g) = \Big( \frac{f+g}{\sqrt{2}}, \, \frac{f-g}{i\sqrt{2}} \Big), 
\qquad 
(\tilde u, \tilde v) = \Phi_1(q,p) = (|D_x|^{-\frac12} q, |D_x|^{\frac12} p),
\end{equation}
where $|D_x|^s$ is the Fourier multiplier 
$|D_x|^s e^{ik \cdot x} = |k|^s e^{ik \cdot x}$, $s \in \R$, 
$k \in \Z^d \setminus \{ 0 \}$ 
(the frequency $k=0$ can be ignored here,
because only zero-average functions are involved).

\begin{lemma} \label{lemma:isometry}
Let $s \in \R$, let $f$ be a zero-average, complex-valued function in $H^s(\T^d,\C)$, 
and let $g$ be its complex conjugate, i.e., $(f,g) \in H^s_0(\T^d,c.c.)$. 
Then $(q,p)$ defined in \eqref{f.g.q.p.tilde.u.tilde.v} is a pair of zero-average, 
real-valued functions in $H^s_0(\T^d,\R)$, with 
\[
\| q \|_s^2 + \| p \|_s^2 
= \| f \|_s^2 + \| g \|_s^2 
= 2 \| f \|_s^2,
\]
and $(\tilde u, \tilde v)$ defined in \eqref{f.g.q.p.tilde.u.tilde.v} 
is a pair of zero-average, real-valued functions 
in $H^{s+\frac12}_0(\T^d,\R) \times H^{s-\frac12}_0(\T^d,\R)$, 
with 
\[
\| \tilde u \|_{s+\frac12} = \| q \|_s, 
\qquad 
\| \tilde v \|_{s-\frac12} = \| p \|_s.
\]
\end{lemma}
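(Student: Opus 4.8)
The plan is to prove Lemma~\ref{lemma:isometry} by a direct computation on Fourier coefficients, treating the three assertions (realness of $(q,p)$; the norm identity for $(q,p)$; the realness and norm identities for $(\tilde u,\tilde v)$) in turn. First I would expand everything in Fourier series: writing $f = \sum_{k\neq 0} f_k e^{ik\cdot x}$ and using $g = \overline{f}$, which at the level of coefficients means $g_k = \overline{f_{-k}}$, I compute $q_k = (f_k + g_k)/\sqrt 2 = (f_k + \overline{f_{-k}})/\sqrt 2$ and $p_k = (f_k - g_k)/(i\sqrt 2) = (f_k - \overline{f_{-k}})/(i\sqrt 2)$. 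Then realness of $q$ and $p$ amounts to checking $q_{-k} = \overline{q_k}$ and $p_{-k} = \overline{p_k}$ for all $k$, which is immediate from these formulas (for $p$ one uses that dividing by $i$ introduces the sign needed to match the complex conjugation). Zero average is preserved because the sums omit $k=0$.

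Next I would establish the norm identity $\|q\|_s^2 + \|p\|_s^2 = \|f\|_s^2 + \|g\|_s^2$. The cleanest way is to note that the map $(f,g)\mapsto(q,p)$ is, coefficient by coefficient, multiplication by the matrix $\frac{1}{\sqrt 2}\begin{pmatrix} 1 & 1 \\ -i & i\end{pmatrix}$ acting on the pair $(f_k,g_k)$, and this matrix is unitary; hence $|q_k|^2 + |p_k|^2 = |f_k|^2 + |g_k|^2$ for every $k$. Multiplying by $|k|^{2s}$ and summing over $k\neq 0$ gives the claimed identity. The final equality $\|f\|_s^2 + \|g\|_s^2 = 2\|f\|_s^2$ is then just $\|g\|_s = \|f\|_s$, which holds because $|g_k| = |\overline{f_{-k}}| = |f_{-k}|$ and summing $|k|^{2s}|f_{-k}|^2$ over $k\neq 0$ is the same as summing $|k|^{2s}|f_k|^2$ (reindex $k \mapsto -k$, using $|-k| = |k|$).

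For the last part, I would compute the Fourier coefficients of $\tilde u = |D_x|^{-1/2} q$ and $\tilde v = |D_x|^{1/2} p$: one has $\tilde u_k = |k|^{-1/2} q_k$ and $\tilde v_k = |k|^{1/2} p_k$ for $k\neq 0$. Realness of $\tilde u,\tilde v$ follows from realness of $q,p$ together with the fact that $|k|^{\pm 1/2}$ is a real, even (in $k$) multiplier, so the conjugation symmetry $\tilde u_{-k} = \overline{\tilde u_k}$ is inherited. For the norms, $\|\tilde u\|_{s+1/2}^2 = \sum_{k\neq 0} |k|^{2s+1} |\tilde u_k|^2 = \sum_{k\neq 0} |k|^{2s+1} |k|^{-1} |q_k|^2 = \sum_{k\neq 0} |k|^{2s} |q_k|^2 = \|q\|_s^2$, and similarly $\|\tilde v\|_{s-1/2}^2 = \sum_{k\neq 0} |k|^{2s-1}|k| \, |p_k|^2 = \|p\|_s^2$; taking square roots gives the stated identities. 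Zero average of $\tilde u,\tilde v$ is again automatic since only $k\neq 0$ modes appear.

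There is no real obstacle here: the lemma is a bookkeeping statement about a unitary change of coordinates composed with diagonal Fourier multipliers, and every step is a one-line Fourier-side verification. The only point requiring a modicum of care is tracking the factor of $i$ in the definition of $p$ to get the conjugation symmetry (and hence realness) right; everything else is routine. I would keep the write-up short, presenting the unitarity of the $2\times 2$ matrix as the conceptual core and letting the multiplier identities $|k|^{2(s\pm 1/2)}|k|^{\mp 1} = |k|^{2s}$ do the rest.
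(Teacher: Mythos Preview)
Your proposal is correct and is exactly the kind of elementary Fourier-coefficient computation the paper has in mind; the paper's own proof is the single line ``Elementary calculations with Fourier coefficients.'' Your write-up simply fills in those details (unitarity of the $2\times 2$ matrix for the norm identity, and the multiplier identities $|k|^{2(s\pm 1/2)}|k|^{\mp 1}=|k|^{2s}$ for the last part), so there is nothing to add or correct.
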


\begin{proof} 
%The proof is a basic exercise of calculations with Fourier coefficients.
Elementary calculations with Fourier coefficients.
\end{proof}

The next lemma regards the solutions $(\tilde u, \tilde v)$ 
of system \eqref{p1}, that is, the solutions $\tilde u$ 
of the original Kirchhoff equation \eqref{K Om}.

\begin{lemma}  \label{lemma:bound.phys.var}
Assume the hypotheses of Lemma \ref{lemma:bound.f.g},
and let $(\tilde u, \tilde v) := \Phi_1 \circ \Phi_2(f,g)$. 
If $\e \leq \e_4$, then the function
\begin{equation}\label{def.calN}
\mN (t) := 
\Big( \| \tilde u (t) \|^2_{\frac32} + \| \tilde v (t) \|^2_{\frac12} 
\Big)^{\frac12}
\end{equation}
satisfies
\begin{align}
& \e c_0 + \frac{47}{100} \e^2 r_0 \leq \mN(t)
\leq \e c_0 + \frac{102}{100} \e^2  r_0
\quad \ \forall t\in I_j^*,
\notag \\
& \e c_0 - \frac{3}{100} \e^2 r_0 \leq \mN(t)
\leq \e c_0 + \frac{52}{100} \e^2 r_0
\quad \ \forall t\in E_j^*,
\notag \\
& \max_{t \in I_j^*} \mN(t)
\geq \e c_0 + \frac{97}{100} \e^2 r_0,
\qquad 
\min_{t \in E_j^*} \mN(t)
\leq \e c_0 + \frac{2}{100} \e^2 r_0,
 \label{bounds.tilde.u.tilde.v} 
\end{align}
where $c_0 := \sqrt{2} \tilde c_1$, 
$r_0 := \sqrt{2} \tilde r_1$, 
and $\tilde c_1, \tilde r_1$ are defined 
in Lemma \ref{lemma:chaos.u.without.square}.
The inequalities \eqref{bounds.tilde.u.tilde.v} hold 
for the indices $j$ described in Lemma \ref{lemma:chaos.u}, 
that is, for $j = 0, \ldots, N$, where $N$ satisfies \eqref{cond.N}.
\end{lemma}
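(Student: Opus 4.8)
The plan is to deduce the statement directly from Lemma~\ref{lemma:bound.f.g}, exploiting that the two remaining changes of variables $\Phi_1$ and $\Phi_2$ act isometrically between the relevant Sobolev spaces (Lemma~\ref{lemma:isometry}). First I would record the setup: under the hypotheses of Lemma~\ref{lemma:bound.f.g}, and in particular for $0 < \e \le \e_4$, the solution $(u,v)$ of \eqref{3101.16} remains in the ball $\| u \|_{m_1} \le \d$ on $[0,T_{\mathrm{NF}}]$, so the nonlinear map $\Phi_3 \circ \Phi_4 \circ \Phi_5$ is well defined there and $(f,g)$ is a pair of zero-average, complex conjugate trigonometric polynomials (hence in $H^1_0(\T^d,c.c.)$); consequently $(q,p) = \Phi_2(f,g)$ and $(\tilde u,\tilde v) = \Phi_1(q,p)$ are given by \eqref{f.g.q.p.tilde.u.tilde.v}, and $(\tilde u,\tilde v) = \Phi(u,v)$ solves the physical system \eqref{p1} on $[0,T_{\mathrm{NF}}]$, so $\tilde u$ solves \eqref{K Om} there. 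Since $T_\e = K\e^{-3}\log(\e^{-1}) \le T_{\mathrm{NF}}$ for $\e$ small (recall $T_{\mathrm{NF}} \ge K\e^{-4}$), all the intervals $I_j^*, E_j^*$ occurring below lie in the domain of validity.

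Next I would establish the core identity $\mN(t) = \sqrt{2}\,\| f(t) \|_1$. Applying Lemma~\ref{lemma:isometry} with $s = 1$ to the pair $(f,g) \in H^1_0(\T^d,c.c.)$ gives $\| q(t) \|_1^2 + \| p(t) \|_1^2 = 2\| f(t) \|_1^2$, and, since $(\tilde u,\tilde v) = \Phi_1(q,p)$, also $\| \tilde u(t) \|_{\frac32} = \| q(t) \|_1$ and $\| \tilde v(t) \|_{\frac12} = \| p(t) \|_1$. Therefore
\[
\mN(t)^2 = \| \tilde u(t) \|_{\frac32}^2 + \| \tilde v(t) \|_{\frac12}^2 = \| q(t) \|_1^2 + \| p(t) \|_1^2 = 2\, \| f(t) \|_1^2 .
\]

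Finally I would substitute the bounds \eqref{bounds.f} for $\| f(t) \|_1$ into the identity $\mN(t) = \sqrt{2}\,\| f(t) \|_1$. Because multiplication by the positive constant $\sqrt{2}$ is monotone and commutes with $\max$ and $\min$ over the intervals $I_j^*, E_j^*$, every inequality in \eqref{bounds.f} is transported to the corresponding one in \eqref{bounds.tilde.u.tilde.v} with $c_0 := \sqrt{2}\,\tilde c_1$ and $r_0 := \sqrt{2}\,\tilde r_1$, the fractions $47/100, 102/100, \dots$ being carried over verbatim; and the set of admissible indices $j = 0,\dots,N$ with $N$ as in \eqref{cond.N} is inherited from Lemma~\ref{lemma:chaos.u}. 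No smallness condition on $\e$ beyond $\e \le \e_4$ is introduced. I do not expect any genuine obstacle here: the only point requiring care is the bookkeeping of the Sobolev indices, namely that $s = 1$ is exactly the value for which the $H^1$ norm of $(f,g)$ corresponds to the $H^{\frac32} \times H^{\frac12}$ norm of the physical pair $(\tilde u,\tilde v)$; all the analytic substance has already been spent in Lemmas~\ref{lemma:after.Gronwall}--\ref{lemma:bound.f.g}.
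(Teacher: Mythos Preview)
Your proposal is correct and follows essentially the same approach as the paper: establish the identity $\mN(t) = \sqrt{2}\,\| f(t) \|_1$ via Lemma~\ref{lemma:isometry} with $s=1$, and then carry over the bounds \eqref{bounds.f} from Lemma~\ref{lemma:bound.f.g}. The paper's proof is just a terser version of exactly this argument.
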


\begin{proof} 
By Lemma \ref{lemma:isometry}, one has 
$\mN(t) = \sqrt{2} \| f(t) \|_1$.
Hence \eqref{bounds.tilde.u.tilde.v} follows directly from 
\eqref{bounds.f}.
\end{proof}

%\subsection{Proof of Theorem \ref{thm:main}} 

\begin{proof}[Proof of Theorem \ref{thm:short}]
All the previous smallness conditions on $\s$ are satisfied 
for $\s \leq \s_4 := \min \{ \s_0, \s_1, \s_2, \s_3\}$, 
where $\s_0 = \s_0(a_0)$ is given by Proposition \ref{prop:chaos.xi.eta}, 
$\s_1$ by Lemma \ref{lemma:maybe.not.useful}, 
$\s_2$ by Lemma \ref{lemma:Sn.eps}, 
$\s_3$ by Lemma \ref{lemma:T.polar}. 
Note that $\s_0, \s_1, \s_2, \s_3$ are all universal constants, 
and therefore $\s_4$ is universal too. 
All the previous smallness conditions on $\e$ are satisfied 
for $0 < \e \leq \e_4$, where $\e_4$ is defined in Lemma \ref{lemma:bound.f.g}.
Also note that $\e_4$ depends only on $m,p$. 

Let $m=2$, and let $p$ be the minimum integer such that $p > m = 2$ 
and $\s = m/p = 2/p \leq \s_*$. 
Since $\s_*$ is a universal constant, the integers $m,p$ are universal constants too. 
Then all the constants % in Theorem \ref{thm:main} 
depending only on $m,p$ now become universal constants. 
In particular, $M_0(\s)$ given by Proposition 
\ref{prop:chaos.xi.eta} is now a universal constant. 

By Lemma \ref{lemma:bound.phys.var}, 
renaming $s_j, \bar s_j$ the times $t_j, \bar t_j$ in \eqref{def:tj},
renaming $\tau$ the period $T = T_a$, at $a = a_0$, in Proposition \ref{prop:chaos.xi.eta},
renaming $t_j, \bar t_j$ the times $t_j^*, \bar t_j^*$ in \eqref{def.rescaled.times}, 
renaming $I_j, E_j$ the intervals $I_j^*, E_j^*$ in \eqref{def.rescaled.times}, 
renaming $\e$ the product $\e c_0$ where $c_0$ is defined in Lemma \ref{lemma:bound.phys.var},
renaming $r_0$ the constant $r_0/(2 c_0^2)$ where $c_0, r_0$ are defined in Lemma \ref{lemma:bound.phys.var},
renaming $b$ the ratio $b/c_0^3$ where $c_0$ is defined in Lemma \ref{lemma:bound.phys.var}
and $b$ in \eqref{def.s.1234.a1.a2.b},
and also renaming $u$ the solution $\tilde u$ in Lemma \ref{lemma:bound.phys.var},
the proof of Theorem \ref{thm:short} is complete. 
\end{proof}

\begin{footnotesize}

\end{footnotesize}

\bigskip

\begin{flushright}
\begin{small}

\textbf{Pietro Baldi}

Dipartimento di Matematica e Applicazioni ``R. Caccioppoli'', Universit\`a di Napoli Federico II

Via Cintia, Monte S. Angelo, 
80126 Napoli, Italy

\texttt{pietro.baldi@unina.it}

\bigskip

\textbf{Filippo Giuliani}

Dipartimento di Matematica, Politecnico di Milano

Piazza Leonardo da Vinci, 32 - Campus Bonardi
20133 Milano, Italy 

\texttt{filippo.giuliani@polimi.it}

\bigskip

\textbf{Marcel Guardia}

Departament de Matem\`atiques i Inform\`atica, Universitat de Barcelona, 

Gran Via, 585, 08007 Barcelona, Spain

Centre de Recerca Matem\`atica

Edifici C, Campus Bellaterra, 08193 Bellaterra, Spain

\texttt{guardia@ub.edu}

\bigskip

\textbf{Emanuele Haus}

Dipartimento di Matematica e Fisica, Universit\`a Roma Tre

Largo San Leonardo Murialdo 1, 
00146 Roma, Italy

\texttt{ehaus@mat.uniroma3.it}

\end{small}
\end{flushright}

\end{document}